\numberwithin{equation}{section}
\newcommand{\tz}{{\Tilde{Z}}}
\newcommand{\tbh}{{\Tilde{\bm{H}}}}
\newcommand{\tbq}{{\Tilde{\bm{Q}}}}
\newcommand{\tbx}{{\Tilde{\bm{X}}}}
\newcommand{\tbxi}{{\Tilde{\bm{\xi}}}}
\newcommand{\bx}{{\bm{X}}}
\newcommand{\bxi}{{\bm{\xi}}}
\newcommand{\tx}{{\Tilde{X}}}
\newcommand{\txi}{{\Tilde{\xi}}}
\newcommand{\Prob}{\bm{\mathrm{P}}}
\newcommand{\Expe}{\bm{\mathrm{E}}}
\newcommand{\st}[1]{#1^{\e*}}
\newcommand{\T}{\Tilde}
\newcommand{\e}{\varepsilon}
\newtheorem{theorem}{Theorem}[section]
\newtheorem{corollary}[theorem]{Corollary}
\newtheorem{lemma}[theorem]{Lemma}
\newtheorem{proposition}[theorem]{Proposition}
\newtheorem{definition}[theorem]{Definition}
\newtheorem{remark}[theorem]{Remark}
\newcommand{\thmref}[1]{Theorem~\ref{#1}}
\newcommand{\lemref}[1]{Lemma~\ref{#1}}
\newcommand{\E}{\bm{\mathrm E}}
\title{Fast-oscillating random perturbations of Hamiltonian systems}
\author{Shuo Yan\\
Department of Mathematics, University of Maryland\\ 
College Park, Maryland, United States\\
shuoyan@umd.edu}
\date{}
\begin{document}
\maketitle
\begin{abstract}
    We consider coupled slow-fast stochastic processes, where the averaged slow motion is given by a two-dimensional Hamiltonian system with multiple critical points. On a proper time scale, the evolution of the first integral converges to a diffusion process on the corresponding Reeb graph, with certain gluing conditions specified at the interior vertices, as in the case of additive white noise perturbations of Hamiltonian systems considered by M. Freidlin and A. Wentzell.
    The current paper provides the first result where the motion on a graph and the corresponding gluing conditions appear due to the averaging of a slow-fast system{, with a Hamiltonian structure, on a large time scale}.
    The result allows one to consider, for instance, long-time diffusion approximation for an oscillator with a potential with more than one well.\\
    \indent \textbf{Keywords:} Averaging, slow-fast system, gluing conditions, diffusion approximation\\
    \indent \textbf{Mathematics Subject Classification:} 37J40, 60F17
\end{abstract}
\section{Introduction}
\label{sec:introduction}
Consider a family of diffusion processes $(\bx_t^\e,\bxi_t^\e)$ in $\mathbb R^2\times\mathbb T^m$ satisfying 
\begin{equation}
\label{eq:theprocess1}
\begin{aligned}
    d\bm X_t^\e=&~b(\bx_t^\e,\bxi_t^\e)dt,~~~~~~~ \bx_0^\e=x_0\in\mathbb R^2,\\
    d\bxi_t^\e=&~\frac{1}{\e}v(\bxi_t^\e)dt+\frac{1}{\sqrt{\e}}\sigma(\bxi_t^\e)dW_t,~~~\bxi_0^\e=y_0\in\mathbb T^m,
\end{aligned}
\end{equation}
where $\e$ is a small positive parameter, $\mathbb T^m$ is the $m$-dimensional torus, and $W_t$ is an $m$-dimensional Brownian motion. 
In the coupled slow-fast system, $\bx_t^\e$ is the slow component and $\bxi_t^\e$ is the fast component, since the generator of the diffusion in the second equation is multiplied by $\e^{-1}$.
On the space $\mathbb R^2\times\mathbb T^m$, the diffusion \eqref{eq:theprocess1} is everywhere degenerate. All the randomness comes from the second equation and is transmitted to $\bx_t^\e$ through the vector $b(\bx_t^\e,\bxi_t^\e)$, which is fast-oscillating in time.
Under natural conditions, the averaging principle holds for the process in \eqref{eq:theprocess1} 
(cf. \cite{randomperturbation}). For example, if $\sigma(y)$ is non-degenerate (and thus 
$\bxi_t^\e$ has a unique invariant measure $\mu$ independent of $\e$), then $\bx_t^\e$ converges as $\e\to0$ in probability on each finite interval $[0,T]$ to an averaged process defined by the differential equation
\begin{equation}
\label{eq:averagedprocess}
    d{\bm{x}_t}=\bar b({\bm{x}_t})dt,
\end{equation}
where $\bar b(x)=\int_{\mathbb T^m}b(x,y)d\mu(y)$. 
Therefore, $\bx_t^\e$ can be viewed as a result of fast-oscillating random perturbations{, i.e. $b(x,y)-\bar b(x)$,} of the deterministic process ${\bm{x}_t}$. 
Moreover, the deviation can be described more precisely: the process $\e^{-1/2}(\bx_t^\e-{\bm{x}_t})$ converges weakly to a Gaussian Markov process on a finite interval $[0,T]$ {(cf. \cite{MR0203789}, \cite{MR0517995})}, and, if we assume a special type of vector $b(x,y)$, then the local limit theorem holds for $\e^{-1}(\bx_t^\e-{\bm{x}_t})$ at time $t$ (\cite{LLT}).

If the system \eqref{eq:averagedprocess} has a first integral $H$, then, by the averaging principle, $H(\bx_t^\e)$ is nearly constant on finite time intervals when $\e$ is small. Nontrivial behavior can, however, be observed on larger time intervals (of order $\e^{-1}$). Assume, momentarily, that $H$ has a single critical point. Then it was demonstrated in \cite{AIHPB_1995__31_3_485_0} that $H(\bx_{t/\e}^\e)$ converges weakly in $C([0,T])$, as $\e\to0$, to a diffusion process for any finite $T$, under additional assumptions. 
A similar result in the case of multiple degrees of freedom was obtained recently in \cite{Freidlin2021} and the main goal there was to overcome difficulties related to resonances, which is typical in the case of multiple degrees of freedom.
The result holds in the region where no critical points of the first integrals are present and action-angle-type coordinates can be introduced. 
\begin{figure}[!ht] 
    \centering
    \includegraphics[width=0.75\textwidth]{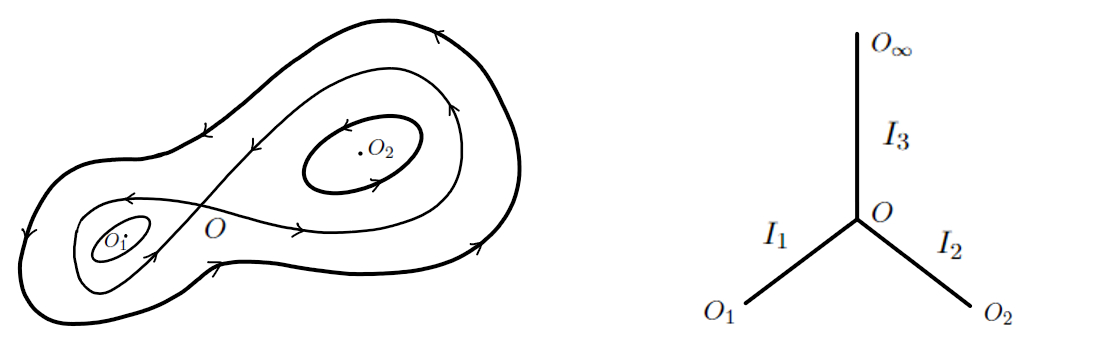}
    \caption{Level sets and the Reeb graph.}
    \label{fig:reeb_graph}
    \end{figure}
    
Let us return to the two-dimensional situation. In the presence of multiple critical points, including saddle points, the problem gets more complicated as we need to consider the Reeb graph in order to describe the evolution of the first integrals denoted by $h=(k,H)${, where the additional discrete-valued first integral $k$ is the label of the edge on the Reeb graph. (For instance, in Figure~\ref{fig:reeb_graph}, we have one saddle point and two local minima of $H$ in the space $\mathbb R^2$. Accordingly, on the graph, we have one interior vertex, three exterior vertices, including one formally representing the infinity, and three edges connecting them.)} In particular, the interior vertices on the graph correspond to the level curves that contain the saddle points, and those level curves are called the separatrices. 
In this situation, the limiting behavior has already been described for the white-noise-type additive perturbations of dynamical systems: Hamiltonian systems in $\mathbb R^2$ (\cite{FreidlinWentzell1994}), general dynamical systems with conservation laws in $\mathbb R^n$ (\cite{Freidlin2004}), and Hamiltonian systems with an ergodic component on two-dimensional surfaces (\cite{Dmitry2008},\cite{Dmitry2013},\cite{dolgopyat_freidlin_koralov_2012}). 

In this article, we consider fast-oscillating random perturbations, as discussed above, of Hamiltonian system in $\mathbb R^2$ with multiple critical points and prove that the evolution of the first integrals $h$ converges to a diffusion process defined by an operator ($\mathcal L, D(\mathcal L))$ on the corresponding Reeb graph. In particular, the exterior vertices turn out to be inaccessible and the behavior of the process near the interior vertices is described in terms of the domain $D(\mathcal L)$ in the following way: for interior vertex $O_i$, there are constants $p_k$ such that each function $f\in D(\mathcal L)$ satisfies
\begin{equation}
\label{eq:gluing_condtion}
\sum_{I_k\sim O_i}p_k\lim_{h_k\to O_i}f'(h_k)=0,
\end{equation}{where $I_k\sim O_i$ means that $O_i$ is an endpoint of $I_k$.}
Intuitively, the absolute value of $p_k$ is proportional to the probability of entering edge $I_k$ after the process arrives at the vertex $O_i$. The relation \eqref{eq:gluing_condtion} is usually referred to as the gluing condition. 
In the next section, we will formulate the results along with the assumptions more precisely. The coefficients $p_k$ will be calculated explicitly. As we mentioned, similar results hold in case of additive perturbations of Hamiltionian systems. Now the techniques in the proof are more involved and require new ideas with analysis on multiple time scales : $O(\e^{-1})$, $O(1)$, $O(\e)$, etc.
It is worth noting that our result provides the first example where the motion on a graph and the corresponding gluing conditions appear as a result of averaging of a slow-fast system{, with a Hamiltonian structure, on a large time scale}.

{In the remainder of this section, we briefly introduce the main idea and the critical steps of the proof, and outline the plan of the paper.}
To start with, since our interest is in the long-time behavior of $\bx_t^\e$ on $O(\e^{-1})$ time scales, it is often convenient to consider a temporally re-scaled process $(X_t^\e,\xi_t^\e)$:
\begin{equation}
\label{eq:rescaled_process1}
\begin{aligned}
    d X_t^\e=&~\frac{1}{\e}b(X_t^\e,\xi_t^\e)dt,~~~~~~~ X_0^\e=x_0\in\mathbb R^2,\\
    d\xi_t^\e=&~\frac{1}{\e^2}v(\xi_t^\e)dt+\frac{1}{{\e}}\sigma(\xi_t^\e)dW_t,~~~\xi_0^\e=y_0\in \mathbb T^m.
\end{aligned}
\end{equation}
It is clear that $(X_t^\e,\xi_t^\e)=(\bx_{t/\e}^\e,\bxi_{t/\e}^\e)$ in distribution.
Thus, it suffices to prove the weak convergence of $h(X_t^\e)$ in the space $\bm{\mathrm{C}}([0,T],\mathbb G)$, where $\mathbb G$ is the Reeb graph.
The proof of the weak convergence relies on demonstrating that the pre-limiting process asymptotically solves the martingale problem. Namely, we will show that, for each $f$ in a sufficiently large subset of $D(\mathcal L)$ and $T>0$,
\begin{equation}
\label{eq:martingale_problem}
    \E_{(x,y)}[f(h(X_{T}^\e))-f(h(x))-\int_0^T\mathcal Lf(h(X_{t}^\e))dt]\to 0, 
\end{equation}
as $\e\to 0$, uniformly in $x$ in any compact set in $\mathbb R^2$ and in $y\in\mathbb T^m$.
{Note that, contrary to the standard formulation of the martingale problem, there is no conditioning in \eqref{eq:martingale_problem}. However, \eqref{eq:martingale_problem} is still enough for our purpose (see Lemma~\ref{lem:martingale_problem}), since $(X_t^\e,\xi_t^\e)_{\e>0}$ is a family of strong Markov processes.}
The main idea in our proof of \eqref{eq:martingale_problem} is to divide the time interval $[0,T]$ into excursions between different visits to the separatrices and show that the contribution from each excursion is small and they do not accumulate. 
For example, suppose for now that there is only one saddle point, as shown in Figure~\ref{fig:markov_chain}. 
Let $O$ be the saddle point with $H(O)=0$, $\gamma$ be the separatrix, $\gamma'=\{x:|H(x)|=\e^\alpha\}$ be a set near the separatrix, where $0<\alpha<1/2$, and $\sigma\geq0$ be the first time when the process $X_{t}^\e$ reaches $\gamma$. Define inductively the two sequences of stopping times:
\begin{equation}
\label{eq:stopping_times}
   \begin{aligned}
    \sigma_0=&~\sigma,\\
    \tau_n=&\inf\{t>\sigma_{n-1}:X_t^\e\in\gamma'\},\\
    \sigma_n=&\inf\{t>\tau_{n}:X_t^\e\in\gamma\},
    \end{aligned}
\end{equation}
and consequently two Markov chains $(X_{\tau_n}^\e,\xi_{\tau_n}^\e)$ and $(X_{\sigma_n}^\e,\xi_{\sigma_n}^\e)$.
\begin{figure}[!ht] 
    \centering
    \includegraphics[width=0.5\textwidth]{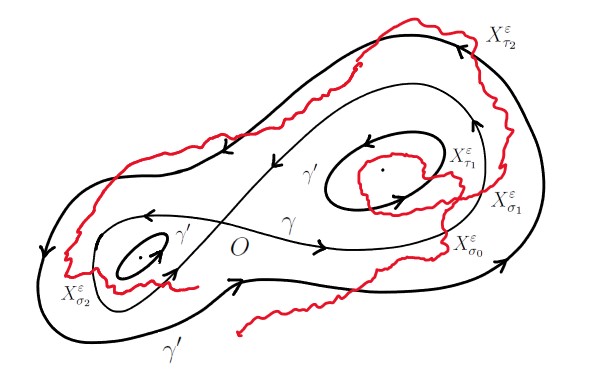}
    \caption{Construction of discrete Markov chains.}
    \label{fig:markov_chain}
    \end{figure}
As pointed out earlier, we wish to prove that the contributions to \eqref{eq:martingale_problem} from all individual excursions are small and the sum converges to zero as $\e\downarrow0$. 
{Thus, except for the first and the last excursions, it is sufficient to show that 
(a) the expectation corresponding to one excursion $[\sigma_n,\sigma_{n+1}]$ converges to zero as $\e\downarrow0$ uniformly in initial distribution,
(b) the expectation corresponding to one excursion $[\sigma_n,\sigma_{n+1}]$ is exactly zero, for all $\e$, if the process starts with the invariant measure of the Markov chain $(X_{\sigma_n}^\e,\xi_{\sigma_n}^\e)$ on $\gamma\times\mathbb T^m$, and (c) the measures on $\gamma\times\mathbb T^m$ induced by $(X_{\sigma_n}^\e,\xi_{\sigma_n}^\e)$ converge exponentially, as $n\to\infty$, uniformly in $\e$ and in initial distribution, to the invariant one. }

{The claim in (a) is an extension of the results outside of the singularities in \cite{Freidlin2021}, and new difficulties arise due to the degenerations occurring on the boundaries.
The claim in (b) is true if there is a common invariant measure for the processes $(X_t^\e,\xi_t^\e)$ for all $\varepsilon$ and the gluing conditions are chosen appropriately. In general, there is no common invariant measure for all $\e$, and we need to consider a family of auxiliary processes that do have a common invariant measure, and then use the proximity of the auxiliary and the original processes near the separatrix, and the Girsanov theorem to show that the gluing conditions are actually the same. 
The assertion in (c) is hard to verify, and its proof requires new techniques, including a local limit theorem for time-inhomogeneous functions of Markov processes and density estimates for hypoelliptic diffusions that will be discussed in later sections.}

{\textbf{\textit{Plan of the paper.}}} 

{In Section~\ref{sec:mainresult}, we introduce the notation, state the assumptions, formulate the main result and the lemma we use to establish the weak convergence. 
In Section~\ref{sec:preliminaries}, the problem is reduced to the case we discussed where there is only one saddle point. Besides, we construct an auxiliary process with $\e$-independent invariant measure and derive diffusion approximations of the processes. 
In Section~\ref{sec:averaging}, we prove the averaging principle for the process on the Reeb graph up to the time when the process reaches an interior vertex. 
In Section~\ref{sec:exponenitalconvergence}, we construct the Markov chain on the product space of the separatrix and the $m$-torus (see \eqref{eq:stopping_times}) and prove its convergence to the invariant measure with an exponential rate uniformly in $\e$. 
In Section~\ref{sec:proofofthemainresult}, we prove the main result. 
A few technical results including time estimates near the vertices and tightness of the processes are included in the Appendix.}
\section{Main result}
\label{sec:mainresult}
Throughout this article, $\Prob$ and $\E$ represent the probability and expectation, respectively, and the  subscripts pertain to initial conditions. 
{$\mathcal F_t^{X^\e_\cdot}$ denotes the natural filtration generated by the process $X_t^\e$.}
For brevity, the stopping times' dependence on parameters and initial conditions is not always indicated in the notation when introduced (e.g. \eqref{eq:stopping_times}). $\nabla$ denotes a first order differential operator, i.e., derivative, gradient, Jacobian, etc., depending on the context. 
$\chi_A$ denotes the indicator function of the event $A$.
If $A$ and $B$ are two non-negative functions that depend on an asymptotic parameter, we write $A\lesssim B$ if $A=O(B)$.
$\bm{\mathrm C}_0(\mathbb G)$ is the space of continuous functions on the Reeb graph $\mathbb G$ that tend to zero at infinity with uniform norm.
$h$ is the projection onto $\mathbb G$.
In order to formulate the assumptions and results, we {further introduce the following notation.}
\\

{\textbf{\textit{Notation.}}}
\begin{enumerate}[(i)]
    \item[\textit{(i)}] $O_i$'s are the vertices on the graph and are occasionally used to denote the corresponding critical points on the plane when there is no ambiguity. $I_k$'s are the edges on the graph and $U_k$'s are the corresponding two-dimensional domains. Formally, $O_\infty$ is the vertex that corresponds to infinity. 
    A symbol $\sim$ between a vertex and an edge means that the vertex is an endpoint of the edge. 
    \item[\textit{(ii)}] Consider the following metric on $\mathbb G$: $r(h_1,h_2)$ is the length of the shortest path connecting $h_1$ and $h_2$. For example, if $h_1=(1,H_1)$, $I_1\sim O_1$, $O_1\sim I_2$, $I_2\sim O_2$, $O_2\sim I_3$ and $h_2=(3,H_2)$, then $r(h_1,h_2)=|H_1-H(O_1)|+|H(O_1)-H(O_2)|+|H(O_2)-H_2|$. 
    \item[\textit{(iii)}] $\gamma(h)=\{x:H(x)=h\}$ and $\gamma_k(h)$ is the connected component of $\gamma(h)$ in the domain $U_k$.
    \item[\textit{(iv)}] $b_h(x,y)=\nabla H(x)\cdot b(x,y)$. 
    \item[\textit{(v)}] $\xi_t$ is the diffusion process on $\mathbb T^m$ with the generator $L$, where
    \begin{equation}
        \label{eqb:def_operator_L}
    L f(y)=v(y)\cdot\nabla_y f(y)+\frac{1}{2}\sum_{i,j}(\sigma\sigma^*)_{ij}(y)\frac{\partial^2}{\partial y_i\partial y_j}f(y).
    \end{equation}
    \item[\textit{(vi)}] For $h$ in the interior of $I_k$, define
    \begin{equation}
    \label{def:llimit_operator}
    \begin{aligned}
            Q_k(h)&=\int_{\gamma_k(h)}\frac{dl}{|\nabla H(x)|},\\
            A_k(h)&=\frac{2}{Q_k(h)}\int_{\gamma_k(h)}\frac{1}{|\nabla H(x)|}\int_0^\infty\E_\mu b_h(x,\xi_s)b_h(x,\xi_0)dsdl,\\
            B_k(h)&=\frac{1}{Q_k(h)}\int_{\gamma_k(h)}\frac{1}{|\nabla H(x)|}\int_0^\infty\E_\mu\nabla_x b_h(x,\xi_s)\cdot(b(x,\xi_0)-\nabla^{\perp}H(x))dsdl,\\
            L_kf(h)&=\frac{1}{2}A_k(h)f''(h)+B_k(h)f'(h).
        \end{aligned}
        \end{equation}
\end{enumerate}

The following conditions are assumed to hold throughout the article.
\\

{\textbf{\textit{Assumptions.}}}
\begin{enumerate}
    \item[\hypertarget{H1}{\textit{(H1)}}] $v(y)$ and $\sigma(y)$ are $C^\infty$ functions on $\mathbb T^m$. $\sigma(y)$ is $m\times m$ matrix-valued and $\sigma(y)\sigma(y)^{\mathsf T}$ is positive-definite for all $y\in\mathbb T^m$.
    \item[\hypertarget{H2}{\textit{(H2)}}] $H(x)$ is a $C^\infty$ function from $\mathbb R^2$ to $\mathbb R$ with bounded second derivatives. $H(x)$ has a finite number of non-degenerate critical points. Each level curve corresponding to a vertex on the Reeb graph contains at most one critical point. As $|x|\to+\infty$, $H(x)/|x|\to+\infty$. 
    \item[\hypertarget{H3}{\textit{(H3)}}] $b(x,y)$ is a $C^\infty$ function from $\mathbb R^2\times\mathbb T^m$ to $\mathbb R^2$ such that the averaged process is a Hamiltonian system with $H$, i.e. $\bar b(x)=\nabla^\perp H(x)$. 
    \item[\hypertarget{H4}{\textit{(H4)}}] The fast-oscillating perturbation is non-degenerate, i.e. $\{b(x,y)-\bar b(x):y\in\mathbb T^m\}$ spans $\mathbb R^2$ for each $x\in\mathbb R^2$, and is uniformly bounded together with its first derivatives.
    \item[\hypertarget{H5}{\textit{(H5)}}] For each $x$ that belongs to one of the separatrices, there exists $y\in\mathbb T^m$ such that the process in \eqref{eq:theprocess1} satisfies the parabolic H\"ormander condition at $(x,y)$. Namely, with $\e^{-1}\tilde v(y)$ being the drift term in the equation for $\bxi^\e_t$ in the Stratonovich form, we have that
    \begin{equation}
        \mathrm{Lie}\left(
        \left\{\begin{pmatrix}
            0\\ \sigma_k(y)
        \end{pmatrix},1\leq k\leq m\right\}\bigcup
        \left\{\left[\begin{pmatrix}
            b(x,y)\\ \tilde v(y)
        \end{pmatrix},
        \begin{pmatrix}
            0\\ \sigma_k(y)
        \end{pmatrix}\right],1\leq k\leq m\right\}\right)
    \end{equation} at $(x,y)$ spans $\mathbb R^{2+m}$, where $\sigma_k(y)$ is the $k$-th column of $\sigma(y)$, $[\cdot,\cdot]$ is the Lie bracket, and Lie$(\cdot)$ is the Lie algebra generated by a set (cf. \cite{Hormander_Hairer} or Section 2.3.2 of \cite{Nualart}). 
\end{enumerate}

\begin{definition}
\label{def:domain_original}
    The domain $D(\mathcal L)$ consists of functions $f\in \bm{\mathrm{C}}_0(\mathbb G)$ satisfying:
    \begin{enumerate}[(i)]
        \item $f$ is twice continuously differentiable in the interior of each edge $I_k$ of $\mathbb G$;
        \item The limits $\lim_{h_k\to O_i}L_kf(h_k)$ exist and do not depend on the edge $I_k$;
        \item For interior vertex $O_i$, there are constants $p_k:=\pm\lim_{h\to O_i}A_k(h)Q_k(h)$ such that
        \begin{equation}
        \label{eq2:gluing_condition}
            \sum_{I_k\sim O_i}p_k\lim_{h_k\to O_i}f'(h_k)=0,
        \end{equation}
    \end{enumerate}
    where the sign $+$ is taken if $O_i$ is minimum on $I_k$, and the sign $-$ is taken otherwise.
    The operator $\mathcal L$ on the Reeb graph is defined by 
        \begin{equation}
            \label{def:operatorL}
            \mathcal Lf(h)=L_kf(h)
        \end{equation}
    for $f\in D(\mathcal L)$ and $h$ in the interior of $I_k$, and defined as $\lim_{h\to O_i}\mathcal Lf(h)$ at the vertex $O_i$.
\end{definition}
By the Hille-Yosida theorem (see, for example, Theorem 4.2.2 in \cite{markov_process}), one can check that there exists a unique strong Markov process on $\mathbb G$ with continuous sample paths that has $\mathcal L$ as its generator.
Now we are ready to formulate the main result of this article.
\begin{theorem}
\label{thm:mainresult}
    Let the process $(\bx_t^\e,\bxi_t^\e)$ be defined as in \eqref{eq:theprocess1} and the conditions \hyperlink{H1}{\textit{(H1)-(H5)}} hold.
    Then $h(\bx_{t/\e}^\e)$ converges weakly to the strong Markov process on the Reeb graph $\mathbb G$ that has the generator $(\mathcal L,D(\mathcal L))$ and the initial distribution $h(x_0)$.  
\end{theorem}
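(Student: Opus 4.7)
Since by the rescaling $(X_t^\e,\xi_t^\e)=(\bx_{t/\e}^\e,\bxi_{t/\e}^\e)$ in distribution, it suffices to prove that $h(X_t^\e)$ converges weakly in $\bm{\mathrm{C}}([0,T],\mathbb G)$ to the Markov process on $\mathbb G$ generated by $(\mathcal L,D(\mathcal L))$. I would proceed by the standard martingale-problem route: first establish tightness of $\{h(X_t^\e)\}_{\e>0}$ in $\bm{\mathrm{C}}([0,T],\mathbb G)$, then identify every subsequential limit by verifying the martingale characterization \eqref{eq:martingale_problem} for all $f$ in a core of $D(\mathcal L)$. Uniqueness of the limit then follows from the Hille--Yosida argument noted after Definition~\ref{def:domain_original}. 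Tightness will be a direct consequence of the averaging estimates of Section~\ref{sec:averaging}: away from a shrinking neighborhood of the separatrices, $H(X_t^\e)$ has uniformly bounded drift and diffusion coefficients (namely $B_k,A_k$), while the time spent in the $O(\e^\alpha)$-neighborhood of each separatrix is controlled by the excursion analysis of Section~\ref{sec:exponenitalconvergence}, giving a uniform modulus of continuity on $\mathbb G$.

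For the identification step, I would fix $f\in D(\mathcal L)$ (restricted to a suitable core, e.g., $f$ that is smooth up to each vertex and satisfies the gluing condition \eqref{eq2:gluing_condition}) and decompose the expectation in \eqref{eq:martingale_problem} across the excursions defined by the stopping times $\sigma_n,\tau_n$ in \eqref{eq:stopping_times}, extended in the obvious way to the multi-vertex case by taking $\gamma$ to be the union of all separatrices and $\gamma'=\{x:r(h(x),\{O_i\})=\e^\alpha\}$. On each sub-interval $[\tau_n,\sigma_{n+1}]$, which lies at distance $\gtrsim\e^\alpha$ from the critical points, the averaging principle of Section~\ref{sec:averaging} shows that $f(h(X_t^\e))-f(h(X_{\tau_n}^\e))-\int_{\tau_n}^t L_k f(h(X_s^\e))\,ds$ is, in expectation, of order $o(\sigma_{n+1}-\tau_n)$ uniformly in the starting point. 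Summing in $n$, this part of the contribution converges to the integral term in \eqref{eq:martingale_problem}.

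The delicate contribution comes from the excursions $[\sigma_n,\tau_{n+1}]$ near the separatrices, where $f\circ h$ can change and the drift/diffusion picture degenerates. Here I would exploit the strategy outlined after \eqref{eq:stopping_times}: construct an auxiliary process in a neighborhood of each separatrix that admits a common invariant measure for all small $\e$, and, using the Girsanov theorem, transfer the computation from the original process to the auxiliary one. On the auxiliary process, the expectation of a single excursion started from the invariant measure $\nu$ of the chain $(X_{\sigma_n}^\e,\xi_{\sigma_n}^\e)$ is exactly zero precisely because the constants $p_k=\pm\lim_{h\to O_i}A_k(h)Q_k(h)$ in \eqref{eq2:gluing_condition} are proportional to the asymptotic probability of entering edge $I_k$ from $O_i$ under $\nu$; combined with condition (ii) in Definition~\ref{def:domain_original}, this produces the cancellation. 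The error from replacing the true initial distribution at each $\sigma_n$ by $\nu$ is controlled by the exponential mixing of the Markov chain $(X_{\sigma_n}^\e,\xi_{\sigma_n}^\e)$ established in Section~\ref{sec:exponenitalconvergence}, uniformly in $\e$ and in the starting point. Multiplying the per-excursion error by the $O(|\log\e|)$ number of excursions in $[0,T]$ (a consequence of the hitting-time estimates in Section~\ref{sec:averaging}) gives an overall $o(1)$ bound.

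\emph{Main obstacle.} The heart of the argument is the near-separatrix analysis in the third paragraph: obtaining a per-excursion error that is $o(1/\text{(number of excursions)})$ \emph{uniformly} in the initial condition on $\gamma'\times\mathbb T^m$. This requires (i) sharp enough control on the proximity of the original and auxiliary processes via Girsanov, valid even when the drifts differ on the $\e^\alpha$-scale; (ii) the exponential, $\e$-uniform convergence to $\nu$ of the discrete chain on $\gamma\times\mathbb T^m$, which in turn rests on the local limit theorem and hypoelliptic density estimates invoked under hypothesis~\hyperlink{H5}{(H5)}; and (iii) the explicit identification of the weights $p_k$ through $A_k Q_k$, which is what links the probabilistic mixing picture to the analytic gluing condition. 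All three ingredients are precisely the new technical input of the paper, and together they are what allow the proof to close.
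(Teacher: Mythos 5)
Your plan follows the paper's own route: reduction to the martingale problem via Lemma~\ref{lem:martingale_problem} plus tightness, decomposition of $[0,T]$ into excursions defined by the stopping times \eqref{eq:stopping_times}, cancellation of the per-excursion expectation under the invariant measure of the chain on the separatrix by passing to an auxiliary process with a common invariant measure and a Girsanov comparison, and exponential ($\e$-uniform) mixing of that chain proved through a local limit theorem and hypoelliptic density estimates under \textit{(H5)}. Two points in your outline, however, are gaps rather than omissions of routine detail.

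First, you skip the localization/compactification step, and your cancellation argument does not close without it. The paper does not run the common-invariant-measure argument on $\mathbb R^2$, nor ``in a neighborhood of each separatrix'': it covers the plane by domains each containing a single separatrix and embeds such a domain into a compact surface $M$, extending the coefficients so that the averaged system is Hamiltonian on $M$ and so that the auxiliary drift $\tilde c$ solving \eqref{eq:added term} makes $\lambda\times\mu$ invariant for every $\e$. Positive recurrence on the compact $M$ is what allows Khas'minskii's theorem to convert $\E_{\nu^\e}\int_0^{\sigma_1}\mathcal L_{\tilde c}f(h(\tx_t^\e))\,dt$ into the space integral $\int_M\mathcal L_{\tilde c}f(h(x))\,d\lambda(x)$, which then vanishes by integration by parts on each edge precisely because $p_k=\pm\lim_{h\to O_i}A_k(h)Q_k(h)$ (Lemma~\ref{lem:zeroexpectation}); the statement that the $p_k$ are ``proportional to entry probabilities under $\nu$'' is only the heuristic, not the mechanism. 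A process restricted to a neighborhood of a separatrix has no canonical invariant measure, and on all of $\mathbb R^2$ the process is not positive recurrent and $\lambda$ is infinite, so as written your step (the exact zero expectation per excursion) is not justified. The localization also reduces the problem to one saddle point, which the excursion construction of Section~\ref{sec:exponenitalconvergence} uses.

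Second, your error budget is miscalibrated: the expected number of excursions before time $T$ is $O(\e^{-\alpha})$ (Corollary~\ref{cor:num_excursion}), not $O(|\log\e|)$. Your guiding principle --- per-excursion error $o(1/\text{number of excursions})$ --- is the right one, but it forces per-excursion estimates of size $o(\e^{\alpha})$, which is exactly what Lemma~\ref{lem:eps_avg_prin_to_sp} and the bound \eqref{eq:auxiliary_to_main} provide; estimates designed only to beat $1/|\log\e|$ would not suffice, and a proof calibrated to that target would fail at the summation step.
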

\begin{remark}
    The last condition in \hyperlink{H2}{\textit{(H2)}} can be relaxed without much extra effort since the limiting process defined by $\mathcal L$ cannot reach infinity in finite time.
    In addition, as seen from the proofs in Section~\ref{sec:exponenitalconvergence} and Remark~\ref{rmk:positive_any_subset_separatrix}, assumption \hyperlink{H5}{\textit{(H5)}} can be relaxed so that it holds for at least one point  on each separatrix. Moreover, if the number of Lie brackets needed to generate $\mathbb R^{2+m}$ in the parabolic H\"ormander condition is assumed to be given, then we can relax the assumptions on smoothness of the coefficients.
\end{remark}
To prove the theorem, we need a result on weak convergence of processes, that is Lemma 4.1 in \cite{FreidlinKoralov2021} adapted to our case (see also the original statement in \cite{FreidlinWentzell1994}):
\begin{lemma}
\label{lem:martingale_problem}
Let $\Psi$ be a dense linear subspace of $\bm{\mathrm{C}}_0(\mathbb G)$ and $\mathcal D_{\mathcal L}$ be a linear subspace of $D(\mathcal L)$, and suppose that $\Psi$ and $\mathcal D_{\mathcal L}$ have the following properties:
\begin{enumerate}[(1)]
    \item There is a $\lambda>0$ such that for {each $F\in\Psi$} the equation $\lambda f-\mathcal Lf=F$ has a solution $f\in\mathcal D_{\mathcal L}$;
    \item For each $T>0$, each $f\in\mathcal D_{\mathcal L}$, and each compact $K\subset\mathbb G$, 
    \begin{equation}
        \label{eq:mgprob}
        \E_{(x,y)}[f(h(X_{T}^\e))-f(h(x))-\int_0^T\mathcal Lf(h(X_{t}^\e))dt]\to 0,
    \end{equation}
\end{enumerate}
uniformly in $x\in h^{-1}(K)$ and $y\in\mathbb T^m$. 

Suppose that{, for each starting point $(x,y)$ of $(X_t^\e,\xi_t^\e)$,} the family of measures on $\bm{\mathrm{C}}([0,\infty), \mathbb G)$ induced by the processes $h(X_t^\e)$, $\e>0$, is tight. Then{, for each starting point $(x,y)$ of $(X_t^\e,\xi_t^\e)$,} $h(X_t^\e)$ converges weakly to the strong Markov process on the Reeb graph $\mathbb G$ that has the generator $(\mathcal L,D(\mathcal L))$ and the initial distribution $h(x)$.  
\end{lemma}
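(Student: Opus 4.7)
The plan is to follow the classical martingale problem route: use tightness to extract subsequential limits, show that any such limit solves the martingale problem associated with $(\mathcal L,\mathcal D_{\mathcal L})$, and then use the resolvent equation $\lambda f-\mathcal L f=F$ supplied by hypothesis~(1) to identify the limit uniquely with the Markov process produced by Hille--Yosida.

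Fixing $(x,y)\in\mathbb R^2\times\mathbb T^m$, Prokhorov's theorem applied to the tight family $\{h(X^\e)\}_{\e>0}$ yields a subsequence $\e_n\downarrow 0$ along which $h(X^{\e_n})$ converges weakly in $\bm{\mathrm{C}}([0,\infty),\mathbb G)$ to a process $Z$ with $Z_0=h(x)$. I would then show that, for every $f\in\mathcal D_{\mathcal L}$, every $0\le s<t$, and every bounded continuous functional $G$ of the segment $(h(X_u^\e))_{0\le u\le s}$,
\[
\E_{(x,y)}\!\left[G\cdot\!\left(f(h(X_t^\e))-f(h(X_s^\e))-\int_s^t\mathcal L f(h(X_u^\e))\,du\right)\right]\to 0.
\]
This follows by conditioning on $\mathcal F_s$ and using the Markov property of $(X^\e,\xi^\e)$: the inner conditional expectation equals
\[
\E_{(x',y')}\!\left[f(h(X_{t-s}^\e))-f(h(x'))-\int_0^{t-s}\mathcal L f(h(X_u^\e))\,du\right]\Big|_{(x',y')=(X_s^\e,\xi_s^\e)},
\]
which by hypothesis~(2) tends to $0$ uniformly in $(x',y')$ with $x'$ in a compact set; escape to infinity is negligible thanks to the coercivity of $H$ in \hyperlink{H2}{\textit{(H2)}} combined with the tightness assumption. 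Passing to the limit along $\e_n$ identifies $f(Z_t)-f(Z_0)-\int_0^t\mathcal L f(Z_u)\,du$ as a martingale for each $f\in\mathcal D_{\mathcal L}$.

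The remaining step is uniqueness in law of $Z$. By the Hille--Yosida construction referenced just before the statement, there exists a strong Markov process $\bar Z$ on $\mathbb G$ with generator $(\mathcal L,D(\mathcal L))$ and resolvent $R_\lambda$. Given $F\in\Psi$, hypothesis~(1) provides $f\in\mathcal D_{\mathcal L}$ solving $\lambda f-\mathcal L f=F$; applying Dynkin's identity to the martingale for $Z$ and letting $t\to\infty$ (with $e^{-\lambda t}f(Z_t)\to 0$ in $L^1$ since $f\in\bm{\mathrm{C}}_0(\mathbb G)$) yields
\[
\E_{h(x)}\int_0^\infty e^{-\lambda t}F(Z_t)\,dt \;=\; f(h(x)) \;=\; R_\lambda F(h(x)).
\]
Density of $\Psi$ in $\bm{\mathrm{C}}_0(\mathbb G)$ propagates this equality to every bounded continuous $F$, determining the one-dimensional marginals of $Z$; repeating the same argument after time-shifts pins down the finite-dimensional distributions, so $Z\stackrel{d}{=}\bar Z$ and the whole family $h(X^\e)$ converges weakly to $\bar Z$.

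The principal obstacle is the passage from the unconditional statement \eqref{eq:mgprob} to the conditional version above: one must exploit the Markov property of the pair $(X^\e,\xi^\e)$ after conditioning on $\mathcal F_s$, and this is clean only because the convergence in~(2) is uniform in the initial fast coordinate $y\in\mathbb T^m$, allowing the unobserved $\xi_s^\e$ to be effectively averaged out. A secondary technicality is that $\int_s^t\mathcal L f(h(X_u^\e))\,du$ is not a continuous functional on $\bm{\mathrm{C}}([0,t],\mathbb G)$ at paths that dwell at vertices; continuity of $\mathcal L f$ on $\mathbb G$ (imposed in Definition~\ref{def:domain_original}) together with the fact that the limit process spends Lebesgue-null time at any given vertex lets the weak-convergence limit be taken inside the time integral.
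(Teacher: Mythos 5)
The paper itself does not prove Lemma~\ref{lem:martingale_problem}; it imports it as Lemma~4.1 of \cite{FreidlinKoralov2021} (originally from \cite{FreidlinWentzell1994}), so your attempt can only be compared with that classical argument. Your overall scheme is the same one: tightness plus Prokhorov to extract subsequential limits, the Markov property of the pair $(X_t^\e,\xi_t^\e)$ together with the uniformity of \eqref{eq:mgprob} in the starting point (and compactness of $h^{-1}(K)$, guaranteed by \textit{(H2)}) to pass the asymptotic martingale relation to any limit point $Z$, and the resolvent equation of hypothesis~(1) to identify $Z$. Incidentally, your ``secondary technicality'' is not one: for $f\in\mathcal D_{\mathcal L}\subset D(\mathcal L)$, $\mathcal Lf$ is continuous on $\mathbb G$ by Definition~\ref{def:domain_original}, so $\eta\mapsto\int_s^t\mathcal Lf(\eta_u)\,du$ is already a bounded continuous functional on path space; no claim about the time spent at vertices is needed.

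The identification step, however, has a genuine gap as written. What your Dynkin/killing argument yields is the identity $\E_{h(x)}\int_0^\infty e^{-\lambda t}F(Z_t)\,dt=f(h(x))=R_\lambda F(h(x))$ for the \emph{single} $\lambda$ supplied by hypothesis~(1), for $F\in\Psi$ and hence, by density, for all $F\in\bm{\mathrm C}_0(\mathbb G)$. This is the Laplace transform of $t\mapsto\E F(Z_t)$ evaluated at one point, and a one-point evaluation does not determine a function, so the conclusion ``determining the one-dimensional marginals of $Z$'' does not follow; nor can the time-shifted repetition pin down finite-dimensional distributions before the marginals are known. The standard repair, which must be made explicit, uses dissipativity of $\mathcal L$ restricted to $\mathcal D_{\mathcal L}$ (it satisfies the positive maximum principle on $\mathbb G$; equivalently, $(\mathcal L,D(\mathcal L))$ generates a contraction semigroup): together with the density of $(\lambda-\mathcal L)(\mathcal D_{\mathcal L})\supset\Psi$ this shows $\mathcal D_{\mathcal L}$ is a core, so any solution of the martingale problem for $\mathcal L|_{\mathcal D_{\mathcal L}}$ also solves it for the closed generator, the resolvent identity then holds for \emph{every} $\lambda>0$, and uniqueness of Laplace transforms (with continuity of $t\mapsto\E F(Z_t)$) recovers the marginals; the conditional, time-shifted version combined with the uniqueness machinery for martingale problems in Chapter~4 of \cite{markov_process} then gives the finite-dimensional distributions. (Alternatively, one can iterate your conditional identity at the fixed $\lambda$ with nested resolvents to recover all $\lambda$-derivatives of the Laplace transform and conclude by analyticity, but some argument of this type has to be supplied; mere density of $\Psi$ is not enough.)
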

Here we choose $\Psi$ to be all the functions in $\bm{\mathrm{C}}_0(\mathbb G)$ that are twice continuously differentiable in the interior of each edge; $\mathcal D_{\mathcal L}$ to be all the functions in $D(\mathcal L)$ that are four times continuously differentiable in the interior of each edge. It is easy to check condition $(1)$ holds in Lemma~\ref{lem:martingale_problem}, and the tightness of distributions of $h(X_t^\e)$, $\e>0$, is verified in Appendix~\ref{sec:tightness}. Then the main ingredient of the proof is to verify \eqref{eq:mgprob} in condition (2) of Lemma~\ref{lem:martingale_problem}.

\section{Preliminaries}
\label{sec:preliminaries}
In this section, we explain some technical difficulties and our approach to the proof.

\subsection{Localization}
Considering \thmref{thm:mainresult} for $\bx_t^\e$ in the state space $\mathbb R^2$ causes technical difficulties due to the presence of multiple separatrices of the Hamiltonian and to the fact that the process $\bx_t^\e$ is not positive recurrent. 
However, such difficulties can be circumvented by considering the process $\bx_t^\e$ locally. 
Namely, let us cover the plane $\mathbb R^2$ by finitely many bounded domains, each containing one of the separatrices and bounded by up to three connected components of level sets of $H$, and one unbounded domain not containing any critical points. 
For example, as shown in Figure~\ref{fig:local_graph}, we have different parts of the Reeb graph $\mathbb G$ that correspond to the domains in $\mathbb R^2$.
Every point of $\mathbb R^2$ can be assumed to be contained in the interior either one or two domains. 
Since it takes positive time to travel from the boundary of one domain to the boundary of another, it suffices to prove the result up to time of exit from one domain. 
To be more precise, let $\{V_k:1\leq k \leq K\}$ be the open cover. 
Define $\eta_0=\inf\{t\geq0:X_t^\e\in\bigcup_{1\leq k \leq K}\partial V_k\}$ and, for $k$ such that $X_{\eta_{n-1}}^\e\in V_k$, define $\eta_n=\inf\{t>\eta_{n-1}: X_t^\e\not\in V_k\}$, $n\geq 1$. 
In order to prove \eqref{eq:mgprob}, it suffices to prove instead, uniformly in $x$ in any compact set in $\mathbb R^2$ and in $y\in\mathbb T^m$, that
\begin{equation}
\label{eq:martingale_problem_stopped}
    \E_{(x,y)}[f(h(X_{T\wedge\eta_1}^\e))-f(h(x))-\int_0^{T\wedge{\eta_1}}\mathcal Lf(h(X_{t}^\e))dt]\to 0,~~{\rm as}~\varepsilon \downarrow 0,
\end{equation}
since it also implies that $\Prob(\eta_n<T)\to0$ as $n\to\infty$, uniformly in all $\e$ sufficiently small.
In the unbounded domain without critical points, \eqref{eq:martingale_problem_stopped} can be obtained using the result in bounded domain together with the tightness of $h(X_t^\e)$.
It remains to consider the bounded domains.
Let $V$ be one of the bounded domains. As explained below, the process $X_t^\e$ in $V$ can be extended beyond the time when it reaches the boundary by embedding $V$ into a compact manifold $M$ with an area form and a Hamiltonian such that there are no other separatrices.
\begin{figure}[!ht] 
    \centering
    \includegraphics[width=0.4\textwidth]{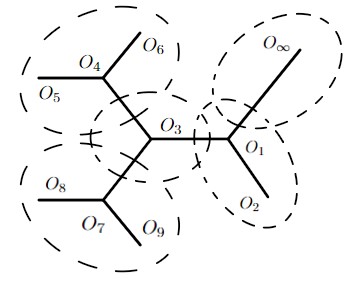}
    \caption{Domains projected on the graph.}
    \label{fig:local_graph}
    \end{figure}

Consider the case where $V$ is not simply connected - for example, $V$ is the domain that contains $O_3$ in Figure~\ref{fig:local_graph}. There are three connected components of $\mathbb R^2\setminus V$ as shown in Figure~\ref{fig:three_components} (other situations can be treated similarly). Then we can modify the Hamiltonian and the vector field in $C_1$ and $C_2$ in such  a way that assumptions \hyperlink{H2}{\textit{(H2)}-\textit{(H4)}} hold locally, and there is only one extremum point of $H$ in $C_1$ and one in $C_2$ (this modification is not needed if $V$ is simply connected). 
The unbounded domain $C_3$ outside $V$ can be replaced by a compact surface $S$ so that the resulting state space of $X_t^\e$ is, topologically, a sphere $M = V \bigcup C_1 \bigcup C_2 \bigcup S$. 
Then, the vector field on the surface can be chosen as a smooth extension from $V$ so that the averaged process is a Hamiltonian system on $M$ with respect to an area form $\omega$, which is simply $dx_1\wedge dx_2$ on $V$, $C_1$, and $C_2$. 
Moreover, there exists a chart $(S,\Phi)$ such that the corresponding vector field $b(x,y)$ on $D:=\Phi(S)$ satisfies that $\{b(x,y)-\bar b(x):y\in\mathbb T^m\}$ spans $\mathbb R^2$ for each $x\in D$ and the averaged process is a Hamiltonian system with respect to $dx_1\wedge dx_2$ on $D$.
For example, as shown in Figure~\ref{fig:sphere}, we can modify the vector field on the plane outside $V$ so that there are two disks with the same center $D_1\subset D_2$, and the averaged process is a Hamiltonian system in $D_2$, in particular, rotation between $\partial D_1$ and $\partial D_2$. Then $D_2$ is smoothly glued to a hemisphere and the resulting manifold is $M$, and the vector field can be extended to the surface in such a way that the averaged process is a rotation with certain constant angular velocity on the level sets. 
\begin{figure}[!htbp]
\centering
\begin{subfigure}{.5\textwidth}
  \centering
  \includegraphics[width=.9\linewidth]{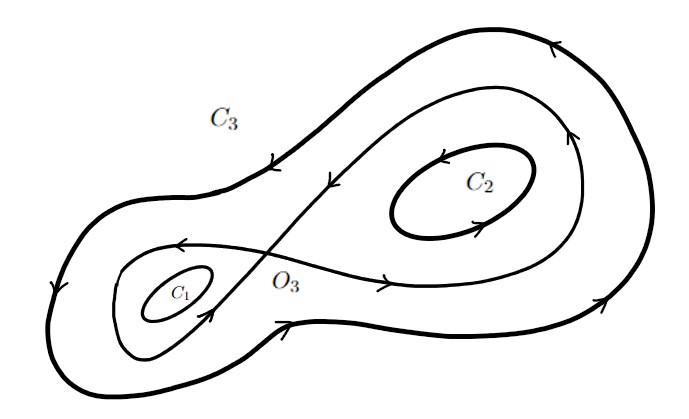}
  \caption{Three connected components of $\mathbb R^2\setminus V$.}
  \label{fig:three_components}
\end{subfigure}%
\begin{subfigure}{.5\textwidth}
  \centering
  \includegraphics[width=.9\linewidth]{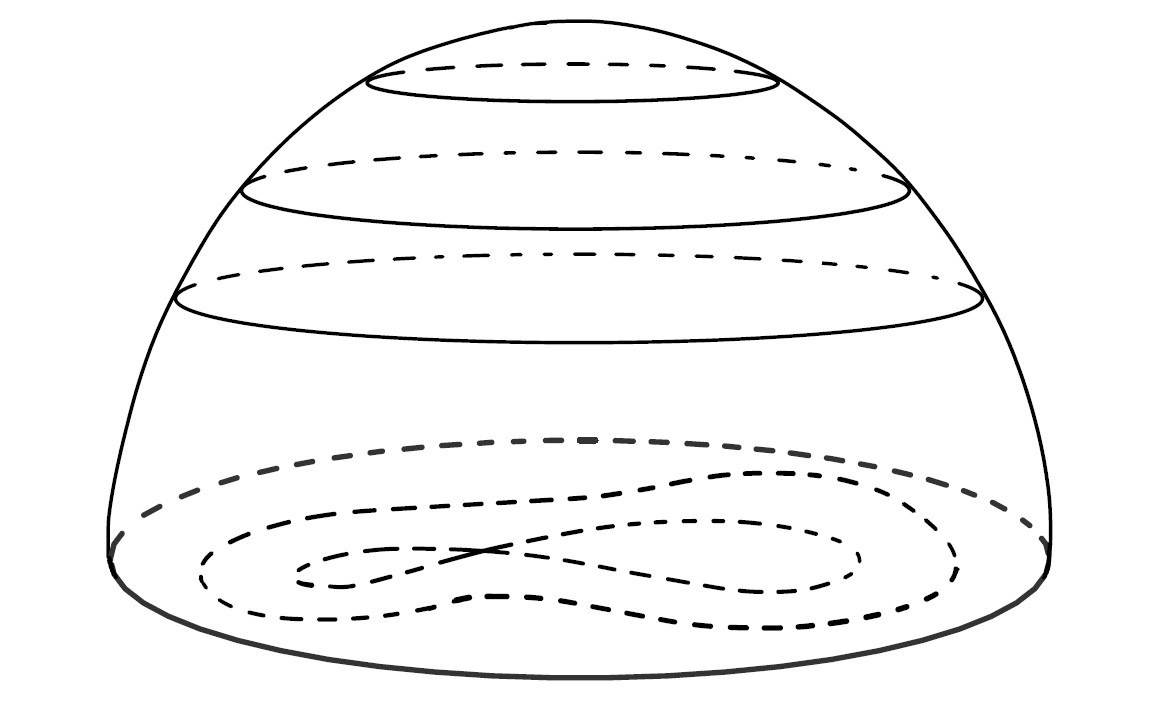}
  \caption{Hamiltonian system on manifold $M$.}
  \label{fig:sphere}
\end{subfigure}
\caption{Localization.}
\label{fig:localization}
\end{figure}

It is clear that, when restricted to $V\times\mathbb T^m$, the resulting system defined on $M\times\mathbb T^m$ has exactly the same behavior as the original process on $\mathbb R^2\times\mathbb T^m$. Therefore, it suffices to prove \eqref{eq:martingale_problem_stopped} for the new process on $M\times\mathbb T^m$.
Let us formally restate the corresponding assumptions and formulate the result on $M\times\mathbb T^m$. 
In the remainder of the paper, all the definitions (e.g. the quantities defined in Section~\ref{sec:mainresult}) and statements on $M$ are understood by locally choosing coordinates so that $\omega=dx_1\wedge dx_2$. In particular, on $S$, they are understood in the coordinate $\Phi$, while on the "flat" part that contains $V$, they are understood in the usual way.
Then the assumptions on the coefficients on $M\times\mathbb T^m$ are analogous to those introduced earlier, so we only mention the differences:
\begin{enumerate}
    \item[\hypertarget{H2'}{\textit{(H2$'$)}}]$H(x)$ is a $C^\infty$ function from $M$ to $\mathbb R$  that has three extremum points and one saddle point.
    \item[\hypertarget{H3'}{\textit{(H3$'$)}}] $b(x,y)$ is a $C^\infty$ function from $M\times\mathbb T^m$ to $TM$ such that $\bar b(x)=\nabla^\perp H(x)$. 
    \item[\hypertarget{H4'}{\textit{(H4$'$)}}] $\{b(x,y)-\bar b(x):y\in\mathbb T^m\}$ spans $TM$ for all $x\in M$.
\end{enumerate}

From this point on, we denote the process on $M\times\mathbb T^m$ as $(\bx_t^\e,\bxi_t^\e)$, and $(X_t^\e,\xi_t^\e)$ on the time scale $O(\e^{-1})$ (defined by \eqref{eq:theprocess1} and \eqref{eq:rescaled_process1} with $\mathbb R^2$ replaced by $M$), and assume that the conditions \hyperlink{H2'}{\textit{(H2$'$)-(H4$'$)}} replacing \hyperlink{H2}{\textit{(H2)-(H4)}} hold. Then \eqref{eq:mgprob} follows from the next result (see \eqref{eq:martingale_problem_stopped}).
\begin{proposition}
\label{prop:main_result}
For each $f\in \mathcal D_{\mathcal L}$ and each $T>0$,
    \begin{equation}
    \label{eq:mg_problem_M}
        \E_{(x,y)}[f(h(X_{\eta}^\e))-f(h(x))-\int_0^{\eta}\mathcal Lf(h(X_t^\e))dt]\to0,
    \end{equation}
    as $\e\to0$, uniformly in $x\in M$, $y\in\mathbb T^m$, and $\eta\leq T$ that is a stopping time w.r.t. $\mathcal F_t^{X^\e_\cdot}$.
\end{proposition}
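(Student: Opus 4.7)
The plan is to verify \eqref{eq:mg_problem_M} by decomposing the path of $(X_t^\e,\xi_t^\e)$ into excursions between successive visits to the (now unique) separatrix $\gamma$ passing through the saddle $O$, using the stopping times $\sigma_n,\tau_n$ defined in \eqref{eq:stopping_times} with $\gamma'=\{|H(x)-H(O)|=\e^\alpha\}$ for a fixed $\alpha\in(0,1/2)$. Write
\[
M_t^\e:=f(h(X_t^\e))-f(h(x))-\int_0^t\mathcal L f(h(X_s^\e))\,ds,
\]
and telescope along $\sigma_n\wedge\eta$. By optional sampling one may take $\eta=T$, so the proof reduces to estimating the contributions from (i) the initial segment $[0,\sigma_0\wedge T]$, (ii) the interior pieces $[\tau_n,\sigma_n]$ away from the $\e^\alpha$-strip, and (iii) the full excursions $[\sigma_{n-1},\sigma_n]$ which include a brief visit to the saddle neighborhood. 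For segments of type (i) and (ii) the process lives in the interior of a single edge $I_k$, so the edge-wise averaging principle to be proved in Section~\ref{sec:averaging} yields a vanishing contribution to $\E M_\cdot^\e$.

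The heart of the argument is (iii). Since $f(h(X_{\sigma_n}^\e))=f(H(O))$ for every $n\geq 0$, the boundary terms cancel in the telescoping sum and the per-excursion expectation reduces to
\[
\E_{(x,y)}\!\int_{\sigma_{n-1}}^{\sigma_n}\mathcal L f(h(X_t^\e))\,dt.
\]
Let $\nu^\e$ denote the invariant probability measure on $\gamma\times\mathbb T^m$ of the Markov chain $(X_{\sigma_n}^\e,\xi_{\sigma_n}^\e)$. The proof splits into two independent assertions, as foreshadowed in the introduction. \textbf{(a)} When the excursion is started from $\nu^\e$, the quantity $\E_{\nu^\e}\!\int_0^{\sigma_1}\mathcal L f(h(X_t^\e))\,dt$ tends to $0$. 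Because the joint process $(X_t^\e,\xi_t^\e)$ has no common invariant measure across $\e$, I would introduce the family of auxiliary processes $(\h X_t^\e,\h\xi_t^\e)$ constructed in Section~\ref{sec:preliminaries}, which coincide with the original process outside the saddle neighborhood yet admit a common invariant measure $\mu$ independent of $\e$. The Girsanov density for the change of measure between the original and auxiliary processes is $1+o(1)$, because the Lebesgue time spent inside the $\e^\alpha$-strip of the saddle is itself $o(1)$; this transfers (a) to the auxiliary process. On the auxiliary side, Khasminskii's identity rewrites the time-integral against $\nu^\e$ as an integral of $\mathcal L f\circ h$ against $\mu$ (weighted by the mean excursion duration), and the contribution of each edge can be evaluated explicitly via the coefficients $A_k(h),B_k(h)$, $Q_k(h)$ of Section~\ref{sec:mainresult}. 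After integrating by parts in $h$ along each edge, the bulk term vanishes because $\mathcal L f=L_kf$ in the edge interior, leaving exactly boundary contributions of the form $p_k\lim_{h_k\to O}f'(h_k)$ with $p_k=\pm\lim_{h\to O}A_k(h)Q_k(h)$; these are killed by the gluing condition \eqref{eq2:gluing_condition} built into $\mathcal D_{\mathcal L}$.

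\textbf{(b)} The laws of $(X_{\sigma_n}^\e,\xi_{\sigma_n}^\e)$ converge to $\nu^\e$ geometrically fast, uniformly in $\e$ and in the starting point. This is the content of Section~\ref{sec:exponenitalconvergence} and rests on a Doeblin-type minorization of the transition kernel of the discrete chain, which in turn follows from quantitative lower bounds on the joint density of $(X_t^\e,\xi_t^\e)$ on a time scale bounded away from zero; those density bounds come from the parabolic H\"ormander condition \hyperlink{H5}{\textit{(H5)}} combined with a local limit theorem for the slow component.

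Combining (a) and (b), the error in replacing the law of $(X_{\sigma_{n-1}}^\e,\xi_{\sigma_{n-1}}^\e)$ by $\nu^\e$ in each excursion contribution is bounded by their total-variation distance, which decays geometrically in $n$; together with an a priori upper bound on $\E N_T$, where $N_T$ is the (random) number of completed excursions before time $T$ (uniform in small $\e$, obtained from a lower bound on the mean excursion duration supplied by the edge-averaging step), the sum over $n$ is made arbitrarily small. I expect the main obstacle to be step (b): proving geometric mixing with constants that do not degenerate as $\e\to 0$ requires controlling hypoelliptic densities of $(X_t^\e,\xi_t^\e)$ precisely on the time scale where the slow variable travels a macroscopic distance, and carefully tracking how the Lie-bracket span in \hyperlink{H5}{\textit{(H5)}} interacts with the $\e^{-1}$ and $\e^{-2}$ scalings of the drift and noise near the saddle.
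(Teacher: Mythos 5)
Your overall architecture matches the paper's: excursion decomposition via the stopping times \eqref{eq:stopping_times}, exact cancellation per excursion under the invariant measure via an auxiliary process, Khasminskii's identity and the gluing condition, and uniform geometric mixing of the chain on the separatrix (your sketch of step (b) is essentially Section~\ref{sec:exponenitalconvergence}). However, there are genuine gaps in how you compare the original and auxiliary processes and in the quantitative bookkeeping.

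First, the auxiliary process cannot simultaneously ``coincide with the original process outside the saddle neighborhood'' and have $\lambda\times\mu$ as invariant measure for all $\e$: the corrected drift $\tilde c$ must solve \eqref{eq:added term}, i.e. $\mathrm{div}_y(\tilde c\,p)=-\mathrm{div}_x(b\,p)$, globally on $M\times\mathbb T^m$, and the right-hand side does not vanish away from the separatrix. With the actual (global) correction, your Girsanov estimate fails: on the $X^\e$ time scale the drift discrepancy is $\e^{-1}\tilde c$ against noise $\e^{-1}\sigma$, so the Girsanov exponent is $\int_0^T\sigma^{-1}\tilde c\cdot dW-\frac12\int_0^T|\sigma^{-1}\tilde c|^2dt$, which is of order $1$ on $[0,T]$ uniformly in $\e$ --- it is not $1+o(1)$, and the ``small time spent in the $\e^\alpha$-strip'' plays no role here. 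The paper instead applies Girsanov only on short macroscopic windows of length $\rho$ (Lemma~\ref{lem:aux_to_ori}), where the density is within $\delta'$ of $1$ with probability at least $1-\rho^2$, and then sums the $[T/\rho]$ windows; the smallness comes from choosing $\rho$ and $\delta'$, not from $\e\to0$. Second, your step (a) is stated too weakly and your count of excursions is wrong: the number of excursions before time $T$ is of order $\e^{-\alpha}$ (Corollary~\ref{cor:num_excursion}), not uniformly bounded in $\e$, so a per-excursion error that merely ``tends to $0$'' cannot be summed; the paper needs the expectation to vanish \emph{exactly} for the auxiliary process (Lemma~\ref{lem:zeroexpectation}), which is precisely why the common invariant measure is indispensable, and the remaining errors per excursion must be $o(\e^\alpha)$ (this is why Lemma~\ref{lem:eps_avg_prin_to_sp} and Proposition~\ref{prop:exit_time_from_separatrix} carry the explicit $\e^\alpha$ rates). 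Finally, the opening reduction ``by optional sampling one may take $\eta=T$'' is unjustified: the expression $f(h(X^\e_t))-f(h(x))-\int_0^t\mathcal Lf(h(X^\e_s))ds$ is not a martingale (that is what is being proved, asymptotically), and the uniformity over stopping times $\eta\le T$ must be established directly, as the paper does by carrying general $\eta$ and $\sigma'$ through all the intermediate lemmas.
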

\subsection{Auxiliary process}
It turns out that similar results hold for a more general process with a slightly perturbed fast component:
\begin{equation}
\label{eq:auxiliary}
\begin{aligned}
    d\tbx_t^\e=&\ b(\tbx_t^\e,\tbxi^\e_t)dt,\\
    d\tbxi^\e_t=&\ \frac{1}{\e}v(\tbxi^\e_t)dt+\frac{1}{\sqrt\e}\sigma(\tbxi^\e_t)dW_t+c(\tbx_t^\e,\tbxi^\e_t)dt,
\end{aligned}
\end{equation}
where $c(x,y)$ is infinitely differentiable. 
Namely, $h(\tbx_{t/\e}^\e)$ converges weakly to the Markov process defined by the operator $(\mathcal L_c, D(\mathcal L_c))$ on the Reeb graph. 
Here, the subscript $c$ indicates that $\mathcal L_c$ depends on the choice of $c(x,y)$. If $c(x,y)=0$, then it is clear that $(\tbx_t^\e,\tbxi_t^\e)=(\bx_t^\e,\bxi_t^\e)$, and thus $\mathcal L=\mathcal L_c$. 
However, if $c(x,y)\not=0$, then we have an additional drift term in \eqref{eq:auxiliary}, and thus we need an additional drift term in the generator of the limiting process. 
While a precise definition of $(\mathcal L_c, D(\mathcal L_c))$ is deferred to later sections, we observe that the operators replacing $L_k$ depend on $c(x,y)$, and the domain $D(\mathcal L_c)$ as well as the linear subspace, denoted by $\mathcal D_{\mathcal L_c}$, chosen in Lemma~\ref{lem:martingale_problem} also vary for different $c(x,y)$. 
Therefore, in order to formulate general results, we consider $\mathcal D$, the set of  continuous functions on $\mathbb G$ that are four-times continuously differentiable inside each edge and satisfy conditions (i) and (iii) in Definition~\ref{def:domain_original}, as well as a weaker form of condition (ii), namely, the limits $\lim_{h_k\to O_i} L_k f(h_k)$ exist but are not necessarily independent of the edge $I_k$. 
Note that $\mathcal D$ contains $\mathcal D_{\mathcal L_c}$ for all choices of $c(x,y)$. Define $\mathcal L_c$ on $\mathcal D$ by applying the differential operator ($L_k$ plus an additional drift corresponding to $c(x,y)$) on each edge separately, with the result not being necessarily continuous at the interior vertices.

As mentioned before, we need to construct a family of auxiliary processes that, on the one hand, have a common invariant measure for all $\e>0$ and, on the other hand, are close to the processes of interest. 
The auxiliary process on $M$ can in fact be obtained by choosing a special $c(x,y)$ in \eqref{eq:auxiliary}.
We denote this particular choice of $c(x,y)$ as $\Tilde c(x,y)$. 
Now we find $\Tilde c(x,y)$ such that $\lambda\times\mu$ is the invariant measure for the process with every $\e$, where $\lambda$ is the area measure w.r.t. $\omega$ and $\mu$ is the invariant measure for $\bxi_t^\e$ in $\mathbb T^m$. 
Let $\Tilde L^\e$ be the generator of the process $(\tbx_t^\e,\tbxi_t^\e)$:
\[
\T L^\e f(x,y)=b(x,y)\cdot\nabla_x f(x,y)+\Tilde c(x,y)\cdot\nabla_y f(x,y)+\frac{1}{\e}L f(x,y).
\]
Hence, $\lambda\times\mu$ is the invariant measure if ${\T L^{\e*}}p(y)=0$, where ${\T L^{\e*}}$ is the adjoint operator of $\T L^\e$ and $p$ is the density of $\mu$, i.e.
\begin{equation}
\label{eq:adjointoperator}
\begin{aligned}
    0={\T L^{\e*}}p(y)=&-\mathrm{div}_x (b(x,y)p(y))-\mathrm{div}_y(\Tilde c(x,y)p(y))+\frac{1}{\e}L^{*}p(y),
\end{aligned}
\end{equation}
where $L^{*}$ is the adjoint operator of $L$. Since $\mu$ is the invariant measure for $\bxi^\e_t$, the last term vanishes. Hence \eqref{eq:adjointoperator} reduces to 
\begin{equation}
\label{eq:added term}
    \mathrm{div}_x b(x,y)p(y)+\mathrm{div}_y(\Tilde c(x,y)p(y))=0.
\end{equation}
To see the existence of the solution, we need the following lemma (cf. Lemma~2.1 in \cite{Freidlin2021}).
\begin{lemma}
\label{lem:solution}
    Let $\Tilde g(x,y)$ be a bounded function on $\mathbb R^2\times\mathbb T^m$ that is infinitely differentiable, and let $\Tilde L$ be the generator of a non-degenerate diffusion on $\mathbb T^m$ with the unique invariant measure $\Tilde\mu$ and suppose that $\int_{\mathbb T^m}\Tilde g(x,y)d\Tilde{\mu}(y)=0$ for each $x\in \mathbb R^2$. Then there exists a unique solution $\Tilde{u}(x,y)$ to the equation
    \begin{equation}
    \label{eq:theequation}
        \Tilde L\Tilde{u}(x,y)=-\Tilde{g}(x,y),~~~\int_{\mathbb T^m}\Tilde u(x,y)d\Tilde{\mu}(y)=0,
    \end{equation}
    and $\Tilde{u}(x,y)$ is also bounded and infinitely differentiable.
    Moreover, if $\tilde g(x,y)$ has uniformly bounded derivatives up to order $K$ in $x$ (or $y)$, the same holds for $\tilde u(x,y)$.
\end{lemma}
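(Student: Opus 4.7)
The plan is to construct the solution probabilistically via the semigroup associated with $\tilde L$, and then invoke elliptic and parabolic regularity to upgrade smoothness. Let $P_t$ denote the Markov semigroup on $\mathbb T^m$ generated by $\tilde L$, so $P_t f(y) = \E_y[f(\tilde \xi_t)]$ where $\tilde \xi_t$ is the diffusion on $\mathbb T^m$ with generator $\tilde L$. Since $\tilde L$ is non-degenerate (uniformly elliptic with smooth coefficients on a compact manifold), the measure $\tilde \mu$ is the unique invariant probability, and the semigroup has a spectral gap: there exist $C,\lambda>0$ such that
\begin{equation*}
\sup_y \left| P_t f(y) - \int_{\mathbb T^m} f\, d\tilde \mu \right| \leq C e^{-\lambda t}\|f\|_\infty
\end{equation*}
for every bounded measurable $f$. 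Treating $x$ as a parameter and applying this to $f=\tilde g(x,\cdot)$, which satisfies $\int \tilde g(x,\cdot)\,d\tilde \mu = 0$ by hypothesis, I define
\begin{equation*}
\tilde u(x,y) := \int_0^\infty P_t \tilde g(x,\cdot)(y)\, dt = \int_0^\infty \E_y[\tilde g(x,\tilde \xi_t)]\, dt,
\end{equation*}
and the exponential decay guarantees absolute convergence and the uniform bound $\|\tilde u\|_\infty \leq C\lambda^{-1}\|\tilde g\|_\infty$.

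Next, I would verify that $\tilde u$ solves the Poisson equation. Using the identity $\frac{d}{dt}P_t g = \tilde L P_t g = P_t \tilde L g$ and the fact that $P_t \tilde g(x,\cdot)(y) \to \int \tilde g(x,\cdot)\,d\tilde \mu = 0$ as $t\to\infty$ (uniformly in $y$), one formally obtains $\tilde L \tilde u(x,\cdot) = \int_0^\infty \tilde L P_t \tilde g(x,\cdot)\, dt = -\tilde g(x,\cdot)$. The interchange of $\tilde L$ and the time integral can be justified by applying $\tilde L$ to the truncated integral $\int_0^T P_t \tilde g\, dt$ and passing to the limit $T\to\infty$ using semigroup theory. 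The normalization $\int \tilde u(x,\cdot)\, d\tilde \mu = 0$ follows from $\int P_t \tilde g(x,\cdot)\, d\tilde \mu = \int \tilde g(x,\cdot)\, d\tilde \mu = 0$ by invariance of $\tilde \mu$. Uniqueness is a consequence of the strong maximum principle for elliptic operators on a compact manifold: any $\tilde L$-harmonic function is constant, and the zero-mean normalization pins down the constant.

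For smoothness and derivative bounds, I would separate $x$-differentiation from $y$-differentiation. Since $x$ enters only through $\tilde g$, differentiating under the integral sign yields
\begin{equation*}
\partial_x^\alpha \tilde u(x,y) = \int_0^\infty P_t (\partial_x^\alpha \tilde g)(x,\cdot)(y)\, dt,
\end{equation*}
and differentiating the zero-mean condition in $x$ shows that $\int \partial_x^\alpha \tilde g(x,\cdot)\, d\tilde \mu = 0$, so the same spectral-gap argument gives $\|\partial_x^\alpha \tilde u\|_\infty \leq C\lambda^{-1}\|\partial_x^\alpha \tilde g\|_\infty$. Smoothness in $y$ follows from the fact that $\tilde L$ is uniformly elliptic with smooth coefficients, so the equation $\tilde L \tilde u(x,\cdot) = -\tilde g(x,\cdot)$ together with elliptic regularity theory (Schauder estimates on the compact manifold $\mathbb T^m$) bootstraps any prescribed $C^k$ regularity of $\tilde g$ in $y$ to $C^{k+2}$ regularity of $\tilde u$ in $y$, with uniform bounds in $x$.

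The main obstacle I expect is organizing the uniform-in-$x$ bounds for mixed derivatives $\partial_x^\alpha \partial_y^\beta \tilde u$, since the $y$-regularity is obtained through the PDE while the $x$-regularity is obtained through the integral representation. The cleanest resolution is to note that $\partial_x^\alpha \tilde u$ itself satisfies $\tilde L (\partial_x^\alpha \tilde u) = -\partial_x^\alpha \tilde g$ (by commuting $\partial_x^\alpha$ with $\tilde L$, which acts only in $y$), so applying Schauder estimates to this equation yields the desired simultaneous control of all mixed derivatives purely in terms of the corresponding derivatives of $\tilde g$.
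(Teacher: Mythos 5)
Your proposal is correct, and it is essentially the standard argument behind this lemma: the paper itself gives no proof, deferring to Lemma~2.1 of \cite{Freidlin2021}, whose proof rests on exactly your construction — $\tilde u(x,y)=\int_0^\infty P_t\tilde g(x,\cdot)(y)\,dt$ via the spectral gap of the non-degenerate diffusion on $\mathbb T^m$, uniqueness from the maximum principle plus the zero-mean normalization, and derivative bounds from differentiating under the integral in $x$ and elliptic (Schauder) regularity in $y$ applied to $\tilde L(\partial_x^\alpha\tilde u)=-\partial_x^\alpha\tilde g$. The only detail worth writing out in a full version is the routine justification of joint smoothness in $(x,y)$, e.g.\ by applying the Schauder estimate to $\tilde L\bigl(\tilde u(x,\cdot)-\tilde u(x',\cdot)\bigr)=-\bigl(\tilde g(x,\cdot)-\tilde g(x',\cdot)\bigr)$ as $x'\to x$.
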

\begin{remark}
\label{rmk:existence_of_c}
The same result also holds for functions on $M\times\mathbb T^m$.
Thus, the existence of the solution to \eqref{eq:added term} immediately follows from Lemma~\ref{lem:solution} applied to $\Tilde g(x,y)=\mathrm{div}_x b(x,y)p(y)$ and $\Tilde L=\Delta_y$, and by taking the gradient of the solution in \eqref{eq:theequation} w.r.t. $y$, and dividing it by $p(y)$.
\end{remark}
As in \eqref{eq:rescaled_process1}, we define $(\tx_t^\e,\txi_t^\e)=(\tbx_{t/\e}^\e,\tbxi_{t/\e}^\e)$ in distribution.
Then, a simple corollary can be obtained by using Lemma~\ref{lem:solution} and then applying Ito's formula to the corresponding solution $\Tilde u(\tbx_t^\e,\tbxi_t^\e)$ and $\Tilde u(\tx_t^\e,\txi_t^\e)$ (cf. Lemma~2.3 in \cite{Freidlin2021}).
\begin{corollary}
    \label{cor:avg}
    Let $\Tilde g$ satisfy the all the conditions in Lemma~\ref{lem:solution} with $\Tilde{L}=L$ and $K=1$, then for fixed $T>0$
    \begin{equation}
        \label{eq:avg_coef}
        \E_{(x,y)}\left|\int_0^{\eta} \Tilde g(\tbx_s^\e,\tbxi_s^\e)ds\right|=O(\sqrt\e),~~~~\E_{(x,y)}\left|\int_0^{\eta} \Tilde g(\tx_s^\e,\txi_s^\e)ds\right|=O(\e),
    \end{equation}
    uniformly in $x\in M$, $y\in\mathbb T^m$, and $\eta$ that is a stopping time bounded by $T$.
\end{corollary}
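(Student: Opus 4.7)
The plan is the standard ``corrector'' approach to averaging: first, apply Lemma~\ref{lem:solution} with $\Tilde L=L$ to the centered function $\Tilde g(x,\cdot)$ to obtain a smooth bounded solution $\Tilde u(x,y)$ of $L\Tilde u(x,y)=-\Tilde g(x,y)$. The hypothesis $K=1$ in Lemma~\ref{lem:solution} gives that $\nabla_x\Tilde u$ is uniformly bounded, and compactness of $\mathbb T^m$ together with smoothness of $\Tilde u$ in $y$ gives uniform boundedness of $\nabla_y\Tilde u$ and of $\Tilde u$ itself on $M\times\mathbb T^m$. Both estimates in \eqref{eq:avg_coef} will then be obtained by applying It\^o's formula to $\Tilde u$ along each of the two processes and using $L\Tilde u=-\Tilde g$ to convert $\int_0^\eta\Tilde g\,ds$ into a boundary increment plus some bounded drift integrals plus a stochastic integral.

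For the first bound, I would apply It\^o's formula to $\Tilde u(\tbx_t^\e,\tbxi_t^\e)$. Because the generator of $\tbxi_t^\e$ contains the $y$-operator $\e^{-1}L$ (together with an $O(1)$ drift $c$ and noise scaled by $\e^{-1/2}$), the resulting identity, after substituting $L\Tilde u=-\Tilde g$ and solving for $\int_0^\eta\Tilde g(\tbx_s^\e,\tbxi_s^\e)\,ds$, expresses this integral as $\e\bigl[\Tilde u(x,y)-\Tilde u(\tbx_\eta^\e,\tbxi_\eta^\e)\bigr]$ plus $\e$ times integrals of $\nabla_x\Tilde u\cdot b$ and $\nabla_y\Tilde u\cdot c$ over $[0,\eta]$, plus $\sqrt\e$ times the stochastic integral of $\nabla_y\Tilde u\cdot\sigma$ against $W$. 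The first three contributions are $O(\e)$ uniformly in $\eta\le T$, $x$, and $y$, and the stochastic term has expected absolute value controlled by It\^o isometry (optional stopping is legitimate since $\eta$ is bounded), hence of order $\sqrt\e$. Adding up gives the first estimate.

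For the second bound, the same argument applied to $\Tilde u(\tx_t^\e,\txi_t^\e)$ needs only a change of scale: the time change by $\e^{-1}$ replaces $\e^{-1}L$ above by $\e^{-2}L$ and the noise coefficient $\e^{-1/2}$ by $\e^{-1}$. Solving for $\int_0^\eta\Tilde g(\tx_s^\e,\txi_s^\e)\,ds$ now multiplies the boundary and drift terms by $\e^2$ and the stochastic integral by $\e$, yielding the $O(\e)$ bound. I do not foresee any substantive obstacle here: the only point requiring care is the uniformity in the stopping time, but this is automatic from $\eta\le T$ (for the boundary term) and from It\^o isometry applied to the stopped martingale (for the stochastic term).
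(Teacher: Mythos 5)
Your proposal is correct and is essentially the paper's own argument: the paper obtains the corollary exactly by applying Lemma~\ref{lem:solution} to get the corrector $\Tilde u$ with $L\Tilde u=-\Tilde g$ and then applying It\^o's formula to $\Tilde u(\tbx_t^\e,\tbxi_t^\e)$ and $\Tilde u(\tx_t^\e,\txi_t^\e)$, bounding the boundary, drift, and stochastic terms as you do. One minor bookkeeping point: for the rescaled process the drifts of $\tx_t^\e$ and $\txi_t^\e$ carry an extra factor $\e^{-1}$, so after solving for the integral the drift contributions are $O(\e)$ rather than $O(\e^2)$ — this does not affect the claimed $O(\e)$ bound, which is still dominated by the stochastic term.
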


\subsection{Diffusion approximation}
{Since $\int_{\mathbb T^m}(b(x,y)-\nabla^\perp H(x))d\mu(y)=0$, by Lemma~\ref{lem:solution}, there exists a function $u$ that is bounded together with its derivatives such that
\begin{equation}
\label{eq:u}
    L u(x,y)=-(b(x,y)-\nabla^\perp H(x)).
\end{equation}}
The equation is understood element-wise. Apply Ito's formula to $u(\tbx_t^\e,\tbxi_t^\e)$:
\begin{equation}
\label{eq:ito_diffusion_approx}
    \begin{aligned}
        u(\tbx_t^\e,\tbxi_t^\e)&=u(x_0,y_0)+\frac{1}{\e}\int_0^tLu(\tbx_s^\e,\tbxi_s^\e)+\frac{1}{\sqrt{\e}}\int_0^t\nabla_y u(\tbx_s^\e,\tbxi_s^\e)\sigma(\tbxi_s^\e)dW_s\\
        &\quad+\int_0^t [\nabla_x u(\tbx_s^\e,\tbxi_s^\e)b(\tbx_s^\e,\tbxi_s^\e)+\nabla_y u(\tbx_s^\e,\tbxi_s^\e)c(\tbx_s^\e,\tbxi_s^\e)]ds.
    \end{aligned}
\end{equation}
Combining \eqref{eq:auxiliary}, \eqref{eq:u}, and \eqref{eq:ito_diffusion_approx}, we obtain
\begin{equation}
\label{eq:slowx}
    \begin{aligned}
        \tbx_t^\e &= x_0+\int_0^t\nabla^\perp H(\tbx_s^\e)ds+\e\int_0^t [\nabla_x u(\tbx_s^\e,\tbxi_s^\e)b(\tbx_s^\e,\tbxi_s^\e)+\nabla_y u(\tbx_s^\e,\tbxi_s^\e)c(\tbx_s^\e,\tbxi_s^\e)]ds\\
        &\quad+\sqrt{\e}\int_0^t \nabla_y u(\tbx_s^\e,\tbxi_s^\e)\sigma(\tbxi_s^\e)dW_s+\e(u(x_0,y_0)-u(\tbx_t^\e,\tbxi_t^\e)).
    \end{aligned}
\end{equation}
Similarly, by applying Ito's formula to $u(\tx_t^\e,\txi_t^\e)$ and repeating the steps above, we have
\begin{equation}
\label{eq:x}
    \begin{aligned}
        \tx_t^\e &= x_0+\frac{1}{\e}\int_0^t\nabla^\perp H(\tx_s^\e)ds+\int_0^t [\nabla_x u(\tx_s^\e,\txi_s^\e)b(\tx_s^\e,\txi_s^\e)+\nabla_y u(\tx_s^\e,\txi_s^\e)c(\tx_s^\e,\txi_s^\e)]ds\\
        &\quad+\int_0^t \nabla_y u(\tx_s^\e,\txi_s^\e)\sigma(\txi_s^\e)dW_s+\e(u(x_0,y_0)-u(\tx_t^\e,\txi_t^\e)).
    \end{aligned}
\end{equation}
This idea of diffusion approximation is frequently used in the remainder of the paper, and the function $u(x,y)$ always refers to the solution to \eqref{eq:u}.
\section{Averaging principle inside one domain}
\label{sec:averaging}
In this section, we consider a general process $(\tx_t^\e,\txi_t^\e)$ defined after Remark~\ref{rmk:existence_of_c}, which is a faster version of the process in \eqref{eq:auxiliary}. 
The process takes values on $M\times\mathbb T^m$.
As a result of localization, $M$ is separated into three domains, each bounded by the separatrix or a part of it.
This section is devoted to the proof of the averaging principle for $(\tx_t^\e,\txi_t^\e)$ on $M\times\mathbb T^m$ up to time when $\tx_t^\e$ exits from one of the three domains. The domain under consideration will be denoted by $U$. 
Therefore, without any ambiguity, the projection $h$ simply reduces to the Hamiltonian $H$.
Let $U(h_1,h_2)$ be the region in $U$ between $\gamma(h_1)$ and $\gamma(h_2)$, $O$ be the saddle point, and $O'$ be the extremum point, and further define stopping times $\tau(h)=\inf\{t:|H(\tx_t^\e)-H(O)|=h\}$ and $\eta(h)=\inf\{t:|H(\tx_t^\e)-H(O')|=h\}$.
Without loss of generality, we assume that $H(O)=0$ and $H(O')=1$.
\subsection{Averaging principle before \texorpdfstring{$\tau(\e^\alpha)\wedge\eta(\delta)$}{}}
\label{sec:Averaging principle before}
We aim to prove the averaging principle between $\gamma(\e^\alpha)$ and $\gamma(1-\delta)$ with constants $0<\alpha<1/4$ and $0<\delta<1$.
Notice that, for technical reasons, we assume that $0<\alpha<1/4$ in this intermediate result and in the proofs that utilize it in this subsection and the next, while we always assume that $0<\alpha<1/2$ elsewhere.
Let us further define another coordinate $\phi$ inside this domain $U$. 
Let $l$ denote the curve that is tangent to $\nabla H$ at each point and connects the saddle point $O$ and the extremum point $O'$, and let $l(h)$ be the intersection of $l$ and $\gamma(h)$.
Let $Q(h)$ denote the time it takes for the averaged process $\bm x_t$ to make one rotation on $\gamma(h)$ and $q(x)$ denote the time it takes for $\bm x_t$ starting from $l(H(x))$ to arrive at $x$. Now we define the coordinate $\phi(x)=q(x)/Q(H(x))$ whose range is $S^1:=\mathbb R\ (\mathrm{mod}\ 1)$. It is easy to see that $\bm x_t$ has constant speed $1/Q(H(\bm{x}_t))$ in $\phi$ coordinate.
Since there is logarithmic delay near the saddle point, the coordinate $\phi$ has exploding derivatives near the separatrix. However, as shown in Appendix~\ref{sec:derivatives}, the order of its derivatives w.r.t. the Euclidean coordinates is under control. 
Let us denote $\Tilde H_t^\e=H(\tx_t^\e)$ and $\Tilde\Phi_t^\e=\phi(\tx_t^\e)$. Along the same lines leading to \eqref{eq:x}, we have the following equations with $u_h=u\cdot\nabla H$, $u_\phi=u\cdot\nabla\phi$, $h_0=H(x_0)$, and $\phi_0=\phi(x_0)$:
\begin{align}
    \Tilde H_t^\e&=h_0+\int_0^t \nabla_y u_h(\tx_s^\e,\txi_s^\e)^{\mathsf T}\sigma(\txi_s^\e)dW_s+\e(u_h(x_0,y_0)-u_h(\tx_t^\e,\txi_t^\e))\nonumber\\
    &\quad+\int_0^t[\nabla_x u_h(\tx_s^\e,\txi_s^\e)\cdot b(\tx_s^\e,\txi_s^\e)+\nabla_y u_h(\tx_s^\e,\txi_s^\e)\cdot c(\tx_s^\e,\txi_s^\e)]ds,\label{eq:H}\\
    \Tilde\Phi_t^\e&=\phi_0+\int_0^t \nabla_y u_\phi(\tx_s^\e,\txi_s^\e)^{\mathsf T}\sigma(\txi_s^\e)dW_s+\frac{1}{\e}\int_0^t\frac{1}{Q(\tilde H_s^\e)}ds\nonumber\\
    &\quad+\int_0^t[\nabla_x u_\phi(\tx_s^\e,\txi_s^\e)\cdot b(\tx_s^\e,\txi_s^\e)+\nabla_y u_\phi(\tx_s^\e,\txi_s^\e)\cdot c(\tx_s^\e,\txi_s^\e)]ds\nonumber\\
    &\quad+\e(u_\phi(x_0,y_0)-u_\phi(\tx_t^\e,\txi_t^\e)),\label{eq:phi}
\end{align}
for $\e^\alpha\leq h_0\leq 1-\delta$ and $t\leq\tau(\e^\alpha)\wedge\eta(\delta)$. {The term multiplied by $1/\e$ in \eqref{eq:H} disappears since $\nabla H\cdot\nabla^\perp H=0$.} Define the following coefficients using the original coordinates for all $x\in M$:
\begin{equation}
\begin{aligned}
    \label{eq:definition_of_operator_AB_x}
    A(x)&=\int_{\mathbb T^m}|\nabla_y u_h(x,y)^{\mathsf T}\sigma(y)|^2d\mu(y),\\
    {B_c}(x)&=\int_{\mathbb T^m}[\nabla_xu_h(x,y)\cdot b(x,y)+\nabla_y u_h(x,y)\cdot c(x,y)]d\mu(y);
\end{aligned}
\end{equation}
and $(h,\phi)$ coordinates for $x=(h,\phi)$, where $\e^\alpha\leq h\leq 1-\delta$ and $\phi\in S^1$:
\begin{equation}
    \begin{aligned}
    \label{eq:definition_of_operator_AB}
    A(h,\phi)&= A(x),~~~~~~\bar A(h)=\int_{S^1} A(h,\phi)d\phi,\\
    {B_c}(h,\phi)&= {B_c}(x),~~~~\bar {B_c}(h)=\int_{S^1} {B_c}(h,\phi)d\phi.
\end{aligned}
\end{equation}
Define ${\mathcal L_c}$ by ${\mathcal L_c}f=\frac{1}{2}\bar Af''+\bar {B_c}f'$ for $f\in\mathcal D$ in the interior of each edge. In particular, when $c(x,y)=0$, this definition is consistent with that in \eqref{def:operatorL}. Introduce two processes close to $\Tilde H_t^\e,\Tilde\Phi_t^\e$:
\begin{align}
    \hat H_t^\e&=h_0+\int_0^t \nabla_y u_h(\tx_s^\e,\txi_s^\e)^{\mathsf T}\sigma(\txi_s^\e)dW_s\nonumber\\
    &\quad+\int_0^t[\nabla_x u_h(\tx_s^\e,\txi_s^\e)\cdot b(\tx_s^\e,\txi_s^\e)+\nabla_y u_h(\tx_s^\e,\txi_s^\e)\cdot c(\tx_s^\e,\txi_s^\e)]ds,\label{eq:h_hat}\\
    \hat \Phi_t^\e&=\phi_0+\int_0^t \nabla_y u_\phi(\tx_s^\e,\txi_s^\e)^{\mathsf T}\sigma(\txi_s^\e)dW_s+\frac{1}{\e}\int_0^t\frac{1}{Q(\tilde H_s^\e)}ds\nonumber\\
    &\quad+\int_0^t[\nabla_x u_\phi(\tx_s^\e,\txi_s^\e)\cdot b(\tx_s^\e,\txi_s^\e)+\nabla_y u_\phi(\tx_s^\e,\txi_s^\e)\cdot c(\tx_s^\e,\txi_s^\e)]ds.\label{eq:phi_hat}
\end{align}
For each $f\in \mathcal D$, $x\in U(\e^\alpha,1-\delta)$, $y\in\mathbb T^m$, and stopping time {$\sigma'\leq T\wedge\eta(\delta)\wedge\tau(\e^\alpha)$}, by Ito's formula applied to $f(\hat H_{\sigma'}^\e)$, we have
\begin{equation}
    \begin{aligned}
        \E_{(x,y)} f(\hat H_{\sigma'}^\e)&=f(H(x))+\E_{(x,y)}\int_0^{\sigma'}\left(\frac{1}{2} |\nabla_y u_h(\tx_s^\e,\txi_s^\e)^{\mathsf T}\sigma(\txi_s^\e)|^2f''(\hat H_s^\e)\right.\\
        &\quad\left.+ \left[\nabla_x u_h(\tx_s^\e,\txi_s^\e)\cdot b(\tx_s^\e,\txi_s^\e)+\nabla_y u_h(\tx_s^\e,\txi_s^\e)\cdot c(\tx_s^\e,\txi_s^\e)\right]f'(\hat H_s^\e)\right)ds.
    \end{aligned}
\end{equation}
Since $\sup_{0\leq t\leq\sigma'}|\Tilde H_t^\e-\hat H_t^\e|=O(\e)$,
\begin{equation}
\label{eq:f}
    \begin{aligned}
        \E_{(x,y)} f(\Tilde H_{\sigma'}^\e)&=f(H(x))+\E_{(x,y)}\int_0^{\sigma'}\left(\frac{1}{2} |\nabla_y u_h(\tx_s^\e,\txi_s^\e)^{\mathsf T}\sigma(\txi_s^\e)|^2f''(\Tilde H_s^\e)\right.\\
        &\quad\left.+ \left[\nabla_x u_h(\tx_s^\e,\txi_s^\e)\cdot b(\tx_s^\e,\txi_s^\e)+\nabla_y u_h(\tx_s^\e,\txi_s^\e)\cdot c(\tx_s^\e,\txi_s^\e)\right]f'(\Tilde H_s^\e)\right)ds+O(\e).
    \end{aligned}
\end{equation}
Combining this with \eqref{eq:definition_of_operator_AB_x} and \eqref{eq:definition_of_operator_AB}, by Lemma~\ref{lem:solution}, as in Corollary~\ref{cor:avg}, we have
\begin{equation}
\label{eq:the_result}
    \E_{(x,y)}\left[f(\Tilde H_{\sigma'}^\e)-f(H(x))-\int_0^{\sigma'}\left(\frac{1}{2}A(\Tilde H_s^\e,\Tilde \Phi_s^\e)f''(\Tilde H_s^\e)+{B_c}(\Tilde H_s^\e,\Tilde \Phi_s^\e)f'(\Tilde H_s^\e)\right)ds\right]=O(\e).
\end{equation}
\begin{lemma}
\label{lem:avg_rotation}
Let $g(h,\phi)$ be either $A(h,\phi)f''(h)$ or ${B_c}(h,\phi)f'(h)$, and $\bar g(h)=\int_{S^1}g(h,\phi)d\phi$. 
Then, for every $T>0$,
\begin{equation}
    \sup_{\substack{x\in U(\e^\alpha, 1-\delta)\\ y\in\mathbb T^m}}\sup_{\sigma'\leq T\wedge\eta(\delta)\wedge\tau(\e^\alpha)}\E_{(x,y)}\left|\int_0^{\sigma'} \left[g(\Tilde H_s^\e,\Tilde \Phi_s^\e)-\bar g(\Tilde H_s^\e)\right]ds\right|\to0,~~\text{as }\e\downarrow0,
\end{equation}where the first supremum is taken over all stopping times {$\sigma'\leq T\wedge\eta(\delta)\wedge\tau(\e^\alpha)$}.
\end{lemma}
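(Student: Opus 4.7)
The plan is to use Khasminskii's averaging approach, solving the cell problem in the fast angle $\phi$ and then applying Ito's formula. Since $\int_{S^1}[g(h,\phi) - \bar g(h)]\,d\phi = 0$, I would define
$$\psi(h,\phi) := Q(h)\int_0^\phi [g(h,\phi') - \bar g(h)]\,d\phi',$$
which is smooth and periodic in $\phi$ on $\{h\in[\e^\alpha,1-\delta]\}$ and satisfies the cell identity $\partial_\phi\psi(h,\phi)/Q(h) = g(h,\phi) - \bar g(h)$. The leading-order drift of $\Tilde\Phi_t^\e$ in \eqref{eq:phi} is $1/(\e Q(\Tilde H_t^\e))$, so applying Ito's formula to $\e\psi(\Tilde H_t^\e,\Tilde\Phi_t^\e)$ produces exactly the integrand $g-\bar g$ as its leading term.

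More precisely, using \eqref{eq:H} and \eqref{eq:phi} and rearranging Ito's formula gives
\begin{align*}
\int_0^{\sigma'}\bigl[g(\Tilde H_s^\e,\Tilde\Phi_s^\e) - \bar g(\Tilde H_s^\e)\bigr]\,ds
={}& \e\bigl[\psi(\Tilde H_{\sigma'}^\e,\Tilde\Phi_{\sigma'}^\e) - \psi(h_0,\phi_0)\bigr] \\
& - \e\int_0^{\sigma'}\partial_h\psi\,d\Tilde H_s^\e
- \e\int_0^{\sigma'}\bigl[\text{slow drift of }\Tilde\Phi_s^\e\bigr]\partial_\phi\psi\,ds \\
& - \e\int_0^{\sigma'}\bigl[\tfrac{1}{2}\partial_h^2\psi\,d\langle\Tilde H^\e\rangle_s + \partial_{h\phi}^2\psi\,d\langle\Tilde H^\e,\Tilde\Phi^\e\rangle_s + \tfrac{1}{2}\partial_\phi^2\psi\,d\langle\Tilde\Phi^\e\rangle_s\bigr] \\
& - \e\,M_{\sigma'},
\end{align*}
where $M_t$ collects the Ito martingales generated by the stochastic integrals in the right-hand sides of \eqref{eq:H} and \eqref{eq:phi}. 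I would then estimate each piece separately. Using the standard saddle-point asymptotics $Q(h)\sim C|\log h|$ and $|Q^{(k)}(h)|\lesssim h^{-k}$ near $h=0$, together with the control on derivatives of the chart $x\mapsto(H(x),\phi(x))$ from Appendix~\ref{sec:derivatives}, one obtains on $\{H\in[\e^\alpha,1-\delta]\}$ the bounds $\|\psi\|_\infty\lesssim|\log\e|$, $\|\nabla\psi\|_\infty\lesssim\e^{-\alpha}$, and $\|\nabla^2\psi\|_\infty\lesssim\e^{-2\alpha}$. Since the drift and diffusion coefficients of $\Tilde H_t^\e$ in \eqref{eq:H} and of the slow part of $\Tilde\Phi_t^\e$ in \eqref{eq:phi} are $O(1)$, taking $\E|\cdot|$ of each term in the display above yields the bound
$$O(\e|\log\e|) + O(T\e^{1-2\alpha}) + O(\sqrt{T}\,\e^{1-\alpha}),$$
which vanishes as $\e\downarrow 0$ since $\alpha<1/4<1/2$. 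Because all of these bounds depend only on $T$, $\alpha$, and sup-norms of $A$, $B_c$, $f$, and the coefficients of $\Tilde X_t^\e$, the convergence is uniform in $x\in U(\e^\alpha,1-\delta)$, $y\in\mathbb T^m$, and stopping times $\sigma'\le T\wedge\tau_0$.

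The main technical obstacle is verifying the derivative bounds for $\psi$ near the separatrix. The period $Q(h)$ and its derivatives carry logarithmic and power-law singularities at the saddle, and the derivatives in the $(h,\phi)$ coordinates of $A$ and $B_c$ require understanding how the chart $x\mapsto(h,\phi)$ degenerates along $\gamma(0)$. These estimates rest on the derivative control collected in Appendix~\ref{sec:derivatives}, and the assumed constraint $\alpha<1/4$ leaves ample margin for the worst-case bound $\e^{1-2\alpha}$ to vanish.
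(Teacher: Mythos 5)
Your overall strategy -- build a corrector in the angle variable so that the fast rotation term $\frac{1}{\e Q(\Tilde H_s^\e)}\partial_\phi\psi$ reproduces $g-\bar g$, then apply Ito's formula and estimate the remainder -- is the same idea as the paper's proof; the paper just implements it Fourier mode by mode ($v=g_kQ/2\pi ik$) while you use the single primitive $\psi=Q\int_0^\phi(g-\bar g)$. That difference is fine in principle. However, there is a genuine gap in the central step: you apply Ito's formula to $\psi(\Tilde H_t^\e,\Tilde\Phi_t^\e)$ and keep terms like $d\langle\Tilde H^\e\rangle_s$, $d\langle\Tilde\Phi^\e\rangle_s$ and a martingale $M_t$ read off from the stochastic integrals in \eqref{eq:H}--\eqref{eq:phi}. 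But $\Tilde H_t^\e=H(\tx_t^\e)$ and $\Tilde\Phi_t^\e=\phi(\tx_t^\e)$ are functions of the slow motion, which solves an ODE with random coefficients; they are finite-variation processes with zero quadratic variation. In \eqref{eq:H}--\eqref{eq:phi} the explicit stochastic integrals are exactly offset by the martingale part hidden in $\e(u_\cdot(x_0,y_0)-u_\cdot(\tx_t^\e,\txi_t^\e))$, so your displayed identity is not a valid Ito expansion for the tilde processes. The correct route -- the one the paper takes -- is to apply Ito's formula to $\psi(\hat H_t^\e,\hat\Phi_t^\e)$, where $\hat H,\hat\Phi$ of \eqref{eq:h_hat}--\eqref{eq:phi_hat} are genuine Ito processes, and then transfer back to $(\Tilde H,\Tilde\Phi)$ using $\sup_t|\Tilde H_t^\e-\hat H_t^\e|=O(\e)$, $\sup_t|\Tilde\Phi_t^\e-\hat\Phi_t^\e|=O(\e^{1-\alpha})$ together with bounds on $\partial_h[(g-\bar g)Q]$ and $\partial_\phi[(g-\bar g)Q]$ (this is the analogue of \eqref{eq:gk_close}); your write-up omits this step entirely.

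The quantitative bounds are also off, in a way that matters for the error budget even though the conclusion survives. Near the separatrix $\nabla\phi=O(1/h)$ and $\nabla^2\phi=O(1/h^2)$, so on $\{h\ge\e^\alpha\}$ the diffusion coefficient of $\hat\Phi$ is $O(\e^{-\alpha})$ and its slow drift is $O(\e^{-2\alpha})$, not $O(1)$ as you claim; likewise, from the estimates of Appendix~\ref{sec:derivatives} ($g'_h=O(|\log h|/h)$, $g''_{hh}=O(|\log h|^2/h^3)$, $Q''=O(1/h^2)$) one gets $\partial^2_{hh}\psi=O(|\log h|^3/h^3)=O(\e^{-3\alpha}|\log\e|^3)$, not $O(\e^{-2\alpha})$. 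The dominant remainder is therefore $O(\e^{1-3\alpha}|\log\e|^3)$, which still tends to zero for $\alpha<1/4$ (indeed for $\alpha<1/3$), so a corrected version of your argument would close; but your stated budget $O(\e|\log\e|)+O(T\e^{1-2\alpha})+O(\sqrt T\,\e^{1-\alpha})$ and the comment that ``$\alpha<1/4<1/2$'' leaves ample margin misrepresent where the constraint on $\alpha$ actually bites -- with your bounds any $\alpha<1/2$ would appear to work, which is inconsistent with the degeneration of the $(h,\phi)$ chart that this lemma, and the paper's restriction to $\alpha<1/4$ in this subsection, are specifically there to handle.
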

\begin{proof}
Fix $\kappa>0$. Since, for fixed $h$, $g(h,\phi)-\bar g(h)$ is a function on $S^1$, we can approximate it by a finite sum of its Fourier series with error less than $\frac{\kappa}{2T}$:
\begin{equation}
    g(h,\phi)-\bar g(h)\approx\sum_{0<|k|\leq K(\e)}g_k(h,\phi):=\sum_{0<|k|\leq K(\e)}G_k(h)\exp(2\pi ik\phi),
\end{equation}for all $\e^\alpha\leq h\leq 1-\delta$ and $\phi\in S^1$,
where
\begin{equation}
    G_k(h)=\int [g(h,\phi)-\bar g(h)]\exp(-2\pi ik\phi)d\phi.
\end{equation}
Since, as shown in Appendix~\ref{sec:derivatives}, $g''_{\phi\phi}=O(|\log h|/h)$, we see that $K(\e)$ can be chosen as $\e^{-\alpha}|\log \e|^2$ for sufficiently small $\e$.
Then it suffices to prove that, for all $0<|k|\leq K(\e)$ and $\e$ sufficiently small,
\begin{equation}
\label{eq:suffice}
    \sup_{\substack{x\in U(\e^\alpha, 1-\delta)\\ y\in\mathbb T^m}}\sup_{\sigma'\leq T\wedge \eta(\delta)\wedge\tau(\e^\alpha)}\E_{(x,y)}\left|\int_0^{\sigma'} g_k(\Tilde H_s^\e,\Tilde \Phi_s^\e)ds\right|=o\left(\frac{\e^\alpha}{|\log \e|^2}\right).
\end{equation}
We define an auxiliary function $v$ for fixed $g_k$, where $0<|k|\leq K(\e)$:
\begin{equation}
    v=\frac{g_k(h,\phi)Q(h)}{2\pi ik},
\end{equation}
which satisfies that $v'_\phi/Q(h)=g_k(h,\phi)$. We formulate the bounds on $\phi$, $v$, $g$, and their derivatives, uniformly in all $\e^\alpha<h<1-\delta$ and $0<|k|\leq K(\e)$ (proved in the Appendix~\ref{sec:derivatives}):
\begin{equation}
\label{eq:bounds}
\begin{aligned}
    \phi&\in[0,1),~\nabla\phi=O(1/h),~\nabla^2\phi=O(1/h^2),\\
    v &= O(|\log h|),~v'_\phi=O(|\log h|),~v''_{\phi\phi}=O(|\log h|^3/h),\\
    v'_h&=O(|\log h|^2/h),~v''_{hh}=O(|\log h|^3/h^3),~v''_{\phi h}=O(|\log h|^2/h),\\
    g'_h&=O(|\log h|/h),~g''_{hh}=O(|\log h|^2/h^3).
\end{aligned}
\end{equation}
By comparing $(\Tilde H_t^\e,\Tilde\Phi_t^\e)$ and $(\hat H_t^\e,\hat \Phi_t^\e)$ in \eqref{eq:H}, \eqref{eq:phi}, \eqref{eq:h_hat}, and \eqref{eq:phi_hat}, and using the bounds in \eqref{eq:bounds}, we know that for all {$\sigma'\leq T\wedge\eta(\delta)\wedge\tau(\e^\alpha)$},
\begin{equation}
\label{eq:gk_close}
    \int_0^{\sigma'}\left|g_k(\Tilde H_s^\e,\Tilde\Phi_s^\e)-\frac{v'_{\phi}(\hat  H_s^\e,\hat \Phi_s^\e)}{Q(\tilde H_s^\e)}\right|ds=\int_0^{\sigma'}\left|\frac{v'_{\phi}(\tilde  H_s^\e,\tilde \Phi_s^\e)-v'_{\phi}(\hat  H_s^\e,\hat \Phi_s^\e)}{Q(\tilde H_s^\e)}\right|ds=O(\e^{1-2\alpha}|\log\e|^3).
\end{equation}
Apply Ito's formula to $v(\hat H_{\sigma'}^\e,\hat \Phi_{\sigma'}^\e)$ and obtain
\begin{align*}
    \frac{1}{\e}\int_0^{\sigma'} \frac{v'_{\phi}(\hat  H_s^\e,\hat \Phi_s^\e)}{Q(\tilde H_s^\e)}ds&=v(\hat H_{\sigma'}^\e,\hat \Phi_{\sigma'}^\e)-v(H(x),\phi(x))-\int_0^{\sigma'}v'_h(\hat H_s^\e,\hat \Phi_s^\e)\nabla_y u_h(\tx_s^\e,\txi_s^\e)^{\mathsf T}\sigma(\txi_s^\e)  dW_s\\
    &\quad-\int_0^{\sigma'} v'_h(\hat H_s^\e,\hat \Phi_s^\e)[\nabla_x u_h(\tx_s^\e,\txi_s^\e)\cdot b(\tx_s^\e,\txi_s^\e)+\nabla_y u_h(\tx_s^\e,\txi_s^\e)\cdot c(\tx_s^\e,\txi_s^\e)]ds\\
    &\quad-\frac{1}{2}\int_0^{\sigma'}v''_{hh}(\hat H_s^\e,\hat \Phi_s^\e)|\nabla_y u_h(\tx_s^\e,\txi_s^\e)^{\mathsf T}\sigma(\txi_s^\e)|^2ds\\
    &\quad-\int_0^{\sigma'}v'_\phi(\hat H_s^\e,\hat \Phi_s^\e)\nabla_y u_\phi(\tx_s^\e,\txi_s^\e)^{\mathsf T}\sigma(\txi_s^\e)dW_s\\
    &\quad-\int_0^{\sigma'} v'_\phi(\hat H_s^\e,\hat \Phi_s^\e)[\nabla_x u_\phi(\tx_s^\e,\txi_s^\e)\cdot b(\tx_s^\e,\txi_s^\e)+\nabla_y u_\phi(\tx_s^\e,\txi_s^\e)\cdot c(\tx_s^\e,\txi_s^\e)]ds\\
    &\quad-\frac{1}{2}\int_0^{\sigma'}v''_{\phi\phi}(\hat H_s^\e,\hat \Phi_s^\e)|\nabla_y u_\phi(\tx_s^\e,\txi_s^\e)^{\mathsf T}\sigma(\txi_s^\e)|^2ds\\
    &\quad-\int_0^{\sigma'}v_{\phi h}''\nabla_y u_h(\tx_s^\e,\txi_s^\e)^{\mathsf T}\sigma(\txi_s^\e)\sigma(\txi_s^\e)^{\mathsf T}u_\phi(\tx_s^\e,\txi_s^\e)ds.
\end{align*}
By using the estimates in \eqref{eq:bounds} and the fact that $0<\alpha<1/4$, we know that the expectation of the {right-hand side} is $o(\frac{\e^{\alpha-1}}{|\log\e|^2})$. Combining this with \eqref{eq:gk_close}, we get \eqref{eq:suffice}. {Thus, the desired result follows.}
\end{proof}
Now, applying Lemma~\ref{lem:avg_rotation} to \eqref{eq:the_result}, we get
\begin{lemma}
\label{lem:bounded}
For each $f\in \mathcal D$, $0<\alpha<1/4$, and $0<\delta<1$, as $\e \downarrow 0$,
\begin{equation}
\label{eq:bounded}
    \sup_{\substack{x\in U(\e^\alpha, 1-\delta)\\ y\in\mathbb T^m}}\sup_{\sigma'\leq T\wedge\eta(\delta)\wedge\tau(\e^\alpha)}|\E_{(x,y)}[f(H(\tx_{\sigma'}^\e))-f(H(x))-\int_0^{\sigma'}{\mathcal L_c}f(H(\tx_s^\e))ds]|\to0,
\end{equation}where the first supremum is taken over all stopping times $\sigma'\leq T\wedge\eta(\delta)\wedge\tau(\e^\alpha)$.
\end{lemma}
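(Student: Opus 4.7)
The statement is an averaging-principle conclusion that collects the work already done in equation (4.7) and Lemma~\ref{lem:avg_rotation}. The plan is to take the expansion obtained for $\E_{(x,y)} f(\Tilde H_{\sigma'}^\e)$ via Itô's formula and then use the angular averaging lemma to replace the coefficients $A(h,\phi)$ and $B_c(h,\phi)$ by their averages $\bar A(h)$ and $\bar {B_c}(h)$ over $\phi\in S^1$.

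First, I would start from the identity \eqref{eq:the_result} established just above, which gives, uniformly over the allowed $x$, $y$ and stopping times $\sigma'\leq T\wedge\tau_0$,
\begin{equation*}
\E_{(x,y)}\!\left[f(\Tilde H_{\sigma'}^\e)-f(H(x))-\!\int_0^{\sigma'}\!\!\left(\tfrac{1}{2}A(\Tilde H_s^\e,\Tilde\Phi_s^\e)f''(\Tilde H_s^\e)+B_c(\Tilde H_s^\e,\Tilde\Phi_s^\e)f'(\Tilde H_s^\e)\right)ds\right]=O(\e).
\end{equation*}
This was derived by applying Itô's formula to $f(\hat H_t^\e)$, using $\sup_{t\leq\sigma'}|\Tilde H_t^\e-\hat H_t^\e|=O(\e)$, and invoking Lemma~\ref{lem:solution} / Corollary~\ref{cor:avg} to turn the $\xi$-integrands into their $\mu$-averages $A(x)$ and $B_c(x)$.

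Next, I would apply Lemma~\ref{lem:avg_rotation} twice: once with $g(h,\phi)=A(h,\phi)f''(h)$ and once with $g(h,\phi)=B_c(h,\phi)f'(h)$. Since $f\in\mathcal D$ is smooth in the interior of each edge, $f'$ and $f''$ are bounded and smooth on $[\e^\alpha,1-\delta]$, so $g$ inherits the regularity needed for the Fourier-series argument in the proof of Lemma~\ref{lem:avg_rotation}, and the derivative bounds in \eqref{eq:bounds} apply uniformly. The lemma then gives
\begin{equation*}
\sup_{x,y}\sup_{\sigma'\leq T\wedge\tau_0}\E_{(x,y)}\!\left|\int_0^{\sigma'}\!\bigl[A(\Tilde H_s^\e,\Tilde\Phi_s^\e)-\bar A(\Tilde H_s^\e)\bigr]f''(\Tilde H_s^\e)\,ds\right|\to 0,
\end{equation*}
and the analogous statement for $B_c-\bar{B_c}$, both as $\e\downarrow 0$.

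Finally, combining these two vanishing contributions with the $O(\e)$ remainder and the definition $\mathcal L_c f(h)=\tfrac{1}{2}\bar A(h)f''(h)+\bar{B_c}(h)f'(h)$ yields \eqref{eq:bounded}, with the convergence uniform in $x\in U(\e^\alpha,1-\delta)$, $y\in\mathbb T^m$, and in the stopping time $\sigma'\leq T\wedge\eta(\delta)\wedge\tau(\e^\alpha)$ (which is indeed bounded by $\tau_0$). The main obstacle -- the angular averaging near the separatrix, where $\phi$ and its derivatives blow up logarithmically -- has already been absorbed into Lemma~\ref{lem:avg_rotation}; the restriction $0<\alpha<1/4$ here is exactly what is needed there to control the $O(\e^{1-2\alpha}|\log\e|^3)$ error when comparing $(\tilde H^\e,\tilde\Phi^\e)$ to $(\hat H^\e,\hat\Phi^\e)$. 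Once Lemma~\ref{lem:avg_rotation} is in hand, the present lemma is essentially a direct substitution.
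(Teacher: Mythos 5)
Your proposal is correct and matches the paper's argument: the paper itself obtains this lemma by applying Lemma~\ref{lem:avg_rotation} (with $g=Af''$ and $g=B_cf'$) to the identity \eqref{eq:the_result}, exactly as you do, noting that any stopping time $\sigma'\leq T\wedge\eta(\delta)\wedge\tau(\e^\alpha)$ is admissible since it is bounded by $T\wedge\tau_0$. No gaps; this is essentially the paper's proof written out in slightly more detail.
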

\begin{remark}
\label{rmk:accessibility}
The diffusion process governed by $\mathcal L_c$ can reach all points inside the edge and the interior vertex but cannot reach the exterior vertex. For example, in the case considered here, the process can reach all points in $[0,1)$ but cannot reach $1$. The reason is that, for each $\delta>0$, on $[0,1-\delta]$, $\bar B_c(h)$ is bounded and $1/\bar A(h)\lesssim|\log h|$ (see Appendix~\ref{sec:derivatives} for details). However, for $\kappa>0$ sufficiently small, on $[1-\kappa,1]$, $B_c$ is uniformly negative while $A(h)\lesssim 1-h$ due to the non-degeneracy of the maximum point (see Lemma~\ref{lemb:non-zero-drift} for details).
\end{remark}
\begin{lemma}
    \label{lem:time_bounded_wo_t}
    For each $\delta>0$ and $0<\alpha<1/4$, $\E_{(x,y)}(\eta(\delta)\wedge\tau(\e^\alpha))$ in uniformly bounded for all $x\in U(\e^\alpha, 1-\delta)$, $y\in\mathbb T^m$, and $\e$ sufficiently small.
\end{lemma}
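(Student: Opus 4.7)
The plan is to build a Lyapunov function $f\in\mathcal D$, uniformly bounded in $\e$, with $\mathcal L_c f(h)\le -1$ on $[\e^\alpha,1-\delta]$, apply \lemref{lem:bounded} to it to bound $\E_{(x,y)}[T_0\wedge\tau(\e^\alpha)\wedge\eta(\delta)]$ for a fixed $T_0$, and then iterate via the strong Markov property to remove the truncation at $T_0$.

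To construct $f$, I work on the single edge $I\subset\mathbb G$ corresponding to $H(U)$, parametrized by $h\in[0,1]$ with $h=0$ the interior vertex and $h=1$ the extremum. On $I$ I solve the first order linear equation for $g=f'$, namely $\tfrac12\bar A g'+\bar B_c g=-1$, with $g(0)=0$, giving
\begin{equation*}
    g(h)=-\frac{1}{\psi(h)}\int_0^h\frac{2\psi(u)}{\bar A(u)}\,du,\qquad \psi(h):=\exp\Bigl(\int_0^h\frac{2\bar B_c(u)}{\bar A(u)}\,du\Bigr),
\end{equation*}
and set $f(h):=\int_0^h g(u)\,du$, so $f(0)=f'(0)=0$ and $\mathcal L_c f=-1$ on $(0,1-\delta)$. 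By \remref{rmk:accessibility}, $|\bar B_c|$ is bounded on $[0,1-\delta]$ and $1/\bar A(h)\lesssim|\log h|$; since $|\log u|$ is integrable at $0$, both $\int_0^h 2\bar B_c/\bar A\,du$ and $\int_0^h 2/\bar A\,du$ are uniformly bounded, hence $\psi$ is bounded above and below and $|f|$ is uniformly bounded on $[0,1-\delta]$. I extend $f$ by zero on the other edges adjacent to the interior vertex and smoothly to $[1-\delta,1]$; the gluing condition (iii) in the definition of $\mathcal D$ holds trivially because $f'(0^+)=0$ on $I$ and $f\equiv 0$ on the other incident edges, and $f$ is $C^\infty$ in the interior of every edge since $\bar A,\bar B_c$ are smooth there.

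Applying \lemref{lem:bounded} with this $f$ and $\sigma':=T_0\wedge\tau(\e^\alpha)\wedge\eta(\delta)$ yields, for every $\e$ sufficiently small (depending on $T_0$),
\begin{equation*}
    \E_{(x,y)}\bigl[T_0\wedge\tau(\e^\alpha)\wedge\eta(\delta)\bigr]\le f(H(x))-\E_{(x,y)}f(H(\tx_{\sigma'}^\e))+o(1)\le 2\|f\|_\infty+1=:C,
\end{equation*}
uniformly in $x\in U(\e^\alpha,1-\delta)$ and $y\in\mathbb T^m$. Setting $T_0:=2C$ and $\sigma:=\tau(\e^\alpha)\wedge\eta(\delta)$, Markov's inequality gives $\sup_{(x,y)}\Pro_{(x,y)}(\sigma>T_0)\le C/T_0=1/2$. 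The strong Markov property at time $T_0$ (noting $\tx_{T_0}^\e\in U(\e^\alpha,1-\delta)$ on $\{\sigma>T_0\}$, so that the same bound applies at the restart point) iterates this estimate to $\sup_{(x,y)}\Pro_{(x,y)}(\sigma>nT_0)\le 2^{-n}$, so $\E_{(x,y)}[\sigma]\le T_0\sum_{n\ge 0}2^{-n}=2T_0$, giving the desired uniform bound.

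The delicate point is the construction of $f$: the coefficient $\bar A$ degenerates logarithmically at the saddle, so it is not a priori obvious that the solution of $\mathcal L_c f=-1$ remains bounded as $h\to 0$. The integrability of $1/\bar A$ quantified in \remref{rmk:accessibility}, which is precisely the statement that the limit diffusion reaches the interior vertex in finite expected time, is the key input that makes the Lyapunov construction work.
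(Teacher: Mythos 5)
Your proof is correct and follows essentially the same route as the paper: a bounded solution of $\mathcal L_c f=-1$ on $[0,1-\delta]$ (whose existence/boundedness rests on the estimates in Remark~\ref{rmk:accessibility}), combined with Lemma~\ref{lem:bounded}, Markov's inequality, and the strong Markov property. The only differences are cosmetic — you impose $f(0)=f'(0)=0$ with an explicit variation-of-constants formula and spell out the extension to a function in $\mathcal D$, whereas the paper takes the Dirichlet solution $f^\delta(0)=f^\delta(1-\delta)=0$ and leaves these verifications implicit.
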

\begin{proof}
The solution $f^\delta$ to the following equation exists on $[0,1-\delta]$ due to Remark~\ref{rmk:accessibility}:
    \begin{equation}
    \begin{cases}
    {\mathcal L_c}f^\delta=-1\\
    f^\delta(0)=f^\delta(1-\delta)=0
    \end{cases}
    \end{equation}
Let $\Tilde T>3\|f^\delta\|_{\mathrm{sup}}$, then Lemma~\ref{lem:bounded} implies that, for all $x\in U(\e^\alpha, 1-\delta)$, $y\in\mathbb T^m$, and $\e$ small enough,
\begin{equation}
    \E_{(x,y)}(\eta(\delta)\wedge\tau(\e^\alpha)\wedge \Tilde T)<\Tilde T/2.
\end{equation}
Thus, by Markov inequality and strong Markov property, $\E_{(x,y)}(\eta(\delta)\wedge\tau(\e^\alpha))\leq 2\Tilde T$.
\end{proof}
\begin{lemma}
\label{lem:bounded_wo_t}
For each $f\in \mathcal D$, $\delta>0$, and $0<\alpha<1/4$, as $\e\downarrow0$,
\begin{equation}
\label{eq:bounded_wo_t}
    \sup_{\substack{x\in U(\e^\alpha, 1-\delta)\\ y\in\mathbb T^m}}\sup_{\sigma'\leq \eta(\delta)\wedge\tau(\e^\alpha)}|\E_{(x,y)} [f(H(\tx^\e_{\sigma'}))-f(H(x))-\int_0^{\sigma'}{\mathcal L_c} f(H(\tx_s^\e))ds]|\to0,
\end{equation}where the first supremum is taken over all stopping times $\sigma'\leq \eta(\delta)\wedge\tau(\e^\alpha)$.
\end{lemma}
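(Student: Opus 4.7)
The plan is to deduce this uncapped estimate from the $T$-capped Lemma~\ref{lem:bounded} by truncating the stopping time at a large but fixed $T$ and controlling the remainder via Lemma~\ref{lem:time_bounded_wo_t}. Put $\tau_0:=\eta(\delta)\wedge\tau(\e^\alpha)$, and for each $T>0$ and each stopping time $\sigma'\leq\tau_0$ write
\[
f(H(\tx_{\sigma'}^\e))-f(H(x))-\int_0^{\sigma'}\mathcal L_c f(H(\tx_s^\e))\,ds = A_T + B_T,
\]
where
\begin{align*}
A_T &= f(H(\tx_{\sigma'\wedge T}^\e))-f(H(x))-\int_0^{\sigma'\wedge T}\mathcal L_c f(H(\tx_s^\e))\,ds,\\
B_T &= \chi_{\{\sigma'>T\}}\!\left[f(H(\tx_{\sigma'}^\e))-f(H(\tx_T^\e))-\int_T^{\sigma'}\mathcal L_c f(H(\tx_s^\e))\,ds\right].
\end{align*}
Since $\sigma'\wedge T$ is itself a stopping time bounded by $T\wedge\tau_0$, Lemma~\ref{lem:bounded} immediately gives, for every fixed $T$, that $|\E_{(x,y)}A_T|\to 0$ as $\e\downarrow 0$ uniformly in $x\in U(\e^\alpha,1-\delta)$, $y\in\mathbb T^m$, and $\sigma'\leq\tau_0$.

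For the tail $B_T$, both $f$ and $\mathcal L_c f$ are bounded on the region of interest by constants $M_1$ and $M_2$: indeed $f\in\bm{\mathrm C}_0(\mathbb G)$, and the existence of the finite limits $\lim_{h_k\to O_i}L_k f(h_k)$ from the definition of $\mathcal D$ (in its weaker edge-dependent form), combined with continuity of $\bar A$ and $\bar{B_c}$ inside each edge, makes $\mathcal L_c f$ bounded on $H^{-1}([0,1-\delta])$. Hence
\[
|B_T|\leq \chi_{\{\tau_0>T\}}\bigl[\,2M_1 + M_2(\tau_0-T)\,\bigr].
\]
By Markov's inequality and Lemma~\ref{lem:time_bounded_wo_t}, there is some $C$ independent of $\e$, $x$, $y$ with $\Prob_{(x,y)}(\tau_0>T)\leq C/T$; and by the strong Markov property at the deterministic time $T$, applied together with Lemma~\ref{lem:time_bounded_wo_t} at $(\tx_T^\e,\txi_T^\e)\in U(\e^\alpha,1-\delta)\times\mathbb T^m$ on $\{\tau_0>T\}$,
\[
\E_{(x,y)}\bigl[\chi_{\{\tau_0>T\}}(\tau_0-T)\bigr] = \E_{(x,y)}\bigl[\chi_{\{\tau_0>T\}}\E_{(\tx_T^\e,\txi_T^\e)}(\tau_0)\bigr]\leq C\cdot \Prob_{(x,y)}(\tau_0>T)\leq C^2/T.
\]
Therefore $\sup_{x,y,\sigma'}\E_{(x,y)}|B_T|\leq (2M_1 + M_2 C)\,C/T$, which is independent of $\e$.

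To conclude, given $\kappa>0$ I will first pick $T$ so large that $(2M_1+M_2 C)C/T<\kappa/2$, and then invoke Lemma~\ref{lem:bounded} with this $T$ to find $\e_0>0$ such that $|\E_{(x,y)}A_T|<\kappa/2$ for $\e<\e_0$ uniformly in $(x,y,\sigma')$; summing the two bounds yields the claim. The mechanical steps (truncation, Markov, strong Markov at a deterministic time) are routine, so the one checkpoint I expect to require care is justifying a uniform bound $M_2$ on $\mathcal L_c f$ near the saddle, since $\bar A(h)$ degenerates as $h\to 0$ and $\bar{B_c}$ must be handled delicately there. However, this is exactly what the finite-limit clause in the definition of $\mathcal D$ is designed to supply, and $M_2$ depends only on $f$ and the (fixed) limit operator, not on $\e$, so the tail bound is indeed $\e$-uniform as needed.
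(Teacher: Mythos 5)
Your proposal is correct and follows essentially the same route as the paper, whose proof of Lemma~\ref{lem:bounded_wo_t} is exactly the combination of Lemma~\ref{lem:bounded} (applied to the stopping time capped at a large fixed $T$) with Lemma~\ref{lem:time_bounded_wo_t} and the Markov property at time $T$ to control the tail. Your write-up simply makes explicit the truncation, the Chebyshev/Markov bound $\Prob_{(x,y)}(\tau_0>T)\lesssim 1/T$, and the uniform boundedness of $f$ and $\mathcal L_c f$ on $H^{-1}([0,1-\delta])$, which the paper leaves implicit.
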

\begin{proof}
The result can be deduced from \lemref{lem:bounded} and \lemref{lem:time_bounded_wo_t} by choosing a sufficiently large $T$ and using the Markov property.
\end{proof}


\subsection{Averaging principle before \texorpdfstring{$\sigma$}{sigma}}
With estimates on the transition times and probabilities between level sets near the critical points, the result from last subsection can be extended to the time when the process reaches the separatrix, {which is denoted by  $\sigma$.} The main result of this subsection is
\begin{proposition}
\label{prop:up_to_separatrix}
For each $f\in \mathcal D$, as $\e\downarrow0$,
\begin{equation}
\label{eq:main_result_before_sigma}
    \sup_{\substack{x\in U, y\in\mathbb T^m}}\sup_{\sigma'\leq\sigma}|\E_{(x,y)} [f(H(\tx^\e_{\sigma'}))-f(H(x))-\int_0^{\sigma'}{\mathcal L_{c}} f(H(\tx_s^\e))ds]|\to0,
\end{equation}where the first supremum is taken over all stopping times $\sigma'\leq \sigma$.
\end{proposition}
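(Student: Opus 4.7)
The plan is to bootstrap Lemma~\ref{lem:bounded_wo_t} from the exit time of $U(\e^\alpha,1-\delta)$ to the hitting time $\sigma$ of the separatrix, by iterating applications of that lemma along visits to the boundary level curves $\gamma(\e^\alpha)$ and $\gamma(1-\delta)$ and by separately controlling the short excursions into the thin strips $\{0\le H<\e^\alpha\}$ near the separatrix and $\{1-\delta<H\le 1\}$ near the extremum.

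First I would dispose of the case where $x$ already lies in one of the thin strips. I run the process until the first time $\tau^*$ at which it either reaches the corresponding inner boundary ($\gamma(\e^\alpha)$ or $\gamma(1-2\delta)$) or is stopped at $\sigma'$. The contribution of $[0,\tau^*]$ to both the boundary term and the integral term is $o(1)$: the variation of $f\circ H$ is $O(\e^\alpha)+O(\delta)$ by continuity of $f$, and $\E\tau^*=o(1)$ follows from the diffusion approximation \eqref{eq:x} combined with the local estimates developed in Section~\ref{sec:exponenitalconvergence} near the saddle and in Lemma~\ref{lemb:non-zero-drift} near the extremum (via Remark~\ref{rmk:accessibility}). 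From this point on I may assume $x\in U(\e^\alpha,1-\delta)$.

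With this reduction, I introduce an alternating sequence of stopping times $0=\rho_0\le\rho_1\le\rho_2\le\cdots$ in which \emph{bulk} segments $[\rho_{2i},\rho_{2i+1}]$ have $\tx^\e$ staying in $U(\e^\alpha,1-\delta)$ up to a visit to $\gamma(\e^\alpha)\cup\gamma(1-\delta)$ (or to $\sigma'$), and \emph{excursion} segments $[\rho_{2i+1},\rho_{2i+2}]$ have $\tx^\e$ inside one of the thin strips, ending either at $\sigma$ or back on $\gamma(\e^\alpha)\cup\gamma(1-\delta)$. On each bulk segment, Lemma~\ref{lem:bounded_wo_t} applied via the strong Markov property yields
\[
\E_{(x,y)}\bigl[f(H(\tx^\e_{\rho_{2i+1}\wedge\sigma'}))-f(H(\tx^\e_{\rho_{2i}\wedge\sigma'}))-\int_{\rho_{2i}\wedge\sigma'}^{\rho_{2i+1}\wedge\sigma'}\mathcal L_c f(H(\tx^\e_s))\,ds\bigr]=o(1)
\]
uniformly in the initial data. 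Each excursion segment contributes at most $O(\e^\alpha)+O(\delta)$ to the boundary term (since the two endpoints have $H$-values differing by at most $\e^\alpha$ on the separatrix side or at most $\delta$ on the extremum side, and $f$ is uniformly continuous on $\mathbb G$), and at most $\|\mathcal L_c f\|_\infty\cdot\E[\text{excursion duration}]=o(1)$ to the integral term, provided each excursion has expected duration $o(1)$.

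To sum the excursion contributions, I bound the expected number of excursions before $\sigma$. Near the extremum, $\bar B_c$ is uniformly negative on $[1-\kappa,1]$ for small $\kappa$ (Remark~\ref{rmk:accessibility}), so each excursion into $\{H>1-\delta\}$ ends quickly and is followed by nontrivial time in the bulk before the next visit to $\gamma(1-\delta)$; together with Lemma~\ref{lem:time_bounded_wo_t}, this bounds the expected number of such excursions before time $\sigma$ uniformly in $\e$. Near the separatrix, for each excursion starting from $\gamma(\e^\alpha)$ there is a lower bound $p>0$, uniform in $\e$ and in the starting point on $\gamma(\e^\alpha)$, on the probability of hitting the separatrix before returning to $\gamma(2\e^\alpha)$, so the expected number of such excursions is at most $1/p=O(1)$. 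Multiplying an $O(1)$ number of excursions by the $o(1)$ per-excursion cost proves \eqref{eq:main_result_before_sigma}. The hardest ingredients are the $o(1)$ excursion-duration bound in $\{0\le H\le 2\e^\alpha\}$ and the uniform positive hitting probability from $\gamma(\e^\alpha)$ to $\sigma$: both rely on the hypoelliptic structure in \hyperlink{H5}{\textit{(H5)}} together with \eqref{eq:x}, and are exactly the analytic ingredients prepared in Section~\ref{sec:exponenitalconvergence}.
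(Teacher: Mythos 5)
Your overall scheme---averaging in the bulk via Lemma~\ref{lem:bounded_wo_t}, an excursion decomposition glued by the strong Markov property, exit-time and hitting-probability control in the two thin strips, and a uniformly bounded expected number of excursions---is essentially the route the paper takes: its Lemmas~\ref{lem:time1} and \ref{lem:time2} and the proof of Proposition~\ref{prop:up_to_separatrix} carry out the same bookkeeping, except that near the extremum the paper does not count excursions; it instead chooses $\delta'$ so that, by Remark~\ref{rmk:accessibility} and Lemma~\ref{lem:bounded_wo_t}, the process reaches the $\delta'$-neighborhood of the extremum before $\tau(\e^\alpha)$ only with probability $<\kappa$, and invokes Lemma~\ref{lem:near_extremum} once. (Your extremum-excursion count can also be made to work, but note that bounding it requires a hitting-probability estimate extracted from Lemma~\ref{lem:bounded_wo_t}, as in the proof of Lemma~\ref{lem:time1}; Lemma~\ref{lem:time_bounded_wo_t}, an expected-time bound, does not by itself give it.)

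The genuine gap is where you send the reader for the two hardest ingredients: the $o(1)$ bound on the expected duration of an excursion in $\{0\le H\le 2\e^\alpha\}$ and the uniform lower bound on the probability, starting from $\gamma(\e^\alpha)$, of reaching the separatrix before returning to $\gamma(2\e^\alpha)$. You attribute both to the hypoelliptic structure in \textit{(H5)} and to ``the analytic ingredients prepared in Section~\ref{sec:exponenitalconvergence}.'' That is not where they come from, and that machinery would not deliver them: the local limit theorem and H\"ormander-condition arguments of Section~\ref{sec:exponenitalconvergence} produce lower bounds on densities after an $O(1)$ time, used only for the mixing of the chain on the separatrix, and give no upper bound on exit times from the $\e^\alpha$-strip. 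In the paper these inputs are Proposition~\ref{prop:exit_time_from_separatrix} (Appendix~\ref{sec:exitfromneighborhood}), which gives $\E_{(x,y)}\tau=O(\e^{2\alpha}|\log\e|)$ via Morse coordinates and the positive-definiteness of $\bm A(x)$ (a consequence of \textit{(H4$'$)}, not \textit{(H5)}), together with Corollary~\ref{cor:exit_time_eps} and Lemma~\ref{lem:lin_prob}, which combine that exit-time bound with the near-martingale representation \eqref{eq:H} of $H(\tx_t^\e)$ to obtain $\Prob_{(x,y)}(\tau(\e^\alpha)<\sigma)=H(x)\e^{-\alpha}+O(\e^\alpha|\log\e|)$ and hence the uniform positive crossing probability. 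If you cite those results, your argument closes and coincides with the paper's; as written, the asserted source for the key estimates would fail.
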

We state a simple corollary of Proposition~\ref{prop:exit_time_from_separatrix}.
\begin{corollary}
\label{cor:exit_time_eps}
    For each $0<\alpha<1/2$, uniformly in $x\in U(0,\e^\alpha)$ and $y\in\mathbb T^m$,
    \begin{equation}
        \E_{(x,y)}\sigma\wedge\tau(\e^\alpha)=O(\e^{2\alpha}|\log\e|).
    \end{equation}
\end{corollary}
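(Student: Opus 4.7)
The plan is to invoke Proposition~\ref{prop:exit_time_from_separatrix} with its generic neighborhood-size parameter $h$ chosen equal to $\e^\alpha$. That proposition, established just above, provides a uniform upper bound of the form $\E_{(x,y)}[\sigma\wedge\tau(h)] \lesssim h^2|\log h|$ for $x\in U(0,h)$, $y\in\mathbb T^m$, and all $\e$ small enough relative to $h$. Substituting $h = \e^\alpha$ yields $(\e^\alpha)^2|\log \e^\alpha| = \alpha\,\e^{2\alpha}|\log \e|=O(\e^{2\alpha}|\log \e|)$, which is exactly the claim. The only verifications I would carry out explicitly are that the hypotheses of Proposition~\ref{prop:exit_time_from_separatrix} are met for this choice of $h$: the restriction $0<\alpha<1/2$ keeps $\e^\alpha \to 0$ and, being strictly less than $1/2$, still allows the smallness comparisons that arise in the underlying Lyapunov estimate; the set $U(0,\e^\alpha)$ coincides with the separatrix neighborhood to which the proposition applies; and the uniformity in $y\in\mathbb T^m$ transfers verbatim.

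Conceptually, the $h^2|\log h|$ scaling is the natural one. Near the separatrix the effective diffusion coefficient of $H(\tx_t^\e)$ satisfies $\bar A(h) \asymp 1/|\log h|$, owing to the logarithmic blow-up of the period $Q(h)$ at the saddle combined with the relation $\lim_{h\to O_i} \bar A(h) Q(h) = |p_k|$ built into the gluing constants of Definition~\ref{def:domain_original}, while $\bar B_c(h)$ remains bounded (cf.\ Remark~\ref{rmk:accessibility}). A one-dimensional diffusion with these coefficients exits an interval of size $h$ on the time scale $h^2/\bar A(h) \sim h^2|\log h|$, matching the corollary after substitution.

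I expect the main obstacle to live not in this corollary but in the proof of the underlying Proposition~\ref{prop:exit_time_from_separatrix}: the natural Lyapunov candidate $g(h) = C h^2|\log h|$ is not of class $C^2$ at $h = 0$, so it cannot be directly plugged into the martingale problem of Proposition~\ref{prop:up_to_separatrix}, which is formulated for $f\in\mathcal D$. This would be circumvented either by smoothing $g$ near the separatrix and controlling the error, or by applying Ito's formula directly to the diffusion-approximation SDE \eqref{eq:x} up to a stopping time that keeps $H$ at distance at least $\delta$ from $0$, verifying $\mathcal L_c g \leq -1$ on $\{0<|h|<h_0\}$ using the asymptotics of $\bar A$ and $\bar B_c$, and then letting $\delta\downarrow 0$ via monotone convergence. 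Once Proposition~\ref{prop:exit_time_from_separatrix} is in hand, the corollary itself is a one-line substitution and no further obstacle arises.
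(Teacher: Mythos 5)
Your proposal is correct and is essentially the paper's own argument: the corollary is an immediate consequence of Proposition~\ref{prop:exit_time_from_separatrix}, since $U(0,\e^\alpha)\subset V^\e$ and, by continuity of $H(\tx_t^\e)$, the process cannot leave $V^\e$ before either hitting $\gamma$ or reaching $H=\e^\alpha$, so $\sigma\wedge\tau(\e^\alpha)\le\inf\{t:\tx_t^\e\notin V^\e\}$ and the proposition (which is already stated uniformly in $0<\alpha<1/2$, so no substitution of a generic parameter $h=\e^\alpha$ is needed, and note $U(0,\e^\alpha)$ is only the one-sided part of $V^\e$) yields the $O(\e^{2\alpha}|\log\e|)$ bound. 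Your speculation about how Proposition~\ref{prop:exit_time_from_separatrix} itself would be proved (a Lyapunov function of the form $h^2|\log h|$) differs from the paper's actual proof (Morse-lemma coordinates near the saddle and variance accumulation of the stochastic integral), but this is immaterial for the corollary, which only uses the proposition's statement.
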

\begin{lemma}
\label{lem:lin_prob}
    For each $0<\alpha<1/2$, uniformly in $x\in U(0,\e^\alpha)$ and $y\in\mathbb T^m$,
    \begin{equation}
        |\Prob_{(x,y)}(\tau(\e^\alpha)<\sigma)-H(x)\e^{-\alpha}|=O(\e^{\alpha}|\log\e|).
    \end{equation}
\end{lemma}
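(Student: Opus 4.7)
The idea is to treat $\tilde H_t^\e = H(\tx_t^\e)$ as approximately harmonic on the thin strip $U(0,\e^\alpha)$: if $H$ were exactly conserved by the dynamics, optional stopping would give $\Pro_{(x,y)}(\tau(\e^\alpha) < \sigma) = H(x)/\e^\alpha$ exactly, and one only needs to show that the deviations from martingality in the diffusion approximation are negligible compared to $\e^\alpha$ after dividing. The crucial quantitative input is \corref{cor:exit_time_eps}: the sojourn time $\sigma \wedge \tau(\e^\alpha)$ has expectation $O(\e^{2\alpha}|\log\e|)$, which is small precisely because the strip is thin.

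I would start from the decomposition \eqref{eq:H} of $\tilde H_t^\e$, which is valid for all $t \geq 0$ since $u$ solves $Lu = -(b-\bar b)$ globally on $M \times \mathbb T^m$ by \lemref{lem:solution}. Setting $t = \sigma \wedge \tau(\e^\alpha)$ and taking $\E_{(x,y)}$, the stochastic integral vanishes (the integrand $\nabla_y u_h^{\mathsf T}\sigma$ is bounded and $\E[\sigma \wedge \tau(\e^\alpha)] < \infty$), and the boundary term contributes $O(\e)$ uniformly since $u$ is bounded. Assume without loss of generality that $H > 0$ on $U$; then on $\{\sigma < \tau(\e^\alpha)\}$ one has $\tilde H^\e_{\sigma \wedge \tau(\e^\alpha)} = 0$ and on $\{\tau(\e^\alpha) < \sigma\}$ one has $\tilde H^\e_{\sigma \wedge \tau(\e^\alpha)} = \e^\alpha$, giving
\[
\e^\alpha \Pro_{(x,y)}\bigl(\tau(\e^\alpha) < \sigma\bigr) = H(x) + \E_{(x,y)} \int_0^{\sigma \wedge \tau(\e^\alpha)} \bigl[\nabla_x u_h \cdot b + \nabla_y u_h \cdot c\bigr](\tx_s^\e, \txi_s^\e) \, ds + O(\e).
\]
Since $u$, $\nabla u$, $b$, and $c$ are uniformly bounded, the integrand is bounded in sup norm by a constant $C$ independent of $(x,y,\e)$, so the remainder is at most $C \, \E_{(x,y)}[\sigma \wedge \tau(\e^\alpha)] = O(\e^{2\alpha}|\log\e|)$ by \corref{cor:exit_time_eps}.

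Dividing by $\e^\alpha$ and noting that for $0 < \alpha < 1/2$ one has $\e^{1-\alpha} \lesssim \e^\alpha \lesssim \e^\alpha|\log\e|$, the asserted bound $|\Pro_{(x,y)}(\tau(\e^\alpha) < \sigma) - H(x)\e^{-\alpha}| = O(\e^\alpha|\log\e|)$ follows, uniformly in $x \in U(0,\e^\alpha)$ and $y \in \mathbb T^m$, the uniformity being inherited from that of \corref{cor:exit_time_eps} and of the sup-norm bounds on $u$. The only step requiring care is the bookkeeping of the two error sources (the drift $O(\e^{2\alpha}|\log\e|)$ and the boundary $O(\e)$) against the target $O(\e^\alpha|\log\e|)$; crucially, no finer averaging of $\nabla u \cdot b$ or $\nabla_y u \cdot c$ against $\mu$ is needed here, because the smallness is supplied entirely by the shortness of the sojourn time in the $\e^\alpha$-thin strip.
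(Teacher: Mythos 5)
Your proposal is correct and follows essentially the same route as the paper: write the decomposition \eqref{eq:H} for $H(\tx^\e_t)$ stopped at $\sigma\wedge\tau(\e^\alpha)$, kill the stochastic integral by optional stopping, bound the drift contribution by $O(\e^{2\alpha}|\log\e|)$ via Corollary~\ref{cor:exit_time_eps} together with the $O(\e)$ boundary term, and divide by $\e^\alpha$. The only difference is that you spell out the bookkeeping (identifying $\E_{(x,y)}H(\tx^\e_{\sigma\wedge\tau(\e^\alpha)})$ with $\e^\alpha\Prob_{(x,y)}(\tau(\e^\alpha)<\sigma)$ and checking $\e^{1-\alpha}\lesssim\e^\alpha|\log\e|$) more explicitly than the paper does.
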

\begin{proof}
    As in \eqref{eq:H}, write the equation for $H(\tx_t^\e)=\Tilde H_t^\e$ stopped at ${\sigma\wedge\tau(\e^\alpha)}$, 
    \begin{align*}
        H(\tx_{\sigma\wedge\tau(\e^\alpha)}^\e)&=H(x)+\int_0^{\sigma\wedge\tau(\e^\alpha)}\nabla_x u_h(\tx_s^\e,\txi_s^\e)\cdot b(\tx_s^\e,\txi_s^\e)+\nabla_y u_h(\tx_s^\e,\txi_s^\e)\cdot c(\tx_s^\e,\txi_s^\e)ds\\
        &\quad+\int_0^{\sigma\wedge\tau(\e^\alpha)}\nabla_y u_h(\tx_s^\e,\txi_s^\e)^{\mathsf T}\sigma(\txi_s^\e)dW_s+O(\e).
    \end{align*}
    From Corollary~\ref{cor:exit_time_eps}, it follows that
    \begin{equation*}
        |\Prob_{(x,y)}(\tau(\e^\alpha)<\sigma)-H(x)\e^{-\alpha}|=\e^{-\alpha}|\E_{(x,y)} H(\tx_{\sigma\wedge\tau(\e^\alpha)}^\e)-H(x)|=O(\e^\alpha|\log\e|).\qedhere
    \end{equation*}
\end{proof}
We prove that the process spends finite time (in expectation) inside $U$. The idea is to use the fact that the process on the graph spends little time near the vertices and exits the edge with positive probability once it gets close enough to the interior vertex.
\begin{lemma}
\label{lem:time1}
    For each $0<\alpha<1/4$, $\E_{(x,y)}\tau(\e^\alpha)$ is uniformly bounded for all $x\in U$ such that $H(x)\geq\e^\alpha$, $y\in\mathbb T^m$, and $\e$ sufficiently small.
\end{lemma}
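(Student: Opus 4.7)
The plan is to combine \lemref{lem:time_bounded_wo_t} (which bounds $\E_{(x,y)}(\tau(\e^\alpha)\wedge\eta(\delta))$) with a matching bound on excursions into the extremum region $U(1-\delta,1)$, and then to iterate via the strong Markov property. By \remref{rmk:accessibility}, for $\delta$ small enough one has $\bar{B_c}(h)\le -c_0<0$ on $[1-\delta,1]$, so the process has a uniformly negative drift in $H$ near the extremum, which forces excursions above $\gamma(1-\delta)$ to return in bounded expected time.

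First I would prove an extremum-region analogue of \lemref{lem:time_bounded_wo_t}: for each $h_2\in[1-\delta,1)$ and $\rho(h_2):=\inf\{t:H(\tx_t^\e)\le h_2\}$,
\[
\sup_{x\in U,\,H(x)\ge h_2,\,y\in\mathbb T^m}\E_{(x,y)}\rho(h_2)\le C(h_2),
\]
uniformly for $\e$ small. The proof repeats the scheme of \secref{sec:Averaging principle before} in $U(h_2,1)$: near the non-degenerate maximum $O'$ the angular coordinate $\phi$ and its derivatives are well-behaved, so the diffusion approximation leading to \eqref{eq:the_result} and the averaging identity \lemref{lem:avg_rotation} carry over, yielding the analogue of \lemref{lem:bounded_wo_t} on $U(h_2,1)$. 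Applied to the linear Lyapunov function $V(h)=(2/c_0)(h-h_2)$, which satisfies $\mathcal L_cV=\bar{B_c}V'\le -2$, this yields $\E_{(x,y)}\rho(h_2)\le (1-h_2)/c_0+o(1)$.

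With these pieces in place, I would fix $\delta$ small and introduce two auxiliary levels $h_*:=1-3\delta/4$ and $h^*:=1-\delta/4$, both in $[1-\delta,1)$. Starting from any $x$ with $H(x)\ge\e^\alpha$, first drive the process to $\gamma(h_*)\cup\gamma(\e^\alpha)$ in bounded expected time (by the extremum-region bound if $H(x)>h^*$, combined with \lemref{lem:time_bounded_wo_t} applied with $\delta$ replaced by $3\delta/4$). Thereafter, between consecutive visits to $\gamma(h_*)$, let the process run until it hits either $\gamma(\e^\alpha)$ (done) or $\gamma(h^*)$; in the latter case apply the extremum-region bound with $h_2=h_*$ to drive it back to $\gamma(h_*)$. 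Each such round trip has expected duration bounded by a constant $C_*$ independent of $\e$.

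The main obstacle is to bound the expected number $N$ of such round trips uniformly in $\e$. This reduces to showing that, starting from $\gamma(h_*)$, the probability of exiting $U(\e^\alpha,h^*)$ through $\gamma(\e^\alpha)$ is bounded below by some $p_0>0$ uniformly in $\e$. To get this, take $W\in\mathcal D$ to be a smooth extension to $\mathbb G$ of the function on $[\e^\alpha,h^*]$ solving $\mathcal L_cW=0$ with $W(\e^\alpha)=0$ and $W(h^*)=1$, and apply \lemref{lem:bounded_wo_t} at the stopping time $\tau(\e^\alpha)\wedge\eta(\delta/4)$ to obtain
\[
\Prob_{(x,y)}\bigl(H(\tx_{\tau(\e^\alpha)\wedge\eta(\delta/4)}^\e)=h^*\bigr)=W(H(x))+o(1).
\]
In terms of the scale function $S$ of $\mathcal L_c$, one has $W(h_*)=(S(h_*)-S(\e^\alpha))/(S(h^*)-S(\e^\alpha))$, and the continuity of $S$ at $0$ (which follows from $1/\bar A\lesssim|\log h|$ in \remref{rmk:accessibility}) gives $W(h_*)\to S(h_*)/S(h^*)<1$ as $\e\to 0$, so $1-W(h_*)\ge p_0>0$ for $\e$ small. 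Consequently $N$ is stochastically dominated by a geometric random variable with mean $1/p_0$, and the strong Markov property yields $\E_{(x,y)}\tau(\e^\alpha)\le C_*/p_0<\infty$ uniformly in $x$, $y$, and $\e$.
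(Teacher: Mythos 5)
Your overall skeleton --- bound the time spent near the extremum, use Lemma~\ref{lem:time_bounded_wo_t} in the bulk region, obtain a uniform positive probability of reaching $\gamma(\e^\alpha)$ before returning to the extremum region, and iterate with the strong Markov property --- is exactly the paper's strategy, and your second and third stages (the harmonic function $W$ and the geometric bound on the number of round trips) are sound, up to the minor point that your $W$ depends on $\e$ through the boundary point $\e^\alpha$, so Lemma~\ref{lem:bounded_wo_t} (stated for a fixed $f\in\mathcal D$) must be applied uniformly over this family; this is avoidable by working with the $\e$-independent solution of $\mathcal L_cf=0$ on a fixed interval, which is essentially how the paper extracts the bound $\Prob_{(x,y)}(\eta(\delta)<\tau(\e^\alpha))<1-\kappa$.

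The genuine weak point is your extremum-region estimate. The claim that near the non-degenerate maximum $O'$ the angular coordinate $\phi$ and its derivatives are well-behaved is false: the level curves shrink like $\sqrt{1-h}$, so $|\nabla\phi|\asymp(1-h)^{-1/2}$ (see \eqref{eqa:q1q2}, where the factor $1/|\nabla H|$ blows up at $O'$), and the bounds of Appendix~\ref{sec:derivatives} underlying Lemma~\ref{lem:avg_rotation} are established only on $U(\e^\alpha,1-\delta)$, i.e.\ away from the extremum; asserting that "the averaging identity carries over" to $U(h_2,1)$ would require redoing those estimates. There is also a circularity: the untruncated statement you invoke (the analogue of Lemma~\ref{lem:bounded_wo_t}) itself presupposes a bound on the expected exit time from $U(h_2,1)$, which is exactly what you are trying to prove; you would need the $T$-truncated analogue of Lemma~\ref{lem:bounded} first and only then remove the truncation. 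The fix is both simpler and already in the paper: because your Lyapunov function is linear in $h$, no angular averaging is needed at all --- apply Ito's formula to $u_h$ as in \eqref{eq:H}, use Corollary~\ref{cor:avg}, and use that $B_c(x)$ itself (not merely $\bar B_c$) is uniformly negative in a neighborhood of the maximum by Lemma~\ref{lemb:non-zero-drift}; this is precisely Lemma~\ref{lem:near_extremum}, which is the ingredient the paper's proof cites at this point. With Lemma~\ref{lem:near_extremum} substituted for your first stage, your argument closes and coincides in substance with the paper's proof.
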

\begin{proof}
    By Lemma~\ref{lem:near_extremum}, fix $\delta>0$ such that $\E_{(x,y)}\eta(2\delta)\leq1$  for all $\e$ sufficiently small and all $x$ satisfying $H(x)>1-2\delta$ ; By Lemma~\ref{lem:bounded_wo_t}, fix $\kappa>0$ such that $\Prob_{(x,y)}(\eta(\delta)<\tau(\e^\alpha))<1-\kappa$ all $x$ satisfying $H(x)=1-2\delta$, all $y\in\mathbb T^m$, and all $\e$ sufficiently small; By Lemma~\ref{lem:time_bounded_wo_t}, fix $T>4(1+\sup_{x\in U(\e^\alpha, 1-\delta), y\in\mathbb T^m}\E_{(x,y)}(\tau(\e^\alpha)\wedge\eta(\delta)))/\kappa$.
    For all $x$ with $H(x)>1-2\delta$ and $y\in\mathbb T^m$,
    \begin{equation}
    \begin{aligned}
        &\Prob_{(x,y)}(\tau(\e^\alpha)>2T)\\ &\leq \Prob_{(x,y)}(\eta(2\delta)>T)
        +\sup_{(x',y')\in\gamma(1-2\delta)\times\mathbb T^m}\left(\Prob_{(x',y')}(\tau(\e^\alpha)\wedge\eta(\delta)>T)+\Prob_{(x',y')}(\eta(\delta)<\tau(\e^\alpha))\right)\\
        &\leq 1-\kappa/2.
    \end{aligned}
    \end{equation}
    For all $x\in U$ with $\e^\alpha\leq H(x)\leq 1-2\delta$, the estimate above holds without the first term on the second line. Then the uniform boundedness follows from the Markov property.
\end{proof}
We can apply the similar idea near the separatrix. Namely, we choose $0<\alpha'<\alpha<1/4$. By Corollary~\ref{cor:exit_time_eps}, the process spent little time spent between $\gamma$ and $\gamma(\e^{\alpha'})$, and by Lemma~\ref{lem:lin_prob}, the process is very likely to exit through the separatrix rather than come back to $\gamma(\e^{\alpha'})$ once it reaches $\gamma(\e^\alpha)$. Then, since $\E_{(x,y)}\tau(\e^\alpha)$ is uniformly bounded, one can prove the following result:
\begin{lemma}
\label{lem:time2}
    $\E_{(x,y)}\sigma$ is uniformly bounded for all $x\in U$, $y\in\mathbb T^m$, and $\e$ sufficiently small.
\end{lemma}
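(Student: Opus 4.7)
The plan is to establish $\sup_{\e \text{ small},\, x \in U,\, y \in \mathbb T^m} \E_{(x,y)} \sigma < \infty$ by a strong Markov decomposition that uses two nested level sets near the separatrix, implementing the outline sketched in the paragraph preceding the lemma. Fix $0 < \alpha' < \alpha < 1/4$ and abbreviate
\[
\bar g(\e) := \sup_{\substack{H(z) = \e^\alpha\\ y \in \mathbb T^m}} \E_{(z,y)} \sigma, \qquad M(\e) := \sup_{\substack{H(z) = \e^{\alpha'}\\ y \in \mathbb T^m}} \E_{(z,y)} \sigma.
\]

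First I would bound $M(\e)$ in terms of $\bar g(\e)$. Starting from $z$ on $\gamma(\e^{\alpha'})$, continuity of $H(\tx_t^\e)$ forces $\tau(\e^\alpha) \leq \sigma$, so the strong Markov property at $\tau(\e^\alpha)$ together with Lemma~\ref{lem:time1} (with exponent $\alpha$), which furnishes a constant $C_1 := \sup_{H(z) \geq \e^\alpha,\, y} \E_{(z,y)} \tau(\e^\alpha)$ independent of $\e$, yields $M(\e) \leq C_1 + \bar g(\e)$.

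Next, I would bound $\bar g(\e)$ by a self-consistency argument. Take $z$ on $\gamma(\e^\alpha) \subset U(0, \e^{\alpha'})$ and decompose $\sigma$ at the stopping time $\sigma \wedge \tau(\e^{\alpha'})$. Corollary~\ref{cor:exit_time_eps} applied with exponent $\alpha'$ bounds the expected length of this initial segment by $O(\e^{2\alpha'}|\log \e|)$, while Lemma~\ref{lem:lin_prob} applied with $\alpha'$ shows that $\Prob_{(z,y)}(\tau(\e^{\alpha'}) < \sigma) \leq p_\e := \e^{\alpha - \alpha'} + O(\e^{\alpha'}|\log\e|) \to 0$. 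On the complementary event $\sigma$ has already occurred, while on $\{\tau(\e^{\alpha'}) < \sigma\}$ the remaining expected time is bounded by $M(\e)$, giving
\[
\bar g(\e) \leq O(\e^{2\alpha'}|\log\e|) + p_\e M(\e) \leq o(1) + p_\e \bigl(C_1 + \bar g(\e)\bigr);
\]
since $p_\e < 1/2$ for $\e$ small, rearranging yields $\bar g(\e) \leq 2C_1 + o(1)$, and hence $M(\e) \leq 3C_1 + o(1)$ uniformly in $\e$ small.

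Finally, for arbitrary $z \in U$, $y \in \mathbb T^m$, I would split on $H(z)$: if $H(z) \geq \e^{\alpha'}$, strong Markov at $\tau(\e^{\alpha'}) \leq \sigma$ combined with Lemma~\ref{lem:time1} (with exponent $\alpha'$) gives $\E_{(z,y)} \sigma \leq C_1' + M(\e)$; if $0 < H(z) < \e^{\alpha'}$, strong Markov at $\sigma \wedge \tau(\e^{\alpha'})$ combined with Corollary~\ref{cor:exit_time_eps} gives $\E_{(z,y)} \sigma \leq O(\e^{2\alpha'}|\log\e|) + M(\e)$; and $H(z) = 0$ is trivial. The main bookkeeping obstacle I foresee is verifying that the relevant stopping times $\tau(\e^\alpha), \tau(\e^{\alpha'}), \sigma \wedge \tau(\e^{\alpha'})$ are almost surely finite on the appropriate sublevel sets so that the strong Markov decompositions are legitimate uniformly in $y$; this finiteness and uniformity are built into Lemma~\ref{lem:time1} and Corollary~\ref{cor:exit_time_eps}.
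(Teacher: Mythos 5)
Your argument implements exactly the strategy the paper sketches in the paragraph preceding the lemma (the paper offers no proof beyond that sketch): bounded expected time to reach $\gamma(\e^\alpha)$ from Lemma~\ref{lem:time1}, short excursions below $\gamma(\e^{\alpha'})$ from Corollary~\ref{cor:exit_time_eps}, small return probability to $\gamma(\e^{\alpha'})$ from Lemma~\ref{lem:lin_prob}, all assembled via the strong Markov property for the pair $(\tx_t^\e,\txi_t^\e)$. The one step to tighten is the rearrangement of $\bar g(\e)\le o(1)+p_\e\bigl(C_1+\bar g(\e)\bigr)$, which presupposes $\bar g(\e)<\infty$; this is handled routinely by running the same decomposition for $\sigma\wedge N$ (so every quantity is finite), obtaining a bound uniform in $N$, and letting $N\to\infty$ by monotone convergence.
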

\begin{proof}[Proof of Proposition~\ref{prop:up_to_separatrix}]
     Fix $\kappa>0$ and $0<\alpha<1/4$. By Lemma~\ref{lem:time1}, let $T$ be large enough such that $\Prob_{(x,y)}(\tau(\e^\alpha)>T)<\kappa$ for all $x\in U$ satisfying $H(x)\geq\e^\alpha$, $y\in\mathbb T^m$, and $\e$ sufficiently small.
    By Lemma~\ref{lem:near_extremum}, let $\delta>0$ small enough such that for $\e$ sufficiently small
     \begin{equation}
     \label{eq:cov_near_extremum}
         \sup_{\substack{x\in U:H(x)\geq 1-\delta\\ y\in\mathbb T^m}}\sup_{\sigma'\leq\eta(\delta)}|\E_{(x,y)}[f(H(\tx_{\sigma'}^\e))-f(H(x))-\int_0^{\sigma'}\mathcal L_c f(H(\tx_s^\e))ds]|<\kappa,
     \end{equation}where the first supremum is taken over all stopping times $\sigma'\leq \eta(\delta)$.
     By Remark~\ref{rmk:accessibility} and Lemma~\ref{lem:bounded_wo_t}, let $\delta'>0$ small enough such that $\Prob_{(x,y)}(\eta(\delta')<\tau(\e^\alpha))<\kappa$ for all $x\in U(\e^\alpha,1-\delta)$, $y\in\mathbb T^m$, and $\e$ sufficiently small. 
     For stopping time $\sigma'\leq\tau(\e^\alpha)$, $x\in U(\e^\alpha,1-\delta)$, and $y\in\mathbb T^m$,
     \begin{equation}
     \label{eq:cov_to_eps}
         \begin{aligned}
             &|\E_{(x,y)} [f(H(\tx^\e_{\sigma'})-f(H(x))-\int_0^{\sigma'}{\mathcal L_c} f(H(\tx_s^\e))ds]|\\
             &\leq|\E_{(x,y)} [f(H(\tx^\e_{\eta(\delta')\wedge\sigma'})-f(H(x))-\int_0^{\eta(\delta')\wedge\sigma'}{\mathcal L_c} f(H(\tx_s^\e))ds]\\\
             &\quad+\Prob_{(x,y)}(\eta(\delta')<\sigma')\sup_{\substack{x'\in U:H(x')=\delta'\\ y'\in\mathbb T^m}}|\E_{(x',y')} [f(H(\tx^\e_{\sigma'}))-f(H(x'))-\int_0^{\sigma'}{\mathcal L_c} f(H(\tx_s^\e))ds]|.
         \end{aligned}
     \end{equation}
     Note that the first term converges to $0$ as $\e\downarrow0$ by Lemma~\ref{lem:bounded_wo_t}, the probability in the second term is less than $\kappa$, and the supremum is uniformly bounded for all $\e$ by Lemma~\ref{lem:time2}. Thus, the expression on the left-hand side of \eqref{eq:cov_to_eps} converges to $0$ uniformly. Combining this with \eqref{eq:cov_near_extremum}, we obtain
     \begin{equation}
     \label{eq:cov_to_eps2}
         \sup_{\substack{x\in U:H(x)\geq\e^\alpha\\ y\in\mathbb T^m}}\sup_{\sigma'\leq\tau(\e^\alpha)}|\E_{(x,y)}[f(H(\tx_{\sigma'}^\e))-f(H(x))-\int_0^{\sigma'}\mathcal L_c f(H(\tx_s^\e))ds]|\to0.
     \end{equation}
     Finally, let us choose $0<\alpha'<\alpha$. Apply Corollary~\ref{cor:exit_time_eps} and Lemma~\ref{lem:lin_prob} to obtain that $\E_{(x,y)}\sigma\wedge\tau(\e^{\alpha'})<\e^{\alpha'}$ and $\Prob_{(x,y)}(\sigma<\tau(\e^{\alpha'}))>1/2$ for all $x\in\gamma(\e^\alpha)$, $y\in\mathbb T^m$, and $\e$ sufficiently small. As in \eqref{eq:cov_to_eps}, by stopping the process at $\tau(\e^\alpha)\wedge\sigma'$ and $\tau(\e^{\alpha'})\wedge\sigma'$ and using the strong Markov property, we can conclude that 
     \begin{equation}
         \sup_{\substack{x\in U:H(x)\geq\e^\alpha\\ y\in\mathbb T^m}}\sup_{\sigma'\leq\sigma}|\E_{(x,y)}[f(H(\tx_{\sigma'}^\e))-f(H(x))-\int_0^{\sigma'}\mathcal L_c f(H(\tx_s^\e))ds]|\to0.
     \end{equation}
     Now \eqref{eq:main_result_before_sigma} follows from this by applying Corollary~\ref{cor:exit_time_eps} again.
\end{proof}

\subsection{Averaging principle starting from \texorpdfstring{$\gamma(\e^\alpha)$}{gamma(epsilon alpha)}}
Fix $0<\alpha<\alpha_1<\alpha_2<1/2$ and $r>0$ small enough. 
{More delicate results are obtained in this subsection to describe the behavior of processes during one excursion from $\gamma(\e^\alpha)$ to $\gamma$ (such excursions, in different domains, happen during the time intervals $[\tau_n,\sigma_n]$ defined in \eqref{eq:stopping_times}). In particular, Lemma~\ref{lem:eps_avg_prin_to_sp} gives bounds on contribution to (\ref{eq:mg_problem_M}) from each such excursion.}
Recall that $Q(h)$ is the rotation time of $\bm x_t$ on $\gamma(h)$.
Our first lemma concerns the typical deviation during one rotation.
\begin{lemma}
\label{lem4:one_rotation}
    For each $\delta>0$ there is $\kappa>0$ such that for all $x\in U(\e^{\alpha_1},r)$, $y\in\mathbb T^m$, and $\e$ sufficiently small,
    \begin{equation}
        \Prob_{(x,y)}\left(\sup_{t\in[0,Q(H(x))]}|H(\tbx_t^\e)-H(\bm x_t)|>\e^{1/2-\delta}\right)<\e^\kappa.
    \end{equation}
    There exists $\delta'>0$ and $\kappa>0$ such that for all $x\in U(\e^{\alpha_1},r)$, $y\in\mathbb T^m$, and $\e$ sufficiently small,
    \begin{equation}
        \Prob_{(x,y)}\left(\sup_{t\in[0,Q(H(x))]}|\tbx_t^\e-\bm x_t|>\e^{\delta'}\right)<\e^\kappa.
    \end{equation}
\end{lemma}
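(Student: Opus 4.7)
The starting point is the diffusion approximation \eqref{eq:slowx}, which we rewrite as
\begin{equation*}
\tbx_t^\e = x_0 + \int_0^t \nabla^\perp H(\tbx_s^\e)\,ds + F_t^\e,
\end{equation*}
where $F_t^\e$ decomposes into a $\sqrt\e$-scale martingale $\sqrt\e M_t$, with $M_t=\int_0^t\nabla_y u(\tbx_s^\e,\tbxi_s^\e)\sigma(\tbxi_s^\e)\,dW_s$ satisfying $\langle M\rangle_t\le Ct$, a finite-variation term $\e V_t$ with $|V_t|\le Ct$, and an $O(\e)$ boundary term from $u(x_0,y_0)-u(\tbx_t^\e,\tbxi_t^\e)$. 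The time horizon satisfies $t\le Q(H(x))\le Q(\e^{\alpha_1})=O(|\log\e|)$ since $Q(h)\lesssim|\log h|$ near the separatrix.

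For the first bound, observe that $\nabla^\perp H\cdot\nabla H=0$ yields the centering condition $\int b_h\,d\mu=0$, so by Lemma~\ref{lem:solution} the function $u_h:=u\cdot\nabla H$ solves $Lu_h=-b_h$. Ito's formula applied to $u_h(\tbx_t^\e,\tbxi_t^\e)$ then gives
\begin{equation*}
H(\tbx_t^\e)-H(x_0) = \sqrt\e M_t^h + O(\e|\log\e|),
\end{equation*}
with a continuous martingale $M_t^h$ whose quadratic variation is bounded by $Ct$. Since $H(\bm x_t)=H(x_0)$ exactly, this identity controls $H(\tbx_t^\e)-H(\bm x_t)$. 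Combining Dambis-Dubins-Schwarz with the Gaussian reflection bound,
\begin{equation*}
\Prob_{(x,y)}\!\Bigl(\sup_{t\le Q(H(x))}|M_t^h|>\e^{-\delta/2}\Bigr) \le 4\exp\!\Bigl(-\tfrac{\e^{-\delta}}{2C|\log\e|}\Bigr),
\end{equation*}
which decays faster than any power of $\e$. Multiplying by $\sqrt\e$ yields $|H(\tbx_t^\e)-H(\bm x_t)|\le\e^{1/2-\delta}$ off an event of probability at most $\e^\kappa$ for any fixed $\kappa$.

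For the second bound, set $\epsilon_t:=\tbx_t^\e-\bm x_t$ and subtract the averaged ODE to obtain
\begin{equation*}
\epsilon_t = \int_0^t\bigl[\nabla^\perp H(\tbx_s^\e)-\nabla^\perp H(\bm x_s)\bigr]\,ds + F_t^\e.
\end{equation*}
The martingale estimate above (now with $u$ in place of $u_h$) shows $\sup_{s\le Q(H(x))}|F_s^\e|\le\e^{1/2-\delta_1}$ outside an event of probability $\le\e^\kappa$, for any fixed $\delta_1>0$. Linearizing $\nabla^\perp H$ around $\bm x_s$ and invoking variation of constants, we obtain $\epsilon_t=\int_0^t\Phi_{t,s}\,dF_s^\e$ up to a quadratic correction in $\epsilon$ handled by bootstrap, where $\Phi_{t,s}$ is the fundamental matrix of $\dot\Phi=J(\bm x_t)\Phi$ with $J=D(\nabla^\perp H)$. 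The growth of $\|\Phi_{t,s}\|$ is dictated by passages of the orbit near the saddle point $O$, where the linearization is hyperbolic with eigenvalues $\pm\lambda=\pm\sqrt{|\det\nabla^2 H(O)|}$. Standard analysis near a non-degenerate saddle (in which $H$ is, after a smooth change of coordinates, the product $x_1x_2$) gives $\|\Phi_{t,s}\|\le CH(x)^{-1}\le C\e^{-\alpha_1}$ uniformly for $0\le s\le t\le Q(H(x))$. Therefore $\sup_{t\le Q(H(x))}|\epsilon_t|\le C\e^{1/2-\delta_1-\alpha_1}$, and choosing $\delta_1$ small enough that $\delta':=1/2-\alpha_1-\delta_1>0$, which is possible because $\alpha_1<1/2$, completes the proof.

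\textbf{Main obstacle.} The sharp estimate $\|\Phi_{t,s}\|\le CH(x)^{-1}$ is the delicate step. A crude Gronwall bound using the global Lipschitz constant $L=\|\nabla^2H\|_\infty$ would produce the factor $e^{LQ(\e^{\alpha_1})}$, which can substantially exceed $\e^{-\alpha_1}$ and then fails to beat the $\sqrt\e$ smallness of the martingale. The cancellation between the instability rate $\lambda$ at the saddle and the logarithmic rotation time $Q(h)\sim\lambda^{-1}|\log h|$ is precisely what yields a factor $H(x)^{-1}$ and makes the bound work uniformly for $\alpha_1<1/2$.
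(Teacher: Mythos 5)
Your first estimate is fine and is essentially the paper's own argument: write the equation for $H(\tbx^\e_t)$ via the corrector $u_h$, note the stochastic integrand is bounded, the horizon is $Q(H(x))=O(|\log\e|)$ and the drift and boundary terms are $O(\e|\log\e|)$; the paper just uses Chebyshev where you use an exponential martingale bound, which is harmless.

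The second estimate is where you genuinely depart from the paper, and there is a gap. Even granting your key bound $\|\Phi_{t,s}\|\le CH(x)^{-1}\le C\e^{-\alpha_1}$ (which you assert rather than prove; in this paper it is essentially equivalent to the Appendix~\ref{sec:derivatives} estimate $|\nabla q|=O(|\nabla H|/H)$ through the factorization $\Phi_{t,s}=(D(H,q)(\bm x_t))^{-1}D(H,q)(\bm x_s)$, so it does require an argument), your bootstrap does not close on the full parameter range of the lemma. The term you discard is $\int_0^t\Phi_{t,s}\,O(|\epsilon_s|^2)\,ds\lesssim \e^{-\alpha_1}|\log\e|\bigl(\sup_{s\le t}|\epsilon_s|\bigr)^2$, so self-consistency of the threshold $\e^{\delta'}$ forces $\e^{-\alpha_1}|\log\e|\,\e^{2\delta'}\ll\e^{\delta'}$, i.e. $\delta'>\alpha_1$; since the linear term only allows $\delta'\le 1/2-\alpha_1-\delta_1$, you need $\alpha_1<1/4$. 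But Section~4.3 fixes only $\alpha<\alpha_1<\alpha_2<1/2$ with $\alpha$ itself arbitrary in $(0,1/2)$ (this is the range used in Sections~5 and~6), so the lemma must hold for $\alpha_1\ge 1/4$, where your argument fails. (A smaller point: $\int_0^t\Phi_{t,s}\,dF^\e_s$ cannot be bounded by $\sup\|\Phi\|\cdot\sup|F^\e|$ directly; one needs integration by parts using $\partial_s\Phi_{t,s}=-\Phi_{t,s}\nabla(\nabla^\perp H)(\bm x_s)$, costing an extra $|\log\e|$, which is harmless.) The paper avoids the problem by never linearizing in Euclidean coordinates: it writes exact equations \eqref{eq4:H_tau}--\eqref{eq4:q_tau} for $H(\tbx^\e_t)$ and $q(\tbx^\e_t)$, stopped when either deviation gets large, with variances $O(\e|\log\e|)$ and $O(\e^{1-2\alpha_1}|\log\e|)$ respectively; converting back to the position through the inverse Jacobian, the large factor $|\nabla q|\sim\e^{-\alpha_1}$ multiplies only the tiny $H$-deviation $\e^{1/2-\delta}$, while the modest $q$-deviation $\e^{\delta''}$ is paired with the bounded entries coming from $\nabla H$. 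Your Gronwall/bootstrap in Euclidean coordinates loses exactly this decoupling in the quadratic remainder; to repair it you would either have to restrict to $\alpha_1<1/4$ (insufficient for the paper) or track the two components $(H,q)$ separately — which is precisely the paper's proof.
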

\begin{proof}
    It suffices to prove the result for $\delta<1/2-\alpha_1$. Fix $0<\delta'<\delta''<1/2-\alpha_1-\delta$. Recall the definition of $q$ in Subsection~\ref{sec:Averaging principle before} and consider the coordinates $H$ and $q$ in $U(\e^{\alpha_2},2r)$. As in \eqref{eq:H} and \eqref{eq:phi}, let $q_0=q(x)$, $u_h=u\cdot\nabla H$, $u_q=u\cdot\nabla q$, and $q_t=q_0+t$, and write the equations with $\tau^0=\inf\{t:|\tbh_t^\e-h_0|>\e^{1/2-\delta}~\mathrm{or}~|\tbq_t^\e-q_t|>\e^{\delta''}\}\wedge Q(h_0)$:
    \begin{align}
    H(\tbx_{\tau^0}^\e)&=H(x)+\sqrt{\e}\int_0^{\tau^0} \nabla_y u_h(\tbx_s^\e,\tbxi_s^\e)^{\mathsf T}\sigma(\tbxi_s^\e)dW_s+\e(u_h(x,y)-u_h(\tbx_{\tau^0}^\e,\tbxi_{\tau^0}^\e))\nonumber\\
    &\quad+\e\int_0^{\tau^0}[\nabla_x u_h(\tbx_s^\e,\tbxi_s^\e)\cdot b(\tbx_s^\e,\tbxi_s^\e)+\nabla_y u_h(\tbx_s^\e,\tbxi_s^\e)\cdot c(\tbx_s^\e,\tbxi_s^\e)]ds,\label{eq4:H_tau}\\
    q(\tbx_{\tau^0}^\e)&=q_{\tau^0}+\sqrt{\e}\int_0^{\tau^0} \nabla_y u_q(\tbx_s^\e,\tbxi_s^\e)^{\mathsf T}\sigma(\tbxi_s^\e)dW_s+\e(u_q(x,y)-u_q(\tbx_{\tau^0}^\e,\tbxi_{\tau^0}^\e))\nonumber\\
    &\quad+\e\int_0^{\tau^0}[\nabla_x u_q(\tbx_s^\e,\tbxi_s^\e)\cdot b(\tbx_s^\e,\tbxi_s^\e)+\nabla_y u_q(\tbx_s^\e,\tbxi_s^\e)\cdot c(\tbx_s^\e,\tbxi_s^\e)]ds.\label{eq4:q_tau}
\end{align}
In Appendix~\ref{sec:derivatives}, we prove that $|\nabla q|=O(|\nabla H|/H)$. Thus, it is not hard to see, by looking at the inverse of the Jacobian of $(H,q)$ w.r.t. $x$, that $|H(\tbx_t^\e)-H(\bm x_t)|\leq\e^{1/2-\delta}$ and $|\tbx_t^\e-\bm x_t|\leq\e^{\delta'}$ for all $t\leq\tau^0$. Let $S_H$ and $S_Q$ denote the stochastic integrals in \eqref{eq4:H_tau} and \eqref{eq4:q_tau}. Since ${\tau^0}\lesssim|\log\e|$,
\[\Prob_{(x,y)}(\tau^0< Q(h(x)))<\Prob_{(x,y)}(|S_H|>\e^{1/2-\delta}/2)+\Prob_{(x,y)}(|S_Q|>\e^{\delta''}/2).\] The variance of $S_H$ and $S_Q$ is small:
\begin{align*}
    \bm{\mathrm{Var}}(S_H)&=\e\E(\int_0^{\tau^0} |\nabla_y u_h(\tx_s^\e,\txi_s^\e)^{\mathsf T}\sigma(\txi_s^\e)|^2ds)\lesssim\e\E(\int_0^{\tau^0}|\nabla H(\tx_s^\e)|^2ds)\lesssim\e|\log\e|,\\
    \bm{\mathrm{Var}}(S_Q)&=\e\E(\int_0^{\tau^0} |\nabla_y u_q(\tx_s^\e,\txi_s^\e)^{\mathsf T}\sigma(\txi_s^\e)|^2ds)\lesssim\e\E(\int_0^{\tau^0}|\nabla q(\tx_s^\e)|^2ds)\lesssim\e^{1-2\alpha_1}|\log\e|.
\end{align*}
Hence both results follow from Chebyshev's inequality with $\kappa<\delta$.
\end{proof}
Let $F(h)$ be the solution to
\begin{equation}
\label{eq:ode}
    \begin{cases}
    {\mathcal L_c}F=-1\\
    F(0)=F(2r)=0
    \end{cases}
\end{equation} 
Let $\tau^1$ and $\tau^2$ be the first times for $\tx_t^\e$ to exit $U(\e^{\alpha_1},r)$ and $U(\e^{\alpha_2},2r)$, respectively. Let $x_t^\e=\bm x_{t/\e}$. 
\begin{lemma}
\label{lem4:function_F}
    There exists a function $g(r)$ with $\lim_{r\to0}g(r)=0$ such that $|F'(h)|<g(r)$ for all $0<h<2r$. There exists $C>0$ such that $|F''(h)|<C|\log h|$ and $|F'''(h)|<C/h$.
\end{lemma}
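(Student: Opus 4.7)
The plan is to integrate the linear ODE $\mathcal L_c F=-1$ explicitly via an integrating factor and then read off the derivative bounds from the resulting formula together with the ODE itself. The essential input is the behavior of $\bar A$ and $\bar B_c$ near the saddle end $h=0$: by the definition of $p_k$ in Definition~\ref{def:domain_original} and the logarithmic divergence $Q(h)\asymp|\log h|$ recorded in Appendix~\ref{sec:derivatives}, one has $\bar A(h)\asymp 1/|\log h|$, and in particular $1/\bar A(h)\lesssim|\log h|$. The drift coefficient $\bar B_c$ extends continuously up to $h=0$ (a fact already invoked in Remark~\ref{rmk:accessibility}), so $\bar B_c$ is bounded on $[0,2r]$. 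A parallel computation to that in Appendix~\ref{sec:derivatives} yields $|\bar A'(h)|=O(1/(h|\log h|^2))$ and $|\bar B_c'(h)|=O(|\log h|)$.

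Rewrite the ODE as $F''+pF'=-q$ with $p=2\bar B_c/\bar A$ and $q=2/\bar A$, so that $|p(h)|,|q(h)|\lesssim|\log h|$. Since $|\log s|$ is integrable near $0$, the integrating factor $\mu(h):=\exp\bigl(\int_{r}^{h}p(s)\,ds\bigr)$ is bounded above and below by positive constants on $[0,2r]$ uniformly in sufficiently small $r$, and the ODE takes the form $(\mu F')'=-\mu q$. Since $F(0)=F(2r)=0$, Rolle's theorem supplies $h^\ast\in(0,2r)$ with $F'(h^\ast)=0$, and integrating from $h^\ast$ gives
\[
F'(h)=-\frac{1}{\mu(h)}\int_{h^\ast}^{h}\mu(s)q(s)\,ds,\qquad |F'(h)|\lesssim\int_{0}^{2r}|\log s|\,ds\lesssim r\,|\log r|=:g(r),
\]
which tends to $0$ as $r\to 0$, proving the first claim.

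For the higher derivatives I would read them directly off the ODE. Writing $F''(h)=-\frac{2}{\bar A(h)}\bigl(1+\bar B_c(h)F'(h)\bigr)$ and substituting the asymptotics above together with $|F'|\lesssim g(r)$ yields $|F''(h)|\leq C|\log h|$. Differentiating the ODE once more and solving for $F'''$,
\[
F'''(h)=-\frac{\bar A'(h)}{\bar A(h)}F''(h)-\frac{2}{\bar A(h)}\bigl(\bar B_c'(h)F'(h)+\bar B_c(h)F''(h)\bigr);
\]
the dominant contribution $\tfrac{|\bar A'|}{\bar A}|F''|\lesssim\tfrac{1}{h|\log h|}\cdot|\log h|=\tfrac1h$ prevails over the $O(|\log h|^2)=o(1/h)$ from the other terms, yielding $|F'''(h)|\leq C/h$.

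The principal obstacle is verifying the asymptotics and derivative estimates of $\bar A$ and $\bar B_c$ near $h=0$, which demands a careful geometric analysis of the level sets $\gamma(h)$ as they approach the separatrix and extends the computations of Appendix~\ref{sec:derivatives}. The non-degeneracy $\lim_{h\to 0}\bar A(h)Q(h)=p_k>0$ is crucial: without it the integrating factor $\mu$ could degenerate and $F'$ would fail to inherit the smallness of $r$.
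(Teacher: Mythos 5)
Your route is essentially the one the paper intends: the paper's proof of this lemma is a single sentence deferring to the estimates for $Q(h)$ in Appendix~\ref{sec:derivatives}, and solving the two-point problem for ${\mathcal L_c}F=-1$ with an integrating factor, using $1/\bar A(h)\lesssim|\log h|$ and boundedness of $\bar B_c$ (Remark~\ref{rmk:accessibility}), is exactly the computation being left to the reader; your derivation of $|F'|\lesssim r|\log r|$ and, from the equation itself, $|F''|\lesssim|\log h|$ is correct.

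The one point needing more care is the pair of asserted estimates $|\bar A'(h)|=O(1/(h|\log h|^2))$ and $|\bar B_c'(h)|=O(|\log h|)$, which you defer to ``a parallel computation'' to Appendix~\ref{sec:derivatives}. The generic estimate proved there, $F'_h=O(|\log h|/h)$ for a function whose Euclidean derivatives are bounded, would only give $|\bar A'|=O(|\log h|/h)$, and with that bound the term $(\bar A'/\bar A)F''$ is $O(|\log h|^3/h)$, not $O(1/h)$; so your $F'''$ bound genuinely requires more than the appendix as written. The stronger bound does hold, but one must use the specific structure of $\bar A$: since $u_h=u\cdot\nabla H$, one has $A(x)\lesssim|\nabla H(x)|^2$ and $|\nabla_x A(x)|\lesssim|\nabla H(x)|$, so writing $\bar A(h)=C(h)/Q(h)$ with $C(h)=\int_{\gamma(h)}A\,\frac{dl}{|\nabla H|}$, the divergence computation of the appendix gives a bounded integrand for $C'(h)$, hence $C'(h)=O(|\log h|)$, while $Q'(h)=O(1/h)$, and therefore $\bar A'(h)=O(1)+O(1/(h|\log h|^2))$, which is what your argument needs. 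For $\bar B_c$ the analogous computation only yields $|\bar B_c'(h)|=O(1/(h|\log h|))$ (the relevant integrand is merely $O(1/|\nabla H|^2)$), so your stated $O(|\log h|)$ is too optimistic; this is harmless, however, since the corresponding term in $F'''$ is then bounded by $|\log h|\cdot(h|\log h|)^{-1}\cdot g(r)=O(1/h)$, and the conclusion $|F'''(h)|\leq C/h$ survives.
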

\begin{proof}
The bounds can be verified with the help of estimates for $Q(h)$ in Appendix~\ref{sec:derivatives}.    
\end{proof}
\begin{lemma}
\label{lem:time_exit_alpha1}
There exists a function $g(r)$ with $\lim_{r\to0}g(r)=0$ such that for all $x\in\gamma(\e^\alpha)$, $y\in\mathbb T^m$, and $\e$ sufficiently small,
\begin{equation}
\label{eq4:epsilon_close_time}
    \E_{(x,y)}\tau^1\leq \e^\alpha g(r).
\end{equation}
\end{lemma}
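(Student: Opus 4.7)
The plan is to use the function $F$ from \eqref{eq:ode} as a Lyapunov test function, exploiting $\mathcal L_c F \equiv -1$ on $[0,2r]$ together with $F(0) = F(2r) = 0$. Since $\tx_t^\e$ must exit $U(\e^{\alpha_1}, r)$ strictly before it can reach the separatrix $\{H=0\}$, we have $\tau^1 \leq \sigma$, and Proposition~\ref{prop:up_to_separatrix} applies with $f = F$ and $\sigma' = \tau^1$:
\begin{equation*}
\E_{(x,y)}\bigl[F(H(\tx^\e_{\tau^1})) - F(\e^\alpha) + \tau^1\bigr] \to 0, \qquad \e \downarrow 0,
\end{equation*}
uniformly in $x \in \gamma(\e^\alpha)$, $y \in \mathbb T^m$.

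Because $\mathcal L_c F = -1 < 0$ and $F$ vanishes at both endpoints of $[0,2r]$, the maximum principle gives $F \geq 0$ on $[0,2r]$, so the exit-value term $\E F(H(\tx^\e_{\tau^1})) \geq 0$ can be dropped for an upper bound. Combined with Lemma~\ref{lem4:function_F} and $F(0) = 0$, which yield $F(\e^\alpha) \leq \|F'\|_\infty \cdot \e^\alpha \leq g(r)\,\e^\alpha$, this leads to
\begin{equation*}
\E_{(x,y)} \tau^1 \;\leq\; g(r)\,\e^\alpha \;+\; o(1).
\end{equation*}

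The main obstacle is sharpening the $o(1)$ remainder to $o(\e^\alpha)$, which is needed to absorb it into $\e^\alpha g(r)$ after a harmless redefinition of $g$. Rather than invoking Proposition~\ref{prop:up_to_separatrix} as a black box, I would redo the averaging step for this specific $F$ by applying Ito's formula to $F(\hat H_t^\e)$ (see \eqref{eq:h_hat}) up to time $\tau^1$. The $y$-averaging step is then controlled by Corollary~\ref{cor:avg} applied to $\nabla_x u_h \cdot b + \nabla_y u_h \cdot c - B_c$ and $|\nabla_y u_h^{\mathsf T}\sigma|^2 - A$, contributing an error of order $\e(\|F'\|_\infty + \|F''\|_\infty) = O(\e|\log\e|) = o(\e^\alpha)$ since $\alpha < 1$. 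The $\phi$-averaging step, treated mode-by-mode via the auxiliary function $v = g_k Q(h)/(2\pi ik)$ from the proof of Lemma~\ref{lem:avg_rotation}, contributes a positive power of $\e$ per Fourier mode when multiplied against the small factor $\|F'\|_\infty \leq g(r)$; summing over $K(\e) = \e^{-\alpha}|\log\e|^2$ modes still yields a positive power of $\e$ overall, and hence $o(\e^\alpha)$ provided $\alpha < \alpha_1 < 1/2$ are chosen with $\alpha_1 + 2\alpha < 1$. A parallel estimate on $\E F(H(\tx^\e_{\tau^1}))$, using the harmonic function for $\mathcal L_c$ on $[\e^{\alpha_1}, r]$ to bound the probability of exit at $\gamma(r)$ by $O(\e^\alpha/r)$, corroborates that the subtracted term is itself $O(g(r)\e^\alpha)$ and thus consistent with the final bound.
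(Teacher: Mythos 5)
Your starting point matches the paper's: use $F$ from \eqref{eq:ode} together with $F(0)=0$, $|F'|\leq g(r)$ (Lemma~\ref{lem4:function_F}), so that $F(\e^\alpha)\leq g(r)\e^\alpha$, and try to show that $\E_{(x,y)}\tau^1$ is essentially bounded by $F(\e^\alpha)$. The gap is in the step you yourself flag as "the main obstacle": upgrading the averaging error from $o(1)$ to $o(\e^\alpha)$. Rerunning the Fourier-mode machinery of Lemma~\ref{lem:avg_rotation} does not deliver this as you claim. In that lemma the per-mode error is exactly $o(\e^\alpha/|\log\e|^2)$ and the number of modes is $\e^{-\alpha}|\log\e|^2$, so the total is only $o(1)$; there is no "positive power of $\e$ per mode" left over to sum. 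The small factor $\|F'\|_\infty\leq g(r)$ multiplies only the $B_cF'$ part, not the $AF''$ part, where $F''(h)\sim|\log h|$ and, after differentiating the mode coefficients in $h$, $F'''\sim 1/h$ (and effectively $F''''\sim h^{-2}$) enter; moreover before $\tau^1$ the process only stays above $\e^{\alpha_1}$ with $\alpha_1$ allowed up to $1/2$, whereas the Section~\ref{sec:Averaging principle before} estimates are only valid with cutoff exponent $<1/4$ (this is why the paper restricts $\alpha<1/4$ there). Your extra hypothesis $\alpha_1+2\alpha<1$ is both unjustified by any bookkeeping and not available: $\alpha<\alpha_1<\alpha_2<1/2$ are fixed arbitrarily at the start of Section 4.3, and the lemma (through Corollary~\ref{cor:exit_time_r}, Lemma~\ref{lem:eps_avg_prin_to_sp}, Lemma~\ref{lem:number_excursion}) is used for these. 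Finally, the time-proportional correction terms in your Ito expansion are integrals up to $\tau^1$, whose expectation is precisely the unknown quantity; you neither absorb them (as a $\delta\,\E\tau^1$ term) nor break the circularity in any other way. A minor additional point: your opening display invokes Proposition~\ref{prop:up_to_separatrix} with $f=F$, but $F\notin\mathcal D$ (it is defined only on $[0,2r]$ on one edge and satisfies no gluing condition); since you use this only as motivation it is not fatal, but it cannot be part of the proof.

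The paper closes the gap by a different mechanism that your proposal is missing: instead of a global-in-time averaging estimate, it works rotation by rotation. Over a single rotation period $\e Q(H(x))$ (stopped at $\bar\tau^2$), the $y$-averaging/Ito step gives \eqref{eq4:bartau2} with an $O(\e)$ error, the angular average is computed exactly along the deterministic orbit via \eqref{eq4:epsilonQ}, and the stochastic trajectory is compared to the deterministic one using the one-rotation deviation bounds of Lemma~\ref{lem4:one_rotation} together with the bounds on $F',F'',F'''$ from Lemma~\ref{lem4:function_F}; this yields the key inequality \eqref{eq:compare_T}, i.e.\ a guaranteed expected decrease of $F$ of at least $\e Q(H(x))/K$ per rotation. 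Iterating over the recursively defined rotation times $\bar\tau^2_k$ then gives $\E_{(x,y)}\tau^1\leq KF(H(x))\leq Kg(r)\e^\alpha$. The crucial feature is that all errors are measured relative to the rotation time $\e Q(h)$ and summed through the supermartingale-type recursion, so no $o(\e^\alpha)$ averaging estimate over $[0,\tau^1]$ is ever needed; this is the idea your sketch would have to supply to become a proof.
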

\begin{proof}
For $(\tx_t^\e,\txi_t^\e)$ starting from $(x,y)$, we define $\bar\tau^2=\e Q(H(x))\wedge\tau^2$. As in \eqref{eq:the_result}:
\begin{equation}
\label{eq4:bartau2}
    \E_{(x,y)}[F(H(\tx_{\bar\tau^2}^\e))-F(H(x))-\int_0^{\bar\tau^2}(\frac{1}{2}A(\tx_s^\e)F''(H(\tx_s^\e))+B(\tx_s^\e)F'(H(\tx_s^\e)))ds]=O(\e),
\end{equation}uniformly in $x\in U(\e^{\alpha_2},2r)$ and $y\in\mathbb T^m$.
By the definition of $\bar A(h)$ and $\bar B(h)$, one can see that $\e Q(H(x))\bar A(H(x))=\int_0^{\e Q(H(x))}A(x^\e_s)ds$ and $\e Q(H(x))\bar B(H(x))=\int_0^{\e Q(H(x))}B(x^\e_s)ds$. Since $F$ solves \eqref{eq:ode}, it follows that
\begin{equation}
\label{eq4:epsilonQ}
    \e Q(H(x))=-\e Q(H(x)){\mathcal L_c}F(H(x))=-\int_0^{\e Q(H(x))}\frac{1}{2}A(x^\e_s)F''(H(x^\e_s))+B(x^\e_s)F'(H(x^\e_s))ds.
\end{equation}
We prove that there exists $K>0$ such that 
\begin{equation}
\label{eq:compare_T}
    K\E_{(x,y)}(F(H(\tx_{\bar\tau^2}^\e))-F(H(x)))\leq-\e Q(H(x))
\end{equation}
uniformly in $x\in U(\e^{\alpha_1},r)$, $y\in\mathbb T^m$, and all $\e$ sufficiently small. Then it follows that
\begin{equation}
\label{eq4:sub_solution}
    \E_{(x,y)}\tau^1\leq KF(H(x)),
\end{equation}
for $x\in U(\e^{\alpha_1},r)$, $y\in\mathbb T^m$, and all $\e$ sufficiently small.
Indeed, we can define $\bar\tau^2_k$, $k\geq0$ recursively: $\bar\tau^2_0=0$, $\bar\tau^2_{k+1}=\inf\{t\geq\bar\tau^2_{k}:\tx_t^\e\not\in U(\e^{\alpha_2},2r)\}\wedge(\e Q(H(\tx^\e_{\bar\tau^2_k}))+\bar\tau^2_k)$, and denote the first $k$ such that $\bar\tau^2_k$ exceeds $\tau^1$ as $\bm n$. Then we have
\begin{equation}
    \begin{aligned}
        &\E_{(x,y)} \left[F(H(\tx_{\bar\tau^2_{\bm n}}))-F(H(x))\right]\\
        &=\E_{(x,y)}\sum_{k=0}^\infty\chi_{\bar\tau^2_{k}<\tau^1}\left[F(H(\tx_{\bar\tau^2_{k+1}}))-F(H(\tx_{\bar\tau^2_{k}}))\right]\\
        &\leq\E_{(x,y)}\sum_{k=0}^\infty\chi_{\bar\tau^2_{k}<\tau^1}\sup_{(x',y')\in U(\e^{\alpha_1},r)\times\mathbb T^m}\E_{(x',y')}\left[F(H(\tx_{\bar\tau^2}^\e))-F(H(x'))\right]\\
        &\leq\frac{1}{K}\E_{(x,y)}\sum_{k=0}^\infty\chi_{\bar\tau^2_{k}<\tau^1}(-\e Q(\tx_{\bar\tau^2_{k}})).
    \end{aligned}
\end{equation}
Hence \begin{equation*}
    \E_{(x,y)}\tau^1\leq\E_{(x,y)}\bar\tau^2_{\bm n}=\E_{(x,y)}\sum_{k=0}^\infty\chi_{\bar\tau^2_{k}<\tau^1}(\bar\tau^2_{k+1}-\bar\tau^2_{k})\leq\e\E_{(x,y)}\sum_{k=0}^\infty\chi_{\bar\tau^2_{k}<\tau^1}Q(\tx_{\bar\tau^2_{k}})\leq KF(H(x)).
\end{equation*}Then \eqref{eq4:epsilon_close_time} follows from \eqref{eq4:sub_solution} and Lemma~\ref{lem4:function_F} by taking $x\in\gamma(\e^\alpha)$.

To prove \eqref{eq:compare_T}, it is enough to see that, for $x\in U(\e^{\alpha_1},r)$, $y\in\mathbb T^m$, and $\e$ sufficiently small,
\begin{align*}
    &\e Q(H(x))+\E_{(x,y)}F(H(\tx_{\bar\tau^2}^\e))-F(H(x))\\
    &=-\E_{(x,y)}\int_0^{\e Q(H(x))}\left(\frac{1}{2}A(x^\e_s)F''(H(x^\e_s))+B(x^\e_s)F'(H(x^\e_s))\right)ds\\
    &\quad+\E_{(x,y)}\int_0^{\bar\tau^2}\left(\frac{1}{2}A(\tx_s^\e)F''(H(\tx_s^\e))+B(\tx_s^\e)F'(H(\tx_s^\e))\right)ds+O(\e)\\
    &=\E_{(x,y)}\int_0^{\bar\tau^2}\left(\frac{1}{2}A(\tx_s^\e)F''(H(\tx_s^\e))-\frac{1}{2}A(x^\e_s)F''(H(x^\e_s))\right)ds\\
    &\quad+\E_{(x,y)}\int_0^{\bar\tau^2}\left(B(\tx_s^\e)F'(H(\tx_s^\e))-B(x^\e_s)F'(H(x^\e_s))\right)ds\\
    &\quad-\E_{(x,y)}\int_{\bar\tau^2}^{\e Q(H(x))}\left(\frac{1}{2}A(x^\e_s)F''(H(x^\e_s))+B(x^\e_s)F'(H(x^\e_s))\right)ds+O(\e)\\
    &=o(\e Q(H(x))),
\end{align*}
where the first equality is due to \eqref{eq4:bartau2} and \eqref{eq4:epsilonQ} and the last equality is due to Lemma~\ref{lem4:one_rotation} and Lemma~\ref{lem4:function_F}. 
\end{proof}
Similarly to Lemma~\ref{lem:time2}, we can look at the transitions between $\gamma(\e^\alpha)$ and $\gamma(\e^{\alpha_1})$. By the transition probabilities given in Lemma~\ref{lem:lin_prob} and transition time given in Corollary~\ref{cor:exit_time_eps} and Lemma~\ref{lem:time_exit_alpha1}, one can obtain the following result using the strong Markov property.
\begin{corollary}
\label{cor:exit_time_r}
There exists a function $g(r)$ with $\lim_{r\to0}g(r)=0$ such that for all $x\in\gamma(\e^\alpha)$, $y\in\mathbb T^m$, and $\e$ sufficiently small,
\begin{equation}
    \label{eq:g(r)}
    \E_{(x,y)}\tau(r)\wedge\sigma\leq \e^\alpha g(r).
\end{equation}
\end{corollary}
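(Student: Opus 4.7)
The plan is to combine the three preceding estimates via a two-phase strong-Markov cycle argument, alternating excursions in the outer annulus $U(\e^{\alpha_1},r)$ with short inner transits of the thin layer $U(0,\e^\alpha)$ adjacent to the separatrix, and stopping once the process exits through $\gamma(r)$ or reaches $\gamma$. The three ingredients are Lemma~\ref{lem:time_exit_alpha1}, bounding the mean length of each outer excursion from $\gamma(\e^\alpha)$ by $\e^\alpha g_1(r)$ with $g_1(r)\to 0$ as $r\to 0$; Corollary~\ref{cor:exit_time_eps}, bounding the mean length of each inner transit from $\gamma(\e^{\alpha_1})$ by $C\e^{2\alpha}|\log\e|$; and Lemma~\ref{lem:lin_prob}, controlling the probability that such an inner transit returns to $\gamma(\e^\alpha)$ instead of reaching $\gamma$.

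Concretely, I would set $\rho_0=0$ and inductively define
\begin{align*}
\rho_{2k+1}&=\inf\{t>\rho_{2k}:\tx_t^\e\in\gamma(\e^{\alpha_1})\cup\gamma(r)\},\\
\rho_{2k+2}&=\inf\{t>\rho_{2k+1}:\tx_t^\e\in\gamma(\e^\alpha)\cup\gamma\},
\end{align*}
and let $N=\inf\{k\geq 1:\tx_{\rho_k}^\e\in\gamma(r)\cup\gamma\}$, so that $\tau(r)\wedge\sigma\leq\rho_N$. On the event $\{N>2k\}$ one has $\tx_{\rho_{2k}}^\e\in\gamma(\e^\alpha)$, whence by Lemma~\ref{lem:time_exit_alpha1} and the strong Markov property $\E[\rho_{2k+1}-\rho_{2k}\mid\mathcal F_{\rho_{2k}}]\leq\e^\alpha g_1(r)$. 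On $\{N>2k+1\}$ one has $\tx_{\rho_{2k+1}}^\e\in\gamma(\e^{\alpha_1})\subset U(0,\e^\alpha)$, so Corollary~\ref{cor:exit_time_eps} gives $\E[\rho_{2k+2}-\rho_{2k+1}\mid\mathcal F_{\rho_{2k+1}}]\leq C\e^{2\alpha}|\log\e|$, and Lemma~\ref{lem:lin_prob} gives
\begin{equation*}
\Prob\bigl(\tx_{\rho_{2k+2}}^\e\in\gamma(\e^\alpha)\bigm|\mathcal F_{\rho_{2k+1}}\bigr)\leq \e^{\alpha_1-\alpha}+O(\e^\alpha|\log\e|)\leq\tfrac{1}{2}
\end{equation*}
for $\e$ small, since $\alpha_1>\alpha$. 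In particular $\Prob(N\geq 2m+1)\leq 2^{-m}$.

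Summing the conditional bounds yields $\E_{(x,y)}\rho_N\leq 2\bigl(\e^\alpha g_1(r)+C\e^{2\alpha}|\log\e|\bigr)$, and choosing $\e$ small enough (depending on $r$) that $2C\e^\alpha|\log\e|\leq g_1(r)$ gives the claim with $g(r):=3g_1(r)$. The principal technical obstacle is to verify that the inner-phase residual $O(\e^{2\alpha}|\log\e|)=\e^\alpha\cdot O(\e^\alpha|\log\e|)$ is uniformly dominated by the outer-phase contribution $\e^\alpha g_1(r)$; the separation $\alpha<\alpha_1<1/2$ is used twice, both to keep the per-cycle return probability $\e^{\alpha_1-\alpha}$ bounded away from $1$ and to absorb the inner-phase time into a function of $r$ alone once $\e$ is sufficiently small.
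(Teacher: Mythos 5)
Your proposal is correct and follows essentially the same route the paper intends: the paper's own (one-sentence) proof is precisely to alternate transitions between $\gamma(\e^\alpha)$ and $\gamma(\e^{\alpha_1})$, combining the excursion-time bound of Lemma~\ref{lem:time_exit_alpha1}, the inner-layer time bound of Corollary~\ref{cor:exit_time_eps}, and the return-probability bound of Lemma~\ref{lem:lin_prob} via the strong Markov property. Your cycle construction with the geometric bound $\Prob(N>2m)\leq 2^{-m}$ and the absorption of the $O(\e^{2\alpha}|\log\e|)$ inner-phase contribution for $\e$ small (depending on the fixed $r$) is a faithful and correct elaboration of that sketch.
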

\begin{lemma}
\label{lem:eps_avg_prin_to_sp}
For each $f\in\mathcal D$, as $\e\downarrow0$,
    \begin{equation}
    \sup_{(x,y)\in\gamma(\e^\alpha)\times\mathbb T^m}\sup_{\sigma'\leq\sigma}|\E_{(x,y)} [f(H(\tx^\e_{\sigma'}))-f(H(x))-\int_0^{\sigma'}{\mathcal L_c} f(H(\tx_s^\e))ds]|=o(\e^\alpha),
\end{equation}where the first supremum is taken over all stopping times $\sigma'\leq \sigma$.
\end{lemma}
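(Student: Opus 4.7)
The plan is to strengthen Proposition~\ref{prop:up_to_separatrix}, which only yields the bound $o(1)$, to $o(\e^\alpha)$ for starting points restricted to the level set $\gamma(\e^\alpha)$. The improvement relies on two features specific to this restricted initial data: by Corollary~\ref{cor:exit_time_r}, the expected time to exit the annular region $\{0<H<r\}\subset U$ is of order $\e^\alpha g(r)$ with $g(r)\to 0$ as $r\to 0$; and a martingale-type argument applied to the equation \eqref{eq:H} for $\tilde H_t^\e$ will show that the probability of reaching $\gamma(r)$ before the separatrix is of order $\e^\alpha/r$, so the ``long excursion'' event is rare.

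First I would fix a small $r>0$, set $\theta=\tau(r)\wedge\sigma$, and decompose the quantity $A:=\E_{(x,y)}[f(H(\tx_{\sigma'}^\e))-f(H(x))-\int_0^{\sigma'}\mathcal L_c f(H(\tx_s^\e))ds]$ as $A=A_1+A_2$, where $A_1$ uses the same integrand but stops at $\sigma'\wedge\theta$, and
\[
A_2 = \E_{(x,y)}\chi_{\{\sigma'>\theta\}}\Bigl[f(H(\tx_{\sigma'}^\e))-f(H(\tx_\theta^\e))-\int_\theta^{\sigma'}\mathcal L_c f(H(\tx_s^\e))\,ds\Bigr].
\]
For $A_2$, the constraint $\sigma'\leq\sigma$ forces $\theta=\tau(r)<\sigma$ on the event $\{\sigma'>\theta\}$, so $\tx_\theta^\e\in\gamma(r)$. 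The strong Markov property and Proposition~\ref{prop:up_to_separatrix} give $|A_2|\leq\Prob_{(x,y)}(\tau(r)<\sigma)\cdot\omega(\e)$, where $\omega(\e)=o(1)$ denotes the uniform rate from that proposition. To bound the probability, I take expectation in \eqref{eq:H} at $\theta$: the martingale term vanishes, the boundary correction is $O(\e)$, and the drift is bounded by $C\,\E\theta\leq C\e^\alpha g(r)$ thanks to Corollary~\ref{cor:exit_time_r}. Since $H>0$ throughout $U$ until $\sigma$, we have $\tilde H_\theta^\e\in\{0,r\}$, whence $r\Prob(\tau(r)<\sigma)=\E\tilde H_\theta^\e=\e^\alpha+O(\e^\alpha g(r))$, giving $\Prob(\tau(r)<\sigma)=O(\e^\alpha/r)$ for fixed $r$ and therefore $|A_2|=O(\omega(\e)\e^\alpha/r)=o(\e^\alpha)$.

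For $A_1$, I would invoke the diffusion approximation \eqref{eq:the_result} at the stopping time $\sigma'\wedge\theta\leq\sigma$ to rewrite
\[
A_1 = \E_{(x,y)}\int_0^{\sigma'\wedge\theta}\Bigl[\tfrac12(A-\bar A)(\tx_s^\e,\txi_s^\e)f''(\tilde H_s^\e) + (B_c-\bar B_c)(\tx_s^\e,\txi_s^\e)f'(\tilde H_s^\e)\Bigr]ds + O(\e).
\]
Since the integrand is bounded on $U(0,r)$ by a constant $C(f,r)$ and $\E(\sigma'\wedge\theta)\leq\e^\alpha g(r)$ by Corollary~\ref{cor:exit_time_r}, this yields $|A_1|\leq C(f,r)\,g(r)\,\e^\alpha+O(\e)$. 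Combining the two bounds, $|A|/\e^\alpha\leq C(f,r)g(r) + O(\omega(\e)/r) + O(\e^{1-\alpha})$ uniformly in $x\in\gamma(\e^\alpha)$, $y\in\mathbb T^m$, and $\sigma'\leq\sigma$; sending $\e\to 0$ for fixed $r$ and then letting $r\to 0$ concludes the argument.

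The principal obstacle is the sharp transition-probability estimate $\Prob(\tau(r)<\sigma)=O(\e^\alpha/r)$: without a martingale argument making this rate uniform in small $\e$, one has only the trivial bound $\Prob\leq 1$ and $A_2$ degrades to $o(1)$ rather than $o(\e^\alpha)$. A secondary technicality is the uniform bound on the integrand for $A_1$: if $f\in\mathcal D$ admits a logarithmic blow-up of $f''$ at $h=0$ (consistent with condition (ii) of Definition~\ref{def:domain_original}, where $\bar A f''$ is bounded but $f''$ is not), one should further decompose at the first exit $\tau^1$ from $U(\e^{\alpha_1},r)$, applying Lemma~\ref{lem:time_exit_alpha1} on the bulk piece and Corollary~\ref{cor:exit_time_eps} to control the logarithmically dangerous part of the trajectory inside $U(0,\e^{\alpha_1})$ separately.
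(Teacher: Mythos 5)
Your argument is essentially the paper's own proof: both decompose the expectation at $\tau(r)\wedge\sigma'$, bound the pre-$\tau(r)$ contribution by $O(\e^\alpha g(r))$ via Corollary~\ref{cor:exit_time_r}, obtain $\Prob(\tau(r)<\sigma)=O(\e^\alpha/r)$ by optional stopping applied to \eqref{eq:H}, and treat the restart from $\gamma(r)$ with the strong Markov property together with Proposition~\ref{prop:up_to_separatrix}, letting $\e\to0$ and then $r\to0$ (the paper phrases this with an arbitrary $\kappa$). Your closing caveat about a possible logarithmic blow-up of $f''$ near the vertex, fixed by splitting off $U(0,\e^{\alpha_1})$ with Corollary~\ref{cor:exit_time_eps}, concerns a point the paper glosses over but does not alter the route.
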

\begin{proof}
Fix $\kappa>0$. By Corollary~\ref{cor:exit_time_r}, we can choose $r$ small enough so that for stopping time $\sigma'\leq\sigma$:  $|\E_{(x,y)}[H(\tx_{\tau(r)\wedge\sigma'}^\e)-H(x)]|<\kappa\e^\alpha$ and
$|\E_{(x,y)}[f(H(\tx_{\tau(r)\wedge\sigma'}^\e))-f(H(x))]|<\kappa\e^\alpha$, and
\begin{align*}
    \sup_{(x,y)\in\gamma(\e^\alpha)\times\mathbb T^m}\sup_{\sigma'\leq\sigma}|\E_{(x,y)}\int_0^{\tau(r)\wedge\sigma'}{\mathcal L_c} f(H(\tx_s^\e))ds|<\kappa\e^\alpha,
\end{align*}for all $\e$ sufficiently small, using similar arguments leading to \eqref{eq:H} and \eqref{eq:f}. It follows that, $\Prob_{(x,y)}(H(\tx_{\tau(r)\wedge\sigma'}^\e)=r)\leq H(x)/r+ \kappa\e^\alpha/r\leq2\e^\alpha/r$.
Therefore, uniformly in all $x\in\gamma(\e^\alpha)$, $y\in\mathbb T^m$, and $\sigma'\leq\sigma$,
\begin{align*}
    &|\E_{(x,y)} [f(H(\tx^\e_{\sigma'}))-f(H(x))-\int_0^{\sigma'}{\mathcal L_c} f(\tx_s^\e)ds]|\\
    &\leq|\E_{(x,y)} f[H(\tx^\e_{\tau(r)\wedge\sigma'})-f(H(x))-\int_0^{\tau(r)\wedge\sigma'}{\mathcal L_c} f(H(\tx_s^\e))ds]|\\
    &\quad+\Prob_{(x,y)}(H(X_{\tau(r)\wedge\sigma'}^\e)=r)\sup_{\substack{x'\in\gamma(r)\\ y'\in\mathbb T^m}}|\E_{(x',y')} [f(H(\tx^\e_{\sigma'}))-f(H(x))-\int_0^{\sigma'}{\mathcal L_c} f(H(\tx_s^\e))ds]|\\
    &\leq 3\kappa\e^\alpha,
\end{align*}
for $\e$ sufficiently small, due to Proposition~\ref{prop:up_to_separatrix} and our choice of $r$. The result follows because $\kappa$ can be chosen arbitrarily small.
\end{proof}
{The last result in this subsection provides estimates that will be used later to control the number of excursions from $\gamma(\e^\alpha)$ to $\gamma$ in finite time (see Corollary~\ref{cor:num_excursion}).}
\begin{lemma}
\label{lem:number_excursion}
    There is a constant $\kappa>0$ such that, for all $\e$ sufficiently small,
    \begin{equation}
        \sup_{(x,y)\in\gamma(\e^\alpha)\times\mathbb T^m}\E_{(x,y)} e^{-\sigma}\leq1-\kappa\e^\alpha.
    \end{equation}
\end{lemma}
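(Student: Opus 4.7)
The plan is to establish the lower bound $\Pro_{(x,y)}(\sigma > T_0)\geq \kappa_0\e^\alpha$ for some fixed $T_0,\kappa_0>0$, uniformly in $x\in\gamma(\e^\alpha)$, $y\in\mathbb T^m$, and $\e$ small. Once this is in hand, the elementary estimate
\begin{equation*}
\E_{(x,y)} e^{-\sigma} \leq e^{-T_0}\Pro_{(x,y)}(\sigma > T_0) + \Pro_{(x,y)}(\sigma \leq T_0) = 1 - (1-e^{-T_0})\Pro_{(x,y)}(\sigma > T_0)
\end{equation*}
gives the conclusion with $\kappa=(1-e^{-T_0})\kappa_0$. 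I would split the event $\{\sigma>T_0\}$ via the strong Markov property at the hitting time $\tau(r)$ of a suitable fixed level set $\gamma(r)$, reducing matters to showing (a) the process reaches $\gamma(r)$ before the separatrix with probability of order $\e^\alpha$, and (b) starting from $\gamma(r)$, the separatrix is not reached within time $T_0$ with probability at least $1/2$.

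For (a), I would fix $r>0$ small enough that the function $g(r)$ of Corollary~\ref{cor:exit_time_r} satisfies $2 C_1 g(r)\leq 1$, where $C_1$ is a uniform bound on $|\nabla_x u_h\cdot b+\nabla_y u_h\cdot c|$ provided by the boundedness of the solution $u$ of \eqref{eq:u}. Applying optional stopping to the diffusion approximation \eqref{eq:H} at $\tau:=\tau(r)\wedge\sigma$, legitimate since $\E\tau\lesssim \e^\alpha g(r)<\infty$, the stochastic integral vanishes in expectation and
\begin{equation*}
\E_{(x,y)}H(\tx_\tau^\e) = H(x)+O(\E\tau)+O(\e)=\e^\alpha+O(\e^\alpha g(r))+O(\e).
\end{equation*}
By continuity, $H(\tx_\tau^\e)\in\{0,r\}$, so the left-hand side equals $r\,\Pro_{(x,y)}(\tau(r)<\sigma)$, and I obtain $\Pro_{(x,y)}(\tau(r)<\sigma)\geq\e^\alpha/(3r)$ for all $\e$ sufficiently small.

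For (b), starting from any $x'\in\gamma(r)$, $y'\in\mathbb T^m$, the same decomposition \eqref{eq:H} reads $H(\tx_t^\e)=r+D_t+N_t+O(\e)$, where $|D_t|\leq M_1 t$ and $N_t$ is a continuous martingale with quadratic variation bounded by $M_2 t$. On $\{\sigma\leq T_0\}$, continuity of $H(\tx_\cdot^\e)$ forces $\sup_{t\leq T_0}|N_t|\geq r-M_1 T_0-O(\e)$; choosing $T_0=\min(r/(3M_1),r^2/(18M_2))$, this threshold is at least $r/3$ for $\e$ small, and Doob's $L^2$-inequality bounds the corresponding probability by $9 M_2 T_0/r^2\leq 1/2$. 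Combining (a) and (b) via the strong Markov property at $\tau(r)$ yields $\Pro_{(x,y)}(\sigma>T_0)\geq \e^\alpha/(6r)$, and the opening estimate then finishes the proof with $\kappa=(1-e^{-T_0})/(6r)$.

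The main obstacle is piece (a): it requires the first-order expansion $\Pro_{(x,y)}(\tau(r)<\sigma)\approx H(x)/r$, which in turn hinges on the nontrivial estimate $\E_{(x,y)}[\tau(r)\wedge\sigma]=O(\e^\alpha g(r))$ from Corollary~\ref{cor:exit_time_r}. Without this refined control of the exit time the drift term in \eqref{eq:H} would swamp the $\e^\alpha$ signal carried by $H(x)$ and the optional stopping argument would yield no useful lower bound. Step (b) is, by comparison, a routine martingale localization argument.
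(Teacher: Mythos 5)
Your proposal is correct, and its overall skeleton coincides with the paper's: you first get $\Prob_{(x,y)}(\tau(r)<\sigma)\gtrsim\e^\alpha/r$ exactly as the paper does (optional stopping of \eqref{eq:H} at $\tau(r)\wedge\sigma$, with the drift controlled by the exit-time bound of Corollary~\ref{cor:exit_time_r}, i.e.\ the argument of Lemma~\ref{lem:lin_prob}), and you then combine this with a uniformly positive probability of surviving a fixed time after reaching $\gamma(r)$ via the strong Markov property and the elementary bound $\E e^{-\sigma}\le 1-(1-e^{-T_0})\Prob(\sigma>T_0)$. Where you genuinely diverge is the second ingredient: the paper obtains $\Prob_{(x',y')}(\sigma>t)\geq c_1(r)$ for $x'\in\gamma(r)$ by invoking the averaging principle up to the separatrix (Proposition~\ref{prop:up_to_separatrix}) applied to the function $F$ solving ${\mathcal L_c}F=-1$ in \eqref{eq:ode}, so that the lower bound comes from the limiting one-dimensional generator; you instead work directly with the decomposition \eqref{eq:H}, bounding the drift by $M_1t$ and the martingale part by Doob's $L^2$ maximal inequality with $\langle N\rangle_{T_0}\le M_2T_0$, which is legitimate because all coefficients ($u_h$, its derivatives, $b$, $c$, $\sigma$) are bounded on the compact $M\times\mathbb T^m$, and the pre-$\sigma$ trajectory cannot leave $\bar U$, so $H=0$ at $\sigma$. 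Your route is more elementary and self-contained for that step — it needs no information about $\mathcal L_c$, $F$, or the $\e$-uniform convergence in Proposition~\ref{prop:up_to_separatrix} — while the paper's route reuses machinery already in place ($F$ and the averaging statement) and keeps the constants expressed through the limiting dynamics; both yield constants uniform in $\e$ and hence the stated bound $1-\kappa\e^\alpha$.
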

\begin{proof}
    By Corollary~\ref{cor:exit_time_r}, as in the proof of Lemma~\ref{lem:lin_prob}, we can fix $0<r<1/3$ such that for all $x\in\gamma(\e^\alpha)$, $y\in\mathbb T^m$, and $\e$ sufficiently small, $\Prob_{(x,y)}(\tau(r)<\sigma)\geq\e^\alpha/2r$.
    Let $F$ be defined as in \eqref{eq:ode} and $t=F(r)/3$, then it follows from Proposition~\ref{prop:up_to_separatrix}, as $\e\downarrow0$,
    \begin{equation}
        \sup_{(x,y)\in\gamma(r)\times\mathbb T^m}\E_{(x,y)}[F(H(\tx_{\sigma\wedge\tau(2r)\wedge t}^\e))-F(H(x))-\int_0^{\sigma\wedge\tau(2r)\wedge t}{\mathcal L_c}F(H(\tx_s^\e))ds]\to 0.
    \end{equation}
    Thus, we have that for all $x\in\gamma(r)$, $y\in\mathbb T^m$, and $\e$ sufficiently small,
    \begin{equation}
        \E_{(x,y)}F(H(\tx_{\sigma\wedge\tau(2r)\wedge t}^\e)>F(r)/2,
    \end{equation}
    and it follows that,
    \begin{equation}
        \Prob_{(x,y)}(\sigma>t)\geq\Prob_{(x,y)}(\sigma\wedge\tau(2r)>t)>\frac{\E_{(x,y)}F(H(\tx_{\sigma\wedge\tau(2r)\wedge t}^\e)}{\sup_{[0,2r]}F(h)}>\frac{F(r)}{2\sup_{[0,2r]}F(h)}=:c_1(r).
    \end{equation}
    Then, for all $x\in\gamma(r)$, $y\in\mathbb T^m$, and $\e$ sufficiently small,
    \begin{equation}
        \E_{(x,y)} e^{-\sigma}\leq\Prob_{(x,y)}(\sigma\leq t)+\Prob_{(x,y)}(\sigma>t)e^{-t}\leq 1-\Prob_{(x,y)}(\sigma>t)(1-e^{-t})\leq 1-c(r),
    \end{equation}
    with $c(r)=(1-\exp(-F(r)/3))c_1(r)>0$, and therefore,
    \begin{align*}
        \E_{(x,y)} e^{-\sigma}&\leq \Prob_{(x,y)}(\sigma<\tau(r))+\Prob_{(x,y)}(\sigma>\tau(r))\sup_{{x'\in\gamma(r),y'\in\mathbb T^m}}\E_{(x',y')} e^{-\sigma}\\
        &\leq 1-\Prob_{(x,y)}(\sigma>\tau(r))(1-\sup_{{x'\in\gamma(r),y'\in\mathbb T^m}}\E_{(x',y')} e^{-\sigma})\\
        &\leq 1-\frac{1}{2}c(r)\frac{\e^\alpha}{r}.
    \end{align*}
    The result holds with $\kappa=c(r)/2r$.
\end{proof}

\section{Exponential convergence on the separatrix}
\label{sec:exponenitalconvergence}
We fix $0<\alpha<1/2$. As in \eqref{eq:stopping_times} and Figure \ref{fig:markov_chain}, we define inductively two sequences of stopping times $\sigma_n$, $n\geq0$, and $\tau_n$, $n\geq 1$, but now for the general process $(\tx_t^\e,\txi_t^\e)$ on $M\times\mathbb T^m$ with additional drift $c(x,y)$. Without loss of generality, we assume that the saddle point $O$ satisfies $H(O)=0$. Let $V^\e=\{x:|H(x)|<\e^\alpha\}$ and $U_1$, $U_2$, $U_3$ be the three domains separated by $\gamma$. We aim to prove that the distribution of Markov chain $(\tx^\e_{\sigma_n},\txi^\e_{\sigma_n})$ converges in total variation exponentially fast, uniformly in $\e$ and in the initial distribution. Namely, we have the following lemma.

\begin{lemma}
\label{lem5:expo_ergodicity}
Let $\nu^{n,\e}_{x,y}$ denote the measure on $\gamma\times\mathbb T^m$ induced by $(\tx_{\sigma_n}^\e,\txi_{\sigma_n}^\e)$ with starting point $(x,y)\in\gamma\times\mathbb T^m$. Then there exist a probability measure $\nu^\e$ on $\gamma\times\mathbb T^m$ and constants $\Xi>0$ and $0<c<1$ such that, for all $\e$ sufficiently small,
\begin{equation}
\label{eq:exponentialconvergence}
    \sup_{(x,y)\in\gamma\times\mathbb T^m}{\mathrm{TV}}(\nu^{n,\e}_{x,y},\nu^\e)<\Xi \cdot (1-c)^n,
\end{equation}
where TV is the total variation distance of probability measures. 
\end{lemma}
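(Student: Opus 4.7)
The plan is to verify a Doeblin-type minorization condition for the Markov chain $\Pi_n := (\tilde X^\e_{\sigma_n},\tilde\xi^\e_{\sigma_n})$ that is uniform in $\e$: to produce a fixed integer $N$, a constant $\delta>0$, and a probability measure $\eta$ on $\gamma\times\mathbb T^m$ (all independent of $\e$) such that
\[
\mathbf{P}_{(x,y)}(\Pi_N\in A)\geq \delta\,\eta(A), \quad (x,y)\in\gamma\times\mathbb T^m,\ A\subset\gamma\times\mathbb T^m\text{ Borel},
\]
for all $\e$ sufficiently small. A standard coupling argument then produces a unique invariant measure $\nu^\e$ and yields $\sup_{(x,y)}\mathrm{TV}(\nu^{n,\e}_{x,y},\nu^\e)\leq (1-\delta)^{\lfloor n/N\rfloor}$, which is exactly \eqref{eq:exponentialconvergence}.

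To build the minorization I would work in three stages. First, using the averaging estimates of Section~\ref{sec:averaging} (notably Corollary~\ref{cor:exit_time_r}, Lemma~\ref{lem:lin_prob}, and Lemma~\ref{lem:number_excursion}) together with the strong Markov property, show that starting from any $(x,y)\in\gamma\times\mathbb T^m$, after at most one excursion the process is, with probability bounded below uniformly in $\e$, inside a fixed compact set $K\subset U_k$ that is well separated from $\gamma$ and from the critical points. Second, for the process in its unrescaled form $(\tbx^\e_t,\tbxi^\e_t)$ started from any point of $K\times\mathbb T^m$, invoke the parabolic H\"ormander condition \hyperlink{H5}{\textit{(H5)}} and the nondegeneracy \hyperlink{H4'}{\textit{(H4$'$)}} to conclude, via hypoelliptic density estimates (Malliavin calculus plus a support theorem), that for some fixed $t_0>0$ the joint density of $(\tbx^\e_{t_0},\tbxi^\e_{t_0})$ is bounded below by a positive constant on a nonempty open set $G\subset U_k\times\mathbb T^m$, with all constants uniform in $\e$. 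Third, starting from points in $G$, run the process until the next return time $\sigma_{n+1}$ to $\gamma$ and use transversality of the trajectories to $\gamma$ together with a further short-time density estimate to show that the hitting distribution on $\gamma\times\mathbb T^m$ has a component absolutely continuous with density bounded below (uniformly in $\e$) on some open subset of $\gamma\times\mathbb T^m$; this subset and density define $\eta$.

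The principal obstacle is the uniform-in-$\e$ lower bound on the hypoelliptic density in stage two. The H\"ormander condition and Malliavin calculus give smoothness and (for each fixed $\e$) strict positivity of the joint density, but the singular scalings $\e^{-1}$ and $\e^{-1/2}$ in the generator of $(\tbx^\e,\tbxi^\e)$ make the standard density bounds a priori degenerate as $\e\to 0$. To circumvent this I would combine the diffusion approximation of Section~\ref{sec:preliminaries} (equations \eqref{eq:slowx}--\eqref{eq:x}), which represents the slow motion as a nondegenerate martingale of order $\sqrt\e$ plus an $O(1)$ drift, with the observation that on the same time scale $\tbxi^\e_t$ relaxes to its invariant measure $\mu$. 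A local limit theorem for the slow variable, together with the unperturbed H\"ormander spread in the $y$-direction, should then decouple the two marginals at leading order and produce joint density bounds with $\e$-independent constants. The technical core of this estimate (the local limit theorem and the hypoelliptic density bounds referenced in the introduction) is what the subsequent subsections must supply; once available, the three-stage argument above closes the proof of the lemma.
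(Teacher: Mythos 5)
Your overall scheme (a Doeblin-type minorization uniform in $\e$, then coupling) is in the same spirit as the paper's argument, which establishes a uniform bound $\mathrm{TV}(\mu^\e_{x,y},\mu^\e_{x',y'})\le 1-c$ for one step of the chain; but the concrete construction you propose breaks down at both of its load-bearing steps. First, your stage one is false as stated: starting from $\gamma$, an excursion by definition only reaches $\gamma'=\{|H|=\e^\alpha\}$ before returning, and by the hitting-probability estimate of Lemma~\ref{lem:lin_prob} the chance of climbing from $|H|=\e^\alpha$ to a \emph{fixed} level $r>0$ (hence into any fixed compact $K$ well separated from $\gamma$) before returning to $\gamma$ is $O(\e^{\alpha}/r)\to 0$; no fixed number $N$ of excursions fixes this. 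Moreover, the H\"ormander assumption \textit{(H5)} is only made at points of the separatrix, so you cannot invoke it at interior points of $U_k$. This is why the paper's Step 1 instead proves the quite different statement (Lemmas~\ref{lem:stay_close_to_averaged}--\ref{lem:rotation}) that, with probability bounded below uniformly in $\e$, the process hits a fixed interval $I$ \emph{on the separatrix itself} before $\bm\tau_1$, which requires the delicate passage-through-the-saddle and separatrix-crossing estimates.

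Second, your stage two cannot hold in the form you need: for fixed $t_0$ the slow component concentrates in an $O(\sqrt\e)$ neighborhood of the deterministic orbit, so the joint density of $(\tbx^\e_{t_0},\tbxi^\e_{t_0})$ cannot be bounded below by an $\e$-independent constant on any fixed open set $G$ of size $O(1)$; for the same scaling reason the hitting distribution on $\gamma\times\mathbb T^m$ has no $\e$-independent lower-bounded component on a fixed open subset, so insisting that the minorizing measure $\eta$ be independent of $\e$ is both unnecessary and unattainable. The paper's resolution, which your closing paragraph gestures at but does not supply, is genuinely multi-scale: the local limit theorem (Theorem~\ref{thm:locallimittheorem}) lower-bounds the probability of $O(\e)$-sized boxes inside the $O(\sqrt\e)$ window around the point $\bm x_1\in\gamma$ where \textit{(H5)} holds; a control-type argument (Lemma~\ref{lemma:forcetosmallerbox}) funnels each box to a common $O(\e)$-ball; and the hypoelliptic density bound is obtained for the process rescaled in space and time by $\e$ (equation \eqref{eq5:difference}), whose coefficients are essentially $\e$-independent, giving densities of order $\e^{-2}$ on sets of measure of order $\e^{2}$ (Lemmas~\ref{lem5:density_xy}--\ref{lem:pijk}); finally Girsanov transfers the estimate from the original to the auxiliary fast motion. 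The resulting common component $\pi^\e$ depends on $\e$, and only its mass is uniform, which is all the coupling argument needs. Without this rescaling mechanism and without a correct Step 1, your proposal does not close.
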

The rest of this section is devoted to the proof of Lemma~\ref{lem5:expo_ergodicity}. 
Let $\bm\sigma_n$, $n\geq0$, and $\bm\tau_n$, $n\geq1$, be the stopping times w.r.t. $(\tbx_t^\e,\tbxi_t^\e)$ that are analogous to $\sigma_n$, $\tau_n$ w.r.t. $(\tx_t^\e,\txi_t^\e)$.
The lemma is equivalent to the exponential convergence in total variation of $(\tbx_{\bm\tau_n}^\e,\tbxi_{\bm\tau_n}^\e)$ on $\gamma'\times\mathbb T^m$, uniformly in $\e$ and in the initial distribution. 
The proof consists of three steps:
\begin{enumerate}
    \item[\textbf{1.}] The process starting on $\gamma'\times\mathbb T^m$ hits $I\times\mathbb T^m$ before $\bm\tau_1$ with uniformly positive probability, {where $I$ is a fixed interval on the separatrix.}
    \item[\textbf{2.}] Let the process starting on $I\times\mathbb T^m$ evolve for a certain period of time. Then, by a local limit theorem, we can estimate from below the probabilities of hitting $O(\e)$-sized boxes in a certain $O(\sqrt{\e})$-sized region, uniformly in the starting point on $I\times\mathbb T^m$.
    \item[\textbf{3.}] By the parabolic H\"ormander condition \hyperlink{H5}{\textit{(H5)}}, we prove a common lower bound for the density of the distribution of the process starting from each of the $O(\e)$-sized boxes after a short time.
\end{enumerate}

Let us take care of these steps in order.

\textbf{Step 1}. Let $0<\beta<1$, which will be specified later. We prove in the next two results that the process has a uniformly positive probability of following along the averaged process $\bm x_t$ and going through a neighborhood of the saddle point without making a deviation more than $\beta\sqrt{\e}$ in terms of the value of $H$.
\begin{lemma}
\label{lem:stay_close_to_averaged}
For each fixed $\hat t>0$, $\beta'>0$,
\[\Prob_{(x,y)}\left(\sup_{0\leq t\leq\hat t}|\tbx_t^\e-\bm x_t|\leq\beta'\sqrt{\e}\right),\]
is uniformly positive for all $(x,y)\in M\times\mathbb T^m$ and $\e$ sufficiently small.
\end{lemma}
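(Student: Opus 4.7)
The plan is to compare $\tbx_t^\e$ with the averaged trajectory $\bm x_t$ using the diffusion approximation \eqref{eq:slowx}, and reduce the claim to a small-ball estimate for a stochastic integral whose quadratic variation is uniformly bounded. Subtracting $d\bm x_t=\nabla^\perp H(\bm x_t)\,dt$ from \eqref{eq:slowx} gives
\[
\tbx_t^\e-\bm x_t=\int_0^t\bigl[\nabla^\perp H(\tbx_s^\e)-\nabla^\perp H(\bm x_s)\bigr]\,ds+R_t^\e+N_t^\e,
\]
where $R_t^\e$ collects the two $\e$-order terms of \eqref{eq:slowx} and $N_t^\e:=\sqrt{\e}\int_0^t\nabla_y u(\tbx_s^\e,\tbxi_s^\e)\sigma(\tbxi_s^\e)\,dW_s$. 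By \hyperlink{H2'}{\textit{(H2$'$)}}, $\nabla^\perp H$ is Lipschitz on $M$ with some constant $L$; by \lemref{lem:solution}, $u$ and its first derivatives are uniformly bounded on $M\times\mathbb T^m$, and the same holds for $b,c,\sigma$. Hence $\sup_{t\leq\hat t}|R_t^\e|\leq C\e$ with $C$ independent of $(x,y)$, and Gronwall's inequality yields
\[
\sup_{0\leq t\leq\hat t}|\tbx_t^\e-\bm x_t|\leq e^{L\hat t}\Bigl(C\e+\sup_{0\leq t\leq\hat t}|N_t^\e|\Bigr).
\]
For $\e$ small enough the $C\e$ contribution is below $\tfrac{1}{2}\beta'\sqrt{\e}e^{-L\hat t}$, and the claim reduces to a uniform positive lower bound on $\Prob_{(x,y)}\bigl(\sup_{0\leq t\leq\hat t}|N_t^\e|\leq \tfrac{\beta'\sqrt{\e}}{2e^{L\hat t}}\bigr)$.

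Each scalar component of $N_t^\e$ is a continuous martingale whose quadratic variation $[N^{\e,i}]_{\hat t}$ is almost surely dominated by $K\e\hat t$ for a constant $K$ not depending on $(x,y)$ or $\e$. Applying Dambis-Dubins-Schwarz componentwise on a suitable enlargement of the probability space gives $N^{\e,i}_t=B^i_{[N^{\e,i}]_t}$ for Brownian motions $B^i$ in a common filtration. Using $|N^\e|\leq\sqrt{2}\max_{i}|N^{\e,i}|$ together with Brownian scaling, the required event is implied by
\[
\max_{i=1,2}\sup_{0\leq s\leq 1}|\tilde B^i_s|\leq r:=\frac{\beta'}{2\sqrt{2K\hat t}\,e^{L\hat t}},
\]
where $(\tilde B^1,\tilde B^2)$ is a jointly centered Gaussian pair of Brownian motions on $[0,1]$ with some (possibly degenerate) correlation structure depending on $(x,y,\e)$.

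The only delicate point is the lower bound on this joint small-ball probability: a union bound fails for small $\beta'$, since the marginal probabilities need not be close to $1$. However, the events $A_i:=\{\sup_{s\leq 1}|\tilde B^i_s|\leq r\}$ are symmetric convex sets in the path space, so the Gaussian correlation inequality (Royen's theorem) gives
\[
\Prob(A_1\cap A_2)\geq\Prob(A_1)\,\Prob(A_2)>0,
\]
and the lower bound is a function of $r$ alone. Since the constants $L,C,K,r$ depend only on uniform bounds for the SDE coefficients on the compact space $M\times\mathbb T^m$, the resulting lower bound is uniform in $(x,y)\in M\times\mathbb T^m$ and in sufficiently small $\e>0$, which is the statement of the lemma.
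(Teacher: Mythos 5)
Your reduction---subtracting the averaged dynamics from \eqref{eq:slowx}, absorbing the $O(\e)$ terms, and applying Gr\"onwall so that everything hinges on a uniform small-ball bound for the stochastic integral $\int_0^t\nabla_y u\,\sigma\,dW_s$---is exactly the paper's route (the paper takes $K$ to bound $\nabla^2H$ and works on the event $E=\{\sup_{0\le t\le\hat t}|\int_0^t\nabla_y u(\tbx_s^\e,\tbxi_s^\e)\sigma(\tbxi_s^\e)dW_s|\le\tfrac12\beta'e^{-K\hat t}\}$). The genuine gap is in how you justify that this small-ball event has uniformly positive probability. After the Dambis--Dubins--Schwarz time changes, the two components are each Brownian motions, but the pair $(\tilde B^1,\tilde B^2)$ is \emph{not} jointly Gaussian: the time changes are random and path-dependent, and the joint law of the time-changed components is not a Gaussian measure in general. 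Royen's Gaussian correlation inequality therefore does not apply, and the step $\Prob(A_1\cap A_2)\ge\Prob(A_1)\Prob(A_2)$ is unjustified as written.

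Moreover, this is not a repairable technicality within your framework: a positive joint small-ball bound cannot follow from componentwise upper bounds on the quadratic variation alone. Consider $Z_t\in\mathbb R^2$ with $dZ_t=\hat Z_t^{\perp}\,dW_t^1$, where $\hat Z_t^{\perp}$ is the unit vector orthogonal to $Z_t$ and $Z_0\neq0$: each component is a continuous martingale with quadratic variation at most $t$, yet It\^o's formula gives $d|Z_t|^2=dt$, so $|Z_t|$ grows deterministically and the joint small-ball probability is exactly zero once the radius is below $\sqrt{\hat t}$. Hence your final constant ``depending on $r$ alone'' does not exist at this level of generality; any correct argument must use more than boundedness of $\nabla_y u\,\sigma$ (for instance the specific structure or nondegeneracy of the integrand, or a support-theorem/Girsanov-type argument for the full system $(\tbx_t^\e,\tbxi_t^\e)$). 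This is precisely the point that the paper's proof compresses into the assertion that $E$ has uniformly positive probability ``by the boundedness of the coefficients''; you reached the same crux, but the mechanism you propose to resolve it does not work.
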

\begin{proof}
     Let the eigenvalues of $\nabla^2 H$ be bounded by $K$.
    Recall formula \eqref{eq:slowx}. By the boundedness of the coefficients, the event
    \[E:=\left\{\sup_{0\leq t\leq\Tilde t}|\int_0^t \nabla_y u(\tbx_s^\e,\tbxi_s^\e)\sigma(\tbxi_s^\e)dW_s|\leq\frac{1}{2}\beta'e^{-K\Tilde t}\right\}\]
    has positive probability, uniformly in the starting points.
    By \eqref{eq:slowx}, we have that on the event $E$, for $t\leq\Tilde t$ and $\e$ sufficiently small,
    \begin{align*}
        |\tbx_t^\e-\bm x_t|&\leq |\int_0^t(\nabla^\perp H(\tbx_s^\e)-\nabla^\perp H(\bm x_s)ds|+\sqrt{\e}|\int_0^t \nabla_y u(\tbx_s^\e,\tbxi_s^\e)\sigma(\tbxi_s^\e)dW_s|\\
        +\e|\int_0^t &[\nabla_x u(\tbx_s^\e,\tbxi_s^\e)b(\tbx_s^\e,\tbxi_s^\e)+\nabla_y u(\tbx_s^\e,\tbxi_s^\e)c(\tbx_s^\e,\tbxi_s^\e)]ds|+\e|u(x,y)-u(\tbx_t^\e,\tbxi_t^\e)|\\
        &\leq K\int_0^t|\tbx_s^\e-\bm x_s|ds+\beta'e^{-K\Tilde t}\sqrt{\e}.
    \end{align*}
    Then Gr\"onwall's inequality implies that $|\tbx_t^\e-\bm x_t|\leq\beta'\sqrt{\e}$ for all $t\leq\Tilde{t}$. Therefore, $E$ implies $\{\sup_{0\leq t\leq\hat t}|\tbx_t^\e-\bm x_t|\leq\beta'\sqrt{\e}\}$, and the uniform positivity follows.
\end{proof}
\begin{lemma}
\label{lem:throughsaddlepoint}
     For any given $0<c<1$, there exist curves $\Gamma_1$ and $\Gamma_2$ in $U_1$ such that 
    \begin{enumerate}[(i)]
        \item $\Gamma_1$ and $\Gamma_2$ have their tangent vectors as $\nabla H$. They intersect with the separatrix on different sides of the saddle point and the averaged motion on the separatrix spends finite time from $\Gamma_2$ to $\Gamma_1$. 
        \item Let $x\in\Gamma_1$ satisfy $2\beta\sqrt{\e}\leq |H(x)|\leq 2\sqrt{\e}$ and $\tau_x=\inf\{t:\tbx_t^\e\in\Gamma_2\}$. Then for all $y\in\mathbb T^m$, $\Prob_{(x,y)}(\sup_{0\leq t\leq \tau_x}|H(\tbx^\e_t)-H(x)|\leq\beta\sqrt{\e})>c$ for all $\e$ sufficiently small.
    \end{enumerate}
\end{lemma}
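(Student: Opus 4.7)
Construction of the curves. In local Morse coordinates around the saddle $O$ in which $H=x_1 x_2$ (compatible with the symplectic form) and $U_1$ corresponds to $\{x_1,x_2>0\}$, the averaged Hamiltonian flow is $\dot x_1=x_1$, $\dot x_2=-x_2$, and the two arms of the separatrix entering $U_1$ at $O$ are the positive $x_2$- and $x_1$-axes. I would take $\Gamma_1$ to be a short integral arc of $\nabla H$ through $(0,y_0)$ and $\Gamma_2$ the analogous arc through $(x_0,0)$, where $x_0,y_0>0$ are small constants to be fixed depending on $c$. Property (i) then holds: the tangents are $\nabla H$, $\Gamma_1,\Gamma_2$ meet different arms of the separatrix, and the separatrix orbit from $(x_0,0)$ around the loop bounding $U_1$ back to $(0,y_0)$ takes finite time (the ``long way round'', staying away from $O$).

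For $x\in\Gamma_1$ with $h:=H(x)\in[2\beta\sqrt\e,2\sqrt\e]$, the averaged orbit is $\bm x_t=(h e^t/y_0,\,y_0 e^{-t})+O(h)$, hits $\Gamma_2$ at deterministic time $\bar\tau_x=\log(x_0 y_0/h)+O(1)$, and an elementary calculation gives the key bound
\[
\int_0^{\bar\tau_x}|\nabla H(\bm x_s)|^2\,ds=\int_0^{\bar\tau_x}(x_1^2+x_2^2)(\bm x_s)\,ds=\tfrac12(x_0^2+y_0^2)+o(1),\quad h\to 0.
\]
So although $\bar\tau_x\sim|\log\e|$, this ``action'' integral stays bounded and can be made arbitrarily small by shrinking $x_0,y_0$. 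This is the one quantitative fact that makes the whole argument possible.

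The diffusion approximation \eqref{eq:slowx} applied to $H(\tbx_t^\e)$ yields
\[
H(\tbx_t^\e)-H(x)=\sqrt\e\,M_t+\e R_t,\qquad \langle M\rangle_t\le C\int_0^t|\nabla H(\tbx_s^\e)|^2\,ds,
\]
with $R_t=O(1+t)$ deterministically, so $\e R_t=O(\e|\log\e|)=o(\beta\sqrt\e)$ and the task reduces to bounding $\langle M\rangle_{\tau_x}$. Introduce the stopping time $\rho=\inf\{t:|H(\tbx_t^\e)-H(x)|>\beta\sqrt\e\}$; on $[0,\rho\wedge\tau_x]$ the process stays in the saddle chart with $H\in[\beta\sqrt\e,3\sqrt\e]$. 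Apply the analogous diffusion approximation to the angle coordinate $Z=\tfrac12\log(x_1/x_2)$, for which $\dot Z=1$ along the averaged flow: this gives $Z(\tbx_t^\e)=Z_0+t+\sqrt\e M_t^Z+O(\e|\log\e|)$, and from $|\nabla Z|^2=|\nabla H|^2/(4H^2)$ one gets $\langle M^Z\rangle_{\rho\wedge\tau_x}\lesssim(x_0^2+y_0^2)/h^2\cdot\e\lesssim(x_0^2+y_0^2)/\beta^2$. Hence, with probability at least $1-(1-c)/2$ for $x_0,y_0$ small, $Z(\tbx_s^\e)=Z_0+s+o(1)$ uniformly on $[0,\rho\wedge\tau_x]$, so $|\nabla H(\tbx_s^\e)|^2$ is pointwise within a $(1+o(1))$-factor of $|\nabla H(\bm x_s)|^2$, and thus $\langle M\rangle_{\rho\wedge\tau_x}\le x_0^2+y_0^2+o(1)$. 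Doob's maximal inequality then gives $\Prob\bigl(\sup_{t\le\rho\wedge\tau_x}\sqrt\e|M_t|>\beta\sqrt\e/2\bigr)\le 4(x_0^2+y_0^2)/\beta^2+o(1)$; choosing $x_0,y_0$ small enough and combining with the tube estimate gives $\Prob(\rho<\tau_x)<1-c$, i.e.\ (ii).

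The main obstacle is that $\bar\tau_x\sim|\log\e|$, so the Gr\"onwall argument of Lemma~\ref{lem:stay_close_to_averaged} (valid only on $O(1)$ intervals) cannot give pathwise closeness of $\tbx^\e$ to $\bm x$ during the saddle passage. The resolution, which I expect to be the genuine new input, is that pathwise closeness is not needed: only the integral $\int|\nabla H|^2\,ds$ must be close to its averaged value, and this reduces to a coarse tube estimate on the single scalar $Z$. That tube estimate is affordable because the averaged drift of $Z$ is of order $1$ while its cumulative stochastic fluctuation is only $O(\sqrt{x_0^2+y_0^2}/\beta)$, which becomes arbitrarily small with $x_0,y_0$.
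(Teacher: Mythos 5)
Your route is essentially the paper's: pass to a Morse chart where $H$ is (a function of) $z_1z_2$, track the pair ($H$, logarithmic angle), represent both as diffusion-approximation martingales plus $O(\e|\log\e|)$ corrections, bound the two quadratic variations via $|\nabla H|^2=H(e^{2Z}+e^{-2Z})$ and $|\nabla Z|^2=|\nabla H|^2/(4H^2)$, and make them small by placing $\Gamma_1,\Gamma_2$ close to the saddle (your small $x_0,y_0$ play exactly the role of the paper's large constant $C$), finishing with Chebyshev/Doob. Two places need the paper's extra care. First, symplectic Morse coordinates with exactly $H=x_1x_2$ and averaged flow $\dot x_1=x_1,\ \dot x_2=-x_2$ are not available in general (one only gets $H=\chi(x_1x_2)$); the paper instead takes an arbitrary Morse chart, performs the random time change \eqref{eq:randomchangeoftime} by $D(x)=\mathrm{det}(\nabla_x\psi(x))$, and builds a new corrector $u^*$ so that the averaged motion is exactly the $G=z_1z_2$ flow — harmless for the estimates, but it must be said. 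Second, and more substantively, your bound $\langle M^Z\rangle_{\rho\wedge\tau_x}\lesssim(x_0^2+y_0^2)\e/h^2$ is circular as written: $\rho$ stops only the $H$-deviation, and $\tau_x$ (the hitting time of $\Gamma_2$) is not a priori of order $|\log\e|$, so on the event that the angular coordinate stalls, neither $\int_0^{\rho\wedge\tau_x}|\nabla H(\tbx_s^\e)|^2ds$ nor $\int_0^{\rho\wedge\tau_x}|\nabla Z(\tbx_s^\e)|^2ds$ is controlled by its averaged value — which is precisely what the tube estimate is supposed to deliver (the same issue affects your claim $\e R_t=O(\e|\log\e|)$). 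The paper removes this circularity by stopping at the first time \emph{either} the $G$-deviation exceeds $\beta\sqrt\e$ \emph{or} the angular deviation exceeds $1$, and additionally capping by the deterministic transit time $t_x$; on that interval $e^{\pm2\tilde\Phi_s}$ is dominated by $e^{2}e^{\pm2\phi^*_s}$, the deterministic profile is integrated as in your "key bound", and Chebyshev is applied to both martingales simultaneously, cf. \eqref{eq:G}--\eqref{eq:Phi*}. With the stopping time modified in this way (and the time change inserted), your argument coincides with the paper's proof.
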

\begin{figure}[!htbp]
\centering
\begin{subfigure}{.4\textwidth}
  \centering
  \includegraphics[width=.9\linewidth]{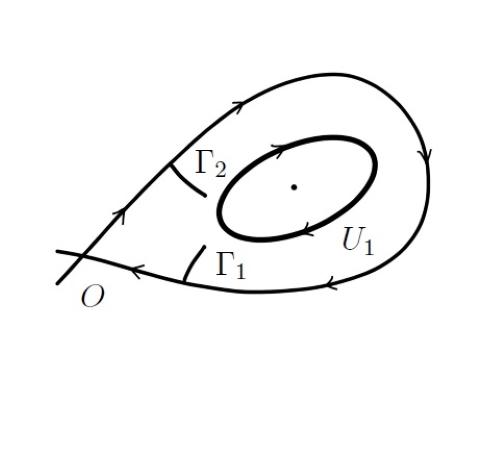}
\end{subfigure}%
\begin{subfigure}{.5\textwidth}
  \centering
  \includegraphics[width=.9\linewidth]{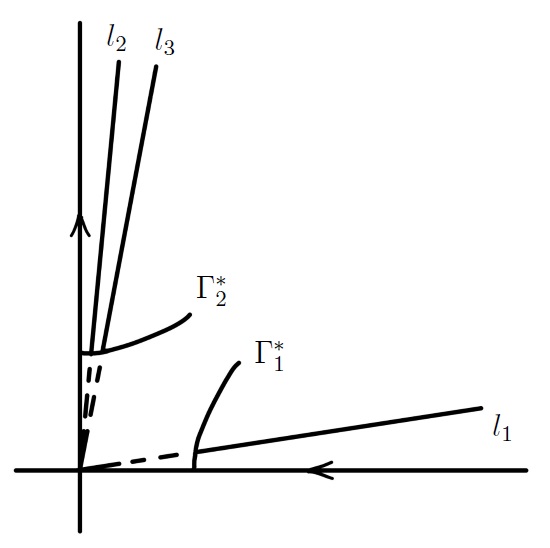}
\end{subfigure}
\caption{Curves in different coordinates.}
\label{fig:Curves in different coordinates}
\end{figure}
\begin{proof}
     Suppose $H(x)>0$ for all $x\in U_1$. By the Morse lemma, there exist neighborhoods $U$ and $V$ of the saddle point $O$ and the origin, respectively, and a diffeomorphism $\psi$ from $U$ to $V$ such that $H(x)=G(\psi(x))$, where $G(z)=z_1z_2$. Then consider a random change of time by dividing the generator by $D(x):=\mathrm{det}(\nabla_x\psi(x))$: 
    \begin{equation}
    \label{eq:randomchangeoftime}
    \begin{aligned}
        d\st{\tbx_t}&=\frac{b(\st{\tbx_t},\st{\tbxi_t})}{D(\st{\tbx_t})}dt,\\
        d\st{\tbxi_t}&=\frac{1}{\e}\frac{v(\st{\tbxi_t})}{D(\st{\tbx_t})}dt+\frac{1}{\sqrt\e}\frac{\sigma(\st{\tbxi_t})}{\sqrt{D(\st{\tbx_t})}}dW_t+\frac{c(\st{\tbx_t},\st{\tbxi_t})}{D(\st{\tbx_t})}dt.
    \end{aligned}
    \end{equation}
    Write the equation for $\st{\tz_t}:=\psi(\st{\tbx_t})$:
    \[
        d\st{\tz_t}=\frac{1}{D(\psi^{-1}(\st{\tz_t}))}\cdot \nabla_x\psi(\psi^{-1}(\st{\tz_t}))b(\psi^{-1}(\st{\tz_t}),\st{\tbxi_t})dt=:b^*(\st{\tz_t},\st{\tbxi_t})dt.
    \]
    It is not hard to verify that $b^*(z,y)$ satisfies
    $$\int_{\mathbb T^m}b^*(z,y)d\mu(y)=\nabla^\perp G(z).$$
    Hence, by Lemma~\ref{lem:solution}, there exists a bounded solution $u^*(z,y)$ to 
    \[Lu^*(z,y)=-(b^*(z,y)-\nabla^\perp G(z))\cdot D(\psi^{-1}(z)).\]
    Consider a local coordinate $G=z_1z_2$ and $\phi^*=\frac{1}{2}\log({z_2}/{z_1})$ in $V$.
    The averaged motion has constant speed: $0$ in $G$ and $1$ in $\phi^*$. 
    As in \eqref{eq:H} and \eqref{eq:phi}, we have the equations for $\st{\Tilde G_t}=G(\st{\tz_t})$, $\st{\Tilde \Phi_t}=\phi^*(\st{\tz_t})$, by applying Ito's formula to $u^*_g=u^*\cdot\nabla G$ and $u^*_{\phi}=u^*\cdot\nabla \phi^*$, with 
    $z=\psi(x)$, $g_0=G(z)$, and $\phi^*_0=\phi^*(z)$:
    \begin{align}
        \st{\Tilde G_t}=~&g_0+\sqrt{\e}\int_0^t \nabla_y u^*_g(\st{\tz_s},\st{\tbxi_s})^{\mathsf T}\frac{\sigma(\st{\tbxi_s})}{\sqrt{D(\psi^{-1}(\st{\tz_s}))}}dW_s-\e(u^*_g(\st{\tz_t},\st{\tbxi_t})-u^*_g(z,y))\nonumber\\
        &+\e\int_0^t\left[\nabla_z u^*_g(\st{\tz_s},\st{\tbxi_s})\cdot b^*(\st{\tz_s},\st{\tbxi_s})+\nabla_y u^*_g(\st{\tz_s},\st{\tbxi_s})\cdot \frac{c(\psi^{-1}(\st{\tz_s}),\st{\tbxi_s})}{D(\psi^{-1}(\st{\tz_s}))}\right]ds,\label{eq:G}\\
        \st{\Tilde \Phi_t}=~&\phi^*_0+t+\sqrt{\e}\int_0^t \nabla_y u^*_\phi(\st{\tz_s},\st{\tbxi_s})^{\mathsf T}\frac{\sigma(\st{\tbxi_s})}{\sqrt{D(\psi^{-1}(\st{\tz_s}))}}dW_s-\e(u^*_\phi(\st{\tz_t},\st{\tbxi_t})-u^*_\phi(z,y))\nonumber\\
        &+\e\int_0^t\left[\nabla_z u^*_\phi(\st{\tz_s},\st{\tbxi_s})\cdot b^*(\st{\tz_s},\st{\tbxi_s})+\nabla_y u^*_\phi(\st{\tz_s},\st{\tbxi_s})\cdot \frac{c(\psi^{-1}(\st{\tz_s}),\st{\tbxi_s})}{D(\psi^{-1}(\st{\tz_s}))}\right]ds.\label{eq:Phi*}
    \end{align}
    
    To get the lower bound for the desired probability, we will choose the curves $\Gamma_1$ and $\Gamma_2$ that are close enough to the saddle point. 
    The time it takes to get from $\Gamma_1$ to $\Gamma_2$ is still of order $|\log\e|$ since they are chosen independently of $\e$.
    In this way, the process starting on $\Gamma_1$ and stopped on $\Gamma_2$ will be shown to have small variance, hence it is unlikely for the process to have deviations larger than what we wish. 
    With $C>0$ to be specified later, let $l_1=\{z:\phi^*(z)=\frac{1}{4}\log\e+\frac{1}{2}\log\beta+C\}$, $l_2=\{z:\phi^*(z)=-(\frac{1}{4}\log\e+\frac{1}{2}\log\beta+C)\}$, and $l_3=\{z:\phi^*(z)=-(\frac{1}{4}\log\e+\frac{1}{2}\log\beta+C)-2\}$.
    The idea is to look at event that the process stays close to the averaged motion before the latter reaches $l_2$, which implies that the process does not make a large deviation in $G$, or equivalently, in $H$, before reaching $l_3$.
    Let $\Gamma_1^*$, $\Gamma_2^*$ be the curves that have tangent vectors as $\nabla_x\psi\circ\psi^{-1}({\nabla_x\psi\circ\psi^{-1}})^{\mathsf T}\nabla G$ and go through the points $(e^{-C},e^C\beta\sqrt{\e})$, $(e^{C+1}\beta\sqrt{\e},e^{-C-1})$, respectively.
    Since $\psi$ is a diffeomorphism, it is easy to see that each $z$ on $\Gamma_1^*$ or $\Gamma_2^*$ with $G(z)\geq\beta\sqrt{\e}$ satisfies that $\frac{1}{4}\log\e+\frac{1}{2}\log\beta+C\leq\phi^*(z)\leq-(\frac{1}{4}\log\e+\frac{1}{2}\log\beta+C)-2$.
    Let $\Gamma_1$ and $\Gamma_2$ be the pre-images of $\Gamma_1^*$ and $\Gamma_2^*$ in $U_1${, as shown in Figure \ref{fig:Curves in different coordinates}}.
    They have $\nabla H$ as tangent vectors due to the specific way we construct $\Gamma_1^*$ and $\Gamma_2^*$.
    Consider the process in \eqref{eq:randomchangeoftime} starting at $x\in\Gamma_1$ satisfying that $2\beta\sqrt{\e}\leq H(x)\leq 2\sqrt{\e}$ with an arbitrary $y\in\mathbb T^m$. 
    Let $\phi_t^*=\phi^*_0+t$.
    Define $t_x=\inf\{t:\phi_t^*=\phi^*(l_2)\}$ and $\tau_x^*=\inf\{t:|\st{\Tilde G_t}-g_0|=\beta\sqrt{\e}\}\wedge \inf\{t:|\st{\Tilde\Phi_t}-\phi_t^*|=1\}\wedge t_x$. 
    Then it is clear that $\Prob_{(x,y)}(\sup_{0\leq t\leq \tau^*_x}|H(\tbx^\e_t)-H(x)|\leq\beta\sqrt{\e})\geq\Prob_{(x,y)}(\tau_x^*=t_x)$. 
    Let $S_G$ and $S_\phi$ denote the stochastic integrals in \eqref{eq:G} and \eqref{eq:Phi*}, respectively, with $t$ replaced by $\tau_x^*$. 
    Since $\tau^*_x\lesssim|\log\e|$, $\nabla G$ is bounded, and $\nabla \phi^*\lesssim\e^{-1/2}$ before $\tau^*_x$, we see that the unwanted deviations happen only if $S_G$ and $S_\phi$ are large.
    Namely,
    \[\Prob_{(x,y)}(\tau_x^*<t_x)\leq\Prob_{(x,y)}(|S_G|\geq\beta\sqrt{\e}/2)+\Prob_{(x,y)}(|S_\phi|\geq1/2).\]
    Both terms on the right-hand side can be controlled by Chebyshev's inequality. 
    Note that there exists a constant $K>0$ independent of $\e$ such that
    \begin{align*}
        \bm{\mathrm{Var}}(S_G)&\leq\e K\E\int_0^{\tau_x^*}|\nabla G(\st{\tz_s})|^2ds\\
        &=\e K\E\int_0^{\tau_x^*} \st{\Tilde G_s}(e^{2\st{\Tilde \Phi_s}}+e^{-2\st{\Tilde \Phi_s}})ds\\
        &\leq \e K\int_0^{\tau_x^*}(2+\beta)\sqrt{\e}e^2(e^{2{\phi_s^*}}+e^{-2{\phi_s^*}})ds\\
        &\leq  3K\sqrt{\e^3}e^2\int_0^{-2(\frac{1}{4}\log\e+\frac{1}{2}\log\beta+C)}(e^{2{\phi_s^*}}+e^{-2{\phi_s^*}})ds\\
        &=\ 3K\sqrt{\e^3}e^2\int_{\frac{1}{4}\log\e+\frac{1}{2}\log\beta+C}^{-(\frac{1}{4}\log\e+\frac{1}{2}\log\beta+C)}(e^{2\varphi}+e^{-2\varphi})d\varphi\\
        &\leq \ \frac{3}{\beta}Ke^{2-2C}\e,
    \end{align*}and, similarly,
    \[
        \bm{\mathrm{Var}}(S_\phi)\leq \e K\E\int_0^{\tau^*_x}|\nabla \Phi(\st{\tz_s})|^2ds
        \leq\e K\E\int_0^{\tau^*_x} \frac{1}{\st{\Tilde G}_s}(e^{2\st{\Tilde \Phi_s}}+e^{-2\st{\Tilde\Phi_s}})ds\leq  \frac{1}{\beta^2}Ke^{2-2C}.
    \]
    Then, $C$ can be chosen large enough such that both variances are small enough, and hence $\Prob_{(x,y)}(|H(\tbx^\e_{\tau^*_x})-H(x)|\leq\beta\sqrt{\e})>c$. 
\end{proof}
\begin{figure}[!htbp] 
    \centering
    \includegraphics[width=0.4\textwidth]{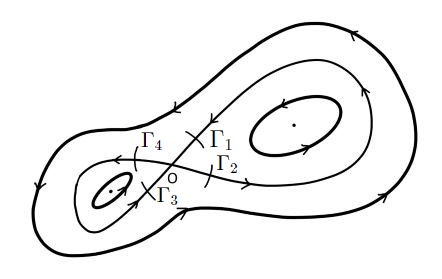}
    \caption{Four curves on four directions.}
    \label{fig:four curves}
\end{figure}
We can choose the corresponding curves in the other regions. As a result, we have four curves corresponding to four different directions, all with positive distance to the saddle point, as shown in Figure~\ref{fig:four curves}. Moreover, the corresponding transition probabilities near the saddle point have lower bounds analogous to that given in Lemma~\ref{lem:throughsaddlepoint} (ii). For the rotations happening away from those curves, we will prove that, before the time when the process comes back to the curves, the deviation of $H$ can be large enough to cross the separatrix with positive probability. Let $\Gamma_i(h_1,h_2)$ be the set $\{x\in\Gamma_i:h_1\leq H(x)\leq h_2\}$.
\begin{lemma}
    \label{lem:hit_separatrix}
    For each fixed $\hat t>0$,
    \[\Prob_{(x,y)}\left(\inf_{0\leq t\leq\hat t}H(\tbx_{t}^\e)\leq -\sqrt{\e},\sup_{0\leq t\leq\hat t}|\tbx_t^\e-\bm x_t|\leq\e^{\frac{1+2\alpha}{4}}\right)\]
    is positive uniformly in {$x\in\Gamma_2(0,2\sqrt{\e})$}, $y\in\mathbb T^m$, and all $\e$ sufficiently small.
\end{lemma}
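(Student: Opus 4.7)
The statement is the intersection of two events: a closeness event
\[E_1 = \{\sup_{0\le t\le \hat t}|\tbx_t^\e - \bm x_t| \le \e^{(1+2\alpha)/4}\}\]
and a crossing event
\[E_2 = \{\inf_{0\le t\le\hat t} H(\tbx_t^\e) \le -\sqrt{\e}\}.\]
Since $\alpha<1/2$ one has $\e^{(1+2\alpha)/4}\gg\sqrt{\e}$, so $E_1$ is a generous requirement. Using the diffusion approximation \eqref{eq:slowx}, Doob's maximal inequality applied to the stochastic integral (whose second moment over $[0,\hat t]$ is $O(\e\hat t)$) together with Gr\"onwall's inequality on the drift terms yields $\Prob_{(x,y)}(E_1^c)=O(\e^{(1-2\alpha)/2})\to 0$, uniformly in $(x,y)$. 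Consequently the substantive task is to bound $\Prob_{(x,y)}(E_2)$ from below, uniformly.

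The first step is to apply Ito's formula to $H(\tbx_t^\e)$, as in \eqref{eq4:H_tau}, to write
\[
H(\tbx_t^\e) - H(x) = \sqrt{\e}\,M_t + R_t, \qquad M_t := \int_0^t \nabla_y u_h(\tbx_s^\e,\tbxi_s^\e)^{\mathsf T}\sigma(\tbxi_s^\e)\,dW_s,
\]
with $|R_t|\le C\e$ uniformly on $[0,\hat t]$. Since $H(x)\in[0,2\sqrt{\e}]$, for $\e$ small the event $\{\inf_{t\le \hat t} M_t \le -4\}$ already implies $E_2$, so it suffices to bound $\Prob_{(x,y)}(\inf_{t\le\hat t} M_t\le -4)$ from below. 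My plan is to combine a uniform lower bound on the quadratic variation $\langle M\rangle_{\hat t}$ with a Brownian crossing estimate via Dambis--Dubins--Schwarz.

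For the quadratic variation lower bound, note that $\Gamma_2$ meets the separatrix transversely at a single point $p^*\ne O$; as $\e\downarrow 0$ the arc $\Gamma_2(0,2\sqrt{\e})$ contracts to $\{p^*\}$. Starting from $x$ on the level set $\gamma(H(x))$ with $H(x)\le 2\sqrt{\e}$, the averaged trajectory $\bm x_s$ is periodic with period $Q(H(x))\sim|\log\e|\to\infty$, so over the fixed interval $[0,\hat t]$ it traces only a vanishing fraction of the orbit and remains in a compact arc $\mathcal A$ close to the forward separatrix trajectory issuing from $p^*$, and bounded away from the saddle $O$. On $\mathcal A$, $\nabla H$ is nonzero, and by \eqref{eq:definition_of_operator_AB_x} together with \hyperlink{H4'}{\textit{(H4$'$)}} (which forces $\nabla_y u_h(x,\cdot)^{\mathsf T}\sigma$ to be not identically zero wherever $\nabla H(x)\ne 0$), the function $A(x)$ is uniformly bounded below by some $A_0>0$. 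Combining this with $E_1$ and an averaging estimate in the spirit of Corollary~\ref{cor:avg} applied to the integrand $|\nabla_y u_h^{\mathsf T}\sigma|^2-A$ then gives $\langle M\rangle_{\hat t}\ge c$ with probability $1-o(1)$, for some $c>0$ uniform in the starting data.

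Finally, by Dambis--Dubins--Schwarz $M_t=B_{\langle M\rangle_t}$ for a Brownian motion $B$; on $\{\langle M\rangle_{\hat t}\ge c\}$ the probability that $\inf_{s\le c}B_s\le -4$ is a strictly positive absolute constant $\kappa_0$ independent of $\e$ and $(x,y)$. Intersecting with $E_1$ and the quadratic-variation event yields the claim. I expect the main obstacle to be the uniform lower bound on $\langle M\rangle_{\hat t}$: it requires separating the fixed time scale $\hat t$ from the diverging orbital period $Q(H(x))\sim|\log\e|$ and controlling the averaging of $|\nabla_y u_h^{\mathsf T}\sigma|^2$ along the perturbed trajectory (not just the averaged one), while using the geometric placement of $\Gamma_2$ so that the starting arc accumulates at a point of the separatrix away from $O$, where \hyperlink{H4'}{\textit{(H4$'$)}} delivers the required nondegeneracy of $A$.
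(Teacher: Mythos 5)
Your proposal is correct and follows essentially the same route as the paper: reduce to the event that the stochastic integral $\int_0^t\nabla_y u_h^{\mathsf T}\sigma\,dW$ drops below $-4$ on the closeness event, bound its quadratic variation from below using Corollary~\ref{cor:avg} together with the uniform positivity of $A$ along the averaged trajectory (which stays near a compact separatrix arc away from the saddle, where \hyperlink{H4'}{\textit{(H4$'$)}} gives nondegeneracy), and conclude via the time-changed Brownian motion as in \eqref{eq5:time_changed_brownian_motion}. The only cosmetic difference is that the paper secures the "bounded away from the saddle" step by first reducing to $\hat t$ small enough that $\bm x_t$ does not reach $\Gamma_1$ (via Lemma~\ref{lem:stay_close_to_averaged} and the Markov property), whereas you argue it directly from the geometry of the flow for fixed $\hat t$.
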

\begin{proof}
    By Lemma~\ref{lem:stay_close_to_averaged} and the Markov property, it is enough to consider small $\hat t$ such that $\bm x_t$ does not reach $\Gamma_1$ before $\hat t$. Using formula \eqref{eq:slowx} again, we see that $\Prob_{(x,y)}(\sup_{0\leq t\leq\hat t}|\tbx_t^\e-\bm x_t|>\e^{\frac{1+2\alpha}{4}})\to 0$ as $\e\downarrow0$ uniformly in $(x,y)$. Use formula \eqref{eq:H} on a shorter time scale:
    \begin{align*}
        H(\tbx_{t}^\e)&=H(x)+\sqrt{\e}\int_0^{t} \nabla_y u_h(\tbx_s^\e,\tbxi_s^\e)^{\mathsf T}\sigma(\tbxi_s^\e)dW_s+\e(u_h(x,y)-u_h(\tbx_{t}^\e,\tbxi_{t}^\e))\\\
    &\quad+\e\int_0^{t}[\nabla_x u_h(\tbx_s^\e,\tbxi_s^\e)\cdot b(\tbx_s^\e,\tbxi_s^\e)+\nabla_y u_h(\tbx_s^\e,\tbxi_s^\e)\cdot c(\tbx_s^\e,\tbxi_s^\e)]ds.
    \end{align*}
    So, it suffices to show the uniform positivity of
    \[\Prob_{(x,y)}\left(\inf_{0\leq t\leq\hat t}\int_0^{t} \nabla_y u_h(\tbx_s^\e,\tbxi_s^\e)^{\mathsf T}\sigma(\tbxi_s^\e)dW_s\leq -4,\sup_{0\leq t\leq\hat t}|\tbx_t^\e-\bm x_t|\leq\e^{\frac{1+2\alpha}{4}}\right).\]
    Note that there exists another Brownian motion $\Tilde W$ such that
    \begin{equation}
    \label{eq5:time_changed_brownian_motion}
    \int_0^{t} \nabla_y u_h(\tbx_s^\e,\tbxi_s^\e)^{\mathsf T}\sigma(\tbxi_s^\e)dW_s=\Tilde W\left(\int_0^{t} |\nabla_y u_h(\tbx_s^\e,\tbxi_s^\e)^{\mathsf T}\sigma(\tbxi_s^\e)|^2ds\right).
    \end{equation}
    Recall that in Subsection~\ref{sec:Averaging principle before} we defined $A(x)=\int_{\mathbb T^m}|\nabla_y u_h(x,y)\sigma(y)|^2d\mu(y)$. Then, by Corollary~\ref{cor:avg},
    \begin{equation}
    \label{eq:close_in_L1}
        \E_{(x,y)}\left|\int_0^{\hat t} |\nabla_y u_h(\tbx_s^\e,\tbxi_s^\e)^{\mathsf T}\sigma(\tbxi_s^\e))|^2ds-\int_0^{\hat t} A(\tbx_s^\e)ds\right|=O(\sqrt\e).
    \end{equation}
    Note that on the event $\{\sup_{0\leq t\leq\hat t}|\tbx_t^\e-\bm x_t|\leq\e^{\frac{1+2\alpha}{4}}\}$, $A(\tbx_t^\e)$ is uniformly positive for $0\leq t\leq\hat t$. Let us denote this lower bound as $m$, which is independent of $x$, $y$, and $\e$. Then
    \begin{equation}
        \Prob_{(x,y)}(\int_0^{\hat t} A(\tbx_s^\e)ds>m\hat t,\sup_{0\leq t\leq\hat t}|\tbx_t^\e-\bm x_t|\leq\e^{\frac{1+2\alpha}{4}})\to1.
    \end{equation}
    By the $L^1$ convergence in \eqref{eq:close_in_L1}, we obtain
    \begin{equation}
    \label{eq5:positive_variance_one_rotation}
        \Prob_{(x,y)}\left(\int_0^{\hat t} |\nabla_y u_h(\tbx_s^\e,\tbxi_s^\e)^{\mathsf T}\sigma(\tbxi_s^\e))|^2ds>m\hat t/2,\sup_{0\leq t\leq\hat t}|\tbx_t^\e-\bm x_t|\leq\e^{\frac{1+2\alpha}{4}}\right)\to1.
    \end{equation}
    Suppose $0<c<\Prob(\inf_{0\leq t\leq m\hat t/2}\Tilde W_t<-4)$.
    Then, for all $\e$ sufficiently small,
    \begin{align*}
        &\Prob_{(x,y)}\left(\inf_{0\leq t\leq\hat t}\int_0^{t} \nabla_y u_h(\tbx_s^\e,\tbxi_s^\e)^{\mathsf T}\sigma(\tbxi_s^\e)dW_s\leq -4,\sup_{0\leq t\leq\hat t}|\tbx_t^\e-\bm x_t|\leq\e^{\frac{1+2\alpha}{4}}\right)\\
        &=\Prob_{(x,y)}\left(\inf_{0\leq t\leq\hat t}\Tilde W\left(\int_0^{t} |\nabla_y u_h(\tbx_s^\e,\tbxi_s^\e)^{\mathsf T}\sigma(\tbxi_s^\e))|^2ds\right)\leq -4,\sup_{0\leq t\leq\hat t}|\tbx_t^\e-\bm x_t|\leq\e^{\frac{1+2\alpha}{4}}\right)\\
        &\geq\Prob_{(x,y)}\left(\inf_{0\leq t\leq m\hat t/2}\Tilde W_t\leq -4,\int_0^{\hat t} |\nabla_y u_h(\tbx_s^\e,\tbxi_s^\e)^{\mathsf T}\sigma(\tbxi_s^\e))|^2ds>m\hat t/2,\sup_{0\leq t\leq\hat t}|\tbx_t^\e-\bm x_t|\leq\e^{\frac{1+2\alpha}{4}}\right)\\
        &\geq c/2.\qedhere
    \end{align*}
\end{proof}
\begin{remark}
\label{rmk:hit_separatrix}
    The result in Lemma~\ref{lem:hit_separatrix} also holds for $x\in\Gamma_4(0,2\sqrt{\e})$. Similarly, for each fixed $\hat t>0$,
    \[\Prob_{(x,y)}\left(\sup_{0\leq t\leq\hat t}H(\tbx_{t}^\e)\geq \sqrt{\e},\sup_{0\leq t\leq\hat t}|\tbx_t^\e-\bm x_t|\leq\e^{\frac{1+2\alpha}{4}}\right)\]
    is positive uniformly in $x\in\Gamma_2(-2\sqrt{\e},0)\cup\Gamma_4(-2\sqrt{\e},0)$, $y\in\mathbb T^m$, and $\e$ sufficiently small.
\end{remark}
Now we can choose $\beta=1/10$. By the results in Lemma~\ref{lem:stay_close_to_averaged}, Lemma~\ref{lem:throughsaddlepoint}, Lemma~\ref{lem:hit_separatrix}, and Remark~\ref{rmk:hit_separatrix}, using the strong Markov property, we obtain the following lemma:
\begin{lemma}
\label{lem:rotation}
There exists a closed interval $I$ on $\gamma$ that does not contain the saddle point and a constant $0<c<1$ satisfying the following property: if the system \eqref{eq:auxiliary} starts at $(x,y)\in\gamma'\times\mathbb T^m$, then for all $\e$ sufficiently small
    \begin{equation}
        \Prob_{(x,y)}(\Tilde\eta_1<\bm\tau_1)\geq c
    \end{equation}
    where $\Tilde\eta_1=\inf\{t:\tbx_t^\e\in I\}$.
\end{lemma}
\begin{remark}
\label{rmk:positive_any_subset_separatrix}
    In order for us to apply Lemma~\ref{lem:hit_separatrix}, we need to choose $I$ that contains the intersection of $\Gamma_2$ and $\gamma$ in its interior. In fact, it is not difficult to show that Lemma~\ref{lem:rotation} holds for any subset of $\gamma$ with non-empty interior.
\end{remark}

\textbf{Step 2}. 
Without loss of generality, we assume that, if $\bm x_t$ starts at one endpoint of $I$, then the other endpoint is $\bm x_{1/2}$.
In the remainder of this section, $\bm x_t$ always denotes this deterministic motion, irrespective of where $\tbx_t^\e$ starts. 
We aim to study the distribution of the process $(\tbx_t^\e,\tbxi_t^\e)$ starting on $I\times\mathbb T^m$ with certain $t>0$. The choice of $t$ will depend on the initial point $x$ being considered (see Figure~\ref{fig:local_limit_theorem}), and this will be convenient as we use the strong Markov property later when combining all three steps.
\begin{figure}[!htbp]
    \centering
    \includegraphics[width=0.7\textwidth]{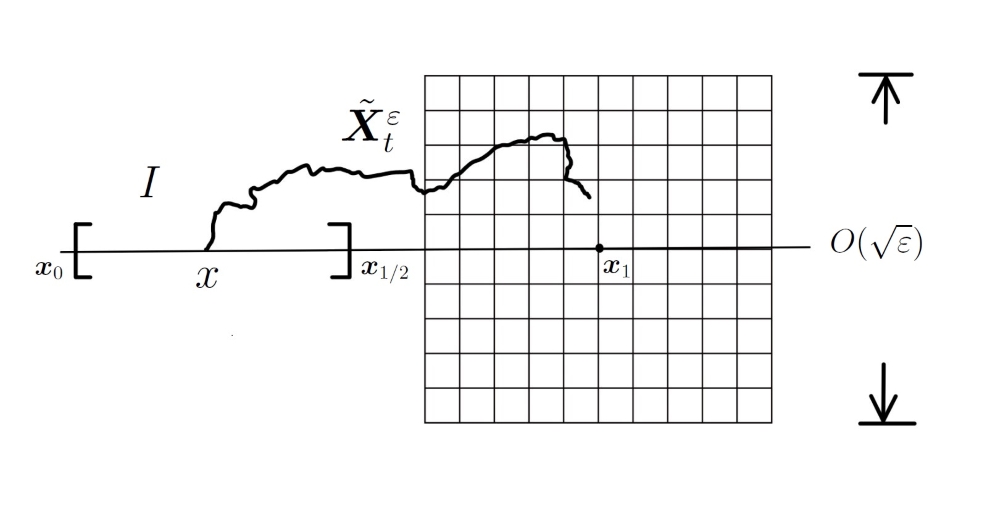}
    \caption{Higher order estimate on the distribution of $\tbx_t^\e$.}
    \label{fig:local_limit_theorem}
\end{figure}
To be more precise, for $x\in I$, let $s(x)$ be such that $\bm x_{s(x)}=x$ (so $0\leq s(x)\leq1/2$). We introduce a process $\tilde\zeta_{t}^{\e}$, $0\leq t$, as the second term in the expansion of $\tbx_{t}^{\e}$ around the deterministic motion $\bm x_{(s(x)+t)}$:
\begin{equation}
\label{eq:linearize_process_tilde}
\begin{aligned}
d\tilde\zeta_{t}^{\e}&=\nabla(\nabla^\perp H)(\bm x_{s(x)+t})\tilde\zeta_{t}^{\e}dt+[b(\bm x_{s(x)+t},\tbxi_{t}^{\e})-\nabla^\perp H(\bm x_{s(x)+t})]dt,~\tilde\zeta_{0}^{\e}=0.
\end{aligned}
\end{equation}
(Note that, for finite $t$, $\tilde\zeta_{t}^{\e}$ is of order $\sqrt{\e}$.) Then, by standard perturbation arguments and Gr\"onwall's inequality, it can be shown that, uniformly in $x$ such that $0\leq s(x)\leq1/2$, $0\leq t\leq 1-s(x)$, and $y\in\mathbb T^m$,
\begin{equation}
\label{eq:xtildecloseinL1}
    \E_{(x,y)}|\tbx_{t}^{\e}-\bm x_{s(x)+t}-\tilde\zeta_{t}^{\e}|=O(\e).
\end{equation}
Therefore, understanding of the distribution of $\tilde\zeta_{1-s(x)}^{\e}$ would help one to understand the distribution of $\tbx_{1-s(x)}^{\e}$.
However, it is not straightforward to study $\tilde\zeta_{t}^{\e}$ since $(\tbxi_{t}^{\e},\tilde\zeta_{t}^{\e})$ is not a Markov process. 
We introduce a related process $\zeta_{t}^{\e}$ defined using the original Markov process $\bxi_t^\e$, apply the local limit theorem to $( \bxi_{t}^{\e}, \zeta_{t}^{\e})$, and use the Girsanov theorem to get the desired estimate. Namely, let $\zeta_{t}^{\e}$, $s\leq t$, be defined by:
\begin{equation}
\label{eq:linearizeprocess}
\begin{aligned}
d \zeta_{t}^{\e}&=\nabla(\nabla^\perp H)(\bm x_{s(x)+t}) \zeta_{t}^{\e}dt+[b(\bm x_{s(x)+t}, \bxi_{t}^{\e})-\nabla^\perp H(\bm x_{s(x)+t})]dt,~ \zeta_{0}^{\e}=0.
\end{aligned}
\end{equation}
The following result is a version of the local limit theorem \cite{LLT} adapted to our case. 
\begin{theorem}
\label{thm:locallimittheorem}
    Let $g:[0,1]\times \mathbb T^m\to\mathbb R^2$ be a $C^\infty$ function such that $g(t,\cdot)$ spans $\mathbb R^2$ and $\int_{\mathbb T^m} g(t,y)d\mu(y)=0$ for all $t\geq 0$, where $\mu$ is the invariant measure of $ \bxi_{t}^{\e}$. 
    Then a local limit theorem holds for the following random variable as $\e\to0$ uniformly in $(x,y)\in I\times\mathbb T^m$,
    \[S^\e:=\frac{1}{\e}\int_0^{1-s(x)} g(s(x)+t, \bxi_{t}^{\e})dt.\]
    Namely, there exists an invertible covariance matrix $B(s)$ continuous in $s$ such that
    \begin{equation}
    \label{eq5:locallimittheorem}
        \lim_{\e\to0}\left| \frac{2\pi}{\e}\sqrt{\mathrm{det}B(s(x))}\cdot\Prob_{(x,y)}(S^\e-u\in[0,1)^2)-\exp({-\frac{\e \langle B(s(x))^{-1}u,u\rangle}{2}})\right|=0,
    \end{equation}uniformly in $u\in\mathbb R^2 $, $x\in I$, and $y\in\mathbb T^m$.
\end{theorem}
The second term in \eqref{eq5:locallimittheorem} is non-trivial even when $u$ takes large values (of order $1/\sqrt{\e}$), which is exactly the situation we are dealing with.
Following \eqref{eq:linearizeprocess}, we solve explicitly
\begin{equation}
     \zeta_{1-s(x)}^{\e}=\int_0^{1-s(x)}U_{s(x)+t,1}(b(\bm x_{s(x)+t}, \bxi_{t}^{\e})-\nabla^\perp H(\bm x_{s(x)+t}))dt,
\end{equation}
where $U_{t,s}$ solves the differential equation
\[
dU_{t,s}=\nabla(\nabla^\perp H)(\bm x_{s})U_{t,s}ds,
\]
and $U_{t,t}$ is the identity matrix.
Since $\bm x_t$ is deterministic, the integrand can be treated as a function only of time $t$ and $ \bxi_{t}^{\e}$. 
Moreover, for each $t$, the integrand has zero mean w.r.t. the invariant measure and spans $\mathbb R^2$, since $U_{t,1}$ is deterministic and non-singular and, for each $x$, $\{b(x,y)-\nabla^\perp H(x):y\in\mathbb T^m\}$ spans $\mathbb R^2$ by assumption \hyperlink{H4'}{\textit{(H4$'$)}}.
Then Theorem~\ref{thm:locallimittheorem} implies that
\begin{equation}
\label{eq:A_jk}
    \Prob_{(x,y)}\left(\frac{1}{\e} \zeta_{1-s(x)}^{\e}\in[j,j+1)\times[k,k+1)\right)\geq\frac{\e}{4\pi\sqrt{\mathrm{det}B(s(x))}}\exp\left(-\frac{\e \langle B(s(x))^{-1}(j,k),(j,k)\rangle}{2}\right)
\end{equation}
for all $\e$ small enough, $-1/\sqrt{\e}\leq j,k\leq 1/\sqrt{\e}$, $x\in I$, and  $y\in\mathbb T^m$. 
Finally, we compare $(\tbx_{t}^{\e},\tbxi_{t}^{\e},\tilde\zeta_{t}^{\e})$ with $( \bx_{t}^{\e}, \bxi_{t}^{\e}, \zeta_{t}^{\e})$. 
Since the added drift $c(x,y)$ in the equation of $\tbxi_{t}^{\e}$ is small compared to the diffusion term $\frac{1}{\sqrt{\e}}\sigma(y)$, it is not hard to verify that, using the Girsanov theorem, for all $\e$ small enough, $-1/\sqrt{\e}\leq j,k\leq 1/\sqrt{\e}$, $x\in I$, and  $y\in\mathbb T^m$, 
\begin{equation}
    \label{eq:A_jk1}
    \Prob_{(x,y)}\left(\frac{1}{\e}\tilde\zeta_{1-s(x)}^{\e}\in[j,j+1)\times[k,k+1)\right)\geq \frac{1}{2}\Prob_{(x,y)}\left(\frac{1}{\e} \zeta_{1-s(x)}^{\e}\in[j,j+1)\times[k,k+1)\right).
\end{equation}

\textbf{Step 3}. We proved that $\bm x_1+\tilde\zeta_{1-s(x)}^{\e}$ reaches the $O(\e)$-sized boxes with probabilities bounded from below. 
We also know that $\tbx_{1-s(x)}^{\e}$ is $O(\e)$-close to $\bm x_1+\tilde\zeta_{1-s(x)}^{\e}$ in $L^1$. 
Let us take one generic pair $(j,k)$, let $B^{\e,K}_{j,k}=\bm x_1+[(j-K)\e,(j+1+K)\e)\times[(k-K)\e,(k+1+K)\e)$, and study the distribution of $(\tbx_t^\e,\tbxi_t^\e)$ with the initial point in $B^{\e,K}_{j,k}\times\mathbb T^m$ after time of order $O(\e)$ {(see Figure \ref{fig:common_component})}.
\begin{figure}[hbtp]
    \centering
    \includegraphics[width=.5\textwidth]{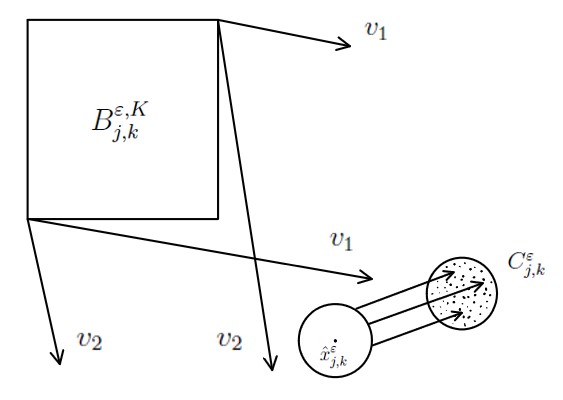}
    \caption{Common component of the distributions.}
    \label{fig:common_component}
\end{figure}
\begin{lemma}
\label{lemma:forcetosmallerbox}
    For each $\kappa>0$, $K>0$, and $\hat y\in\mathbb T^m$, there exist $t_2>0$, $c>0$, and, for each pair $(j,k)$, a point $\hat x_{j,k}^\e$ such that, for each $(x,y)\in B^{\e,K}_{j,k}\times\mathbb T^m$ and all $\e$ sufficiently small,
    $$\Prob_{(x,y)}(\tau_\kappa<t_2\e)\geq c,$$
    where $\tau_\kappa=\inf\{t:\tbx_t^\e\in B(\hat x_{j,k}^\e,\kappa\e),~\tbxi_t^\e\in B(\hat y,\kappa)\}$.
\end{lemma}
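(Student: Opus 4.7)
The plan is to rescale time by $\e$ so that the targeted $O(\e)$ displacement of $\tbx^\e$ takes place over a fixed $O(1)$ window, and then to extract a uniform lower bound from hypoellipticity of the resulting limit process. Setting $s=t/\e$, $Y_s^\e=(\tbx_{\e s}^\e-c_{j,k}^\e)/\e$, and $\hat\xi_s^\e=\tbxi_{\e s}^\e$, where $c_{j,k}^\e$ denotes the centre of $B^{\e,K}_{j,k}$, the system \eqref{eq:auxiliary} becomes
\begin{align*}
    dY_s^\e &= b(c_{j,k}^\e+\e Y_s^\e,\hat\xi_s^\e)\,ds,\\
    d\hat\xi_s^\e &= v(\hat\xi_s^\e)\,ds+\sigma(\hat\xi_s^\e)\,d\tilde W_s+\e\,c(c_{j,k}^\e+\e Y_s^\e,\hat\xi_s^\e)\,ds,
\end{align*}
driven by the standard Brownian motion $\tilde W_s:=W_{\e s}/\sqrt\e$. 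Since $|c_{j,k}^\e-\bm x_1|=O(\sqrt\e)$ uniformly in $|j|,|k|\leq\e^{-1/2}$, on each bounded $s$-interval the coefficients converge to those of the limit process
\begin{equation*}
    dY_s=b(\bm x_1,\xi_s)\,ds,\qquad d\xi_s=v(\xi_s)\,ds+\sigma(\xi_s)\,dW_s,
\end{equation*}
the rescaled starting point $(Y_0^\e,\hat\xi_0^\e)$ lies in the fixed compact set $[-K-1,K+1]^2\times\mathbb T^m$, and the target $B(\hat x_{j,k}^\e,\kappa\e)\times B(\hat y,\kappa)$ corresponds, in the rescaled variables, to $B(\hat Y,\kappa)\times B(\hat y,\kappa)$ for some $\hat Y\in\mathbb R^2$ to be chosen, with $\hat x_{j,k}^\e:=c_{j,k}^\e+\e\hat Y$.

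I would then apply the parabolic H\"ormander condition \hyperlink{H5}{\textit{(H5)}} to the limit system. Since $\bm x_1$ lies on the separatrix, there is $y^\ast\in\mathbb T^m$ at which the Lie algebra generated by the SDE vector fields spans $\mathbb R^{2+m}$; as these vector fields are $Y$-independent in the limit, the same rank condition holds at $(Y,y^\ast)$ for every $Y\in\mathbb R^2$. Hence the limit diffusion is hypoelliptic and admits a jointly smooth transition density for each $s>0$. Controllability of the $\xi$-component follows from non-degeneracy of $\sigma$ in \textit{(H1)}, while controllability of the $Y$-component follows from \textit{(H4$'$)}, which asserts that $\{b(\bm x_1,y)-\nabla^\perp H(\bm x_1):y\in\mathbb T^m\}$ spans $\mathbb R^2$. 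The Stroock--Varadhan support theorem then yields strict positivity of this density at any prescribed interior point once $s$ is large enough. A compactness argument, combined with the observation that the law of $Y_{t_2}-Y_0$ in the limit depends on the starting data only through $y_0$ (because the limit vector fields are $Y$-independent), produces a single $\hat Y$ and time $t_2>0$ for which
\begin{equation*}
    \Prob_{(Y_0,y_0)}\bigl((Y_{t_2},\xi_{t_2})\in B(\hat Y,\kappa)\times B(\hat y,\kappa)\bigr)\geq 2c>0
\end{equation*}
uniformly over all $(Y_0,y_0)\in[-K-1,K+1]^2\times\mathbb T^m$.

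The last step transfers this bound from the limit to the pre-limit $(Y_s^\e,\hat\xi_s^\e)$. I would remove the $\e c$ drift via Girsanov's theorem, whose Radon--Nikodym derivative is $L^p$-bounded on $[0,t_2]$ uniformly in $\e$ by non-degeneracy of $\sigma$ and boundedness of $c$. The remaining $O(\e)$ discrepancy between the drift $b(c_{j,k}^\e+\e Y_s^\e,\cdot)$ and $b(\bm x_1,\cdot)$, together with the uniform convergence $c_{j,k}^\e\to\bm x_1$, yields uniform weak convergence of the law of $(Y_{t_2}^\e,\hat\xi_{t_2}^\e)$ to that of $(Y_{t_2},\xi_{t_2})$. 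The portmanteau theorem applied to the open set $B(\hat Y,\kappa)\times B(\hat y,\kappa)$ then preserves the lower bound, giving probability at least $c>0$ for all $\e$ sufficiently small; unrescaling recovers the stated hitting bound for $\tbx_t^\e$. The principal obstacle is producing a \emph{single} target $\hat Y$ and time $t_2$ that work uniformly for every initial state in the compact starting box, every index $(j,k)$, and every $\hat y$; this rests on combining H\"ormander hypoellipticity at $(Y,y^\ast)$ with the controllability afforded by \textit{(H1)} and \textit{(H4$'$)}, compactness of the initial set, and the translation-invariance of the limit $Y$-dynamics noted above.
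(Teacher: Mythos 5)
Your rescaling is sound, and the overall route (freeze the slow variable at $\bm x_1$, prove a hitting bound for the limit system, transfer back by Girsanov/coupling) is genuinely different from the paper's, which steers the fast variable explicitly: it picks $y_1,y_2$ with $v_1=b(\bm x_1,y_1)$, $v_2=b(\bm x_1,y_2)$ spanning $\mathbb R^2$ (possible by (H4$'$)), chooses $\hat x^\e_{j,k}$ in the intersection of the cones $x+\{av_1+bv_2:a,b\geq0\}$ over $x\in B^{\e,K}_{j,k}$, and bounds below the probability that $\tbxi^\e_t$ dwells near $y_1$ for time $a_x\e$, then near $y_2$ for time $b_x\e$, then enters $B(\hat y,\kappa)$; only (H1) and (H4$'$) are used, not (H5). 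The trouble with your write-up is that the one step where all the work lies --- producing a \emph{single} $\hat Y$ and $t_2$ such that $B(\hat Y,\kappa)$ is reached with uniformly positive probability from \emph{every} $Y_0$ in a box of side $1+2K$ (which may be much larger than $\kappa$) and every $y_0\in\mathbb T^m$ --- is asserted rather than proved. Translation invariance of the limit $Y$-dynamics plus compactness gives nothing until you first show that the support of the displacement $\int_0^{t_2}b(\bm x_1,\xi_s)\,ds$ contains a neighborhood of an entire box of side $1+2K$ of prescribed points, uniformly in $y_0$. That is a quantitative reachability statement: one must argue that occupation measures of the fast diffusion (whose path support is all of $C([0,t_2];\mathbb T^m)$ by (H1)) approximate arbitrary convex combinations of $\{b(\bm x_1,y):y\in\mathbb T^m\}$, that by (H4$'$) this convex hull is two-dimensional with $\nabla^\perp H(\bm x_1)$ in its interior, and hence that for $t_2$ large compared with $K$ the reachable displacements cover the required box centered near $t_2\nabla^\perp H(\bm x_1)$ --- in particular $\hat Y$ cannot lie in the starting box; it must be translated along the averaged flow. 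This is exactly the content of the paper's $y_1$/$y_2$ steering construction, so the crux of the lemma is missing from your proposal.

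Separately, the hypoellipticity detour is both unnecessary and incorrect as stated: (H5) holding at $(\bm x_1,y^\ast)$ (equivalently at $(Y,y^\ast)$ for all $Y$ in the frozen system) does not give a smooth transition density for starting points with arbitrary $y_0\in\mathbb T^m$, and the Stroock--Varadhan support theorem does not yield strict positivity of a density at interior points of the support (hypoelliptic densities can vanish there). What it does give --- positive probability of open sets --- is all you need, and for that the nondegeneracy in (H1) suffices; (H5) plays no role in this lemma (the paper reserves it for Lemma~\ref{lem5:density_xy}). The transfer step you outline (Girsanov for the $\e c$ drift, a Gronwall-type coupling to absorb $|c^\e_{j,k}-\bm x_1|=O(\sqrt\e)$ and $\e Y^\e_s=O(\e)$, and portmanteau with $\kappa$ shrunk to $\kappa/2$ so the bound is uniform in $(j,k)$ and in the starting point) is routine once the limit bound is actually established.
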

\begin{proof}
    Recall the definition of $\bm x_1$ at the beginning of Step 2 (see Figure~\ref{fig:local_limit_theorem}).
    By assumption \hyperlink{H4'}{\textit{(H4$'$)}}, $\{b(\bm x_1,y):y\in\mathbb T^m\}$ spans $\mathbb R^2$. So there exist $y_1,y_2\in\mathbb T^m$ such that $v_1:=b(\bm x_1,y_1)$ and $v_2:=b(\bm x_1,y_2)$ span $\mathbb R^2$. 
    Let us consider the set $S_{j,k}=\bigcap_{x\in B_{j,k}^{\e,K}}\{x+av_1+bv_2:a,b\geq0\}$.
    Then it is easy to see that, there exist a constant $t_2>0$ and, for each pair $(j,k)$, a point $\hat x_{j,k}^\e\in S_{j,k}$  such that for all $x\in B_{j,k}^{\e,K}$, $\hat x_{j,k}^\e=x+ a_x \e v_1+b_x \e v_2$ and $0<a_x,b_x<t_2/5$.
    There exists $\delta>0$ such that for each $x\in B(\bm x_1,2\delta)$ and each $y$ in $B(y_i,2\delta)$, $|b(x,y)-v_i|<\kappa/t_2$, $i=1,2$.
    Let $M$ be the upper bound of vector $b(x,y)$.
    For all $\e$ sufficiently small, the probability of the following event, denoted by $E$, has a lower bound, denoted by $c$, that only depends on $t_2$, $\kappa$, $M$, $y_1$, $y_2$, $\hat y$, $\delta$, and not on the starting point $(x,y)\in B(\bm x_1,\delta)\times\mathbb T^m$, thus not on $(j,k)$: 
    \[E=\begin{Bmatrix}
    \tau_1<(t_2\wedge \kappa/M)\e/5;~\tbxi^\e_{\tau_1+t}\in B(y_1,2\delta),~t\in[0,a_x\e];~\tau_2<\tau_1+a_x+(t_2\wedge \kappa/M)\e/5; \\
    \tbxi^\e_{\tau_2+t}\in B(y_2,2\delta),~t\in[0,b_x \e];~\tau_3<\tau_2+b_x\e+(t_2\wedge \kappa/M)\e/5
    \end{Bmatrix},
    \]where $\tau_1=\inf\{t\geq0:\tbxi^\e_t\in B(y_1,\delta)\}$, $\tau_2=\inf\{t\geq\tau_1+a_{x}\e:\tbxi^\e_t\in B(y_2,\delta)\}$, and $\tau_3=\inf\{t\geq\tau_2+b_{x}\e:\tbxi^\e_t\in B(\hat{y},\kappa)\}$. If $E$ is a subset of the event  $\{\tau_\kappa<t_2\e\}$, then the lemma is proved. To show the inclusion, note that on $E$,
    \begin{align*}
        |\tbx_{\tau_3}^\e-\hat x_{j,k}^\e|&=|\tbx_{\tau_3}^\e-(x+a_{x}\e v_1+b_{x}\e v_2)|\\
        &\leq |\tbx_{\tau_3}^\e-\tbx_{\tau_2+b_{x}\e}^\e|+|\tbx_{\tau_2+b_{x}\e}^\e-(\tbx_{\tau_2}^\e+ b_{x}\e v_2)|+|\tbx_{\tau_2}^\e-\tbx_{\tau_1+a_{x}\e}^\e|\\
        &\quad\quad +|\tbx_{\tau_1+a_{x}\e}^\e-(\tbx_{\tau_1}^\e+ a_{x}\e v_1)|+|\tbx_{\tau_1}^\e-x|\\
        &\leq \kappa\e.
    \end{align*}
    Besides, by the definition of $\tau_3$, $\tbxi_{\tau_3}^\e\in B(\hat{y},\kappa)$. Thus $\tau_\kappa\leq\tau_3<t_2\e$ on $E$.
\end{proof}
From now on, let $\hat y$ be the point in assumption \hyperlink{H5}{\textit{(H5)}} such that the parabolic H\"ormander condition holds at $(\bm x_1,\hat y)$ and let $p_t^\e((x,y),\cdot)$ be the density of $(\tbx_{t\e}^\e,\tbxi_{t\e}^\e)$ starting at $(x,y)$.
\begin{lemma}
\label{lem5:density_xy}
    There exists $\kappa>0$ such that for each $\hat x\in B(\bm x_1,\kappa)$ and all $\e$ sufficiently small, there is a domain $C^\e_{\hat x,\hat y}\subset V^\e\times\mathbb T^m$ with $\lambda(C^\e_{\hat x,\hat y})>\kappa\e^2$ and $p_{1}^\e((x,y),\cdot)>\kappa/\e^2$ on $C^\e_{\hat x,\hat y}$ for $(x,y)\in B(\hat x,\kappa\e)\times B(\hat y,\kappa)$.
\end{lemma}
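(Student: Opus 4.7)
The plan is to reduce this lemma to a classical hypoelliptic density estimate via rescaling. On the time scale $t = O(\e)$ the slow displacement $\tbx_t^\e - \hat x$ is of order $\e$, so the natural scales are $\e$ in $x$ and $1$ in $y$. Accordingly I set
\[
\T X_s := (\tbx^\e_{\e s} - \hat x)/\e, \qquad Y_s := \tbxi^\e_{\e s}, \qquad s \in [0,1].
\]
By Itô's formula and Brownian scaling, $(\T X_s, Y_s)$ satisfies
\[
d\T X_s = b(\hat x + \e \T X_s, Y_s)\, ds, \qquad dY_s = v(Y_s)\, ds + \sigma(Y_s)\, d\tilde W_s + \e\, c(\hat x + \e \T X_s, Y_s)\, ds,
\]
with a new Brownian motion $\tilde W$. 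For $(x, y) \in B(\hat x, \kappa \e) \times B(\hat y, \kappa)$, the rescaled initial condition lies in $B(0, \kappa) \times B(\hat y, \kappa)$ uniformly in $\e$.

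The $\e \to 0$ limit of this SDE has drift/noise vector fields (in Stratonovich form) coinciding, at $(\hat x, \hat y)$, with those in assumption (H5). Hence the parabolic Hörmander condition holds at $(\hat x, \hat y)$ for the limit system, and by continuity of the iterated Lie brackets it holds uniformly on $B(\bm x_1, \kappa) \times B(\hat y, \kappa)$ for $\kappa$ small. I then invoke Malliavin calculus for the family $\{(\T X_s, Y_s)\}_\e$: under this uniform Hörmander condition, the Malliavin covariance matrix of $(\T X_1, Y_1)$ has inverse moments bounded uniformly in $\e$ and in the starting point (the small perturbation $\e c$ and the $\e$-Taylor remainder of $b$ are negligible for these estimates). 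Therefore $(\T X_1, Y_1)$ admits a density $q^\e$ that is $C^\infty$ with derivatives controlled uniformly in $\e$, and $q^\e \to q^0$ locally uniformly as $\e \to 0$, where $q^0$ is the density of the limit process.

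The density $q^0$ is strictly positive on an open set by the Stroock--Varadhan support theorem: one exhibits a deterministic control $\phi \in L^2([0,1]; \mathbb R^m)$ whose controlled ODE drives the starting region $B(0,\kappa) \times B(\hat y,\kappa)$ into a common open subset $\T C \subset \mathbb R^2 \times \mathbb T^m$ with $\lambda(\T C) > \kappa$ and $q^0 \geq 3\kappa$ on $\T C$. By the uniform convergence, $q^\e \geq 2\kappa$ on $\T C$ for all small $\e$. Undoing the rescaling (with Jacobian $\e^2$ for the $x$-coordinate), the original density satisfies $p_1^\e((x,y),\cdot) \geq 2\kappa/\e^2$ on $C^\e_{\hat x, \hat y} := \{(x', y') : ((x'-\hat x)/\e, y') \in \T C\}$, which has Lebesgue measure at least $\kappa \e^2$. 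Finally, in the applications $\hat x$ satisfies $|\hat x - \bm x_1| = O(\sqrt{\e})$ (the boxes $B^{\e,K}_{j,k}$ lie within $O(\sqrt\e)$ of $\bm x_1 \in \gamma$), so $|H(\hat x)| = O(\sqrt\e) \ll \e^\alpha$; since points of $C^\e_{\hat x, \hat y}$ are $O(\e)$-close to $\hat x$, the inclusion $C^\e_{\hat x, \hat y} \subset V^\e \times \mathbb T^m$ follows.

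The main technical obstacle is the $\e$-uniform Malliavin estimate. The noise enters the rescaled system only through the $Y$-equation with $O(1)$ coefficients; spreading of $\T X$ is driven entirely by iterated Lie brackets with the drift $(b, v)$, which is exactly what (H5) provides. One must therefore carry out Hörmander's estimate at a fixed bracket order (determined by the Lie-algebraic structure at $(\bm x_1, \hat y)$) and verify that the bracket-generating constants, together with the constants in Norris's lemma, are stable in $\e$. This stability is not automatic, but follows because the $\e$-perturbation of the coefficients is smooth, bounded, and $O(\e)$ in every $C^k$ norm; nonetheless, making this rigorous is the bulk of the technical work.
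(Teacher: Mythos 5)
Your argument is essentially the paper's own proof: the same $\e$-rescaling of the slow variable on the fast time scale (the paper's process $(\theta^{\e,x,y}_t,\eta^{\e,x,y}_t)$ in \eqref{eq5:difference}), the transfer of the H\"ormander condition \hyperlink{H5}{\textit{(H5)}} to the rescaled system, uniform-in-$\e$ hypoelliptic density estimates with continuity/convergence as $\e\to0$, and the $\e^{-2}$ Jacobian when undoing the rescaling. The only cosmetic difference is that the paper gets positivity by simply picking a point where the limiting density $\tilde p_1^{0,\bm x_1,\hat y}$ is positive and using joint continuity in $(\e,x,y,\theta,\eta)$, whereas you invoke the support theorem; both versions leave the same technical core (stability of the Malliavin/H\"ormander estimates as $\e\to0$) at a comparable level of detail.
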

\begin{proof}
Consider the stochastic processes that depend on the parameters $(\e,x,y)$:
\label{eq:theta_process}
\begin{equation}
    \label{eq5:difference}
    \begin{aligned}
    d\theta_t^{\e,x,y}&=b(x+\e\theta_t^{\e,x,y},y+\eta_t^{\e,x,y})dt,~\theta_0^{\e,x,y}=0\in\mathbb R^2,\\
    d\eta_t^{\e,x,y}&= v(y+\eta_t^{\e,x,y})dt+\e c(x+\e\theta_t^{\e,x,y},y+\eta_t^{\e,x,y})dt+\sigma(y+\eta_t^{\e,x,y}) dW_t,~\eta_0^{\e,x,y}={0}\in\mathbb R^m.
    \end{aligned}
\end{equation}
Since, by assumption \hyperlink{H5}{\textit{(H5)}}, the parabolic H\"ormander condition for equation \eqref{eq:theprocess1} holds at $(\bm x_1,\hat y)$, it is not hard to see that, if $(x,y)$ is close to $(\bm x_1,\hat y)$ and $\e$ is small, the parabolic H\"ormander condition holds for \eqref{eq5:difference} at $0$ and the distribution of $(\theta_t^{\e,x,y},\eta_t^{\e,x,y})$ is absolutely continuous w.r.t. the Lebesgue measure (\cite{Nualart}). 
Moreover, if the density function, denoted by $\tilde p_1^{\e,x,y}(\theta,\eta)$, exists, it is continuous in $\e,x,y,\theta$, and $\eta$. 
Let $\hat\theta$ and $\hat\eta$ satisfy that $\tilde p_1^{0,\bm x_1,\hat y}(\hat\theta,\hat\eta)>0$. 
Then there exists $0<\delta<1$ such that $\tilde p_1^{\e,x,y}(\theta,\eta)$ exists and is greater than $\delta$ for all $0<\e<\delta$, $x\in B(\bm x_1,\delta)$, $y\in B(\hat y,\delta)$, $\theta\in B(\hat\theta,\delta)$, and $\eta\in B(\hat\eta,\delta)$. 
For $\hat x\in B(\bm x_1,\delta/2)$, define $C^\e_{\hat x,\hat y}=B(\hat x+\e\hat\theta,\e\delta/2)\times B(\hat y+\hat\eta,\delta/2)$. 
Then, for $(x,y)\in B(\hat x,\e\delta/2)\times B(\hat y,\delta/2)$, and $(x',y')\in C^\e_{\hat x,\hat y}$, and $0<\e<\delta$, we have that 
\[p^\e_1((x,y),(x',y'))=\frac{1}{\e^2}\tilde p^{\e,x,y}\left(\frac{x'-x}{\e},y'-y\right)>\frac{\delta}{\e^2}.\]
The result holds with $\kappa=(\delta/2)^{m+2}$.
\end{proof}
\begin{lemma}
\label{lem:pijk}
    For each $K>0$, there exist constants $c>0$ and $t_1>0$ such that for all $-1/\sqrt{\e}\leq j,k\leq1/\sqrt{\e}$, there exists a measure $\pi^\e_{j,k}$ and a stopping time $\Tilde\eta_3^{j,k}<t_1\e$ such that for each $(x,y)\in B^{\e,K}_{j,k}\times\mathbb T^m$, the distribution of $(\tbx_{\Tilde\eta_3^{j,k}}^\e,\tbxi_{\Tilde\eta_3^{j,k}}^\e)$ starting at $(x,y)$ has $\pi^\e_{j,k}$ as a component and $\pi^\e_{j,k}(V^\e\times\mathbb T^m)>c$ for all $\e$ sufficiently small.
\end{lemma}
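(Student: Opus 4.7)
The plan is to combine Lemma~\ref{lemma:forcetosmallerbox} (uniform positive probability of entering a small box near the saddle) with Lemma~\ref{lem5:density_xy} (positive density of the transition kernel from that box after an additional time of order $\e$) via the strong Markov property. Let $\kappa_0>0$ be the constant furnished by Lemma~\ref{lem5:density_xy} and fix $\hat y$ as in assumption \hyperlink{H5}{\textit{(H5)}}, so that the parabolic H\"ormander condition holds at $(\bm x_1,\hat y)$. Applying Lemma~\ref{lemma:forcetosmallerbox} with $\kappa=\kappa_0$ (and the given $K$, $\hat y$) yields constants $t_2,c_1>0$ and, for each $(j,k)$, a point $\hat x^\e_{j,k}$ such that for every $(x,y)\in B^{\e,K}_{j,k}\times\mathbb T^m$,
\begin{equation*}
\Prob_{(x,y)}(\tau_{\kappa_0}<t_2\e)\ge c_1,\qquad \tau_{\kappa_0}:=\inf\{t:\tbx^\e_t\in B(\hat x^\e_{j,k},\kappa_0\e),\ \tbxi^\e_t\in B(\hat y,\kappa_0)\}.
\end{equation*}

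To chain this with Lemma~\ref{lem5:density_xy} I need $\hat x^\e_{j,k}\in B(\bm x_1,\kappa_0)$ uniformly in $|j|,|k|\le 1/\sqrt{\e}$. This is a routine geometric check: $B^{\e,K}_{j,k}$ lies within distance $(|j|+|k|+2K+2)\e=O(\sqrt{\e})$ of $\bm x_1$, and by construction $\hat x^\e_{j,k}$ is an $O(\e)$-perturbation of some point of that box, so $|\hat x^\e_{j,k}-\bm x_1|=O(\sqrt{\e})<\kappa_0$ for all $\e$ small enough. Lemma~\ref{lem5:density_xy} then produces a set $C^\e_{j,k}:=C^\e_{\hat x^\e_{j,k},\hat y}\subset V^\e\times\mathbb T^m$ with $\lambda(C^\e_{j,k})>\kappa_0\e^2$ on which $p^\e_1((x',y'),\cdot)>\kappa_0/\e^2$ for every $(x',y')\in B(\hat x^\e_{j,k},\kappa_0\e)\times B(\hat y,\kappa_0)$. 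The inclusion $C^\e_{j,k}\subset V^\e\times\mathbb T^m$ is precisely what makes the construction land inside the desired neighborhood of the separatrix.

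Setting $t_1:=t_2+2$ and $\Tilde\eta_3^{j,k}:=(\tau_{\kappa_0}\wedge t_2\e)+\e<t_1\e$, the strong Markov property at $\tau_{\kappa_0}$ combined with the density bound gives, for every measurable $A\subset M\times\mathbb T^m$ and every $(x,y)\in B^{\e,K}_{j,k}\times\mathbb T^m$,
\begin{equation*}
\Prob_{(x,y)}\bigl((\tbx^\e_{\Tilde\eta_3^{j,k}},\tbxi^\e_{\Tilde\eta_3^{j,k}})\in A\bigr)\ge \E_{(x,y)}\Bigl[\chi_{\{\tau_{\kappa_0}<t_2\e\}}\cdot\frac{\kappa_0}{\e^2}\lambda(A\cap C^\e_{j,k})\Bigr]\ge c_1\frac{\kappa_0}{\e^2}\lambda(A\cap C^\e_{j,k}).
\end{equation*}
Defining $\pi^\e_{j,k}(A):=c_1(\kappa_0/\e^2)\lambda(A\cap C^\e_{j,k})$ then exhibits $\pi^\e_{j,k}$ as a common component of the distribution independent of the starting point, with $\pi^\e_{j,k}(V^\e\times\mathbb T^m)\ge c_1\kappa_0^2=:c$. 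The only subtle point is the uniform geometric bookkeeping that places $\hat x^\e_{j,k}$ in $B(\bm x_1,\kappa_0)$, which relies on $\alpha<1/2$ so that boxes at distance $O(\sqrt{\e})$ of the separatrix sit comfortably inside $V^\e=\{|H|<\e^\alpha\}$; everything else is a clean two-step splicing of the preceding lemmas.
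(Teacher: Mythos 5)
Your proposal is correct and follows essentially the same route as the paper: fix $\kappa$ from Lemma~\ref{lem5:density_xy}, apply Lemma~\ref{lemma:forcetosmallerbox} with that $\kappa$, chain the two via the strong Markov property at $\tau_\kappa$, and take $\pi^\e_{j,k}$ proportional to Lebesgue measure restricted to $C^\e_{j,k}$ with $t_1=t_2+2$, $\Tilde\eta_3^{j,k}=\tau_\kappa\wedge t_2\e+\e$, and $c$ of order $\kappa^2$ times the hitting probability. The only difference is that you make explicit the routine check that $\hat x^\e_{j,k}\in B(\bm x_1,\kappa)$ uniformly in $|j|,|k|\le 1/\sqrt\e$, which the paper leaves implicit.
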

\begin{proof}
    We fix constant $\kappa>0$ such that the statements in Lemma~\ref{lem5:density_xy} hold. 
    Then, for the fixed $\kappa$, by Lemma~\ref{lemma:forcetosmallerbox}, we fix $t_2>0$, $c'>0$, and the point $\hat x_{j,k}^\e$ for each pair $(j,k)$ such that for all $(x,y)\in B^{\e,K}_{j,k}\times\mathbb T^m$ and $\e$ small, $\Prob_{(x,y)}(\tau_\kappa<t_2\e)\geq c'$, where $\tau_\kappa=\inf\{t:\tbx_t^\e\in B(\hat x_{j,k}^\e,\kappa\e),~\tbxi_t^\e\in B(\hat y,\kappa)\}$.
   It follows from Lemma~\ref{lem5:density_xy} that there is a domain $C^\e_{j,k}\subset V^\e\times\mathbb T^m$ with $\lambda(C^\e_{j,k})>\kappa\e^2$ and $p_{1}^\e((x,y),\cdot)>\kappa/\e^2$ on $C^\e_{j,k}$ for all $(x,y)\in B(\hat x_{j,k}^\e,\kappa\e)\times B(\hat y,\kappa)$. 
   Then the result follows if we define $c=c'\kappa^2$, $\pi^\e_{j,k}=c'\kappa/\e^2\cdot\chi_{\{C^\e_{j,k}\}}\lambda$, $t_1=t_2+2$, and $\Tilde\eta_3^{j,k}=\tau_{\kappa}\wedge t_2\e+\e<t_1\e$.
\end{proof}
Now let us combine Step 2 and Step 3 together to get the following result concerning the total variation distance of $(\tbx_{\bm\tau_1},\tbxi_{\bm\tau_1})$ with different starting points on $I\times\mathbb T^m$:
\begin{lemma}
\label{lem5:total_variation_on_I}
    For each $(x,y)\in I\times\mathbb T^m$, let $\tilde\mu_{x,y}^\e$ be the measure induced by $(\tbx_{\bm\tau_1},\tbxi_{\bm\tau_1})$ starting at $(x,y)$. Then there exists $c>0$ such that $\mathrm{TV}(\tilde\mu_{x,y}^\e,\tilde\mu_{x',y'}^\e)<1-c$ for any $(x,y),(x',y')\in I\times\mathbb T^m$ and all $\e$ sufficiently small.
\end{lemma}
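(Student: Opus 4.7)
The plan is to construct a sub-probability measure $\kappa^\e$ on $\gamma' \times \mathbb T^m$, the same for every starting point $(x,y) \in I \times \mathbb T^m$, with total mass bounded below by some $c > 0$ independent of $\e$, and such that $\tilde\mu_{x,y}^\e \geq \kappa^\e$ as measures. Since any two probability measures each dominating $\kappa^\e$ agree up to total variation at most $1 - c$, this yields the lemma.

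I will first transfer the local limit theorem for $\zeta^\e$ to probability lower bounds on landing events for $\tbx_{1-s(x)}^\e$. For $|j|,|k| \leq 1/\sqrt\e$ let $F_{j,k} := \{\tilde\zeta_{1-s(x)}^\e \in \e[j,j+1) \times \e[k,k+1)\}$; these events are pairwise disjoint. Combining Theorem~\ref{thm:locallimittheorem} with \eqref{eq:A_jk1}, and using that $s(x) \in [0,1/2]$ is compact and $B(s)$ is continuous and invertible, I obtain $\Pro_{(x,y)}(F_{j,k}) \geq q_{j,k}^\e := c_1 \e \exp(-c_2 \e (j^2 + k^2))$ uniformly in $(x,y) \in I \times \mathbb T^m$, for constants $c_1, c_2 > 0$ independent of $\e$. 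Markov's inequality applied to \eqref{eq:xtildecloseinL1} gives $\Pro(|\tbx_{1-s(x)}^\e - \bm x_1 - \tilde\zeta_{1-s(x)}^\e| > K\e) \leq C_0/K$; since on $F_{j,k}$ the event $\{\tbx_{1-s(x)}^\e \in B^{\e,K}_{j,k}\}$ is implied by this displacement being bounded by $K\e$, and the $F_{j,k}$ are disjoint,
\[
\sum_{|j|,|k| \leq 1/\sqrt\e} \Pro\big(F_{j,k} \cap \{\tbx_{1-s(x)}^\e \in B^{\e,K}_{j,k}\}\big) \geq \sum_{|j|,|k| \leq 1/\sqrt\e} q_{j,k}^\e - C_0/K.
\]
A standard Riemann-sum argument shows $\sum q_{j,k}^\e$ is bounded below by $c_1 \int_{[-1,1]^2} e^{-c_2 |u|^2}\, du > 0$ uniformly in $\e$, so choosing $K$ large keeps the right-hand side above a positive constant.

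Next, for each such $(j,k)$, Lemma~\ref{lem:pijk} supplies a stopping time $\Tilde\eta_3^{j,k} < t_1 \e$ (measured from $1-s(x)$) and a sub-measure $\pi_{j,k}^\e$ supported on $V^\e \times \mathbb T^m$ with $\pi_{j,k}^\e(V^\e \times \mathbb T^m) > c_3$, such that the distribution of $(\tbx_{1-s(x)+\Tilde\eta_3^{j,k}}^\e, \tbxi_{1-s(x)+\Tilde\eta_3^{j,k}}^\e)$ conditional on the process being in $B^{\e,K}_{j,k} \times \mathbb T^m$ at time $1-s(x)$ dominates $\pi_{j,k}^\e$. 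Since $V^\e$ lies strictly inside $\gamma'$, the process has not yet hit $\gamma'$; a further application of the strong Markov property at $1-s(x)+\Tilde\eta_3^{j,k}$ shows that the contribution to $\tilde\mu_{x,y}^\e$ from the event $F_{j,k} \cap \{\tbx_{1-s(x)}^\e \in B^{\e,K}_{j,k}\}$ dominates $\Pro(F_{j,k} \cap \{\tbx_{1-s(x)}^\e \in B^{\e,K}_{j,k}\})\, \nu_{j,k}^\e$, where
\[
\nu_{j,k}^\e(A) := \int_{V^\e\times\mathbb T^m} \Pro_{(x'',y'')}\big((\tbx_{\bm\tau_1}^\e, \tbxi_{\bm\tau_1}^\e) \in A\big)\, \pi_{j,k}^\e(dx''\,dy'')
\]
depends only on $(j,k)$, not on $(x,y)$. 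Summing over the disjoint index set produces
\[
\tilde\mu_{x,y}^\e \geq \sum_{|j|,|k| \leq 1/\sqrt\e} \Pro\big(F_{j,k} \cap \{\tbx_{1-s(x)}^\e \in B^{\e,K}_{j,k}\}\big)\, \nu_{j,k}^\e =: \kappa^\e,
\]
a measure independent of $(x,y)$ whose total mass is at least $c_3\big(\sum q_{j,k}^\e - C_0/K\big) \geq c > 0$ for $K$ large.

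The main obstacle is the bookkeeping around the buffer $K\e$ in $B^{\e,K}_{j,k}$: each $q_{j,k}^\e$ has size $\e$, which is too small to survive a pointwise $L^1$ correction term of size $C_0/K$, so the bound breaks down if one tries to control each cell separately. The fix exploits the disjointness of the $F_{j,k}$: after summing first and correcting once, the $O(\e)$ per-cell bound is never directly compared to $O(1/K)$. A secondary delicate point is the Girsanov transfer underlying \eqref{eq:A_jk1}, which requires uniform boundedness in $\e$ of the Radon-Nikodym derivative associated with removing the drift $c(x,y)$ on a time interval of length of order one; this is routine given the smoothness and boundedness of $c(x,y)$.
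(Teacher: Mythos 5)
Your overall strategy---local limit theorem plus Girsanov to land in the $O(\e)$-boxes with probability of order $\e$, then Lemma~\ref{lem:pijk} to produce a density component from each box, then assemble a common component---is the same as the paper's, but the assembly step has two genuine gaps. First, the measure $\kappa^\e=\sum_{j,k}\Prob_{(x,y)}\bigl(F_{j,k}\cap\{\tbx^\e_{1-s(x)}\in B^{\e,K}_{j,k}\}\bigr)\nu^\e_{j,k}$ is \emph{not} independent of $(x,y)$: its weights are probabilities computed under the starting point, so the domination $\tilde\mu^\e_{x,y}\ge\kappa^\e$ yields no common component for two different starting points, and your ``sum first, correct once'' device only shows that each single starting point carries a large total good mass. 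To compare $(x,y)$ with $(x',y')$ you need per-cell control that is uniform in the starting point; the paper obtains it by thresholding the per-cell probability at $c'\e/2$ and counting the bad cells $R^\e_{x,y}$ (showing $|R^\e_{x,y}|<\frac{1}{50\e}$ out of roughly $4/\e$ cells), which is exactly the per-cell bookkeeping you dismissed as unworkable. A repair in your spirit is possible: write $\Prob_{(x,y)}(F_{j,k}\cap\{\tbx^\e_{1-s(x)}\in B^{\e,K}_{j,k}\})\ge q_{j,k}-r_{j,k}(x,y)$ with $q_{j,k}$ independent of the start and $\sum_{j,k}r_{j,k}(x,y)\le C_0/K$, and use, for each fixed pair, the common measure $\sum_{j,k}\max\bigl(0,\,q_{j,k}-r_{j,k}(x,y)-r_{j,k}(x',y')\bigr)\nu^\e_{j,k}$; the lemma only requires a common component for each pair, not a universal one. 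But as written your $\kappa^\e$ does not do the job.

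Second, the justification ``since $V^\e$ lies strictly inside $\gamma'$, the process has not yet hit $\gamma'$'' is invalid: being in $V^\e$ (or in $B^{\e,K}_{j,k}$) at time $1-s(x)$ does not preclude an earlier excursion of $H(\tbx^\e_t)$ reaching $\pm\e^\alpha$, in which case $\bm\tau_1\le 1-s(x)$ and the strong-Markov pushforward of $\pi^\e_{j,k}$ from time $1-s(x)+\tilde\eta_3^{j,k}$ to the first subsequent hitting of $\gamma'$ no longer represents the corresponding part of $\tilde\mu^\e_{x,y}$, so the claimed domination fails on that part of the event. You must show the early-hitting event has small probability and remove it together with the large-displacement event; this is what the paper does by including $\{\sup_{0\le t\le 1-s(x)}|H(\tbx^\e_t)|>K\sqrt{\e}\}$ in the bad event $E_K^\e$ (controlled via \eqref{eq:slowx}, using $K\sqrt{\e}\ll\e^\alpha$) and by working with the stopping time $\tilde\eta_2=1-s(x)\wedge\bm\tau_1$. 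With these two repairs your argument essentially coincides with the paper's proof.
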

\begin{proof}
    It suffices to show that there exist $c>0$ and a stopping time $\tilde\eta\leq\bm\tau_1$ such that the total variation distance of $(\tbx_{\tilde\eta},\tbxi_{\tilde\eta})$ with different starting points on $I\times\mathbb T^m$ is no more than $1-c$. 
    Recall the definitions of $s(x)$ and $\tilde\zeta_t^\e$ in Step 2.
    For the process $(\tbx_t^\e,\tbxi_t^\e)$ starting at $(x,y)\in I\times\mathbb T^m$, define
    \[A_{j,k}^{\e}=\{\frac{1}{\e}\tilde\zeta_{1-s(x)}^{\e}\in[j,j+1)\times[k,k+1)\},\]
    \[E_{K}^{\e}=\{|\tbx_{1-s(x)}^{\e}-\bm x_1-\tilde\zeta_{1-s(x)}^{\e}|> K\e\}\cup\{\sup_{0\leq t\leq 1-s(x)}|H(\tbx_{t}^{\e})|>K\sqrt{\e}\}.\]
    Using \eqref{eq:A_jk} and \eqref{eq:A_jk1}, we can find a constant $c'>0$ such that, for all $x\in I$, $y\in \mathbb T^m$, $\e$ sufficiently small, and $-1/\sqrt{\e}\leq j,k\leq1/\sqrt{\e}$, $\Prob_{(x,y)}(A_{j,k}^{\e})\geq c'\e$.
    And using \eqref{eq:slowx} and \eqref{eq:xtildecloseinL1} we can choose $K$ large enough such that, for all $x\in I$, $y\in \mathbb T^m$, and $\e$ sufficiently small, $\Prob_{(x,y)}(E_{K}^{\e})<c'/100$.
    Let $\tilde\eta_2=1-s(x)\wedge\bm\tau_1$.
    Then it is not hard to see that \[\sum_{-1/\sqrt{\e}\leq j,k\leq1/\sqrt{\e}}\Prob_{(x,y)}(A_{j,k}^\e\cap\{\tbx_{\tilde\eta_2}\not\in B^{\e,K}_{j,k}\})<c'/100.\]
    Now let us define, for $(x,y)\in I\times\mathbb T^m$,
    \[
    R_{x,y}^\e=\{(j,k):-1/\sqrt{\e}\leq j,k\leq1/\sqrt{\e},\Prob_{(x,y)}(A_{j,k}^\e\cap\{\tbx^\e_{\Tilde\eta_2}\in B^{\e,K}_{j,k}\})<c'\e/2\}.
    \]
    Then we know that $|R_{x,y}^\e|<\frac{1}{50\e}$ since, for every $(j,k)\in R_{x,y}^\e$, 
    \[\Prob_{(x,y)}(A_{j,k}^{\e}\cap \{\tbx^\e_{\Tilde\eta_2}\not\in B^{\e,K}_{j,k}\})\geq \Prob_{(x,y)}(A_{j,k}^{\e})-\Prob_{(x,y)}(A_{j,k}^\e\cap\{\tbx^\e_{\Tilde\eta_2}\in B^{\e,K}_{j,k}\})\geq c'\e/2.\]
    Let the constants $c''>0$, $t_1>0$, the stopping time $\Tilde\eta_3^{j,k}<t_1\e$, and $\pi^\e_{j,k}$ be defined as in Lemma~\ref{lem:pijk}. 
    Define
    \[
        \pi^\e=\frac{1}{2}c'\e\sum_{-1/\sqrt{\e}\leq j,k\leq1/\sqrt{\e}}\pi^\e_{j,k},~~~~~~~~~
        \hat\pi_{x,y,x',y'}^\e=\frac{1}{2}c'\e\sum_{(j,k)\in R^\e_{x,y}\cup R^\e_{x',y'}}\pi^\e_{j,k}.
    \]
    In order to define the desired stopping time, we first run the process starting on $I\times\mathbb T^m$ for time $\tilde\eta_2$ (with overwhelming probability, it is the time for the deterministic motion with the same stating point to reach $\bm x_1$).
    Then we use the locations of both $\tilde\zeta_{\tilde\eta_2}^\e$ and $\tx_{\tilde\eta_2}^\e$ to determine whether the process continues and, if it continues, we choose the stopping time based on Lemma~\ref{lem:pijk}.
    Namely, we define 
    \begin{equation}
        \Tilde\eta=\Tilde\eta_2+\sum_{-1/\sqrt{\e}\leq j,k\leq1/\sqrt{\e}}\chi(A_{j,k}^{\e}\cap\{\tbx^\e_{\Tilde\eta_2^{j,k}}\in B_{j,k}^{\e,K}\})\cdot \Tilde\eta_3^{j,k}(\tbx^\e_{\tilde\eta_2},\tbxi^\e_{\tilde\eta_2}),
    \end{equation}where $\Tilde\eta_3^{j,k}(x,y)$ denotes the stopping time with initial condition $(x,y)$.
    Then it follows from previous results that, for any pair $(x,y),(x',y')\in I\times\mathbb T^m$, there is a common component $\pi^\e-\hat\pi^\e_{x,y,x',y'}$ of the distributions of $(\tbx^\e_{\Tilde\eta},\tbxi^\e_{\Tilde\eta})$ starting from $(x,y)$ and $(x',y')$, respectively. Moreover, $(\pi^\e-\hat\pi^\e_{x,y,x',y'})(V^\e\times\mathbb T^m)>c'c''$ since
    $|R^\e_{x,y}|<\frac{1}{50\e}$ and $|R^\e_{x',y'}|<\frac{1}{50\e}$. Therefore, the total variation is no more than $1-c'c''$.
\end{proof}
Finally, we combine the result we just obtained with Step 1 to prove Lemma~\ref{lem5:expo_ergodicity}.
\begin{proof}[Proof of Lemma~\ref{lem5:expo_ergodicity}]
    As we discussed, the result is equivalent to the exponential convergence in total variation of $(\tbx_{\bm\tau_n}^\e,\tbxi_{\bm\tau_n}^\e)$ on $\gamma'\times\mathbb T^m$, uniformly in $\e$ and in the initial distribution. 
    Let $\mu^{\e}_{x,y}$ denote the measure on $\gamma'\times\mathbb T^m$ induced by $(\tbx_{\bm\tau_1}^\e,\tbxi_{\bm\tau_1}^\e)$ with the starting point $(x,y)\in\gamma'\times\mathbb T^m$.
    Then it suffices to prove that there exists $c>0$ such that, for every pair $(x,y)$, $(x',y')\in \gamma'\times\mathbb T^m$ and all $\e$ sufficiently small, $\mathrm{TV}(\mu^{\e}_{x,y},\mu^{\e}_{x',y'})<1-c$, which follows from Lemma~\ref{lem:rotation} and Lemma~\ref{lem5:total_variation_on_I}.
\end{proof}

\section{Proof of the main result}
\label{sec:proofofthemainresult}
Since we deal with both the original and the auxiliary processes in this section, certain notation needs clarifying to avoid possible ambiguity: the process $(\tx_t^\e,\txi_t^\e)$ represents not a generic process with arbitrary bounded $c(x,y)$ but only the auxiliary process with $\tilde c(x,y)$ satisfying \eqref{eq:added term}; 
$\sigma_n$, $\tau_n$ are defined as in \eqref{eq:stopping_times}{, but for the process $(X_t^\e,\xi_t^\e)$ on $M\times\mathbb T^m$}, and 
$\Tilde\sigma_n$, $\Tilde{\tau}_n$ represent the corresponding stopping time w.r.t. $(\tx_t^\e,\txi_t^\e)$. In \eqref{eq:definition_of_operator_AB}, we defined $\mathcal L_{c}$ on each edge for a generic $c(x,y)$. Here we give a more explicit definition of $\mathcal L_{\tilde c}=\tilde L_k$ on the edge $I_k$:
\begin{align*}
    \tilde L_kf(h)&=\frac{1}{2}A_k(h)f''(h)+\tilde B_k(h)f'(h),\\
    A_k(h)&=\frac{2}{Q_k(h)}\int_{\gamma_k(h)}\frac{1}{|\nabla H(x)|}\int_0^\infty\E_\mu b_h(x,\xi_s)b_h(x,\xi_0)dsdl,\\
    \Tilde B_k(h)&=\frac{1}{Q_k(h)}\int_{\gamma_k(h)}\frac{1}{|\nabla H(x)|}\int_0^\infty\E_\mu\mathrm{div}_x (b_h(x,\xi_s)(b(x,\xi_0)-\nabla^{\perp}H(x)))dsdl.
\end{align*}
One can easily check that this is consistent with the definitions of $\bar A$ and $\bar B_c$ in \eqref{eq:definition_of_operator_AB}, which are the generalizations of the coefficients defined in \eqref{def:llimit_operator} and, moreover,
\begin{equation}
\label{eq6:derivative}
    \frac{1}{2}[A_k(h_k)Q_k(h_k)f'(h_k)]'=\frac{1}{2}A_k(h_k)Q_k(h_k)f''(h_k)+\Tilde B_k(h_k)Q_k(h_k)f'(h_k).
\end{equation}
\begin{lemma}
\label{lem:zeroexpectation}
    For each $f\in \mathcal D$ and all $\e$ sufficiently small, we have $\Expe_{\nu^\e} \int_0^{\sigma_1}\mathcal L_{\Tilde c}f(h(\tx_t^\e))dt=0$.
\end{lemma}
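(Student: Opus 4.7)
The plan is to use Khas'minskii's averaging identity to convert the excursion integral into a space integral against the full invariant measure, and then reduce to a one-dimensional computation on the Reeb graph using the coarea formula, the identity \eqref{eq6:derivative}, and the gluing condition in Definition~\ref{def:domain_original}(iii).

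First, recall that by the construction carried out in Remark~\ref{rmk:existence_of_c}, the choice of $\Tilde c$ was made precisely so that $\lambda \times \mu$ is an invariant measure for $(\tx_t^\e,\txi_t^\e)$ for every $\varepsilon>0$. By Lemma~\ref{lem5:expo_ergodicity} the return chain $(\tx_{\sigma_n}^\e,\txi_{\sigma_n}^\e)$ on $\gamma\times\mathbb T^m$ admits a unique invariant probability measure $\nu^\e$, and by Lemma~\ref{lem:time2} the mean return time $\E_{\nu^\e}[\sigma_1]$ is finite (uniformly in $\varepsilon$). Since $M$ is compact and $\lambda\times\mu$ is the unique invariant measure on $M\times\mathbb T^m$, a standard Khas'minskii-type identity gives
\begin{equation}
\label{eq:khasminskii_plan}
\E_{\nu^\e}\int_0^{\sigma_1} g(\tx_t^\e,\txi_t^\e)\,dt \;=\; \E_{\nu^\e}[\sigma_1]\int_{M\times\mathbb T^m} g\,d(\lambda\times\mu)
\end{equation}
for bounded measurable $g$; here $\nu^\e$ is automatically identified as the normalized boundary trace of $\lambda\times\mu$ on $\gamma\times\mathbb T^m$ induced by the Markov chain, by uniqueness.

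Apply \eqref{eq:khasminskii_plan} with $g(x,y)=\mathcal L_{\Tilde c}f(h(x))$. Since $g$ is independent of $y$ and $\mu$ is a probability measure,
\begin{equation*}
\E_{\nu^\e}\int_0^{\sigma_1}\mathcal L_{\Tilde c}f(h(\tx_t^\e))\,dt \;=\; \E_{\nu^\e}[\sigma_1]\sum_{k}\int_{U_k}\Tilde L_k f(H(x))\,d\lambda(x).
\end{equation*}
By the coarea formula and the definition of $Q_k(h)$, the inner integral equals $\int_{I_k}\Tilde L_k f(h)\,Q_k(h)\,dh$. Using identity \eqref{eq6:derivative}, this integrand is an exact derivative in $h$, so
\begin{equation*}
\int_{I_k}\Tilde L_k f(h)\,Q_k(h)\,dh \;=\; \tfrac12\bigl[A_k(h)Q_k(h)f'(h)\bigr]_{\partial I_k}.
\end{equation*}

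Finally sum the boundary contributions over all edges. At an exterior vertex (extremum of $H$), the length of the level curve $\gamma_k(h)$ shrinks to zero and $b_h=\nabla H\cdot b$ vanishes there, so $A_k(h)Q_k(h)\to 0$ (cf.~Lemma~\ref{lemb:non-zero-drift} and Remark~\ref{rmk:accessibility}); since $f'$ is controlled near extrema, the exterior boundary terms vanish. At the interior vertex $O$, a sign bookkeeping shows that for each $I_k\sim O$ the contribution is $-\tfrac12 p_k\lim_{h_k\to O} f'(h_k)$ regardless of whether $O$ is min or max on $I_k$ (the sign in the definition $p_k=\pm\lim A_kQ_k$ exactly cancels the sign picked up from the endpoint of integration). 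Summing, the total contribution at $O$ equals $-\tfrac12\sum_{I_k\sim O} p_k\lim_{h_k\to O} f'(h_k)$, which is zero by the gluing condition. Hence $\int_M\mathcal L_{\Tilde c}f(h(x))\,d\lambda(x)=0$, and the lemma follows.

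The main obstacle is justifying \eqref{eq:khasminskii_plan} in this setting: one must verify that the ergodic identification of $\nu^\e$ as the trace of $\lambda\times\mu$ at $\gamma\times\mathbb T^m$ is legitimate, using the uniform mixing of Lemma~\ref{lem5:expo_ergodicity} and the uniform boundedness of $\E_{\nu^\e}[\sigma_1]$ from Lemma~\ref{lem:time2}. A secondary technical point is integrability of $\mathcal L_{\Tilde c}f\circ h$ against $\lambda$ near the extrema — this reduces to the estimate $A_k(h)Q_k(h)\lesssim (1-h)$ near maxima and analogous bounds near minima, combined with the regularity of $f\in\mathcal D$ inside each edge.
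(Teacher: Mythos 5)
Your proposal is correct and follows essentially the same route as the paper: Khas'minskii's theorem (Theorem~2.1 in \cite{Khasminskii}) converts $\E_{\nu^\e}\int_0^{\sigma_1}\mathcal L_{\Tilde c}f(h(\tx_t^\e))dt$ into the space integral $\int_M\mathcal L_{\Tilde c}f(h(x))\,d\lambda(x)$ (up to a positive constant, so your extra factor $\E_{\nu^\e}[\sigma_1]$ is immaterial), after which the coarea reduction to $\int_{I_k}\tilde L_kf\,Q_k\,dh_k$, the exact-derivative identity \eqref{eq6:derivative}, vanishing of $A_kQ_k$ at the exterior vertices, and the gluing condition \eqref{eq2:gluing_condition} give zero, exactly as in the paper's proof.
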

\begin{proof}
    {This is the place where the gluing condition \eqref{eq2:gluing_condition} plays a role.}
    Since the process $(\tx_t^\e,\txi_t^\e)$ on $M\times\mathbb T^m$ is recurrent, and the measure $\lambda\times\mu$ is the invariant measure, by Theorem~2.1 in \cite{Khasminskii}, we have that for any measurable set $A\subset M$,
    \[
        \int_M\chi_A(x)d\lambda(x)=\lambda(A)=(\lambda\times\mu)(A\times\mathbb T^m)=\E_{\nu^\e}\int_0^{\sigma_1}\chi_A(\tx_t^\e)dt.
    \]
    Thus,
    \[
        \int_M \mathcal L_{\Tilde c}f(h(x))d\lambda(x)=\E_{\nu^\e}\int_0^{\sigma_1}\mathcal L_{\Tilde c}f(h(\tx_t^\e))dt.
    \]
    So, it suffices to show that the left-hand side is zero. By \eqref{eq6:derivative} and \eqref{eq2:gluing_condition},
\begin{align*}
       \int_{M}\mathcal L_{\Tilde c}f(h(x))d\lambda(x)&=\sum_{k=1}^3\int_{I_k} \tilde L_kf(h_k)Q_k(h_k)dh_k\\
       &=\sum_{k=1}^3\int_{I_k}(\frac{1}{2}A_k(h_k)f''(h_k)+\Tilde B_k(h_k)f'(h_k))Q_k(h_k)dh_k\\
       &=\sum_{k=1}^3\int_{I_k}\frac{1}{2}[A_k(h_k)Q_k(h_k)f'(h_k)]'dh_k\\
       &=\frac{1}{2}\sum_{k=1}^3p_k\lim_{h_k\to O}f'(h_k)\\
       &=0.\qedhere
\end{align*}
\end{proof}

Let us verify the analogue of \eqref{eq:mg_problem_M} in the case of the auxiliary process $(\tx_t^\e,\txi_t^\e)$.
\begin{proposition}
\label{prop:auxiliary}
For each $f\in \mathcal D$ and $T>0$,
    \begin{equation}
        \E_{(x,y)}[f(h(\tx_{\eta}^\e))-f(h(x))-\int_0^{\eta}\mathcal L_{\Tilde c}f(h(\tx_t^\e))dt]\to0
    \end{equation}
    as $\e\to0$, uniformly in $x\in M$, $y\in\mathbb T^m$, and $\eta$ is a stopping time bounded by $T$.
\end{proposition}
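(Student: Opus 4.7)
My plan is to split $[0,\eta]$ into excursions between successive visits of $\tx^\e$ to the separatrix $\gamma$, then exploit Lemma~\ref{lem:zeroexpectation} (zero expected integral over one full excursion under $\nu^\e$) together with the exponential mixing of the induced Markov chain from Lemma~\ref{lem5:expo_ergodicity}. First, applying Proposition~\ref{prop:up_to_separatrix} with $c=\Tilde c$ to the initial segment $[0,\sigma_0\wedge\eta]$ and the strong Markov property at $\sigma_0$ reduces matters to the case $(x,y)\in\gamma\times\mathbb T^m$, where $\sigma_0=0$. Let $N:=\max\{n\geq 0:\sigma_n\leq\eta\}$; since $h(\tx^\e_{\sigma_n})=0=h(x)$ for every $n$, the intermediate $f$-values telescope and the quantity to estimate becomes
\begin{equation*}
\Delta \;=\; R(x,y,\eta) \;-\; \sum_{n=1}^\infty \E\!\left[\chi_{\sigma_n\leq\eta}\int_{\sigma_{n-1}}^{\sigma_n}\mathcal L_{\Tilde c}f(h(\tx^\e_t))\,dt\right],
\end{equation*}
where $R(x,y,\eta):=\E[f(h(\tx^\e_\eta))-f(0)-\int_{\sigma_N}^\eta\mathcal L_{\Tilde c}f(h(\tx^\e_t))\,dt]$ is the remainder from the final, partial excursion.

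For the infinite sum, define $G(x,y):=\E_{(x,y)}\int_0^{\sigma_1}\mathcal L_{\Tilde c}f(h(\tx^\e_t))\,dt$ on $\gamma\times\mathbb T^m$. Splitting the excursion at $\tau_1$ and applying Lemma~\ref{lem:eps_avg_prin_to_sp} on $[\tau_1,\sigma_1]$ together with Corollary~\ref{cor:exit_time_eps} and the diffusion representation \eqref{eq:H} on $[0,\tau_1]$ gives $\|G\|_\infty=o(1)$. Lemma~\ref{lem:zeroexpectation} yields $\int G\,d\nu^\e=0$ and Lemma~\ref{lem5:expo_ergodicity} yields $\|P^kG\|_\infty\leq 2\Xi\|G\|_\infty(1-c)^k$ for the transition kernel $P$ of the chain $X_n:=(\tx^\e_{\sigma_n},\txi^\e_{\sigma_n})$; thus $U:=\sum_{k\geq 0}P^kG$ is the bounded solution of $U-PU=G$ with $\|U\|_\infty=o(1)$. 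Expanding $\sum_{k=0}^N G(X_k)=U(X_0)-U(X_{N+1})+M_{N+1}$ with the $\{\mathcal F_{\sigma_n}\}$-martingale $M_n:=\sum_{k=0}^{n-1}[U(X_{k+1})-(PU)(X_k)]$, optional stopping applies at $N+1$: this is a $\{\mathcal F_{\sigma_n}\}$-stopping time (since $\{N+1\leq n\}=\{\sigma_n>\eta\}\in\mathcal F_{\sigma_n}$) with $\E[N+1]=O(\e^{-\alpha})$ by Corollary~\ref{cor:num_excursion}, and the $M$-increments are bounded by $2\|U\|_\infty$. Hence $|\E\sum_{k=0}^N G(X_k)|\leq 2\|U\|_\infty=o(1)$. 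The strong Markov property at each $\sigma_{n-1}$ together with the identity $\chi_{\sigma_n\leq\eta}=\chi_{\sigma_{n-1}\leq\eta}-\chi_{\sigma_{n-1}\leq\eta<\sigma_n}$ rewrites the infinite sum as $\E\sum_{k=0}^NG(X_k)-\E\int_{\sigma_N}^{\sigma_{N+1}}\mathcal L_{\Tilde c}f(h(\tx^\e_t))\,dt$, and the second term is bounded by $\|\mathcal L_{\Tilde c}f\|_\infty\cdot\sup_{(x',y')\in\gamma\times\mathbb T^m}\E_{(x',y')}[\sigma_1]=o(1)$ by strong Markov at $\sigma_N$.

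The remainder $R$ is handled similarly by strong Markov at $\sigma_N$: the partial excursion has length $\eta-\sigma_N<\sigma_1$ and starts in $\gamma\times\mathbb T^m$; splitting again at $\tau_1$ of the shifted process, the pre-$\tau_1$ piece is controlled by $|f(h)-f(0)|\leq C\e^\alpha$ on $V^\e$ together with $\|\mathcal L_{\Tilde c}f\|_\infty\E[\tau_1]=o(1)$, while the post-$\tau_1$ piece is $o(\e^\alpha)$ by Lemma~\ref{lem:eps_avg_prin_to_sp} applied from $\gamma(\e^\alpha)$. Combining everything yields $\Delta\to 0$ uniformly in $(x,y)\in M\times\mathbb T^m$ and in bounded stopping times $\eta\leq T$. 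The principal technical point I expect to be delicate is the uniform estimate $\sup_{(x,y)\in\gamma\times\mathbb T^m}\E_{(x,y)}[\sigma_1]\to 0$: Corollary~\ref{cor:exit_time_eps} does not apply directly when starting on the separatrix itself (there $H(x)=0$ and the bound is trivial), so the hitting time of $\gamma(\e^\alpha)$ starting from $\gamma$ must be controlled separately, either via the SDE \eqref{eq:H} for $\Tilde H^\e_t$ together with classical exit-time estimates for diffusions with coefficients bounded away from zero (using Remark~\ref{rmk:accessibility}) or by iterating Corollary~\ref{cor:exit_time_eps} and Lemma~\ref{lem:lin_prob} across the (geometrically distributed) number of returns to $\gamma$ before $\tau_1$.
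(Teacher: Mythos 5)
Your reduction to $x\in\gamma$ and your treatment of the complete excursions are fine: with $G(x,y)=\E_{(x,y)}\int_0^{\sigma_1}\mathcal L_{\Tilde c}f(h(\tx^\e_t))dt$, the bound $\|G\|_\infty=o(1)$, the centering $\int G\,d\nu^\e=0$ from Lemma~\ref{lem:zeroexpectation}, the Poisson equation $U-PU=G$ with $\|U\|_\infty\lesssim\|G\|_\infty$ from Lemma~\ref{lem5:expo_ergodicity}, and optional stopping at $N+1$ (legitimately a stopping time for the chain, with $\E[N+1]<\infty$ by Corollary~\ref{cor:num_excursion}) is an operator-form of what the paper does by bounding each $|\E_{(x,y)}\int_{\tilde\sigma_k}^{\tilde\sigma_{k+1}}\mathcal L_{\Tilde c}f\,dt|$ by $2\,\mathrm{TV}(\nu^{k,\e}_{x,y},\nu^\e)\cdot\sup|G|$ and summing the geometric series. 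Also, the ``delicate point'' you flag at the end is already available: Proposition~\ref{prop:exit_time_from_separatrix} gives $\E_{(x,y)}\tau_1=O(\e^{2\alpha}|\log\e|)$ uniformly on all of $V^\e\times\mathbb T^m$, in particular for starting points on $\gamma$ itself, so no separate argument is needed there.

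The genuine gap is in the boundary terms. $\sigma_N$ (the last visit to $\gamma$ before $\eta$) is \emph{not} a stopping time, since $\{N=n\}=\{\sigma_n\le\eta<\sigma_{n+1}\}$ looks into the future, so ``strong Markov at $\sigma_N$'' is not available; worse, the two estimates you derive from it are false in general, because the excursion straddling $\eta$ is size-biased. On an $O(1)$ time horizon only an asymptotically negligible amount of time is spent near the separatrix, while each excursion makes an $O(1)$-long loop away from $\gamma$ with probability of order $\e^\alpha$ and there are order $\e^{-\alpha}$ excursions; hence with probability bounded away from zero the time $\eta$ falls inside such a loop, and both $\E[\eta-\sigma_N]$ and $\E[\sigma_{N+1}-\eta]$ are of order one. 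Consequently $\E\int_{\sigma_N}^{\sigma_{N+1}}\mathcal L_{\Tilde c}f\,dt$ is not bounded by $\|\mathcal L_{\Tilde c}f\|_\infty\sup_{(x',y')}\E_{(x',y')}\sigma_1$ and is not $o(1)$; likewise $R$ is not $o(1)$ — a martingale-problem functional started at a last-entrance time need not have small mean (compare $\E[W_T^2-W_{g_T}^2-(T-g_T)]=T/2$ for Brownian motion, $g_T$ the last zero before $T$). What rescues your decomposition is that the two defective terms cancel: since $h(\tx^\e_{\sigma_{N+1}})$ is the interior vertex, $R+\E\int_{\sigma_N}^{\sigma_{N+1}}\mathcal L_{\Tilde c}f\,dt=-\E\bigl[f(h(\tx^\e_{\sigma_{N+1}}))-f(h(\tx^\e_\eta))-\int_\eta^{\sigma_{N+1}}\mathcal L_{\Tilde c}f(h(\tx^\e_t))dt\bigr]$, and this combined quantity is $o(1)$ by conditioning at the genuine stopping time $\eta$ (the event that the current excursion has already visited $\gamma'$ is $\mathcal F_\eta$-measurable) and applying Proposition~\ref{prop:up_to_separatrix}, Proposition~\ref{prop:exit_time_from_separatrix} and Lemma~\ref{lem:eps_avg_prin_to_sp} to the remainder of the current excursion. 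This is effectively how the paper proceeds: it completes the functional from $\eta$ out to $\tilde\sigma_n$ via the tower property, so that conditioning only ever occurs at $\eta$ and at the genuine stopping times $\tilde\sigma_k$, and the straddling excursion never has to be estimated on its own.
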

\begin{proof}
We divide the time interval $[0,\eta]$ into visits to the separatrix. Since $\sigma_n\to\infty$,
    \begin{align}
        &|\E_{(x,y)}[f(h(\tx_{\eta}^\e))-f(h(x))-\int_0^{\eta}\mathcal L_{\Tilde c}f(h(\tx_t^\e))dt]|\nonumber\\
        &\leq|\lim_{n\to\infty}\E_{(x,y)}[f(h(\tx_{\tilde\sigma_n}^\e))-f(h(x))-\int_0^{\tilde\sigma_n}\mathcal L_{\Tilde c}f(h(\tx_t^\e))dt]|\nonumber\\
        &\quad+|\lim_{n\to\infty}\E_{(x,y)}\E_{(\tx_{\eta}^\e,\txi_{\eta}^\e)}[f(h(\tx_{\tilde\sigma_n}^\e))-f(h(\tx_0^\e))-\int_0^{\tilde\sigma_n}\mathcal L_{\Tilde c}f(h(\tx_t^\e))dt]|\nonumber\\
        &\leq 2\sup_{(x,y)\in M\times\mathbb T^m}|\E_{(x,y)}[f(H(\tx^\e_{\tilde\sigma}))-f(H(x))-\int_0^{\sigma}{\mathcal L_{\tilde c}} f(H(\tx_s^\e))ds]|\label{eq6:to_separatrx}\\
        &\quad+2\lim_{n\to\infty}\sup_{(x,y)\in\gamma\times\mathbb T^m}|\E_{(x,y)}[f(h(\tx_{\tilde\sigma_n}^\e))-f(h(x))-\int_0^{\sigma_n}\mathcal L_{\tilde c}f(h(\tx_t^\e))dt]|.\label{eq6:between_separatrices}
    \end{align}
    Note that \eqref{eq6:to_separatrx} converges to $0$ due to Proposition~\ref{prop:up_to_separatrix}, and \eqref{eq6:between_separatrices} also converges to $0$ since
    \begin{align*}
        &\lim_{n\to\infty}\sup_{(x,y)\in\gamma\times\mathbb T^m}|\E_{(x,y)}[f(h(\tx_{\tilde\sigma_n}^\e))-f(h(x))-\int_0^{\tilde\sigma_n}\mathcal L_{\tilde c}f(h(\tx_t^\e))dt]|\\
        &\leq\lim_{n\to\infty}\sup_{(x,y)\in\gamma\times\mathbb T^m}\sum_{k=0}^{n-1}|\E_{(x,y)}\int_{\tilde\sigma_k}^{\tilde\sigma_{k+1}}\mathcal L_{\tilde c}f(h(\tx_t^\e))dt|\\
        &\leq\lim_{n\to\infty}\sup_{(x,y)\in\gamma\times\mathbb T^m}\sum_{k=0}^n \left(2\cdot\mathrm{TV}(\nu_{x,y}^{k,\e},\nu^\e)\cdot\sup_{(x',y')\in\gamma\times\mathbb T^m}|\E_{(x',y')}\int_0^{\tilde\sigma_1}\mathcal L_{\tilde c}f(h(\tx_t^\e))dt|\right)\\
        &=0,
    \end{align*}
    where the second inequality is due to Lemma~\ref{lem:zeroexpectation} and the last equality follows from Proposition~\ref{prop:up_to_separatrix}, Lemma~\ref{lem5:expo_ergodicity}, and Proposition~\ref{prop:exit_time_from_separatrix}.
    Thus, the desired result holds.
\end{proof}
{To generalize the result to the original process $(X_t^\e,\xi_t^\e)$ on $M\times\mathbb T^m$, we need the next two technical results.
We start with a simple corollary of Lemma~\ref{lem:number_excursion}, which controls the number of excursions or, equivalently, the number of stopping times $\sigma_n$ and $\tau_n$ in finite time.}
\begin{corollary}
\label{cor:num_excursion}
    For a given $t>0$, the expected number of excursions before $t$ is $O(\e^{-\alpha})$:
    \begin{equation}
    \label{eq:num_excursion}
    \sum_{n=0}^\infty\Prob_{(x,y)}(\tau_{n+1}<t)\leq\sum_{n=0}^\infty\Prob_{(x,y)}(\sigma_n<t)\leq\frac{e^t}{\kappa}\e^{-\alpha},
    \end{equation}where $\kappa$ is the constant chosen in Lemma~\ref{lem:number_excursion}.
\end{corollary}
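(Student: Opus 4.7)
The plan is straightforward given Lemma~\ref{lem:number_excursion}: bound $\Prob_{(x,y)}(\sigma_n<t)$ via an exponential Chebyshev inequality, then use the strong Markov property to iterate the bound from Lemma~\ref{lem:number_excursion} and obtain geometric decay in $n$. The first inequality in \eqref{eq:num_excursion}, namely $\Prob_{(x,y)}(\tau_{n+1}<t)\leq\Prob_{(x,y)}(\sigma_n<t)$, is immediate from $\tau_{n+1}>\sigma_n$ and the monotonicity of the event $\{\cdot<t\}$, so the only real work is the second inequality.

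For the second inequality, I would first observe that since $e^{t-\sigma_n}\geq 1$ on $\{\sigma_n<t\}$, one has
\begin{equation*}
\Prob_{(x,y)}(\sigma_n<t)\leq \E_{(x,y)}[e^{t-\sigma_n}\chi_{\sigma_n<t}]\leq e^t\,\E_{(x,y)}[e^{-\sigma_n}].
\end{equation*}
The central step is then the recursive bound
\begin{equation*}
\E_{(x,y)}[e^{-\sigma_n}]\leq (1-\kappa\e^\alpha)\,\E_{(x,y)}[e^{-\sigma_{n-1}}],\qquad n\geq 1.
\end{equation*}
To establish this, I would condition on $\mathcal F_{\tau_n}$ and use the strong Markov property. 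On $\{\tau_n<\infty\}$ the process $X_{\tau_n}^\e$ lies on $\gamma(\e^\alpha)$, so Lemma~\ref{lem:number_excursion} applied at the random starting point $(X_{\tau_n}^\e,\xi_{\tau_n}^\e)$ gives
\begin{equation*}
\E[e^{-(\sigma_n-\tau_n)}\mid\mathcal F_{\tau_n}]=\E_{(X_{\tau_n}^\e,\xi_{\tau_n}^\e)}[e^{-\sigma}]\leq 1-\kappa\e^\alpha,
\end{equation*}
whence $\E_{(x,y)}[e^{-\sigma_n}]\leq(1-\kappa\e^\alpha)\E_{(x,y)}[e^{-\tau_n}]\leq(1-\kappa\e^\alpha)\E_{(x,y)}[e^{-\sigma_{n-1}}]$, the last inequality because $\tau_n>\sigma_{n-1}$.

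Iterating the recursion and using the trivial bound $\E_{(x,y)}[e^{-\sigma_0}]\leq 1$, we obtain $\E_{(x,y)}[e^{-\sigma_n}]\leq (1-\kappa\e^\alpha)^n$, so summing the geometric series yields
\begin{equation*}
\sum_{n=0}^\infty\Prob_{(x,y)}(\sigma_n<t)\leq e^t\sum_{n=0}^\infty(1-\kappa\e^\alpha)^n=\frac{e^t}{\kappa\e^\alpha},
\end{equation*}
which is exactly the stated bound. There is no substantive obstacle: the only point to be slightly careful about is that Lemma~\ref{lem:number_excursion} gives a uniform bound over starting points on $\gamma(\e^\alpha)\times\mathbb T^m$, which is precisely what allows the strong Markov step to go through regardless of $X_{\tau_n}^\e$; and the bound $\E_{(x,y)}[e^{-\sigma_0}]\leq 1$ is used only as a trivial initialization since the initial point $(x,y)$ is not assumed to lie on $\gamma(\e^\alpha)$.
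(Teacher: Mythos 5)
Your proposal is correct and follows essentially the same route as the paper: the exponential Markov bound $\Prob(\sigma_n<t)\leq e^t\E e^{-\sigma_n}$ combined with iterating Lemma~\ref{lem:number_excursion} via the strong Markov property (applied at $\tau_n$, where the process sits on $\gamma'$) to get $\E_{(x,y)}e^{-\sigma_n}\leq(1-\kappa\e^\alpha)^n$, then summing the geometric series. Your write-up merely makes the conditioning on $\mathcal F_{\tau_n}$ and the initialization $\E e^{-\sigma_0}\leq1$ explicit, which the paper leaves implicit.
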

\begin{proof}
    By Lemma~\ref{lem:number_excursion} and the strong Markov property,
\begin{equation}
    \sup_{(x,y)\in M\times\mathbb T^m}\E_{(x,y)} e^{-\sigma_n}\leq(\sup_{(x,y)\in\gamma'\times\mathbb T^m}\E_{(x,y)} e^{-\sigma})^n\leq(1-\kappa\e^\alpha)^n.
\end{equation}
Thus, by Markov's inequality, for all $n>0$,
\begin{equation}
    \Prob_{(x,y)}(\tau_{n+1}<t)\leq\Prob_{(x,y)}(\sigma_n<t)\leq e^t\E_{(x,y)} e^{-\sigma_n}\leq e^t(1-\kappa\e^\alpha)^n,
\end{equation}
and \eqref{eq:num_excursion} follows by taking the sum.
\end{proof}
\begin{lemma}
\label{lem:aux_to_ori}
    For each $f\in\mathcal D$ and $\delta>0$ there is $0<\rho<1$ such that, for all $x\in\gamma$, $y\in \mathbb T^m$, and all $\e$ sufficiently small,
    \begin{equation}
    \label{eq:auxiliary_to_main}
    \begin{aligned}
        &\sup_{{\sigma'}\leq\rho}|\E_{(x,y)}\sum_{n=0}^\infty\chi_{\{\sigma_n<{\sigma'}\}}[f(h(X_{\tau_{n+1}}^\e))-f(h(X_{\sigma_n}^\e))-\int_{\sigma_n}^{\tau_{n+1}}\mathcal Lf(h(X_s^\e))ds]|\\ 
        &\quad\leq\delta\rho+\e^\alpha\delta\sum_{n=0}^\infty\Prob_{(x,y)}(\sigma_n<\rho),
    \end{aligned}
    \end{equation}
    where $\sigma'$ is a stopping time w.r.t. $\mathcal F^{X_\cdot^\e}_t$.
\end{lemma}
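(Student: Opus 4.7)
The plan is to reduce the sum over excursions to per-excursion contributions via the strong Markov property, then use Girsanov's theorem to transfer estimates from the auxiliary process $(\tx_t^\e,\txi_t^\e)$, for which the results of Section~\ref{sec:averaging} apply, to the original process $(X_t^\e,\xi_t^\e)$.

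First I would apply the strong Markov property at each $\sigma_n$ to rewrite the left-hand side of \eqref{eq:auxiliary_to_main} as $\E_{(x,y)}\sum_{n\geq 0}\chi_{\{\sigma_n<\sigma'\}}g_\e(X^\e_{\sigma_n},\xi^\e_{\sigma_n})$, where
\[g_\e(x',y'):=\E_{(x',y')}\left[f(h(X^\e_{\tau_1}))-f(h(x'))-\int_0^{\tau_1}\mathcal Lf(h(X^\e_s))ds\right]\]
for $(x',y')\in\gamma\times\mathbb T^m$. If I can establish $\sup_{\gamma\times\mathbb T^m}|g_\e|\leq\e^\alpha\delta$ for $\e$ small, then summing yields the $\e^\alpha\delta\sum_n\Prob_{(x,y)}(\sigma_n<\rho)$ term in the bound. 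The additive $\delta\rho$ overhead absorbs (i) the contribution of the excursion straddling $\sigma'$ (when $\sigma_n<\sigma'<\tau_{n+1}$), and (ii) any initial segment $[0,\sigma_0]$ when $x\in\gamma$ yet $\sigma_0>0$ is nontrivial; both are bounded crudely by $\|f\|_\infty+\|\mathcal Lf\|_\infty\,\E\tau_1$, and are controllable by choosing $\rho$ small.

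Next I would apply Girsanov's theorem on the stopped interval $[0,\tau_1]$ to compare $(X^\e,\xi^\e)$ with $(\tx^\e,\txi^\e)$. The Girsanov integrand $\sigma^{-1}(\xi)\tilde c(X,\xi)$ is bounded uniformly thanks to assumption \hyperlink{H1}{\textit{(H1)}} and smoothness of $\tilde c$, so using $\E\tau_1=O(\e^{2\alpha}|\log\e|)$ from Corollary~\ref{cor:exit_time_eps} together with standard exponential-martingale moment bounds, the Radon-Nikodym derivative $M^{-1}$ satisfies $\|M^{-1}-1\|_{L^2}=O(\e^\alpha)$. Writing $g_\e(x',y')=\E_{(x',y')}[\tilde F\cdot M^{-1}]$, where $\tilde F$ is the same integrand evaluated along the auxiliary path (still with $\mathcal L$), and using the pathwise bound $|\tilde F|=O(\e^\alpha)$ (since $|H(\tx^\e_{\tilde\tau_1})|=\e^\alpha$, $f$ is $C^2$ on each edge, and $\|\mathcal Lf\|_\infty\tilde\tau_1=o(\e^\alpha)$ in $L^2$), Cauchy-Schwarz gives $g_\e(x',y')=\E_{(x',y')}[\tilde F]+O(\e^{2\alpha})$. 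To conclude, I decompose $\mathcal L=\mathcal L_{\tilde c}+(\mathcal L-\mathcal L_{\tilde c})$: the difference is a bounded first-order operator on each edge, so $\int_0^{\tilde\tau_1}(\mathcal L-\mathcal L_{\tilde c})f(h(\tx^\e_s))ds=O(\e^{2\alpha}|\log\e|)$ pathwise. The remaining expectation $\E_{(x',y')}[f(h(\tx^\e_{\tilde\tau_1}))-f(h(x'))-\int_0^{\tilde\tau_1}\mathcal L_{\tilde c}f\,ds]$ needs to be $o(\e^\alpha)$ uniformly, which I would obtain by adapting the proof of Proposition~\ref{prop:up_to_separatrix} and Lemma~\ref{lem:eps_avg_prin_to_sp} to the outgoing direction $\gamma\to\gamma'$, exploiting the gluing condition \eqref{eq2:gluing_condition} on $f\in\mathcal D$ to cancel the leading-order $O(\e^\alpha)$ contributions across the edges meeting at the saddle point.

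The main obstacle will be this last estimate. Unlike the setup of Lemma~\ref{lem:eps_avg_prin_to_sp}, the typical excursion from $\gamma$ to $\gamma'$ has duration only $O(\e^{2\alpha}|\log\e|)$, which is too short for the $\phi$-coordinate oscillation averaging of Lemma~\ref{lem:avg_rotation} to be effective (the process need not complete even a single rotation around a level set before hitting $\gamma'$). Instead one must combine the exit-probability estimate of Lemma~\ref{lem:lin_prob} with the gluing condition \eqref{eq2:gluing_condition}, effectively checking that the $\e^\alpha$-scale jumps of $f(h(\tx^\e))$ across different edges incident to the interior vertex average to zero at the correct rate. A secondary technical point is to justify the Girsanov change of measure up to the random stopping time $\tau_1$, which requires a moment bound on $\tau_1$ beyond the first moment provided by Corollary~\ref{cor:exit_time_eps}; this should follow from the same techniques that yield Proposition~\ref{prop:exit_time_from_separatrix}.
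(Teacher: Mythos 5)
There is a genuine gap, and it sits exactly where you flagged your ``main obstacle.'' Your whole reduction rests on the pointwise per-excursion bound $\sup_{(x',y')\in\gamma\times\mathbb T^m}|g_\e(x',y')|\le\delta\e^\alpha$, but no such uniform pointwise estimate is available, and it is most likely false: starting from a fixed $(x',y')$ on the separatrix, $f(h(X^\e_{\tau_1}))-f(h(x'))$ is of size $\e^\alpha$ and its expectation is $\e^\alpha\sum_k(\pm)f_k'(O)\,p_k(x',y')+o(\e^\alpha)$, where $p_k(x',y')$ are the probabilities of exiting into the edge $I_k$ from that particular starting point. Nothing forces these pointwise splitting probabilities to match the gluing weights $p_k\propto\lim A_kQ_k$; they depend on the position on $\gamma$ and on the fast variable $y'$. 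Lemma~\ref{lem:lin_prob} cannot fill this hole: it only compares ``reach $|H|=\e^\alpha$'' versus ``return to $\gamma$'' inside one domain for a starting point with $H(x)>0$, and says nothing about the distribution over the incident edges when starting on $\gamma$ itself. The cancellation you want is not pathwise or per starting point; it is statistical. In the paper it comes from two ingredients you have bypassed: Lemma~\ref{lem:zeroexpectation}, which says the expectation over a \emph{full} cycle $\int_0^{\sigma_1}\mathcal L_{\tilde c}f$ vanishes \emph{exactly} when the chain $(\tx^\e_{\sigma_n},\txi^\e_{\sigma_n})$ starts from its invariant measure $\nu^\e$ (this is where the gluing condition enters, via Khasminskii's identity), and Lemma~\ref{lem5:expo_ergodicity}, the uniform-in-$\e$ exponential convergence to $\nu^\e$, which makes the non-equilibrium contributions summable. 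Consequently the paper never estimates a single $\gamma\to\gamma'$ leg; it shows the \emph{sum} of those legs for the auxiliary process tends to zero indirectly, by subtracting from the full martingale-problem expression (Proposition~\ref{prop:auxiliary}) the pieces handled by Proposition~\ref{prop:up_to_separatrix} and by Lemma~\ref{lem:eps_avg_prin_to_sp} together with Corollary~\ref{cor:num_excursion} (this is the decomposition \eqref{eq:auxleft}--\eqref{eq:auxthirdsum}), arriving at \eqref{eq:aux_stop_at_sigma'}, a bound on the sum, not on individual terms.

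Two secondary points. First, your Girsanov step is applied per excursion on $[0,\tau_1]$; quantitatively the exponent is indeed $O(\e^\alpha\sqrt{|\log\e|})$ there, but since the per-excursion reduction itself fails, this is moot. The paper instead performs the change of measure once on the fixed window $[0,\rho]$, introduces the event $\Omega'$ on which the density is within $\delta'$ of $1$, truncates the number of excursions using Corollary~\ref{cor:num_excursion}, and only then transfers the summed estimate \eqref{eq:aux_stop_at_sigma'} from $(\tx^\e,\txi^\e)$ to $(X^\e,\xi^\e)$; this is what produces the two-term bound $\delta\rho+\e^\alpha\delta\sum_n\Prob_{(x,y)}(\sigma_n<\rho)$. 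Second, your treatment of the integral term and of $\mathcal L$ versus $\mathcal L_{\tilde c}$ is fine (the paper's \eqref{eq:integral_not_matter} does the same), but to repair the argument you would have to abandon the uniform pointwise bound on $g_\e$ and instead route the proof through the invariant measure and mixing results of Sections~\ref{sec:exponenitalconvergence} and \ref{sec:proofofthemainresult}, essentially as the paper does.
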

\begin{proof}
    The result holds either with or without the integral term since nearly all of the time is spent from $\tau_n$ to $\sigma_n$.
    To be precise, by the strong Markov property, Corollary~\ref{cor:num_excursion}, and Proposition~\ref{prop:exit_time_from_separatrix},
    \begin{equation}
    \label{eq:integral_not_matter}
    \begin{aligned}
        &\sup_{(x,y)\in\gamma\times\mathbb T^m}\sup_{{\sigma'}\leq\rho}|\E_{(x,y)}\sum_{n=0}^\infty\chi_{\{\sigma_n<{\sigma'}\}}\int_{\sigma_n}^{\tau_{n+1}}\mathcal Lf(h(X_s^\e))ds|\\
        &\lesssim \sup_{(x,y)\in\gamma\times\mathbb T^m}\sup_{{\sigma'}\leq\rho}\sum_{n=0}^\infty|\E_{(x,y)}\chi_{\{\sigma_n<{\sigma'}\}}\E_{(X_{\sigma_n}^\e,\xi_{\sigma_n}^\e)}\tau_1|=O(\e^\alpha|\log\e|).
    \end{aligned}
    \end{equation}
    Thus, it suffices to prove for all $\e$ sufficiently small
    \begin{equation}
    \label{eq:without_integral}
        \sup_{{\sigma'}\leq\rho}|\E_{(x,y)}\sum_{n=0}^\infty\chi_{\{\sigma_n<{\sigma'}\}}[f(h((X_{\tau_{n+1}}^\e))-f(h((X_{\sigma_n}^\e))|\leq\delta\rho+\e^\alpha\delta\sum_{n=0}^\infty\Prob_{(x,y)}(\sigma_n<\rho).
    \end{equation}

    Let us prove this for $\tx_t^\e$ first using Proposition~\ref{prop:auxiliary}, then apply the Girsanov theorem to get the result for $X_t^\e$.
    Let $\tilde\sigma'$ be the analogue of $\sigma'$ w.r.t. $\mathcal F^{\tx_\cdot^\e}_t$.
    Divide the time interval $[0,\tilde\sigma']$ into excursions using stopping times $\tilde\sigma_n$ and $\tilde\tau_n$:
    \begin{align}
    &\E_{(x,y)}[f(h(\tx_{\Tilde\sigma'}^\e))-f(h(x))-\int_0^{\Tilde\sigma'}\mathcal Lf(h(\tx_t^\e))dt]\label{eq:auxleft}\\
    &\quad = \E_{(x,y)}[f(h(\tx_{\Tilde\sigma'\wedge\Tilde\sigma}^\e))-f(h(x))-\int_0^{\Tilde\sigma'\wedge\Tilde\sigma}\mathcal Lf(h(\tx_t^\e))dt]\label{eq:auxfirstsum}\\
    &\quad+\sum_{n=0}^\infty \E_{(x,y)}\left(\chi_{\{\Tilde\sigma_n<\Tilde\sigma'\}}[f(h(\tx_{\Tilde\tau_{n+1}\wedge \Tilde\sigma'}^\e))-f(h(\tx_{\Tilde\sigma_n}^\e))-\int_{\Tilde\sigma_n}^{\Tilde\tau_{n+1}\wedge \Tilde\sigma'}\mathcal Lf(h(\tx_t^\e))dt]\right)\label{eq:auxsecondsum}\\
    &\quad+\sum_{n=1}^\infty \E_{(x,y)}\left(\chi_{\{\Tilde\tau_n<\Tilde\sigma'\}}[f(h(\tx_{\Tilde\sigma_{n}\wedge \Tilde\sigma'}^\e))-f(h(\tx_{\Tilde\tau_n}^\e))-\int_{\Tilde\tau_n}^{\Tilde\sigma_{n}\wedge \Tilde\sigma'}\mathcal Lf(h(\tx_t^\e))dt]\right)\label{eq:auxthirdsum}.
\end{align}
    Thus, \eqref{eq:auxsecondsum} converges to $0$ uniformly for all $x\in\gamma$ and $\Tilde\sigma'\leq\rho$ due to the convergence of \eqref{eq:auxleft}, \eqref{eq:auxfirstsum}, and \eqref{eq:auxthirdsum}, by Proposition~\ref{prop:auxiliary}, Proposition~\ref{prop:up_to_separatrix}, and Lemma~\ref{lem:eps_avg_prin_to_sp} with Corollary~\ref{cor:num_excursion}, respectively. Note that \eqref{eq:integral_not_matter} also holds for $\tx_t^\e$, hence we conclude that
    \begin{equation}
    \label{eq:aux_stop_at_sigma'}
        \sup_{(x,y)\in\gamma\times\mathbb T^m}\sup_{{\Tilde\sigma'}\leq\rho}\sum_{n=0}^\infty \E_{(x,y)}\left(\chi_{\{\Tilde\sigma_n<\Tilde\sigma'\}}[f(h(\tx_{\Tilde\tau_{n+1}\wedge \Tilde\sigma'}^\e))-f(h(\tx_{\Tilde\sigma_n}^\e))]\right)
        \to0.
    \end{equation}
    
    To apply the Girsanov theorem, we choose a sufficiently small time interval and use the fact that the transition probability of $(X_t^\e,\xi_t^\e)$ is similar to that of $(\tx_t^\e,\txi_t^\e)$ in the sense that they are absolutely continuous with density close to $1$.
    More precisely, for any fixed $\delta'>0$, by the Girsanov theorem, we can choose a constant $\rho_1$ such that for all $0<\rho<\rho_1$,
    \begin{equation}
        \mu_{x,y}^\e\left(\left|\frac{d\Tilde\mu_{x,y}^\e}{d\mu_{x,y}^\e}-1\right|<\delta'\right)\geq 1-\rho^2,
    \end{equation}
    where $\mu_{x,y}^\e$ and $\Tilde\mu_{x,y}^\e$ are the measures on $\bm{\mathrm C}[0,\rho]$ induced by $(X_t^\e,\xi_t^\e)$ and $(\tx_t^\e,\txi_t^\e)$. Define
    \[C'=\left\{\left|\frac{d\Tilde\mu_{x,y}^\e}{d\mu_{x,y}^\e}-1\right|<\delta\right\}\subset\bm{\mathrm C}[0,\rho],~\Omega'=\{(X_t^\e,~t\in[0,\rho])\in C'\}.\]
    Note that the quantity in \eqref{eq:without_integral} primarily depends on the behavior of the processes on time interval $[0,\sigma']$ and event $\Omega'$. 
    Indeed, we can replace the stopping times $\tau_n$ by $\tau_n\wedge\sigma'$ with $O(\e^\alpha)$ errors.
    To replace $\Omega$ with $\Omega'$, we need several additional results that control the difference.
    
    As in Corollary~\ref{cor:num_excursion}, we fix $\kappa>0$ and choose a large constant $C>0$ independent of $\rho$ such that
    \begin{equation}
        \sum_{n=[C\log(C/\rho)\e^{-\alpha}]}^\infty\Prob_{(x,y)}(\sigma_n<\rho)\leq\sum_{n=[C\log(C/\rho)\e^{-\alpha}]}^\infty e^{\rho}(1-\kappa\e^\alpha)^n\leq\delta'\rho\e^{-\alpha}.
    \end{equation}
    Now we choose $\rho_2>0$ such that, for all $0<\rho<\rho_2$, $C\rho\log(C/\rho)<\delta'$. Hence, for all $\sigma'\leq\rho$,
    \begin{equation}
        \sum_{n=0}^\infty\Prob_{(x,y)}(\{\sigma_n<\sigma'\}\setminus\Omega')\leq C\rho^2\log(C/\rho)\e^{-\alpha}+\delta'\rho\e^{-\alpha}\leq2\delta'\rho\e^{-\alpha}.
    \end{equation}
    Thus, with $K:=\max_{I_k\sim O}|\lim_{{h_k\in I_k, h_k\to O}}f'(h_k)|$, we obtain
    \begin{equation}
    \label{eq6:Omega'}
        |\E_{(x,y)}\sum_{n=0}^\infty\chi_{\{\sigma_n<{\sigma'}\}\setminus\Omega'}[f(h(X_{\tau_{n+1}\wedge\sigma'}^\e))-f(h(X_{\sigma_n}^\e))]|\leq 2(K+1)\delta'\rho.
    \end{equation}
    By following the same steps, we can choose $\rho_3>0$ such that for all $0<\rho<\rho_3$,
    \begin{equation}
    \label{eq6:Omega'_tilde}
        |\E_{(x,y)}\sum_{n=0}^\infty\chi_{\{\tilde\sigma_n<{\tilde\sigma'}\}\setminus\Omega'}[f(h(\tx_{\tilde\tau_{n+1}\wedge\tilde\sigma'}^\e))-f(h(\tx_{\tilde\sigma_n}^\e))]|\leq 2(K+1)\delta'\rho.
    \end{equation}
    It remains to consider
    \begin{equation}
    \label{eq6:omega_expc}
        |\E_{(x,y)}\sum_{n=0}^\infty\chi_{\{\sigma_n<{\sigma'}\}\cap\Omega'}[f(h(X_{\tau_{n+1}\wedge\sigma'}^\e))-f(h(X_{\sigma_n}^\e))]|,
    \end{equation}
    which can be written and estimated as, with $F$ denoting the functional on $\bm{\mathrm C}[0,\rho]$ found inside the expectation in \eqref{eq6:omega_expc}, 
    \begin{equation}
    \begin{aligned}
        \left|\int_{C'} F d\mu_{x,y}^\e\right|&=\left|\int_{C'} Fd\Tilde\mu_{x,y}^\e-\int_{C'} F\left(\frac{d\Tilde\mu_{x,y}^\e}{d\mu_{x,y}^\e}-1\right) d\mu_{x,y}^\e\right|\\
        &\leq \left|\int_{C'} Fd\Tilde\mu_{x,y}^\e\right|+\delta'\int_{C'} |F|d\mu_{x,y}^\e.
    \end{aligned}
    \end{equation}
    The first term is bounded by $2(K+2)\delta'\rho$ due to \eqref{eq:aux_stop_at_sigma'} and \eqref{eq6:Omega'_tilde}. The second term is simply bounded by $(K+1)\delta'\e^\alpha\sum_{n=0}^\infty\Prob_{(x,y)}(\sigma_n<\rho)$. Thus, we see that the left-hand side in \eqref{eq:without_integral} is no more than $(4K+6)\delta'(\rho+\e^\alpha\sum_{n=0}^\infty\Prob_{(x,y)}(\sigma_n<\rho))$ with finite $K$ independent of $\delta'$ and $\rho$ for all $\e$ sufficiently small. It remains to take $\delta'=\delta/(4K+6)$.
\end{proof}

\begin{proof}[Proof of Proposition~\ref{prop:main_result}]
Fix arbitrary $\delta>0$. We divide the time interval $[0,\eta]$ into excursions from $\gamma$ to $\gamma'$ and from $\gamma'$ to $\gamma$ by using stopping times $\sigma_n$ and $\tau_n$:
\begin{align}
    &\E_{(x,y)}[f(h(X_{\eta}^\e))-f(h(x))-\int_0^{\eta}\mathcal Lf(h(X_t^\e))dt]\nonumber\\
    &\quad = \E_{(x,y)}[f(h(X_{\eta\wedge\sigma}^\e))-f(h(x))-\int_0^{\eta\wedge\sigma}\mathcal Lf(h(X_t^\e))dt]\label{eq:firstsum}\\
    &\quad+\sum_{n=0}^\infty \E_{(x,y)}\left(\chi_{\{\sigma_n<\eta\}}[f(h(X_{\tau_{n+1}\wedge \eta}^\e))-f(h(X_{\sigma_n}^\e))-\int_{\sigma_n}^{\tau_{n+1}\wedge \eta}\mathcal Lf(h(X_t^\e))dt]\right)\label{eq:secondsum}\\
    &\quad+\sum_{n=1}^\infty \E_{(x,y)}\left(\chi_{\{\tau_n<\eta\}}[f(h(X_{\sigma_{n}\wedge \eta}^\e))-f(h(X_{\tau_n}^\e))-\int_{\tau_n}^{\sigma_{n}\wedge \eta}\mathcal Lf(h(X_t^\e))dt]\right)\label{eq:thirdsum}.
\end{align}
Here \eqref{eq:firstsum} converges to $0$ by Proposition~\ref{prop:up_to_separatrix} and \eqref{eq:thirdsum} converges to $0$ by Lemma~\ref{lem:eps_avg_prin_to_sp} and Corollary~\ref{cor:num_excursion}. It remains to consider \eqref{eq:secondsum} and it suffices to consider instead
\begin{equation}
\label{eq:secondsum1}
    \sum_{n=0}^\infty \E_{(x,y)}\left(\chi_{\{\sigma_n<\eta\}}[f(h(X_{\tau_{n+1}}^\e))-f(h(X_{\sigma_n}^\e))-\int_{\sigma_n}^{\tau_{n+1}}\mathcal Lf(h(X_t^\e))dt]\right)
\end{equation}
because the difference converges to $0$ by Proposition~\ref{prop:exit_time_from_separatrix}.
By Proposition~\ref{prop:auxiliary}, we choose $0<\rho<1$ such that \eqref{eq:auxiliary_to_main} holds for $\delta$ and all $\e$ sufficiently small. We introduce the stopping times $\hat\sigma_n$ by letting $\hat\sigma_0=\sigma$ and $\hat\sigma_n$ be the first of $\sigma_k$ such that $\sigma_k-\hat\sigma_{n-1}\geq\rho$. It is clear that $\hat\sigma_{[T/\rho]}\geq T\geq\eta$. {Hence, we can replace \eqref{eq:secondsum1} by}
\begin{equation}
\label{eq:secondsum2}
    \sum_{n=0}^{[T/\rho]-1}\E_{(x,y)}(\chi_{\{\hat\sigma_n<\eta\}}\E_{(X_{\hat\sigma_n}^\e,\xi_{\hat\sigma_n}^\e)}\sum_{k=0}^\infty\chi_{\{\sigma_k<\rho\}}[f(h(X_{\tau_{n+1}}^\e))-f(h(X_{\sigma_n}^\e))-\int_{\sigma_n}^{\tau_{n+1}}\mathcal Lf(h(X_t^\e))dt])
\end{equation}
and, by the strong Markov property,  the difference is no more than 
\begin{equation}
\label{eq:secondsum3}
    \sup_{(x,y)\in\gamma\times\mathbb T^m}\sup_{{\sigma'}\leq\rho}|\E_{(x,y)}\sum_{n=0}^\infty\chi_{\{\sigma_n<{\sigma'}\}}[f(h((X_{\tau_{n+1}}^\e))-f(h((X_{\sigma_n}^\e))-\int_{\sigma_n}^{\tau_{n+1}}\mathcal Lf(h(X_t^\e))dt]|,
\end{equation}where $\sigma'$ is a stopping time w.r.t. $\mathcal F^{X_\cdot^\e}_t$.
Both of them can be bounded by $O(\delta)$ due to Lemma~\ref{lem:aux_to_ori} and Corollary~\ref{cor:num_excursion}.
\end{proof}

\appendix
\section{Derivatives of the action-angle-type coordinates}
\label{sec:derivatives}
In this section, we carefully estimate the first and second derivatives of $q(x)$, $Q(h)$, $\phi(x)$, $A(h,\phi)$, and ${B_c}(h,\phi)$ in order to prove \eqref{eq:bounds}. 
Our main tool is the Morse lemma.
Note that we only need to verify the bounds near the separatrix, since the derivatives are uniformly bounded inside the domain.
For $x,y\in\mathbb R^2$, let $x\to y$ denote the line segment connecting $x$ and $y$. For $x,y\in\gamma(h)$ for certain $h$, let $x\xrightarrow{\gamma} y$ denote the piece of $\gamma(h)$ connecting $x$ to $y$ along the direction of $\nabla^\perp H$.
Recall that $q(x)=\int_{l(H(x))\xrightarrow{\gamma} x}\frac{1}{|\nabla H|}dl$. 
To start with, using the Morse lemma, one can compute that $q(x)\lesssim |\log H(x)|$ and $Q(h)\lesssim|\log h|$.
Then we make use of two special deterministic motions in the directions of $\nabla^\perp H$ and $\nabla H$ to calculate the first derivatives of $q(x)$ precisely:
\begin{equation}
    \begin{aligned}
        d\bm x_t&=\nabla^\perp H(\bm x_t)dt,\\
        d\bm y_t&=\nabla H(\bm y_t)dt.
    \end{aligned}
\end{equation}
\begin{figure}[!ht] 
    \centering
    \includegraphics[width=0.4\textwidth]{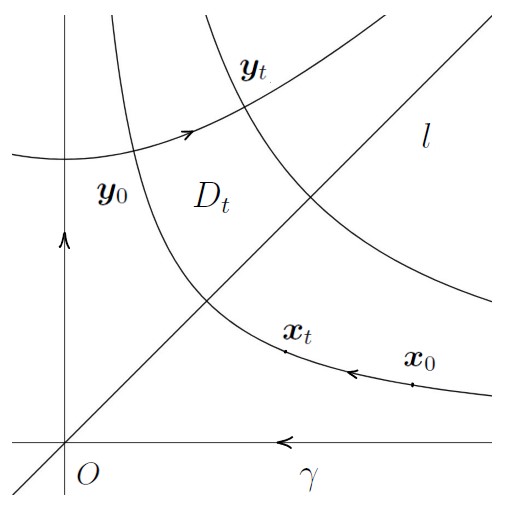}
    \caption{Motions tangent and perpendicular to the level curve.}
    \label{fig:coordinate}
\end{figure}
It follows that
\begin{equation}
\label{eqa:qxt}
    q(\bm x_t)
    =q(\bm x_0)+t,
\end{equation}
\begin{equation}
\label{eqa:qyt}
    \begin{aligned}
        q(\bm y_t)
        =q(\bm y_0)+\int_{\partial D_t}\frac{\nabla H}{|\nabla H|^2}\cdot\bm ndl
        =q(\bm y_0)+\int_{D_t}\mathrm{div}(\frac{\nabla H}{|\nabla H|^2})dS,
    \end{aligned}
\end{equation}
where $D_t$ is the region bounded by $l$, trajectory of $\bm {y}_s$, $0\leq s\leq t$, $\gamma(H(\bm y_0))$, and $\gamma(H(\bm y_t))$, as shown in Figure~\ref{fig:coordinate}.
Thus, by differentiating \eqref{eqa:qxt} and \eqref{eqa:qyt} in $t$, we have the following equations:
\begin{equation}
\label{eqa:linear_sys}
    \begin{aligned}
        \nabla q(x)\cdot\nabla^\perp H(x)&=1,\\
        \nabla q(x)\cdot\nabla H(x)&=|\nabla H(x)|^2\int_{l(H(x))\xrightarrow{\gamma} x}\mathrm{div}(\frac{\nabla H}{|\nabla H|^2})\frac{1}{|\nabla H|}dl.
    \end{aligned}
\end{equation}
Therefore, with subscripts denoting the partial derivatives, by solving the linear system,
\begin{equation}
\label{eqa:q1q2}
    \begin{aligned}
        q'_1&=\frac{-H'_2}{{H'_1}^2+{H'_2}^2}+H'_1p,\\
        q'_2&=\frac{H'_1}{{H'_1}^2+{H'_2}^2}+H'_2p,
    \end{aligned}
\end{equation}
where $p(x)=\int_{l(H(x))\xrightarrow{\gamma} x}\mathrm{div}(\frac{\nabla H}{|\nabla H|^2})\frac{1}{|\nabla H|}dl$. Using the Morse lemma, one can compute $p(x)=O(1/H(x))$, since
\begin{equation}
    \left|\mathrm{div}(\frac{\nabla H}{|\nabla H|^2})\right|\lesssim\frac{1}{|\nabla H|^2}.
\end{equation}
Note that the non-degeneracy of the saddle point implies that $|H|\lesssim |\nabla H|^2$, and hence $\nabla q=O(|\nabla H|/H)$. The next step is to estimate $\nabla p$. For all $x,y$ close enough, with $D$ denoting the region bounded by $l$, $x\to y$, $\gamma(H(x))$, and $\gamma(H(y))$, we have
\begin{align*}
    |p(x)-p(y)|&=\left|\int_{\partial D}\mathrm{div}(\frac{\nabla H}{|\nabla H|^2})\frac{\nabla H}{|\nabla H|^2}\cdot \mathbf{n}dl-\int_{x\to y}\mathrm{div}(\frac{\nabla H}{|\nabla H|^2})\frac{\nabla H}{|\nabla H|^2}\cdot \mathbf{n}dl\right|\\
    &\leq\int_D \left|\mathrm{div}\left[\mathrm{div}(\frac{\nabla H}{|\nabla H|^2})\frac{\nabla H}{|\nabla H|^2}\right]\right|dS+\int_{x\to y}\left|\mathrm{div}(\frac{\nabla H}{|\nabla H|^2})\frac{1}{|\nabla H|}\right|dl\\
    &\lesssim\int_D \frac{1}{|\nabla H|^4}dS+\int_{x\to y}\frac{1}{|\nabla H|^3}dl.
\end{align*}
Then one can obtain $|\nabla p|\lesssim|\nabla H|/H^2$ by using the Morse lemma again and, as a result, $|q''_{ij}|\lesssim |\nabla H|^2/H^2$, $1\leq i,j\leq 2$. Similarly, we can estimate the derivatives of $Q(h)$. In fact, $Q'(h)=\int_{\gamma(h)}\mathrm{div}(\frac{\nabla H}{|\nabla H|^2})\frac{1}{|\nabla H|}dl=O(1/h)$ because of the estimate we had on $p(x)$. In addition,
\begin{equation}
    \left|Q''(h)\right|=\left|\int_{\gamma(h)}\mathrm{div}\left[\mathrm{div}(\frac{\nabla H}{|\nabla H|^2})\frac{\nabla H}{|\nabla H|^2}\right]\frac{1}{|\nabla H|}dl\right|\lesssim\int_{\gamma(h)}\frac{1}{|\nabla H|^5}dl=O(1/h^2).
\end{equation}
Since $\phi(x)=q(x)/Q(H(x))$, $|\nabla\phi|\lesssim|\nabla H|/H$ and $|\phi''_{ij}|\lesssim|\nabla H|^2/H^2$, $1\leq i,j\leq2$. Finally, we estimate the derivatives w.r.t. $h$ of a general function $F(h,\phi)=\Tilde F(x_1,x_2)$ with $\Tilde F$ having bounded first and second derivatives. By computing the inverse of the Jacobian of $(H,\phi)$ and using the first equation in \eqref{eqa:linear_sys},
\begin{equation}
\label{eqa:der_in_h}
    F'_h=\frac{\Tilde F'_1\phi'_2-\Tilde F'_2\phi'_1}{H'_1\phi'_2-H'_2\phi'_1}=(\Tilde F'_1\phi'_2-\Tilde F'_2\phi'_1)Q(H(x_1,x_2)).
\end{equation}
We deduce that $F'_h=O(|\log h|/h)$ and, using $F_h'$ instead of $F$ in {\eqref{eqa:der_in_h}, $F''_{hh}=O(|\log h|^2/h^3)$.} Similarly, one can obtain $F'_\phi=O(|\log h|)$ and $F''_{\phi\phi}=O(|\log h|/h)$.

\section{Exit from neighborhoods of critical points}
\label{sec:exitfromneighborhood}
In this section, we obtain estimates for the exit time from the neighborhoods of the critical points, including extremum points and saddle points. Recall the notation in Section~\ref{sec:averaging}: $O$ is a saddle point, $O'$ is an extremum point, $U$ is a domain bounded by the separatrix, and $\eta(h)=\inf\{t:|H(\tx_t^\e)-H(O')|=h\}$. Recall the function $u$ defined in \eqref{eq:u} and let us define 
\begin{equation}
    \label{eqb:nondegenerate-diffusion}
    \bm A(x)=\int_{\mathbb T^m}\nabla_y u(x,y)\sigma(y)\sigma(y)^\mathsf{T}\nabla_y u(x,y)^\mathsf{T}d\mu(y).
\end{equation}
Using assumption \hyperlink{H4'}{\textit{(H4$'$)}}, one can see that $\bm A(x)$ is positive-definite. 
Indeed, if there exist a point $x$ and a vector $v\not=0$ such that $v^\mathsf{T}\bm A(x)v=0$, then, since $\sigma\sigma^\mathsf{T}$ is positive-definite, $v^\mathsf{T}\nabla_y u(x,y)=0$ for all $y\in\mathbb T^m$.
Namely, $v^\mathsf{T} u(x,y)$ is constant, and $v^\mathsf{T}(b(x,y)-\bar b(x))=L(v^\mathsf{T}u(x,y))=0$ on $\mathbb T^m$, which contradicts with \hyperlink{H4'}{\textit{(H4$'$)}}. 
\begin{lemma}
\label{lemb:non-zero-drift}
    Recall the definition of $B_c(x)$ in \eqref{eq:definition_of_operator_AB_x}. If $O'$ is a minimum point, then $B_c(O')>0$; if $O'$ is a maximum point, then $B_c(O')<0$.
\end{lemma}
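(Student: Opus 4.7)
The plan is to compute $B_c(O')$ explicitly and recognize it as $\tfrac12\operatorname{tr}(\nabla^2 H(O')\,\bm A(O'))$, at which point the sign follows from the (strict) positive-definiteness of $\bm A(O')$ established just before the lemma. The key observation is that at a critical point $O'$, the cofactor $\nabla H(O')=0$ kills most of the terms in $B_c$, and what is left can be evaluated by the carré du champ identity for the invariant measure $\mu$.

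First I would note that, since $u_h(x,y)=u(x,y)\cdot\nabla H(x)$ and $\nabla H(O')=0$, we have $u_h(O',y)=0$ and $\nabla_y u_h(O',y)=\nabla_y u(x,y)^{\mathsf T}\nabla H(x)\big|_{x=O'}=0$. Hence the $c(x,y)$-contribution to $B_c(O')$ vanishes, and the product rule gives $\nabla_x u_h(O',y)=\nabla^2 H(O')\,u(O',y)$. Moreover, $\bar b(O')=\nabla^\perp H(O')=0$, so $b(O',y)=b(O',y)-\bar b(O')=-Lu(O',y)$ by \eqref{eq:u}. Consequently,
\begin{equation*}
B_c(O')=-\int_{\mathbb T^m}\bigl[\nabla^2 H(O')\,u(O',y)\bigr]\cdot Lu(O',y)\,d\mu(y)
=-\sum_{i,j}H''_{ij}\int_{\mathbb T^m}u_j(O',y)\,Lu_i(O',y)\,d\mu(y),
\end{equation*}
where $H''=\nabla^2H(O')$.

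Second I would invoke the identity $L(fg)=fLg+gLf+\nabla_y f^{\mathsf T}\sigma\sigma^{\mathsf T}\nabla_y g$, which, upon integrating against the invariant measure $\mu$, yields $\int(fLg+gLf)\,d\mu=-\int\nabla_y f^{\mathsf T}\sigma\sigma^{\mathsf T}\nabla_y g\,d\mu$. Applying this to $f=u_i(O',\cdot)$, $g=u_j(O',\cdot)$ and symmetrizing in $(i,j)$ using $H''_{ij}=H''_{ji}$, I obtain
\begin{equation*}
B_c(O')=\tfrac12\sum_{i,j}H''_{ij}\int_{\mathbb T^m}\nabla_y u_j(O',y)^{\mathsf T}\sigma(y)\sigma(y)^{\mathsf T}\nabla_y u_i(O',y)\,d\mu(y)=\tfrac12\operatorname{tr}\!\bigl(\nabla^2 H(O')\,\bm A(O')\bigr),
\end{equation*}
where $\bm A(O')$ is the matrix in \eqref{eqb:nondegenerate-diffusion}.

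Finally I would conclude as follows. The matrix $\bm A(O')$ is symmetric positive-definite (shown in the paragraph preceding the lemma), so $\bm A(O')^{1/2}\,\nabla^2 H(O')\,\bm A(O')^{1/2}$ is symmetric with the same inertia as $\nabla^2 H(O')$, and its trace equals $\operatorname{tr}(\nabla^2 H(O')\,\bm A(O'))$. At a nondegenerate minimum $\nabla^2 H(O')$ is positive-definite, so this trace is strictly positive, giving $B_c(O')>0$; at a maximum it is negative-definite, so the trace is strictly negative, giving $B_c(O')<0$. The main (only) delicate point is the symmetrization step together with a correct use of the carré du champ identity; every other step is algebraic bookkeeping.
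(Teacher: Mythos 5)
Your proof is correct and follows essentially the same route as the paper: at the critical point the $c$-term drops out because $\nabla_y u_h(O',\cdot)=\nabla_y u^{\mathsf T}\nabla H(O')=0$, one substitutes $b(O',y)=-Lu(O',y)$, symmetrizes in $(i,j)$, and applies the carr\'e du champ identity with invariance of $\mu$ to obtain $B_c(O')=\tfrac12\operatorname{tr}(\nabla^2H(O')\,\bm A(O'))$, whose sign is determined by the definiteness of $\nabla^2H(O')$ and positive-definiteness of $\bm A(O')$. Your final trace-inertia argument via $\bm A(O')^{1/2}\nabla^2H(O')\,\bm A(O')^{1/2}$ is just a slightly more explicit justification of the sign conclusion the paper states directly.
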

\begin{proof}
We prove the result in the case of minimum point. The other case can be treated similarly. Since $O'$ is a critical point and $Lu(x,y)=-(b(x,y)-\bar b(x))$, we have $\nabla H(O')=0$ and 
    \begin{align*}
        {B_c}(O')&=\int_{\mathbb T^m}[\nabla_xu_h(O',y)b(O',y)+\nabla_y u_h(O',y)c(x,y)]d\mu(y)\\
        &=\int_{\mathbb T^m}u(O',y)^\mathsf{T}\nabla^2 H(O')b(O',y)d\mu(y)\\
        &=-\int_{\mathbb T^m}u(O',y)^\mathsf{T}\nabla^2 H(O')Lu(O',y)d\mu(y)\\
        &=-\sum_{1\leq i,j\leq2}\frac{\partial^2}{\partial x_i\partial x_j}H(O')\int_{\mathbb T^m}u_i(O',y)Lu_j(O',y)d\mu(y)\\
        &=-\sum_{1\leq i,j\leq2}\frac{\partial^2}{\partial x_i\partial x_j}H(O')\int_{\mathbb T^m}\frac{1}{2}(u_iLu_j+u_jLu_i)(O',y)d\mu(y)\\
        &=\frac{1}{2}\sum_{1\leq i,j\leq2}\frac{\partial^2}{\partial x_i\partial x_j}H(O')\left(\bm A(O')_{i,j}-\int_{\mathbb T^m}L(u_iu_j)d\mu(y)\right)\\
        &=\frac{1}{2}\sum_{1\leq i,j\leq2}\frac{\partial^2}{\partial x_i\partial x_j}H(O')\bm A(O')_{i,j}.
\end{align*}
This is positive since both $\nabla^2 H$ and $\bm A$ are positive definite at $O'$.
\end{proof}
\begin{lemma}
\label{lem:near_extremum}
    For each $\kappa>0$, there exists $\delta>0$ such that
    \begin{equation}
    \label{eq:near_extremum}
        \E_{(x,y)}\eta(\delta)\leq\kappa
    \end{equation}
    for all $x$ satisfying $|H(x)-H(O')|<\delta$, $y\in\mathbb T^m$, and $\e$ sufficiently small.
\end{lemma}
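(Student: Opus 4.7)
The plan is to exploit the non-zero drift provided by Lemma~\ref{lemb:non-zero-drift}. Without loss of generality assume $O'$ is a local maximum with $H(O')=1$, so $B_c(O')<0$ and $H\leq 1$ in a neighborhood of $O'$; the minimum case is symmetric. By continuity of $B_c$, fix $\delta_0,m>0$ such that $B_c(x)\leq -m$ whenever $|H(x)-1|\leq\delta_0$. The idea is that the negative drift of $\Tilde H_t^\e:=H(\tx_t^\e)$ forces the process down to level $1-\delta$ in time of order $\delta/m$, which will be arbitrarily small for $\delta$ small.

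To implement this, I would apply Ito's formula to $H(\tx_t^\e)$ via the diffusion approximation \eqref{eq:x} with $f(h)=h$. The martingale contribution vanishes in expectation, and Corollary~\ref{cor:avg} applied to the zero-mean function $\Tilde g(x,y)=\nabla_x u_h\cdot b+\nabla_y u_h\cdot c-B_c(x)$ yields, for any stopping time $\sigma'$ bounded by a fixed $T>0$,
\begin{equation*}
    \E_{(x,y)}\Tilde H_{\sigma'}^\e = H(x)+\E_{(x,y)}\int_0^{\sigma'}B_c(\tx_s^\e)\,ds+o_\e(1),
\end{equation*}
where $o_\e(1)\to 0$ as $\e\to 0$ (with constants depending on $T$). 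Choose $0<\delta<\delta_0$ and take $\sigma'=\eta(\delta)\wedge T$. For $s<\eta(\delta)$ one has $|H(\tx_s^\e)-1|<\delta<\delta_0$, so $B_c(\tx_s^\e)\leq -m$; moreover $\Tilde H_{\sigma'}^\e\geq 1-\delta$ since $H\leq 1$ near $O'$ rules out exit at level $1+\delta$. Combined with $H(x)\leq 1$, this yields
\begin{equation*}
    m\,\E_{(x,y)}[\eta(\delta)\wedge T]\leq \delta+o_\e(1).
\end{equation*}

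The conclusion then follows by iteration via the strong Markov property. Setting $T=4\delta/m$, Markov's inequality gives $\Prob_{(x,y)}(\eta(\delta)>T)\leq 1/3$ for $\e$ small enough. Conditionally on not exiting by time $T$, the process still lies in $\{|H-1|<\delta\}$, so the same bound applies again, yielding $\Prob_{(x,y)}(\eta(\delta)>nT)\leq(1/3)^n$ and hence $\E_{(x,y)}\eta(\delta)\leq T\sum_{n\geq 0}(1/3)^n=6\delta/m$. Choosing $\delta\leq m\kappa/6$ completes the argument. The main subtle point is that the $o_\e(1)$ error in the averaging step is not uniform in $T$ (it grows linearly through the boundary terms in the corrector argument behind Corollary~\ref{cor:avg}), so the order of quantifiers matters: first fix $\delta=\delta(\kappa)$ and hence $T=T(\delta)$, and only then take $\e$ small.
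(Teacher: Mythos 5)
Your proposal is correct and follows essentially the same route as the paper's proof: apply the diffusion approximation for $H(\tx_t^\e)$, use Corollary~\ref{cor:avg} to replace the drift by $B_c$, invoke Lemma~\ref{lemb:non-zero-drift} for a strictly signed drift near $O'$, stop at $\eta(\delta)\wedge T$, and iterate via Markov's inequality and the Markov property. The only differences are cosmetic (you treat the maximum case and take $T=4\delta/m$, while the paper treats the minimum case and truncates at time $1$), and your remark on the order of quantifiers for the $o_\e(1)$ error matches the paper's "for $\e$ sufficiently small" formulation.
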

\begin{proof}
Without loss of generality, we assume $O'$ to be a minimum point. Similarly to \eqref{eq:H}, we apply Ito's formula to $u_h(\tx^\e_{\eta(\delta)\wedge1},\txi^\e_{\eta(\delta)\wedge1})$ and we obtain
\begin{align*}
        H(\tx^\e_{\eta(\delta)\wedge1})&=H(x)+\int_0^{\eta(\delta)\wedge1} \nabla_y u_h(\tx_s^\e,\txi_s^\e)^{\mathsf T}\sigma(\txi_s^\e)dW_s+\e(u_h(x,y)-u_h(\tx_{\eta(\delta)\wedge1}^\e,\txi_{\eta(\delta)\wedge1}^\e))\\
    &\quad+\int_0^{\eta(\delta)\wedge1}[\nabla_x u_h(\tx_s^\e,\txi_s^\e)\cdot b(\tx_s^\e,\txi_s^\e)+\nabla_y u_h(\tx_s^\e,\txi_s^\e)\cdot c(\tx_s^\e,\txi_s^\e)]ds.
    \end{align*}
By taking the expectation on both sides and using Corollary~\ref{cor:avg}, we obtain
\begin{equation}
\label{eqb:exit_from_extremum}
    \E_{(x,y)}\int_0^{\eta(\delta)\wedge1}B_c(\tx_s^\e)ds<\delta+O(\e).
\end{equation}
Due to Lemma~\ref{lemb:non-zero-drift}, $B_c(O')>0$. {Hence, we can choose $\delta$ to be small enough such that $B_c(\tx_s^\e)>B_c(O')/2>0$ before $\eta(\delta)$.} Thus, it follows from \eqref{eqb:exit_from_extremum} that, for all $\e$ sufficiently small,
\[\E_{(x,y)}(\eta(\delta)\wedge1)\leq 4\delta/B_c(O').\]
Then $\Prob_{(x,y)}(\eta(\delta)\geq1)\leq4\delta/B_c(O')$ for all $x$ satisfying $|H(x)-H(O')|<\delta$, $y\in\mathbb T^m$ by Markov's inequality. Then, one can obtain the desired result using the Markov property by the fact that $\E_{(x,y)}\eta(\delta)\leq \E_{(x,y)}(\eta(\delta)\wedge1)+\E_{(x,y)}(\eta(\delta)\chi_{\{\eta(\delta)>1\}})$.
\end{proof}

\begin{proposition}
\label{prop:exit_time_from_separatrix}
    Let $0<\alpha<1/2$, $V^\e=\{x:|H(x)-H(O)|<\e^\alpha\}$, and $\tau=\inf\{t:\tx_t^\e\not\in V^\e\}$. Then, uniformly in $0<\alpha<1/2$ and $(x,y)\in V^\e\times\mathbb T^m$, as $\e\downarrow0$,
    \begin{equation}
        \E_{(x,y)}\tau=O(\e^{2\alpha}|\log\e|).
    \end{equation}
\end{proposition}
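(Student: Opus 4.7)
My plan is to construct a Lyapunov-type super-solution for the effective generator $\mathcal L_{\tilde c}$ on $V^\e$ and to convert the pointwise inequality $\mathcal L_{\tilde c} F \leq -1$ into the desired upper bound on $\E_{(x,y)}\tau$ via an It\^o-plus-averaging argument in the spirit of the proof of \lemref{lem:time_bounded_wo_t}. The target order $\e^{2\alpha}|\log\e|$ is already predicted by two facts established earlier in the paper: by Appendix~\ref{sec:derivatives} and \remref{rmk:accessibility}, $A_k(h)\gtrsim 1/|\log h|$ near $h=0$, while $\tilde B_k(h)$ stays bounded. Consequently, $H(\tx_t^\e)$ behaves like a diffusion of rate $\sim 1/|\log\e|$ on the strip of width $\e^\alpha$ around the saddle level, and this produces an exit time of order $(\e^\alpha)^2/(1/|\log\e|)=\e^{2\alpha}|\log\e|$.

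Concretely, on each edge $I_k$ incident to $O$ with signed $H$-coordinate $h_k$, I would take
\begin{equation*}
F_k(h_k)=\tfrac{1}{2}C|\log\e|(\e^{2\alpha}-h_k^{2}),\qquad h_k\in[-\e^\alpha,\e^\alpha],
\end{equation*}
and piece these together into a globally continuous $F$ on $V^\e$ with $F\geq 0$, $F\equiv 0$ on $\partial V^\e$, and $\sup_{V^\e}F(H(\cdot))\leq \tfrac{1}{2}C\e^{2\alpha}|\log\e|$. Using the lower bound $A_k(h)|\log\e|\geq c>0$ on $|h|\leq\e^\alpha$ and the boundedness of $\tilde B_k$, a direct computation gives $\tilde L_k F_k=-\tfrac{1}{2}C A_k(h_k)|\log\e|-C|\log\e|\tilde B_k(h_k)h_k\leq-1$ on each edge, for $C$ large enough and $\e$ small enough. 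The next step is the averaging estimate
\begin{equation*}
\E_{(x,y)}\bigl[F(H(\tx_{\tau}^\e))-F(H(x))\bigr] = \E_{(x,y)}\int_0^{\tau}\mathcal L_{\tilde c}F(H(\tx_s^\e))\,ds+o(\e^{2\alpha}|\log\e|),
\end{equation*}
which I would establish by applying It\^o's formula to the corrected function $\tilde F(x,y)=F(H(x))+\e F'(H(x))u_h(x,y)$ in order to kill the singular $\e^{-1}$ term in the generator, and then using \corref{cor:avg} together with the rotation-averaging argument of \lemref{lem:avg_rotation} to replace the instantaneous coefficients $A(\tx_s^\e)F''$ and $B_{\tilde c}(\tx_s^\e)F'$ along level sets by their $\phi$-averages. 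Combined with $F(H(\tx_\tau^\e))=0$ and $\mathcal L_{\tilde c}F\leq-1$, this yields $\E_{(x,y)}\tau\leq F(H(x))+o(\e^{2\alpha}|\log\e|)=O(\e^{2\alpha}|\log\e|)$.

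The main obstacle is controlling the averaging error on the short random time horizon $\tau\sim\e^{2\alpha}|\log\e|$. \lemref{lem:avg_rotation} and its consequences were proved for a fixed $T$ and for starting points in the intermediate region $U(\e^{\alpha'},1-\delta)$ with $0<\alpha'<1/4$, whereas here the horizon vanishes with $\e$ and the starting points lie in the much thinner strip $V^\e$ that abuts both the separatrix and the saddle. One would need to re-run the Fourier-expansion argument of \lemref{lem:avg_rotation} with $g(h,\phi)=A(h,\phi)F''(h)$ and $g(h,\phi)=B_{\tilde c}(h,\phi)F'(h)$ on $V^\e$, keeping careful track of the $|\log\e|$ factors introduced by $F$ and its derivatives. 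A separate, small-scale analysis near the saddle point itself (in Morse coordinates, as in the proof of \lemref{lem:throughsaddlepoint}) is also required, since the averaged rotation picture degenerates as $Q(h)\sim|\log h|$ blows up; ensuring that these two analyses dovetail to give a uniform $O(\e^{2\alpha}|\log\e|)$ bound is the most delicate technical point of the argument.
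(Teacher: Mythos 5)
Your heuristic and the shape of the Lyapunov function are right (indeed $\bar A(h)\gtrsim 1/|\log h|$ and $F\sim C|\log\e|(\e^{2\alpha}-h^2)$ is the natural candidate), but the step you defer — the averaging identity
$\E_{(x,y)}[F(H(\tx_\tau^\e))-F(H(x))]=\E_{(x,y)}\int_0^\tau\mathcal L_{\tilde c}F(H(\tx_s^\e))\,ds+o(\e^{2\alpha}|\log\e|)$
up to the exit time from $V^\e$ — is not a technical refinement but the whole difficulty of the proposition, and it cannot be obtained by ``re-running'' Lemma~\ref{lem:avg_rotation}. That lemma and all of Section~\ref{sec:averaging} operate in $U(\e^{\alpha},1-\delta)$ with $\alpha<1/4$ and a fixed horizon $T$; the correction function there has $v''_{\phi\phi}=O(|\log h|^3/h)$, $v''_{hh}=O(|\log h|^3/h^3)$, so the It\^o remainders are controlled only because $h\geq\e^{\alpha}$, and they blow up as $h\downarrow0$. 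Inside $V^\e$ the process repeatedly crosses the separatrix and lingers near the saddle, where $\phi$, $Q(h)$ and the whole rotation-averaging picture are not even defined. You cannot retreat to a pointwise super-solution either: $A(x)=\int_{\mathbb T^m}|\nabla_yu_h(x,y)^{\mathsf T}\sigma(y)|^2d\mu(y)$ vanishes at the saddle because $\nabla H(O)=0$, so $\frac12A(x)F''+B_{\tilde c}(x)F'\leq-1$ fails near $O$ and only the rotation-averaged coefficient can work — i.e.\ exactly the unavailable averaging. There is also a quieter circularity: to make the averaging error $o(\e^{2\alpha}|\log\e|)$ over the random horizon $[0,\tau]$ you already need an a priori bound on $\E\tau$ of the same order, which is what is being proved; this must be broken by first proving a probability bound and iterating, which your outline does not set up.

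The paper's proof avoids the averaged generator entirely in $V^\e$. After a Morse-lemma change of coordinates and a random time change so that $H=pq$ near $O$, Lemma~\ref{lem:near} builds an explicit one-dimensional Lyapunov function in the unstable coordinate at scale $\sqrt\e$ (solving $\frac12A_0\e f''+qf'=-1$) to show each near-saddle passage lasts $O(|\log\e|)$ in slow time; Lemma~\ref{lem:exit_location} shows the exit from that box is through the outgoing sides; Lemma~\ref{lem:away} shows each excursion away from the saddle accumulates a uniformly positive amount of quadratic variation of $H(\bx_t^\e)$ in $O(1)$ time. Then an excursion-counting/exponential-moment argument shows that after $n(\e)\sim\e^{2\alpha-1}$ excursions, i.e.\ time $t(\e)\sim\e^{2\alpha-1}|\log\e|$, the martingale part of $H$ has enough variance to leave the strip with probability bounded away from zero, and the Markov property converts this into $\E_{(x,y)}\bm\tau=O(\e^{2\alpha-1}|\log\e|)$, hence $O(\e^{2\alpha}|\log\e|)$ after rescaling. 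In short, your route would require a new quantitative averaging theorem across the separatrix and through the saddle region at the diffusive scale, and supplying it would amount to redoing (in harder form) the direct analysis of Lemmas~\ref{lem:near}--\ref{lem:away}; as written, the proposal leaves that gap open.
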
 
To prove Proposition~\ref{prop:exit_time_from_separatrix}, it is more convenient to consider the process $(\tbx_t^\e,\tbxi_t^\e)$, define the stopping time $\bm\tau=\inf\{t:\tbx_t^\e\not\in V^\e\}$, and then prove that $\E_{(x,y)}\bm\tau=O(\e^{2\alpha-1}|\log\e|)$.
We need careful analysis of the behavior of the processes near the saddle point and away from the saddle point. 
The latter is easier to deal with since there is no degeneracy, while the former needs us to, again, use the Morse Lemma to make concrete computations. 
For simplicity, we prove the result in the case of $(\bx_t^\e,\bxi_t^\e)$ without the additional drift $c(x,y)$ since it can be seen in the proof that the additional terms induced by $c(x,y)$ are always relatively small. 
We prove that there exist two neighborhoods, $D_1\subset D_2$, of $O$ (as shown in Figure~\ref{fig:D1D2}), such that, in $V^\e$, it takes the process $O(|\log\e|)$ time to escape from $D_2$, and $O(1)$ time to return to $D_1$.
Since $x\in V^\e$ is two-dimensional, we denote $x=(p,q)$. To avoid confusion brought by convoluted formulas, we assume the saddle point is the origin and the Hamiltonian $H(x)=pq$ in a small neighborhood of $O$.
    This assumption is not restrictive because, as in the proof of Lemma~\ref{lem:throughsaddlepoint}, we can use the Morse Lemma and perform a random change of time with the multiplier bounded from below and above, which will not change the order of the expected time. 
    For $r>0$, we denote $D_r$ to be the region in $V^\e$ with $|p|\leq r$ and $|q|\leq r$, $(\partial D_r)_{\textrm{in}}=\{|p|=r\}\cap V^\e$, $(\partial D_r)_{\textrm{out}}=\{|q|=r\}\cap V^\e$, and choose $r_3>0$ small enough that $H(x)=pq$ in $D_{r_3}$.
\begin{figure}[!htbp]
\centering
\begin{subfigure}{.5\textwidth}
  \centering
  \includegraphics[width=.9\linewidth]{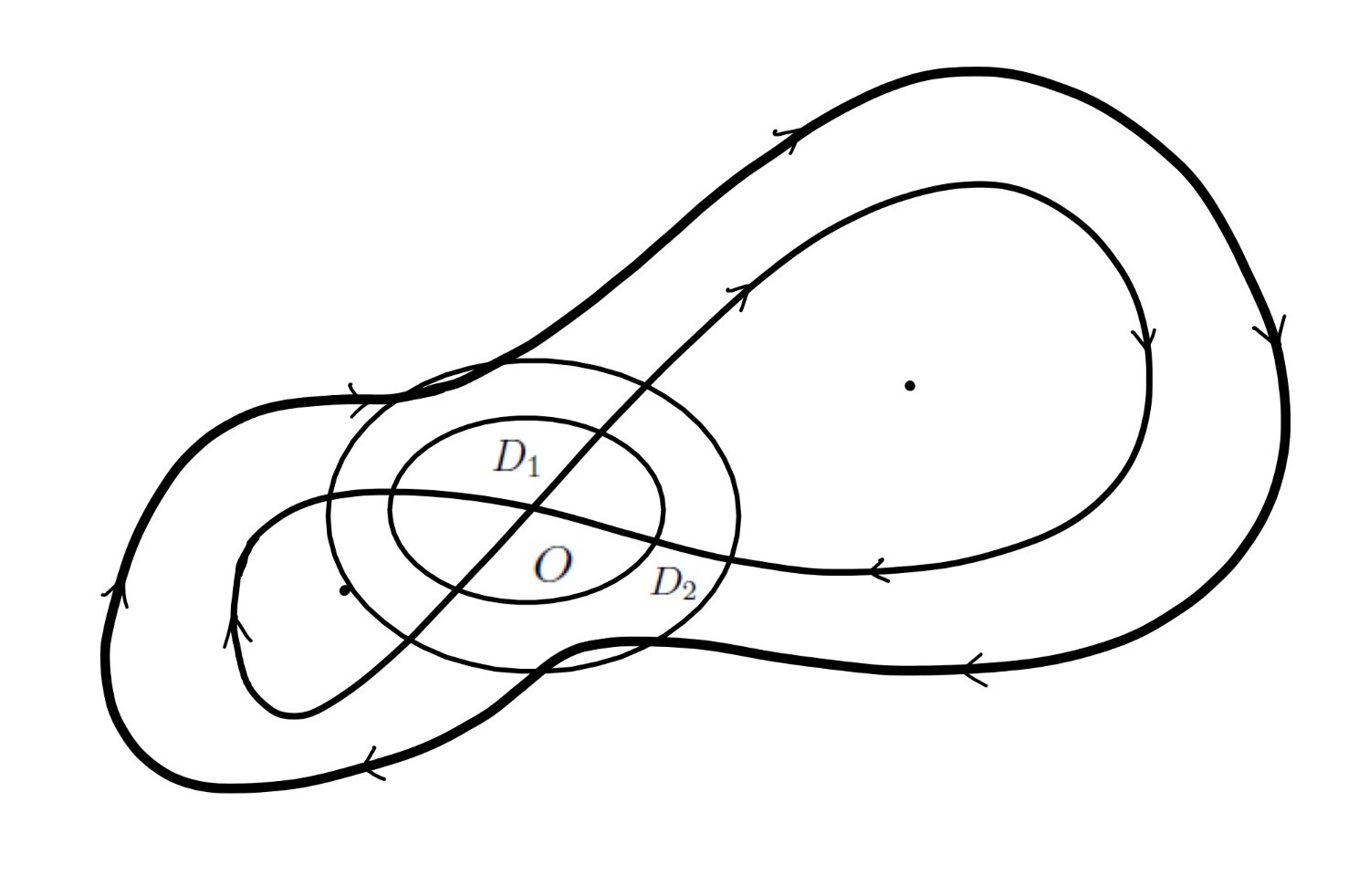}
  \caption{Two neighborhoods of $O$.}
  \label{fig:D1D2}
\end{subfigure}%
\begin{subfigure}{.5\textwidth}
  \centering
  \includegraphics[width=.9\linewidth]{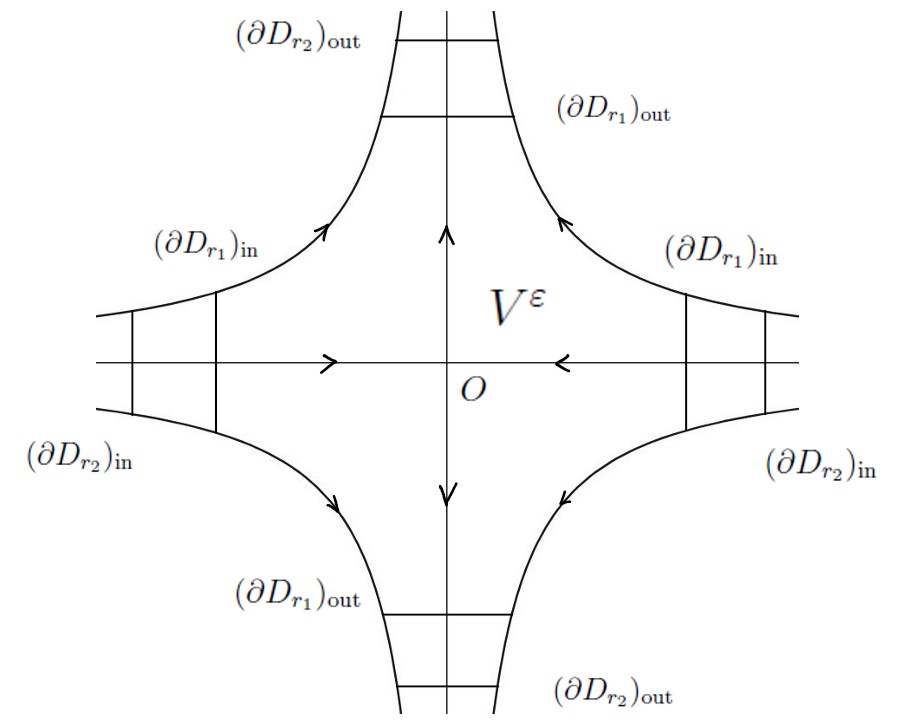}
  \caption{$D_{r_1}$ and $D_{r_2}$.}
  \label{fig:MorseLemma}
\end{subfigure}
\caption{Transitions between two boundaries.}
\label{fig:appendixB}
\end{figure}
\begin{lemma}
\label{lem:near}
    There exist $r_1,r_2>0$ such that, uniformly in $(x,y)\in D_{r_1}\times\mathbb T^m$, as $\e\downarrow0$,
    \begin{equation}
        \E_{(x,y)}\bar\tau=O(|\log\e|),
    \end{equation}
    where $\bar\tau=\inf\{t:\bx_t^\e\not\in D_{r_2}\}$.
\end{lemma}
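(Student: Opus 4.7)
The plan is to exploit the hyperbolic nature of the averaged flow near the saddle: in the Morse coordinates $(p,q)$ with $H=pq$, one has $\bar b = \nabla^\perp H = (-p,q)$, so the deterministic trajectories contract in $p$ and expand exponentially in $q$. The $\sqrt{\e}$-scale noise will drive the process off the separatrix $\{pq=0\}$ in $O(1)$ time, and the hyperbolic expansion will then push it out of the box $D_{r_2}$ within $O(|\log\e|)$ time thereafter. As noted in the preamble to the lemma, reduction to $H=pq$ on a neighborhood is legitimate via the Morse lemma and a bounded random time change, exactly as in the proof of Lemma~\ref{lem:throughsaddlepoint}.

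First, apply the diffusion approximation of Section~\ref{sec:preliminaries} to the coordinate functions $p$ and $q$. Writing $u_p=u\cdot\nabla p$ and $u_q=u\cdot\nabla q$, Ito's formula combined with \eqref{eq:u} yields
\begin{align*}
P_t^\e &= p_0 - \int_0^t P_s^\e\, ds + \sqrt{\e}\,M_t^P + O(\e),\\
Q_t^\e &= q_0 + \int_0^t Q_s^\e\, ds + \sqrt{\e}\,M_t^Q + O(\e),
\end{align*}
where $M_t^P,M_t^Q$ are martingales with instantaneous quadratic-variation rates $|\nabla_y u_p^{\mathsf T}\sigma|^2$ and $|\nabla_y u_q^{\mathsf T}\sigma|^2$. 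By \eqref{eqb:nondegenerate-diffusion} and assumption \hyperlink{H4'}{\textit{(H4$'$)}}, these rates average against $\mu$ to the diagonal entries of the positive-definite matrix $\bm A$, which is uniformly positive-definite on a neighborhood of the saddle by continuity.

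Second, I would prove a one-step estimate: there exist $c>0$, $T_0>0$, $C_0>0$, and radii $r_1<r_2<r_3$ such that with $T(\e)=T_0+C_0|\log\e|$ one has $\Prob_{(x,y)}(\bar\tau\le T(\e))\ge c$, uniformly in $(x,y)\in D_{r_1}\times\mathbb T^m$ and all sufficiently small $\e$. This splits into (a) \emph{escape from the separatrix}: on $\{\bar\tau>T_0\}$, Gronwall gives $Q_{T_0}^\e = q_0 e^{T_0} + \sqrt{\e}\,\tilde M_{T_0}^Q + O(\e)$ with $\tilde M_t^Q=\int_0^t e^{T_0-s}dM_s^Q$, and a small-ball lower bound using the uniformly positive quadratic variation of $M^Q$ and Corollary~\ref{cor:avg} gives $\Prob(|Q_{T_0}^\e|\ge 2\sqrt{\e}\mid \bar\tau>T_0)\ge c_0>0$; and (b) \emph{hyperbolic escape}: once $|Q_{\tau_0}^\e|\ge\sqrt{\e}$ at some $\tau_0\le T_0$, the expanding drift forces $|Q_t^\e|\ge \tfrac12\sqrt{\e}\,e^{t-\tau_0}$ until time $\tau_0+C_0|\log\e|$, provided the rescaled martingale error $\sqrt{\e}\sup_{\tau_0\le s\le \tau_0+C_0|\log\e|}e^{-(s-\tau_0)}|M_s^Q-M_{\tau_0}^Q|$ stays below $\tfrac14\sqrt{\e}$, which has uniformly positive probability by Doob's maximal inequality applied to the time-changed Brownian motion representing $M^Q$. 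Choosing $C_0$ so that $\tfrac12\sqrt{\e}\,e^{C_0|\log\e|}\ge r_2$ forces exit of the $q$-boundary $(\partial D_{r_2})_{\mathrm{out}}$ before $T(\e)$. Finally, iterating via the strong Markov property and summing a geometric series yields $\E_{(x,y)}\bar\tau\le T(\e)/c=O(|\log\e|)$.

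The main obstacle is Step 2(b): controlling the stochastic integral over the exponentially long window $C_0|\log\e|$ while simultaneously ensuring that $P_t^\e$ does not escape through $(\partial D_{r_2})_{\mathrm{in}}$ in an uncontrolled way and that the $O(\e)$ residual in the diffusion approximation does not dominate on this timescale. These are all handled by Doob-type estimates on the time-changed Brownian motions representing $M^P,M^Q$, using the smoothness of $u$ from Lemma~\ref{lem:solution} and the uniform bound on the quadratic variation rates; a delicate but ultimately routine bookkeeping step. A secondary point is that the $P_t^\e$ equation is contractive, so $|P_t^\e|$ stays bounded by $|p_0|+O(\sqrt{\e}|\log\e|^{1/2})$ with high probability, harmlessly staying inside $|p|\le r_2$ throughout the horizon.
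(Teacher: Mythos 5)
Your argument is correct in outline, but it is a genuinely different route from the paper's. The paper proves Lemma~\ref{lem:near} by a barrier (Lyapunov) function argument in the expanding coordinate alone: it takes $f(q)=2\bigl(F(\bar r_2/\sqrt{A_0\e})-F(q/\sqrt{A_0\e})\bigr)$ with $F(q)=\int_0^q e^{-z^2}\int_0^z e^{w^2}dw\,dz$, so that $f$ solves $\tfrac12 A_0\e f''+qf'=-1$, $f(\pm\bar r_2)=0$, and $\sup f=O(|\log\e|)$; it then introduces a corrector $v_2$ via Lemma~\ref{lem:solution} to replace the oscillating quadratic-variation rate $|\nabla_y u_2^{\mathsf T}\sigma|^2$ by its average $\bm A_{22}$, applies Ito's formula to $f(\hat\eta^\e_t)$ up to $\bar\tau\wedge 1/\e$, and reads off $\E(\bar\tau\wedge 1/\e)\lesssim\sup f$ directly, finishing with Markov's inequality and the Markov property. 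You instead prove a uniformly positive probability of exiting within $T_0+C_0|\log\e|$ by the classical two-phase saddle-exit mechanism (noise drives $|q|$ to order $\sqrt{\e}$ in $O(1)$ time; the hyperbolic drift then amplifies it to $r_2$ in $O(|\log\e|)$ time on a good event controlled through the exponentially discounted stochastic integral), and iterate by the strong Markov property. Your route avoids constructing the barrier function and the corrector $v_2$, at the price of the probabilistic lower bounds; the paper's route gets the expectation in one stroke and handles the oscillating diffusion coefficient cleanly. Note also that for this particular lemma an exit through $(\partial D_{r_2})_{\mathrm{in}}$ is harmless (that issue belongs to Lemma~\ref{lem:exit_location}), so your control of $P^\e_t$ is not actually needed here.

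Three technical points in your sketch need tightening, all fixable with the paper's own tools. First, the conditional statement $\Prob(|Q^\e_{T_0}|\ge 2\sqrt{\e}\mid\bar\tau>T_0)\ge c_0$ is awkward, since the conditioning event is correlated with the martingale; argue unconditionally as in Lemma~\ref{lem:hit_separatrix}: either $\bar\tau\le T_0$ (done), or the quadratic variation up to $\bar\tau\wedge T_0$ exceeds a constant with high probability (Corollary~\ref{cor:avg} plus positivity of $\bm A_{22}$ near $O$), and then a running-extremum event for the time-changed Brownian motion produces a stopping time $\tau_0\le T_0$ with $|Q^\e_{\tau_0}|\ge 2\sqrt{\e}$ with uniformly positive probability. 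Second, Doob's maximal inequality alone does not give the uniformly positive probability in your step (b) (the bound $\E\langle M\rangle/\lambda^2$ need not be below $1$); what does work is the small-ball estimate for the time-changed Brownian motion, using that the total quadratic variation of $\int_{\tau_0}^{t}e^{-(s-\tau_0)}dM^Q_s$ is bounded uniformly in $\e$ over the whole $C_0|\log\e|$ window thanks to the exponential discounting. Third, to iterate you need the one-step estimate uniformly over starting points in $D_{r_2}\times\mathbb T^m$ (where the process may sit after an unsuccessful round), not only over $D_{r_1}\times\mathbb T^m$; your argument in fact gives this, but the statement should be made for $D_{r_2}$.
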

\begin{proof}
    We denote $\eta_t^\e=(\bx_t^\e)_2$ and study the behavior of $\eta_t^\e$ inside $B(0,r_3)$. All the computations below concern $\bx_t^\e$ before leaving $D_{r_3}$. As in \eqref{eq:slowx}, we can write the equation for $\eta_t^\e$,
    \begin{equation}
    \begin{aligned}
    \label{eq:etat}
        {\eta}_t^\e&=q+\int_0^t {\eta}_s^\e ds+\sqrt{\e}\int_0^t\nabla_y u_2(\bx_s^\e,\bxi_s^\e)^{\mathsf T}\sigma(\bxi_s^\e)dW_s\\
        &\quad+\e\int_0^t\nabla_x u_2(\bx_s^\e,\bxi_s^\e)\cdot b(\bx_s^\e,\bxi_s^\e)ds+\e(u_2(x,y)-u_2(\bx_t^\e,\bxi_t^\e)).
    \end{aligned}
    \end{equation}
    Introduce $\hat\eta_t^\e$, which is close to $\eta_t^\e$:
    \begin{equation}
        \label{eq:eta}
        \hat\eta_t^\e = q+\int_0^t {\eta}_s^\e ds+\sqrt{\e}\int_0^t\nabla_y u_2(\bx_s^\e,\bxi_s^\e)^{\mathsf T}\sigma(\bxi_s^\e)dW_s+\e\int_0^t\nabla_x u_2(\bx_s^\e,\bxi_s^\e)\cdot b(\bx_s^\e,\bxi_s^\e)ds.
    \end{equation}
    Let $F(q)=\int_0^q e^{-z^2}\int_0^{z}e^{w^2}dwdz$, which satisfies $ F(q)\sim \frac{1}{2}\log q$, as $p\to\infty$, and has bounded derivatives up to the third order.
    Then we can choose a large $C>0$, such that $|F'|,|F''|,|F'''|$, $|u(x,y)|,|\nabla u(x,y)|,|\nabla^2 u(x,y)|$ are bounded by $C$.
    Recall from \eqref{eqb:nondegenerate-diffusion} that the vector-valued function $\nabla_y u_2(x,y)^{\mathsf T}\sigma(y)$ has non-degenerate average w.r.t. $\mu$, in the sense that $\bm A_{22}(x)>0$. 
    Let $A_0=\bm A_{22}(O)>0$ and let $0<r_1<r_2<r_3$ be such that $A_0(1-1/(2C))<\bm A_{22}(x)<A_0(1+1/(2C))$ in $D_{r_2}$, as shown in Figure~\ref{fig:MorseLemma}.
    Let $\bar r_2=\frac{r_3+r_2}{2}$. Define function $f(q)$ (that depends on $\e$) and compute its derivatives:
    \begin{equation}
    \label{eqb:derivatives_f}
    \begin{aligned}
        f(q)=2(F(\frac{\bar r_2}{\sqrt{A_0\e}})-F(\frac{q}{\sqrt{A_0\e}})),~~~&f'(q)=-\frac{2}{\sqrt{A_0\e}}F'(\frac{q}{\sqrt{A_0\e}}),\\
        f''(q)=-\frac{2}{A_0\e}F''(\frac{q}{\sqrt{A_0\e}}),~~~~~~~~~~~~~~&f'''(q)=-\frac{2}{(\sqrt{A_0\e})^3}F'''(\frac{q}{\sqrt{A_0\e}}).
    \end{aligned}
    \end{equation}
    Furthermore, $f$ satisfies the differential equations:
    \begin{equation}
    \label{eqb:equation_f}
        \begin{cases}
        \frac{1}{2}A_0\e f''+qf'=-1\\
        f(-\bar r_2)=f(\bar r_2)=0
        \end{cases}.
    \end{equation}
    By Lemma~\ref{lem:solution}, there is a function $v_2(x,y)$ that is bounded together with its derivatives such that
    \begin{equation}
        Lv_2(x,y)=\left|\nabla_y u_2(x,y)\sigma(y)\right|^2-\bm A_{22}(x).
    \end{equation}
    where $L$ is the generator of the process $\xi_t$ (see \eqref{eqb:def_operator_L}). 
    Since $|\eta_t^\e-\hat\eta_t^\e|=O(\e)$ and $\bar r_2>r_2$, we can apply Ito's formula to $v_2(\bx_t^\e,\bxi_t^\e)f''(\eta_t^\e)$ for $0\leq t\leq\bar\tau$:
    \begin{equation}
    \label{eq:avg_step_eg}
        \begin{aligned}
            v_2(\bx_t^\e,\bxi_t^\e)f''(\eta_t^\e)=&v_2(x,y)f''(q)+\int_0^t\nabla_x(v_2(\bx_s^\e,\bxi_s^\e)f''(\eta_s^\e))\cdot b(\bx_s^\e,\bxi_s^\e)ds\\
            &+\frac{1}{\e}\int_0^t Lv_2(\bx_s^\e,\bxi_s^\e)f''(\eta_s^\e)ds+\frac{1}{\sqrt{\e}}\int_0^t\nabla_yv_2(\bx_s^\e,\bxi_s^\e)^{\mathsf T}f''(\eta_s^\e)\sigma(\bxi_s^\e)dW_s.
        \end{aligned}
    \end{equation}
    Hence it follows that
    \begin{equation}
    \label{eqb:averaged_a22}
    \begin{aligned}
        &\int_0^t (\left|\nabla_y u_2(\bx_s^\e,\bxi_s^\e)\sigma(\bxi_s^\e)\right|^2-\bm A_{22}(\bx_s^\e))f''(\eta_s^\e)ds\\
        &\quad=\e\left(v_2(\bx_t^\e,\bxi_t^\e)f''(\eta_t^\e)-v_2(x,y)f''(q)\right) -\e\int_0^t\nabla_x(v_2(\bx_s^\e,\bxi_s^\e)f''(\eta_s^\e))\cdot b(\bx_s^\e,\bxi_s^\e)ds\\
        &\quad\quad -\sqrt{\e}\int_0^tf''(\eta_s^\e)\nabla_yv_2(\bx_s^\e,\bxi_s^\e)^{\mathsf T}\sigma(\bxi_s^\e)dW_s\\
        &\quad= O(1)+O(\frac{1}{\sqrt{\e}})\cdot t-\sqrt{\e}\int_0^tf''(\eta_s^\e)\nabla_yv_2(\bx_s^\e,\bxi_s^\e)^{\mathsf T}\sigma(\bxi_s^\e)dW_s.
    \end{aligned}
    \end{equation}
    Now apply Ito's formula to $f(\hat\eta_t^\e)$ for $0\leq t\leq \bar\tau$:
    \begin{align*}
        f(\hat\eta_t^\e)&=f(q)+\int_0^tf'(\hat\eta_s^\e)\eta_s^\e ds+\frac{\e}{2}\int_0^t f''(\hat\eta_s^\e)|\nabla_y u_2(\bx_s^\e,\bxi_s^\e)^{\mathsf T}\sigma(\bxi_s^\e)|^2ds\\
        &\quad+\e\int_0^t f'(\hat\eta_s^\e)\nabla_x u_2(\bx_s^\e,\bxi_s^\e)\cdot b(\bx_s^\e,\bxi_s^\e)ds+\sqrt{\e}\int_0^tf'(\hat\eta_s^\e)\nabla_y u_2(\bx_s^\e,\bxi_s^\e)^{\mathsf T}\sigma(\bxi_s^\e)dW_s\\
        &= f(q)+\int_0^tf'(\hat\eta_s^\e)\hat\eta_s^\e ds+O(\sqrt{\e})\cdot t+\frac{\e}{2}\int_0^t f''(\hat\eta_s^\e)\bm A_{22}(\bx_s^\e)ds\\
        &\quad+\frac{\e}{2}\int_0^t (|\nabla_y u_2(\bx_s^\e,\bxi_s^\e)^{\mathsf T}\sigma(\bxi_s^\e)|^2-\bm A_{22}(\bx_s^\e))f''(\eta_s^\e)ds+O(\sqrt{\e})\cdot t\\
        &\quad+\sqrt{\e}\int_0^tf'(\hat\eta_s^\e)\nabla_y u_2(\bx_s^\e,\bxi_s^\e)^{\mathsf T}\sigma(\bxi_t^\e)dW_s\\
        &= f(q)+\int_0^t [f'(\hat\eta_s^\e)\hat\eta_s^\e+\frac{A_0\e}{2}f''(\hat\eta_s^\e)]ds+\frac{\e}{2}\int_0^t f''(\hat\eta_s^\e)(\bm A_{22}(\bx_s^\e)-A_0)ds\\
        &\quad+\frac{\e}{2}( O(1)+O(\frac{1}{\sqrt{\e}})\cdot t-\sqrt{\e}\int_0^tf''(\eta_s^\e)\nabla_yv_2(\bx_s^\e,\bxi_s^\e)^{\mathsf T}\sigma(\bxi_s^\e)dW_s)+O(\sqrt{\e})\cdot t\\
        &\quad+\sqrt{\e}\int_0^tf'(\hat\eta_s^\e)\nabla_y u_2(\bx_s^\e,\bxi_s^\e)^{\mathsf T}\sigma(\bxi_s^\e)dW_s\\
        &\leq f(q)-t+\frac{t}{2}+O(\e)+O(\sqrt{\e})\cdot t-\frac{\sqrt{\e^3}}{2}\int_0^tf''(\eta_s^\e)\nabla_yv_2(\bx_s^\e,\bxi_s^\e)^{\mathsf T}\sigma(\bxi_s^\e)dW_s\\
        &\quad+\sqrt{\e}\int_0^tf'(\hat\eta_s^\e)\nabla_y u_2(\bx_s^\e,\bxi_s^\e)^{\mathsf T}\sigma(\bxi_s^\e)dW_s,
    \end{align*}where the equalities follow from \eqref{eqb:derivatives_f} and \eqref{eqb:averaged_a22}, and the last inequality holds since $f$ solves \eqref{eqb:equation_f} and $|\bm A_{22}(\bx_s^\e)-A_0|<A_0/(2C)$.
    Let $\Tilde \tau=\bar\tau\wedge 1/\e$. Then $\hat\eta_{\Tilde\tau}^\e\in[-\bar r_2,\bar r_2]$ because $|\hat\eta_{\Tilde\tau}^\e-\eta_{\Tilde\tau}^\e|=O(\e)$. The previous calculation reduces to
    \begin{align*}
        f(\hat\eta_{\Tilde\tau}^\e)&\leq f(q)-\Tilde\tau/2+O(\e)+O(\sqrt\e)\cdot\Tilde\tau-\frac{\sqrt{\e^3}}{2}\int_0^{\Tilde\tau}f''(\eta_s^\e)\nabla_yv_2(\bx_s^\e,\bxi_s^\e)^{\mathsf T}\sigma(\bxi_s^\e)dW_s\\
        &\quad+\sqrt{\e}\int_0^{\Tilde{\tau}}f'(\hat\eta_s^\e)\nabla_y u_2(\bx_s^\e,\bxi_s^\e)^{\mathsf T}\sigma(\bxi_s^\e)dW_s.
    \end{align*}
    By taking the expectation, we have for all $x\in D_{r_2}$, $y\in\mathbb T^m$, and $\e$ small enough
    \begin{equation}
        \E_{(x,y)}\Tilde{\tau}\leq 5\sup_{-\bar r_2\leq q'\leq\bar r_2}f(q')=O(|\log\e|).
    \end{equation}
    Then, by Markov's inequality and the Markov property, we obtain that $\E_{(x,y)}\bar\tau=O(|\log\e|)$.
\end{proof}
\begin{lemma}
\label{lem:exit_location}
    Let $r_1,r_2,\bar\tau$ be defined as in Lemma~\ref{lem:near}. Then, uniformly in $(x,y)\in D_{r_1}\times\mathbb T^m$,
    \begin{equation}
        \Prob_{(x,y)}(\bx_{\bar\tau}^\e\in(\partial D_{r_2})_{\mathrm{in}})\to0~\text{as }\e\downarrow0.
    \end{equation}
\end{lemma}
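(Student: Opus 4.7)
The plan is to show that the $p$-coordinate $\zeta_t^\e := (\bx_t^\e)_1$ remains strictly below $r_2$ throughout $[0, \bar\tau]$ with probability tending to $1$, so that exit from $D_{r_2}$ must occur through $(\partial D_{r_2})_{\mathrm{out}}$. Following the reduction used in the proof of Lemma~\ref{lem:near}, I would invoke the Morse lemma together with a bounded random change of time (as in Lemma~\ref{lem:throughsaddlepoint}) to assume $H(x) = pq$ in $D_{r_3}$, so that $\nabla^\perp H(x) = (-p, q)$. Since the time change is monotone, it does not alter which boundary is hit.

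From \eqref{eq:slowx}, the first component then satisfies
\[
    \zeta_t^\e = p - \int_0^t \zeta_s^\e\, ds + \sqrt\e\, M_t^{(1)} + \e R_t,
\]
where $M_t^{(1)} := \int_0^t (\nabla_y u_1)^{\mathsf T}(\bx_s^\e,\bxi_s^\e)\sigma(\bxi_s^\e)\, dW_s$ is a martingale with bounded quadratic variation rate and $\e R_t$ collects the $O(\e)$ drift term and the bounded boundary term $\e(u_1(x,y) - u_1(\bx_t^\e,\bxi_t^\e))$. To turn this into a genuine Ito diffusion, I would set $\hat\zeta_t^\e := \zeta_t^\e + \e\, u_1(\bx_t^\e,\bxi_t^\e)$, which differs from $\zeta_t^\e$ by $O(\e)$ and satisfies
\[
    d\hat\zeta_t^\e = -\hat\zeta_t^\e\, dt + \sqrt\e\,(\nabla_y u_1)^{\mathsf T}(\bx_t^\e,\bxi_t^\e)\sigma(\bxi_t^\e)\, dW_t + O(\e)\, dt.
\]
Variation of parameters then yields
\[
    \hat\zeta_t^\e = e^{-t}\hat\zeta_0^\e + \sqrt\e\, e^{-t}\tilde M_t + O(\e),\qquad \tilde M_t := \int_0^t e^s (\nabla_y u_1)^{\mathsf T}(\bx_s^\e,\bxi_s^\e)\sigma(\bxi_s^\e)\, dW_s,
\]
where $\tilde M_t$ is a martingale with compensator $\langle\tilde M\rangle_t \leq C e^{2t}$.

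By Markov's inequality and the uniform bound $\E_{(x,y)}\bar\tau = O(|\log\e|)$ from Lemma~\ref{lem:near} (valid for all starting points in $D_{r_2}\times\mathbb T^m$), I would set $T_\e := |\log\e|^2$ and obtain $\Prob_{(x,y)}(\bar\tau > T_\e) = O(1/|\log\e|) = o(1)$ uniformly. It then suffices to prove that $\Prob(\sup_{t \leq T_\e}|\zeta_t^\e| \geq r_2) \to 0$. The deterministic part $e^{-t}\hat\zeta_0^\e$ is bounded by $r_1 + O(\e)$, so the task reduces to controlling $\sqrt\e\sup_{t \leq T_\e}e^{-t}|\tilde M_t|$. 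For this I would apply Doob's $L^2$ inequality to $\tilde M$ on each unit interval $[k,k+1]$, giving
\[
    \E\Big[\sup_{t\in[k,k+1]} \big(e^{-t}|\tilde M_t|\big)^2\Big] \leq e^{-2k}\cdot 4\,\E\tilde M_{k+1}^2 \leq C,
\]
and then Chebyshev combined with a union bound over $k = 0, 1, \ldots, \lceil T_\e\rceil$ yields $\Prob(\sup_{t \leq T_\e} e^{-t}|\tilde M_t| \geq \mu) \leq C T_\e/\mu^2$. Choosing $\mu := |\log\e|^{3/2}$ makes this probability $O(1/|\log\e|) \to 0$, while $\sqrt\e\,\mu = \sqrt\e\,|\log\e|^{3/2} \to 0$. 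Combining everything, $\sup_{t\leq T_\e}|\zeta_t^\e| \leq r_1 + o(1) < r_2$ with probability $1-o(1)$, uniformly in $(x,y)\in D_{r_1}\times\mathbb T^m$, which is the desired conclusion.

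The hard part will be the uniform $L^\infty$-in-time control of $e^{-t}|\tilde M_t|$ up to the logarithmic time scale, precisely where $\langle\tilde M\rangle_t$ itself grows like $e^{2t}$. The key observation that makes the argument work is that the contracting drift of the $p$-dynamics produces an exact factor $e^{-t}$ which cancels this growth, so that $e^{-t}\tilde M_t$ is an Ornstein--Uhlenbeck-like process bounded in $L^2$ uniformly in $t$; the dyadic chaining then costs only a factor $\sqrt{T_\e}$, and multiplication by $\sqrt\e$ leaves a fluctuation of order $\sqrt\e\,|\log\e|^{3/2}$, which is far below the fixed geometric gap $r_2 - r_1 > 0$. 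All implicit constants are uniform in $(x,y)\in D_{r_1}\times\mathbb T^m$ by the uniform boundedness of $u$, $\nabla u$, $b$, $c$, and $\sigma$.
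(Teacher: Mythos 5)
Your argument is correct, and it takes a genuinely different route from the paper's. The paper proves the same statement via an explicit $\e$-dependent Lyapunov function: after the same Morse-coordinate reduction it sets $f(p)=\exp(p^2/(C\e))$, which solves $\frac{C\e}{2}f''-pf'-f=0$, shows that $e^{-t/2}f(\hat\zeta_t^\e)$ is a supermartingale up to controllable errors, and deduces $\E_{(x,y)}e^{-\hat\tau_2/2}\leq 2e^{(r_1^2-\bar r_2^2)/(C\e)}$; hence the time for the $p$-coordinate to climb from $r_1$ to $r_2$ is exponentially large in $1/\e$ with high probability, which is then played off against the bound $\Prob_{(x,y)}(\bar\tau>|\log\e|/\sqrt{\e})\to0$ coming from Lemma~\ref{lem:near} and Markov's inequality. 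You instead solve the linear SDE for $\hat\zeta_t^\e$ by variation of parameters and control $\sqrt{\e}\,e^{-t}|\tilde M_t|$ by Doob's $L^2$ inequality on unit time blocks plus a union bound up to $T_\e=|\log\e|^2$, obtaining fluctuations of order $\sqrt{\e}|\log\e|^{3/2}$, far below the fixed gap $r_2-r_1$. Your route is more elementary (only second moments, no $\exp(p^2/(C\e))$ barrier) but yields a weaker, polylogarithmic-in-time barrier estimate, which is all the lemma's qualitative conclusion requires; the paper's exponential estimate is sharper but is not exploited beyond $o(1)$ either. Two small points to tidy up: the supremum should be taken over $t\leq T_\e\wedge\bar\tau$, since the exact linear-drift representation for $\zeta_t^\e$ (equivalently, the identity that the first component of $\bar b$ equals $-p$, which the paper secures by a vertical extension of $b$) is only guaranteed while the process stays in $D_{r_2}$; this costs nothing because exit through $(\partial D_{r_2})_{\mathrm{in}}$ forces $|\zeta^\e_{\bar\tau}|=r_2$ with $\bar\tau\leq T_\e$ on the relevant event, and Doob applies equally to the stopped martingale. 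Also, Lemma~\ref{lem:near} as stated gives the $O(|\log\e|)$ exit-time bound uniformly over $D_{r_1}\times\mathbb T^m$ (its proof covers $D_{r_2}$), which is exactly the uniformity you need for the Markov step.
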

\begin{proof}
    Again, we denote $x=(p,q)$ and, for simplicity, we assume that the saddle point is the origin and that $H(x) = pq$ in a small neighborhood of $O$.
    We extend the function $b(x,y)$ in the vertical direction in such a way that it is bounded together with its partial derivatives and the first component of $\bar b(x)$ is $-p$ in the region $\{x:|p|\leq r_2\}$.
    We denote $\zeta_t^\e=(\bx_t^\e)_1$ and show that it takes significantly longer than $|\log\e|$ for $\zeta_t^\e$ to reach $\pm r_2$, hence it is unlikely for $\bx_t^\e$ to exit $D_{r_2}$ through ${(\partial D_{r_2})}_{\mathrm{in}}$. All the computations below concern $\bx_t^\e$ before $\zeta_t^\e$ reaches $\pm r_2$. As in \eqref{eq:slowx}, we can write the equation for $\zeta_t^\e$:
    \begin{equation}
    \begin{aligned}
    \label{eq:zetat}
        {\zeta}_t^\e&=p-\int_0^t {\zeta}_s^\e ds+\sqrt{\e}\int_0^t\nabla_y u_1(\bx_s^\e,\bxi_s^\e)^{\mathsf T}\sigma(\bxi_s^\e)dW_s\\
        &\quad+\e\int_0^t\nabla_x u_1(\bx_s^\e,\bxi_s^\e)\cdot b(\bx_s^\e,\bxi_s^\e)ds+\e(u_1(x,y)-u_1(\bx_t^\e,\bxi_t^\e)).
    \end{aligned}
    \end{equation}
    Introduce $\hat\zeta_t^\e$, which is close to $\zeta_t^\e$:
    \begin{equation}
        \label{eq:zeta}
        \hat\zeta_t^\e = p-\int_0^t {\zeta}_s^\e ds+\sqrt{\e}\int_0^t\nabla_y u_1(\bx_s^\e,\bxi_s^\e)^{\mathsf T}\sigma(\bxi_s^\e)dW_s+\e\int_0^t\nabla_x u_1(\bx_s^\e,\bxi_s^\e)\cdot b(\bx_s^\e,\bxi_s^\e)ds.
    \end{equation}
    Since $b(x,y)$ and its partial derivatives are bounded in $\{x:|p|\leq r_2\}\times\mathbb T^m$, we can choose $C>0$ such that
    \begin{equation}
    \label{eqb:supremums}
        \sup_{x:|p|\leq r_2,y\in\mathbb T^m}\left(|\nabla_y u_1(x,y)^{\mathsf T}\sigma(y)|^2\vee2|u_1(x,y)|\vee|\nabla_x u_1(x,y)\cdot b(x,y)|\right)<C/2.
    \end{equation}
    Let us define $\bar r_2=\frac{r_1+r_2}{2}$, $\hat{\tau}_2=\inf\{t:|\hat\zeta_t^\e|>\bar r_2\}$, and function $f(p)=\exp(p^2/(C\e))$. Then it follows that
    \begin{equation}
    \label{eq:ode_exit_loc}
        \frac{C\e}{2}f''-pf'-f=0.
    \end{equation}
    Note that $|\zeta_t^\e|\leq r_2$ for $0\leq t\leq\hat\tau_2$ since $|\zeta_t^\e-\hat\zeta_t^\e|\leq C\e/2$. Apply Ito's formula to $\exp(-t/2)f(\hat\zeta_t^\e)$ for $0\leq t\leq\hat\tau_2$ and obtain using \eqref{eqb:supremums}:
    \begin{align*}
        e^{-t/2}f(\hat\zeta_t^\e)&=f(p)-\frac{1}{2}\int_0^t e^{-s/2}f(\hat\zeta_s^\e)ds-\int_0^t e^{-s/2}f'(\hat\zeta_s^\e)\zeta_s^\e ds\\
        &\quad+\e\int_0^t e^{-s/2} f'(\hat\zeta_s^\e)\nabla_x u_1(\bx_s^\e,\bxi_s^\e)\cdot b(\bx_s^\e,\bxi_s^\e)ds\\
        &\quad+\frac{\e}{2}\int_0^t e^{-s/2}f''(\hat\zeta_s^\e)|\nabla_y u_1(\bx_s^\e,\bxi_s^\e)^{\mathsf T}\sigma(\bxi_s^\e)|^2ds\\
        &\quad+\sqrt{\e}\int_0^t e^{-s/2}f'(\hat\zeta_s^\e)\nabla_y u_1(\bx_s^\e,\bxi_s^\e)^{\mathsf T}\sigma(\bxi_s^\e)dW_s\\
        &= f(p)-\frac{1}{2}\int_0^t e^{-s/2}f(\hat\zeta_s^\e)ds\\
        &\quad+\int_0^t e^{-s/2}f'(\hat\zeta_s^\e)\left(-\frac{1}{2}\hat\zeta_s^\e+\left[(\hat\zeta_s^\e-\zeta_s^\e)+\e \nabla_x u_1(\bx_s^\e,\bxi_s^\e)\cdot b(\bx_s^\e,\bxi_s^\e)-\frac{1}{2}\hat\zeta_s^\e\right]\right) ds\\
        &\quad+\frac{\e}{2}\int_0^t e^{-s/2}f''(\hat\zeta_s^\e)|\nabla_y u_1(\bx_s^\e,\bxi_s^\e)^{\mathsf T}\sigma(\bxi_s^\e)|^2ds\\
        &\quad+\sqrt{\e}\int_0^t e^{-s/2}f'(\hat\zeta_s^\e)\nabla_y u_1(\bx_s^\e,\bxi_s^\e)^{\mathsf T}\sigma(\bxi_s^\e)dW_s\\
        &\leq f(p)-\frac{1}{2}\int_0^t e^{-s/2}f(\hat\zeta_s^\e)ds-\frac{1}{2}\int_0^t e^{-s/2}f'(\hat\zeta_s^\e)\hat\zeta_s^\e ds+\frac{C\e}{4}\int_0^t e^{-s/2}f''(\hat\zeta_s^\e)ds\\
        &\quad+\int_0^t e^{-s/2}f'(\hat\zeta_s^\e)\left[(\hat\zeta_s^\e-\zeta_s^\e)+\e \nabla_x u_1(\bx_s^\e,\bxi_s^\e)\cdot b(\bx_s^\e,\bxi_s^\e)-\frac{1}{2}\hat\zeta_s^\e\right] ds\\
        &\quad+\sqrt{\e}\int_0^t e^{-s/2}f'(\hat\zeta_s^\e)\nabla_y u_1(\bx_s^\e,\bxi_s^\e)^{\mathsf T}\sigma(\bxi_s^\e)dW_s\\
        &\leq  f(p)+18C\e(1-e^{-t/2})+\sqrt{\e}\int_0^t e^{-s/2}f'(\hat\zeta_s^\e)\nabla_y u_1(\bx_s^\e,\bxi_s^\e)^{\mathsf T}\sigma(\bxi_s^\e)dW_s.
    \end{align*}
    The last inequality follows from \eqref{eq:ode_exit_loc} and the fact that the integrand on the second line is either negative, when $|\hat\zeta_s^\e|\geq 2C\e$, or small and bounded by $9C\e e^{-s/2}$, when $|\hat\zeta_s^\e|\leq 2C\e$. By replacing $t$ by the stopping time $\hat\tau_2$ and taking expectation, we obtain
    \begin{equation}
        \E_{(x,y)} e^{-\hat\tau_2/2}\leq 2 e^{(r_1^2-{\bar r_2}^2)/(C\e)}.
    \end{equation}
    Let $\bar{\tau}_2=\inf\{t:|\zeta_t^\e|>r_2\}$. Then, since $|\zeta_t^\e-\hat\zeta_t^\e|\leq C\e/2$, it follows that
    \begin{equation}
        \Prob_{(x,y)}(\bar{\tau}_2<|\log\e|/\sqrt{\e})\leq \Prob_{(x,y)}(\hat{\tau}_2<|\log\e|/\sqrt{\e})\leq 2\exp(\frac{r_1^2-{\bar r_2}^2}{C\e}+\frac{|\log\e|}{2\sqrt{\e}})\to 0,
    \end{equation}
    as $\e\downarrow0$. However, by Lemma~\ref{lem:near} and Markov's inequality, we have
    \begin{equation}
        \Prob_{(x,y)}(\bar\tau>|\log\e|/\sqrt{\e})\to0,
    \end{equation}
    as $\e\downarrow0$. Thus, the desired result follows.
\end{proof}
\begin{lemma}
    \label{lem:away}
    Let $\bar{\bar\tau}=\inf\{t:\bx_t^\e\in D_{r_1}\}\wedge\bm\tau$. Then there exists $a>0$ such that, uniformly in $(x,y)\in(\partial {D_{r_2}})_{\mathrm{out}}\times\mathbb T^m$,
    \begin{equation}
    \label{eq:away_non_deg}
        \Prob_{(x,y)}(\bar{\bar\tau}<\bm\tau,\int_0^{\bar{\bar\tau}}|\nabla_y u_h(\bx_s^\e,\bxi_s^\e)^{\mathsf T}\sigma(\bxi_s^\e)|^2ds<a)\to 0~\text{as }\e\to0.
    \end{equation}
    Furthermore, $\E_{(x,y)}\bar{\bar\tau}$ is bounded uniformly in $(x,y)\in(\partial {D_{r_2}})_{\mathrm{out}}\times\mathbb T^m$.
\end{lemma}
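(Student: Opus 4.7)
The plan is first to establish the uniform bound $\E_{(x,y)} \bar{\bar\tau} \lesssim 1$, and then deduce \eqref{eq:away_non_deg} via Corollary~\ref{cor:avg}. For the uniform bound, I would exploit the closed averaged orbit through $x$. Let $h := H(x)$. The Hamiltonian flow $\bm x_t$ on the level set $\{H = h\}$ leaves $D_{r_2}$ immediately through $(\partial D_{r_2})_{\mathrm{out}}$ (since $\dot q = q$ makes $|q|$ exceed $r_2$), traverses the portion of $M$ outside $D_{r_3}$ in $O(1)$ time (there $|\nabla H|$ is uniformly bounded below and the level-set arc has length bounded by a constant depending only on $M$), re-enters the Morse region through the inner boundary, and reaches $|p| = r_1$ in an additional $\log(r_3/r_1) = O(1)$ time in Morse coordinates. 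Stopping at $|p| = r_1$ cuts off the logarithmic slowdown near the saddle, so the total return time $T_1$ is independent of $h$, and for $\e$ sufficiently small $\bm x_{T_1}$ lies in the interior of $D_{r_1}$ uniformly in $x \in (\partial D_{r_2})_{\mathrm{out}}$. By Lemma~\ref{lem:stay_close_to_averaged} with $\hat t = T_1$ and $\beta'$ small, the good event $\{\sup_{t \le T_1}|\bx_t^\e - \bm x_t| \le \beta'\sqrt\e\}$ has uniformly positive probability $c$, and on it either $\bx_{T_1}^\e \in D_{r_1}$ or the process has already exited $V^\e$; in both cases $\bar{\bar\tau} \le T_1$. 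The strong Markov property then yields $\E_{(x,y)} \bar{\bar\tau} \le T_1/c$.

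For \eqref{eq:away_non_deg}, on the event $\{\bar{\bar\tau} < \bm\tau\}$ the process travels from $|q| = r_2$ into $\{|q| \le r_1\}$; since $b$ is bounded by some $M$ and $\bx_t^\e$ has no diffusion term, this forces $\bar{\bar\tau} \ge T_0 := (r_2 - r_1)/M$. Throughout $[0, \bar{\bar\tau})$ the process lies in $V^\e \setminus D_{r_1}$, which, by \hyperlink{H2'}{\textit{(H2$'$)}} (each extremum sits at $H$-distance $\ge \delta_0 > 0$ from the saddle level), contains no critical point of $H$ provided $\e^\alpha < \delta_0$. Hence $|\nabla H(\bx_s^\e)| \ge c_2 > 0$ uniformly there, and the uniform positive-definiteness of $\bm A$ from \eqref{eqb:nondegenerate-diffusion} gives $A(\bx_s^\e) \ge c_3 := \lambda_0 c_2^2 > 0$ on $[0,\bar{\bar\tau})$, so $\int_0^{\bar{\bar\tau}} A(\bx_s^\e)\,ds \ge c_3 T_0$ on the event. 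Setting $a := c_3 T_0/2$ gives the inclusion
\[
\{\bar{\bar\tau} < \bm\tau\} \cap \Big\{\int_0^{\bar{\bar\tau}}|\nabla_y u_h^{\mathsf T}\sigma|^2\,ds < a\Big\} \subset \Big\{\Big|\int_0^{\bar{\bar\tau}}\big(|\nabla_y u_h(\bx_s^\e,\bxi_s^\e)^{\mathsf T}\sigma(\bxi_s^\e)|^2 - A(\bx_s^\e)\big)\,ds\Big| > a\Big\}.
\]
Since $\tilde g(x,y) := |\nabla_y u_h(x,y)^{\mathsf T}\sigma(y)|^2 - A(x)$ has zero $\mu$-mean, Corollary~\ref{cor:avg} applied to $\bar{\bar\tau} \wedge T$ for a large fixed $T$---with the tail beyond $T$ made small using $\E \bar{\bar\tau} \lesssim 1$---shows the expectation of the right-hand integral is $o(1)$, and Markov's inequality completes the proof.

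The main obstacle is the $h$-independence of the return time $T_1$: the averaged period on $\{H = h\}$ grows like $|\log h|$, but this blow-up occurs only inside $D_{r_1}$ and is eliminated by stopping at $|p| = r_1$ before the flow reaches the saddle. Once this geometric fact is in hand, the rest of the argument---combining Lemma~\ref{lem:stay_close_to_averaged}, the strong Markov property, a deterministic travel-time lower bound, and Corollary~\ref{cor:avg}---is routine.
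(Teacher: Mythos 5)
Your proposal is correct in substance and rests on the same pillars as the paper's proof (closeness to the averaged motion, uniformly bounded transit times for the averaged flow, non-degeneracy of the averaged diffusion coefficient $A$, and Corollary~\ref{cor:avg} plus Chebyshev), but several steps are arranged differently and are in fact more elementary. For the lower bound on the variance integral, the paper works on the event that $\bx_t^\e$ stays $\e^{(1+2\alpha)/4}$-close to $\bm x_t$, so that $\bar{\bar\tau}\geq\hat t/2$ with high probability and $A$ is positive along the averaged trajectory (as in \eqref{eq5:positive_variance_one_rotation}); you instead get $\bar{\bar\tau}\geq(r_2-r_1)/\sup|b|$ deterministically on $\{\bar{\bar\tau}<\bm\tau\}$ (the slow motion has no diffusion term), and you use the pointwise bound $A(x)=\nabla H(x)^{\mathsf T}\bm A(x)\nabla H(x)\geq\lambda_0|\nabla H(x)|^2\geq c_3$ on $V^\e\setminus D_{r_1}$, which is legitimate because before $\bar{\bar\tau}$ the path never leaves that region; this removes one of the two ``bad-event'' probabilities the paper has to control. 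You also prove the expectation bound first and use it to truncate the stopping time at a fixed horizon $T$ before invoking Corollary~\ref{cor:avg}, whereas the paper compares with the fixed window $[0,\hat t/2]$ and never needs the truncation; and for the expectation bound you use the uniformly positive probability of $\beta'\sqrt\e$-closeness from Lemma~\ref{lem:stay_close_to_averaged}, while the paper uses the high-probability closeness at scale $\e^{(1+2\alpha)/4}$ up to time $2\check t$ --- both feed into the same Markov-property iteration.

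Two points in your first part should be tightened, though both are easily repaired and the paper's own write-up is equally terse there. First, the return time of the averaged flow is not ``independent of $h$'': it varies with the level and the starting point within uniform bounds $[\hat t,\check t]$. What saves you is that once the flow crosses $\{|p|=r_1\}$ it remains in $\{|p|\leq r_1\}$ with $|q|\lesssim\e^\alpha$ for a time of order $\alpha|\log\e|$, so choosing a single $T_1>\check t+1$ puts $\bm x_{T_1}$ in the interior of the box with a margin independent of $\e$ and of the starting point, and then $\beta'\sqrt\e$-closeness forces $\bx_{T_1}^\e\in D_{r_1}$ unless $\bm\tau$ has already occurred. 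Second, the Markov iteration giving $\E_{(x,y)}\bar{\bar\tau}\leq T_1/c$ requires the one-step bound $\Prob_{(z,w)}(\bar{\bar\tau}\leq T_1)\geq c$ uniformly over all restart states $(z,w)\in(V^\e\setminus D_{r_1})\times\mathbb T^m$, not only over $(\partial D_{r_2})_{\mathrm{out}}\times\mathbb T^m$; the same geometric argument applies to such $z$ (bounded time to reach $\{|p|=r_1\}$ along the level set, then lingering inside the box), but this needs to be said.
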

\begin{proof}
Let $\hat t>0$ and $\check t>0$ be the lower bound and the upper bound of time spent by $\bm x_t$ to get from $(\partial {D_{r_2}})_{\mathrm{out}}$ to $D_{r_1}$, respectively. Then, similarly to \eqref{eq5:positive_variance_one_rotation}, there exists $a>0$ such that
\begin{equation}
    \Prob_{(x,y)}\left(\int_0^{\hat t/2} |\nabla_y u_h(\bx_s^\e,\bxi_s^\e)^{\mathsf T}\sigma(\bxi_s^\e))|^2ds>a,\sup_{0\leq t\leq2\hat t}|\bx_t^\e-\bm x_t|\leq\e^{\frac{1+2\alpha}{4}}\right)\to1.
\end{equation}
Hence
\begin{align*}
    &\Prob_{(x,y)}\left(\bar{\bar\tau}<\bm\tau,\int_0^{\bar{\bar\tau}}|\nabla_y u_h(\bx_s^\e,\bxi_s^\e)^{\mathsf T}\sigma(\bxi_s^\e)|^2ds<a\right)\\
    &\leq\Prob_{(x,y)}\left(\hat t/2\leq\bar{\bar\tau}<\bm\tau,\int_0^{\bar{\bar\tau}}|\nabla_y u_h(\bx_s^\e,\bxi_s^\e)^{\mathsf T}\sigma(\bxi_s^\e)|^2ds<a\right)+\Prob_{(x,y)}(\bar{\bar\tau}<\bm\tau,\bar{\bar\tau}<\hat t/2)\\
    &\leq\Prob_{(x,y)}\left(\int_0^{\hat t/2}|\nabla_y u_h(\bx_s^\e,\bxi_s^\e)^{\mathsf T}\sigma(\bxi_s^\e)|^2ds<a\right)+\Prob_{(x,y)}\left(\sup_{0\leq t\leq2\hat t}|\bx_t^\e-\bm x_t|>\e^{\frac{1+2\alpha}{4}}\right)\\
    &\to0.
\end{align*}
Similarly, it is easy to see that $\Prob_{(x,y)}(\bar{\bar\tau}>2\check t)<\Prob_{(x,y)}\left(\sup_{0\leq t\leq2\check t}|\bx_t^\e-\bm x_t|>\e^{\frac{1+2\alpha}{4}}\right)\to0$, and the desired result follows from the Markov property.
\end{proof}

\begin{proof}[Proof of Proposition~\ref{prop:exit_time_from_separatrix}]
    As in~\eqref{eq:H}:
    \begin{align*}
        H(\bx_t^\e) &= H(x)+\e\int_0^t \nabla_x u_h(\bx_s^\e,\bxi_s^\e)\cdot b(\bx_s^\e,\bxi_s^\e)ds\\
    &\quad+\sqrt{\e}\int_0^t \nabla_y u_h(\bx_s^\e,\bxi_s^\e)^{\mathsf T}\sigma(\bxi_s^\e)dW_s+\e(u_h(x,y)-u_h(\bx_t^\e,\bxi_t^\e)).
    \end{align*}
    The change in $H(\bx_t^\e)$ is mainly due to the stochastic integral while the other terms are of order $O(\e)$ and $O(t\cdot\e)$ and can be controlled. For each $t(\e)>0$,
    \begin{equation}
    \label{eq:set}
    \begin{aligned}
        \{\bm\tau<t(\e)\}\supset&\left\{\sup_{[0,t(\e)]}\left|\sqrt{\e}\int_0^{t(\e)} \nabla_y u_h(\bx_s^\e,\bxi_s^\e)^{\mathsf T}\sigma(\bxi_s^\e)dW_s\right|>3\e^\alpha\right\}\\
        &\quad\bigcap\left\{\e\int_0^{t(\e)} |\nabla_x u_h(\bx_s^\e,\bxi_s^\e)\cdot b(\bx_s^\e,\bxi_s^\e)|ds<\e^\alpha/2\right\}.
    \end{aligned}
    \end{equation}
    Note that if we choose $t(\e)=o(\e^{\alpha-1})$, then the second event is always true. Now we recursively define stopping times:
    \begin{align*}
        \theta^1_0&=0,\\
        \theta^2_k&=\inf\{t\geq\theta^1_{k-1}:\bx_t^\e\in\partial D_{r_2}\}\wedge\bm\tau,\\
        \theta^1_k&=\inf\{t\geq\theta^2_{k}:\bx_t^\e\in\partial D_{r_1}\}\wedge\bm\tau.
    \end{align*}
    We denote $D(x,y)=\nabla_y u_h(x,y)^{\mathsf T}\sigma(y)$. Note that once the process leaves $V^\e$, the stopping times stay constant afterwards. The main idea of the proof is to show that after a sufficiently long time $t(\e)$, the stochastic integral will accumulate enough variance to exit from $V^\e$. Let us bound the probability of variance being small:
    \begin{equation}
    \label{eq:top_inequality}
        \begin{aligned}
            &\Prob_{(x,y)}\left(\bm\tau\geq t(\e),\int_0^{t(\e)}|D(\bx_s^\e,\bxi_s^\e)|^2ds<9\e^{2\alpha-1}\right)\\
            &\quad\leq\Prob_{(x,y)}\left(\bm\tau\geq t(\e)>\theta^1_{n(\e)},\int_0^{t(\e)}|D(\bx_s^\e,\bxi_s^\e)|^2ds<9\e^{2\alpha-1}\right)+\Prob_{(x,y)}(\theta^1_{n(\e)}\geq t(\e)),
        \end{aligned}
    \end{equation}
    where the integer $n(\e)$ will be specified later.
    Let $\bar\tau$, $\bar{\bar\tau}$, and $a$ be defined as in Lemma~\ref{lem:near} and Lemma~\ref{lem:away}. Then
    \begin{align}
        &\Prob_{(x,y)}\left(\bm\tau\geq t(\e)>\theta^1_{n(\e)},\int_0^{t(\e)}|D(\bx_s^\e,\bxi_s^\e)|^2ds<9\e^{2\alpha-1}\right)\nonumber\\
        &\quad \leq \exp(9\e^{2\alpha-1}/a)\E_{(x,y)}\left(\chi_{\{\bm\tau>\theta^1_{n(\e)}\}}\exp\left(-\frac{1}{a}\int_0^{\theta^1_{n(\e)}}|D(\bx_s^\e,\bxi_s^\e)|^2ds\right)\right)\label{eq:rotations}\\
        &\quad\leq\exp(9\e^{2\alpha-1}/a)\left[\sup_{(x,y)\in\partial D_{r_1}\times\mathbb T^m}\E_{(x,y)}\left(\chi_{\{\bm\tau>\theta^1_1\}}\exp\left(-\frac{1}{a}\int_0^{\theta^1_1}|D(\bx_s^\e,\bxi_s^\e)|^2ds\right)\right)\right]^{{n(\e)}-1}.\nonumber
    \end{align}
    Now let us deal with one excursion from $D_{r_2}$ to $D_{r_1}$. For $(x,y)\in(\partial {D_{r_2}})_{\mathrm{out}}\times\mathbb T^m$,
    \begin{equation}
    \label{eqb:e0.99}
    \begin{aligned}
        &\E_{(x,y)}\left(\chi_{\{\bar{\bar\tau}<\bm\tau\}}\exp\left(-\frac{1}{a}\int_0^{\bar{\bar\tau}}|D(\bx_s^\e,\bxi_s^\e)|^2ds\right)\right)\\
        &\quad\leq \Prob_{(x,y)}(\bar{\bar\tau}<\bm\tau,\int_0^{\bar{\bar\tau}}|D(\bx_s^\e,\bxi_s^\e)|^2ds<a)+\Prob_{(x,y)}(\bar{\bar\tau}<\bm\tau,\int_0^{\bar{\bar\tau}}|D(\bx_s^\e,\bxi_s^\e)|^2ds\geq a)/e\\
        &\quad\leq e^{-0.99},
    \end{aligned}
    \end{equation}
    for all $\e$ sufficiently small, by Lemma~\ref{lem:away}. For $(x,y)\in\partial D_{r_1}\times\mathbb T^m$:
    \begin{align*}
        &\E_{(x,y)}\left(\chi_{\{\bm\tau>\theta^1_1\}}\exp\left(-\frac{1}{a}\int_0^{\theta^1_1}|D(\bx_s^\e,\bxi_s^\e)|^2ds\right)\right)\\
        &\leq \E_{(x,y)}\left(\chi_{\{\bm\tau>\theta^1_1,\bx^\e_{\bar\tau}\in(\partial {D_{r_2}})_{\mathrm{out}}\}}\exp\left(-\frac{1}{a}\int_0^{\theta^1_1}|D(\bx_s^\e,\bxi_s^\e)|^2ds\right)\right)+\Prob_{(x,y)}(\bx^\e_{\bar\tau}\in(\partial {D_{r_2}})_{\mathrm{in}})\\
        &\leq \sup_{(x',y')\in(\partial {D_{r_2}})_{\mathrm{out}}\times\mathbb T^m}\E_{(x',y')}\left(\chi_{\{\bar{\bar\tau}<\bm\tau\}}\exp\left(-\frac{1}{a}\int_0^{\bar{\bar\tau}}|D(\bx_s^\e,\bxi_s^\e)|^2ds\right)\right)+\Prob_{{(x,y)}}(\bx^\e_{\bar\tau}\in(\partial {D_{r_2}})_{\mathrm{in}})\\
        &\leq e^{-0.98},
    \end{align*}
    by Lemma~\ref{lem:exit_location} and \eqref{eqb:e0.99}.
    Now we can come back to \eqref{eq:rotations} and have
    \begin{equation}
    \label{eq:first_prob}
        \Prob_{(x,y)}\left(\bm\tau\geq t(\e)>\theta^1_n,\int_0^{t(\e)}|D(\bx_s^\e,\bxi_s^\e)|^2ds<9\e^{2\alpha-1}\right)\leq\exp(9\e^{2\alpha-1}/a-0.98({n(\e)}-1)).
    \end{equation}
    The second probability on the {right-hand side of} \eqref{eq:top_inequality} can be bounded by Lemmas~\ref{lem:near} and \ref{lem:away} with certain $K>0$:
    \begin{equation}
    \label{eq:second_prob}
        \begin{aligned}
            \Prob_{(x,y)}(\theta^1_n\geq t(\e))&\leq{\E_{(x,y)}\theta^1_n}/{t(\e)}\\
            &\leq{\left(\sup_{(x',y')\in\partial D_{r_1}\times\mathbb T^m}\E_{(x',y')}\bar\tau+\sup_{{(x',y')\in\partial D_{r_2}\times\mathbb T^m}}\E_{(x',y')}\bar{\bar\tau}\right)}\cdot\frac{n(\e)}{t(\e)}\\
            &\leq\frac{n(\e)K|\log\e|}{t(\e)}.
        \end{aligned}
    \end{equation}
    Choose $n(\e)=[10\e^{2\alpha-1}/a+2]$. Then the quantity in \eqref{eq:first_prob} converges to $0$. Choose $t(\e)=100K\e^{2\alpha-1}|\log\e|/a$, then the quantity on the {right-hand side} of \eqref{eq:second_prob} converges to $0.1$. Therefore, the quantity on the {right-hand side} of \eqref{eq:top_inequality} converges to $0.1$. Moreover, since $t(\e)=o(\e^{\alpha-1})$, it follows from \eqref{eq:set} that, for all $x\in V^\e$, $y\in\mathbb T^m$, and $\e$ sufficiently small,
    \begin{align*}
        &\Prob_{(x,y)}(\bm\tau\geq t(\e))\\
        &=\Prob_{(x,y)}\left(\bm\tau\geq t(\e),\sup_{[0,t(\e)]}\left|\sqrt{\e}\int_0^t D(\bx_s^\e,\bxi_s^\e)dW_s\right|\leq 3\e^\alpha\right)\\
        &\leq \Prob_{(x,y)}\left(\bm\tau\geq t(\e),\int_0^{t(\e)}|D(\bx_s^\e,\bxi_s^\e)|^2ds>9\e^{2\alpha-1},\sup_{[0,t(\e)]}\left|\sqrt{\e}\int_0^t D(\bx_s^\e,\bxi_s^\e)dW_s\right|\leq 3\e^\alpha\right)\\
        &\quad+ \Prob_{(x,y)}\left(\bm\tau\geq t(\e),\int_0^{t(\e)}|D(\bx_s^\e,\bxi_s^\e)|^2ds<9\e^{2\alpha-1}\right)\\
        &\leq 0.69+0.11=0.8,
    \end{align*}
    since the stochastic integral in the last inequality can be represented as time-changed Brownian motion. Finally, we have by the Markov property
    \[
    \E_{(x,y)}\bm\tau\leq 5t(\e)=O(\e^{2\alpha-1}|\log\e|).\qedhere
    \]
\end{proof}

\section{Tightness}
\label{sec:tightness}
In this section, we verify the tightness for the projection to the Reeb graph of the original process \eqref{eq:rescaled_process1} on $\mathbb R^2\times\mathbb T^m$.
By the Arzelà–Ascoli theorem {(cf. Theorem 7.3 in \cite{MR1700749})}, it suffices to check the following two conditions 
{\begin{align}
    &(\romannumeral1)\lim_{R\to+\infty}\limsup_{\e\downarrow0}\Prob_{(x,y)}\left(\sup_{0\leq t\leq T}|H(X_t^\e)|\geq R\right)=0,\label{eqc:bounded}\\
    &(\romannumeral2)\lim_{\delta\downarrow0}\limsup_{\e\downarrow0}\Prob_{(x,y)}\left(\sup_{\substack{0\leq s<t\leq T\\ |s-t|<\delta}}r(h(X_t^\e),h(X_s^\e))>\kappa\right)=0,\label{eqc:equi-cont}
\end{align}
hold for all $\kappa>0$ and $(x,y)\in\mathbb R^2\times\mathbb T^m$.}
As in \eqref{eq:H}, we can also write the equation for $H(X_t^\e)$, where we consider $(X_t^\e,\xi_t^\e)$ to be a process on $\mathbb R^2\times\mathbb T^m$:
\begin{equation}
\label{eqc:h}
    \begin{aligned}
    H(X_t^\e)&=H(x)+\int_0^t \nabla_y u_h(X_s^\e,\xi_s^\e)^{\mathsf T}\sigma(\xi_s^\e)dW_s+\e(u_h(x_0,y_0)-u_h(X_t^\e,\xi_t^\e))\\
    &\quad+\int_0^t \nabla_x u_h(X_s^\e,\xi_s^\e)\cdot b(X_s^\e,\xi_s^\e)ds.
\end{aligned}
\end{equation}
By assumption \hyperlink{H4}{\textit{(H4)}} and Lemma~\ref{lem:solution}, $u(x,y)$ is bounded together with its first derivatives.
Besides, by assumption \hyperlink{H2}{\textit{(H2)}}, $H(x)/|x|\to+\infty$ as $|x|\to+\infty$, hence $\sup\{|x|:H(x)\leq R\}/R\to0$ as $R\to+\infty$. 
Also, there exists an $K>0$ such that  and $H(x)>-K$ for all $x\in\mathbb R^2$.
For $R>K$, {let $\tau_R=\inf\{t:|H(X_t^\e)|=R\}\wedge T=\inf\{t:H(X_t^\e)=R\}\wedge T$.} Then, by Markov's inequality and boundedness of second derivatives of $H$,
\begin{align*}
    \Prob_{(x,y)}(\sup_{0\leq t\leq T}|H(X_t^\e)|\geq R)
    &=\Prob_{(x,y)}(H(X^\e_{\tau_R})=R)\\
    &\leq \E_{(x,y)}(H(X^\e_{\tau_R})+K)/R\\
    &\lesssim ({H(x)+(\e+T)\sup\{|\nabla H(x)|:H(x)\leq R\}+T+K})/{R}\\
    &\lesssim ({H(x)+(\e+T)\sup\{|x|:H(x)\leq R\}+T+K})/{R}\to0,
\end{align*}
as $R\to+\infty$, uniformly in $0<\e<1$.
Thus, we have \eqref{eqc:bounded}, and it also follows that
{\begin{equation}
\label{eqc:bounded_gradient}
    \lim_{R\to+\infty}\limsup_{\e\downarrow0}\Prob_{(x,y)}(\sup_{0\leq t\leq T}|\nabla H(X_t^\e)|\geq R)=0,
\end{equation}}since $H(x)/|x|\to+\infty$ and $H$ has bounded second derivatives.
To verify \eqref{eqc:equi-cont}, we see that, for an arbitrary $\kappa>0$ small,
\begin{align*}
    \Prob_{(x,y)}\left(\sup_{\substack{0\leq s<t\leq T\\ |s-t|<\delta}}r(h(X_t^\e),h(X_s^\e))>\kappa\right)&\leq \sum_{k=0}^{[T/\delta]}\Prob_{(x,y)}\left(\sup_{t\leq \delta}r(h(X_{k\delta+t}^\e),h(X_{k\delta}^\e))>\kappa/4\right)\\
    &\leq \sum_{k=0}^{[T/\delta]}\Prob_{(x,y)}\left(\sup_{t\leq \delta}|H(X_{k\delta+t}^\e)-H(X_{k\delta}^\e)|>\kappa/12\right).
\end{align*}
Since \eqref{eqc:bounded_gradient} holds, it is sufficient to prove, for each $R>0$,
{\begin{equation}
\label{eqc:b}
    \lim_{\delta\downarrow0}\limsup_{\e\downarrow0}\sum_{k=0}^{[T/\delta]}\Prob_{(x,y)}\left(\sup_{t\leq \delta}|H(X_{k\delta+t}^\e)-H(X_{k\delta}^\e)|>\kappa/12,\sup_{0\leq t\leq T}|\nabla H(X_t^\e)|\leq R\right)=0.
\end{equation}}Let $K'$ be the upper bound for $|b(x,y)-\nabla^\perp H(x)|$, $|u(x,y)|$, $|\nabla_x u(x,y)|$, and eigenvalues of $\nabla^2 H(x)$ and $(\nabla_y u\sigma\sigma^\mathsf{T}\nabla_y u^\mathsf{T})(x,y)$ over $\mathbb R^2\times\mathbb T^m$. 
On the event $\{\sup_{0\leq t\leq T}|\nabla H(X_t^\e)|\leq R\}$ the following holds. For $\e<\frac{\kappa}{48K'R}$, {due to \eqref{eqc:h} and the fact that the stochastic integral can be represented as a time-changed Brownian motion}, we have
\begin{align*}
    \sup_{t\leq \delta}|H(X_{k\delta+t}^\e)-H(X_{k\delta}^\e)|&\leq\sup_{t\leq \delta}|\int_{k\delta}^{k\delta+t}\nabla_y u_h(X_s^\e,\xi_s^\e)^{\mathsf T}\sigma(\xi_s^\e)dW_s|+2\e K'R+{K'}^2(K'+R)\delta\\
    &\leq\sup_{0\leq t\leq\delta K'R^2}|\tilde W_t|+{K'}^2(K'+R)\delta+\kappa/24,
\end{align*}
where $\tilde W_t$ is another Brownian motion. Hence, for $\delta$ sufficiently small, independently of $\e$,
\begin{align*}
    &\sum_{k=0}^{[T/\delta]}\Prob_{(x,y)}\left(\sup_{t\leq \delta}|H(X_{k\delta+t}^\e)-H(X_{k\delta}^\e)|>\kappa/12,\sup_{0\leq t\leq T}|\nabla H(X_t^\e)|\leq R\right)\\
    &\leq\left[\frac{T}{\delta}+1\right]\cdot\Prob_{(x,y)}\left(\sup_{0\leq t\leq\delta K'R^2}|\tilde W_t|>\kappa/48\right)\\
    &\to0,
\end{align*}
as $\delta\downarrow0$, since each term is exponentially small as $\delta\downarrow0$.
This proves \eqref{eqc:b}, and thus \eqref{eqc:equi-cont}.

\section*{Acknowledgements}
I am grateful to Mark Freidlin for introducing me to this problem and to my advisor Leonid Koralov for invaluable guidance. I am also grateful to Dmitry Dolgopyat and Yeor Hafouta for insightful discussions.
\printbibliography
\end{document}